\newtheorem{thm}{Theorem}
\newtheorem{lem}{Lemma}
\newtheorem{prop}{Proposition}
\newtheorem{rem}{Remark}
\newtheorem*{prop*}{Proposition}
\newtheorem*{thm*}{Theorem}
\numberwithin{thm}{section}
\numberwithin{cor}{section}
\numberwithin{lem}{section}
\numberwithin{prop}{section}
\numberwithin{rem}{section}
\numberwithin{equation}{section}
\renewcommand{\Re}{\text{Re\,}}
\renewcommand{\Im}{\text{Im\,}}
\newcommand{\Ai}{\text{Ai}}
\newcommand{\Bi}{\text{Bi}}
\begin{document}
\title[Zeroes of the spectral density]
{Zeroes of the spectral density of the
\\
Schr\"odinger operator with the
\\
slowly decaying Wigner--von Neumann potential}

\author{Sergey Simonov}
\address{%
Dublin Institute of Technology\\
School of Mathematical Sciences\\
Kevin Street, Dublin 8\\
Ireland}
\email{sergey.a.simonov@gmail.com}

\subjclass{47E05,34B20,34L40,34L20,34E10,34E13  }
\keywords{Schr\"odinger operator, Titchmarsh--Weyl theory, Wigner--von Neumann
potential, embedded eigenvalue, spectral density, pseudogap, multiple scale method}

\maketitle

\begin{abstract}
We consider the Schr\"odinger operator $\mathcal L_{\alpha}$ on the half-line with a periodic background potential and a perturbation which consists of two parts: a summable potential and the slowly decaying Wigner--von Neumann potential $\frac{c\sin(2\omega x+\delta)}{x^{\gamma}}$, where $\gamma\in(\frac12,1)$. The continuous spectrum of this operator has the same band-gap structure as the continuous spectrum of the unperturbed periodic operator. In every band there exist two points, called critical, where the eigenfunction equation has square summable solutions. Every critical point $\nu_{cr}$ is an eigenvalue of the operator $\mathcal L_{\alpha}$ for some value of the boundary parameter $\alpha=\alpha_{cr}$, specific to that particular point. We prove that for $\alpha\neq\alpha_{cr}$ the spectral density of the operator $\mathcal L_{\alpha}$ has a zero of the exponential type at $\nu_{cr}$.
\end{abstract}

\tableofcontents

\section{Introduction}

Wigner--von Neumann potential $\frac{c\sin(2\omega x)}x$ gives probably the simplest example of an eigenvalue embedded into the continuous spectrum of the Schr\"odinger operator on the half-line. The eigenvalue can appear at the point $\omega^2$  which is called critical or resonance point, and appears for only one boundary condition and only if $|c|>2|\omega|$. At this point a square summable solution of the eigenfunction equation exists, and for one value of the boundary parameter it satisfies the boundary condition. This situation is very unstable: the eigenvalue disappears if one slightly changes the boundary condition or adds a summable perturbation to the potential. This eigenvalue can be related to resonances, which are poles on the unphysical sheet of the analytic continuation of the Green's function, but may or may not exist. It is meaningful to consider objects that are stable under small perturbations. One can study properties of the Weyl's $m$-function or the spectral density $\rho'$ of the operator (which is the derivative of the spectral function \cite{Levitan-Sargsyan-1975}).

In the present paper we study properties of the spectral density of the Schr\"odinger operator $\mathcal L_{\alpha}$ defined by the differential expression
\begin{equation}\label{L}
    L:=-\frac{d^2}{dx^2}
    +q(x)+q_{WN}(x,\gamma)+q_1(x)
\end{equation}
on the positive half-line with the boundary condition
\begin{equation}\label{bc}
    \psi(0)\cos\alpha-\psi'(0)\sin\alpha=0,
\end{equation}
which means that $\mathcal L_{\alpha}$ acts as
\begin{equation}\label{rule}
    \mathcal L_{\alpha}:\psi\mapsto L\psi
\end{equation}
on the domain
\begin{equation}\label{domain}
    \text{dom}\,\mathcal L_{\alpha}=\{\psi\in L_2(\mathbb R_+)\cap H^2_{loc}(\mathbb R_+): L\psi\in L_2(\mathbb R_+), \psi(0)\cos\alpha-\psi'(0)\sin\alpha=0\}
\end{equation}
in the Hilbert space $L_2(\mathbb R_+)$. We assume that
\begin{equation}\label{conditions}
    \begin{array}{l}
    \cdot\ q\text{ is periodic with the period }a\text{ and }q\in L_1(0,a),
    \\
    \cdot\ c,\omega,\delta\in\mathbb R,\gamma\in(\frac12,1],\frac{2a\omega}{\pi}\notin\mathbb Z,
    \\
    \cdot\ q_{WN}(x,\gamma):=
    \left\{
    \begin{array}{l}
    \frac{c\sin(2\omega x+\delta)}{x^{\gamma}},\text{ if }\gamma\in(\frac12,1),
    \\
    \frac{c\sin(2\omega x+\delta)}{x+1},\text{ if }\gamma=1,
    \end{array}
    \right.
    \\
    \cdot\ q_1\in L_1(\mathbb R_+),
    \\
    \cdot\ \alpha\in[0,\pi).
    \end{array}
\end{equation}
Under these assumptions the operator $\mathcal L_{\alpha}$ is self-adjoint \cite{Kurasov-Simonov-2013}.

As it was shown by Kurasov and Naboko in \cite{Kurasov-Naboko-2007}, the absolutely continuous spectrum of the operator given by the expression $L$ on the whole real line coincides with the spectrum of the corresponding unperturbed periodic operator on the whole line,
\begin{equation}\label{L-per}
    \mathcal L_{per}:\psi\mapsto-\psi''+q\psi,
\end{equation}
\begin{equation*}
    \text{dom}\,\mathcal L_{per}=\{\psi\in L_2(\mathbb R)\cap H^2_{loc}(\mathbb R): (-\psi''+q\psi)\in L_2(\mathbb R)\},
\end{equation*}
which means it has a band-gap structure:
\begin{equation}\label{prelim spectrum of periodic operator}
    \sigma(\mathcal L_{per})=\bigcup\limits_{j=0}^{\infty}([\lambda_{2j},\mu_{2j}]\cup[\mu_{2j+1},\lambda_{2j+1}]),
\end{equation}
where
\begin{equation}\label{prelim edges of spectrum}
    \lambda_0<\mu_0\le\mu_1<\lambda_1\le\lambda_2<\mu_2\le\mu_3<\lambda_3\le\lambda_4<...
\end{equation}
In turn, the absolutely continuous spectrum of $\mathcal L_{\alpha}$ coincides as a set with the spectrum of $\mathcal L_{per}$, although the latter has multiplicity two, whereas the former is simple. Moreover, Kurasov and Naboko in \cite{Kurasov-Naboko-2007} showed that in every band $[\lambda_j,\mu_j]$ or $[\mu_j,\lambda_j]$  there exist two critical points $\nu_{j+}$ and $\nu_{j-}$. The type of asymptotics of generalised eigenvectors (solutions of the eigenfunction equation) at these points is different from that type in other points of the absolutely continuous spectrum. At each of the critical points there exists a subordinate solution and hence each of these points can be an eigenvalue of the operator $\mathcal L_{\alpha}$ as long as this solution belongs to $L_2(\mathbb R_+)$ and satisfies the boundary condition. Locations of the points $\nu_{j\pm}$ are determined by the conditions \cite{Kurasov-Naboko-2007}
\begin{equation}\label{intro critical points}
    k(\nu_{j+})=\pi\left(j+1-\left\{\frac{a\omega}{\pi}\right\}\right),
    \
    k(\nu_{j-})=\pi\left(j+\left\{\frac{a\omega}{\pi}\right\}\right),
    \
    j\ge0,
\end{equation}
where $k$ is the quasi-momentum of the periodic operator $\mathcal L_{per}$  and $\{\cdot\}$ is the standard fractional part function. The condition
$\frac{2a\omega}{\pi}\notin\mathbb Z$ ensures that critical points do not coincide with the endpoints of bands and with each other.

In  \cite{Naboko-Simonov-2012} we have studied the asymptotic behaviour of the spectral density of the operator $\mathcal L_{\alpha}$  near critical points for the case $\gamma=1$. In this paper we consider $\gamma\in(\frac12,1)$, and this case differs significantly. Let us state both results right away.

Let $\psi_+(x,\lambda)$ and $\psi_-(x,\lambda)$ be the Bloch solutions of the periodic equation $-\psi''(x)+q(x)\psi(x)=\lambda\psi(x)$ chosen so that they are complex conjugate to each other whenever $\lambda$ is inside a spectral band. Let $\varphi_{\alpha}(x,\lambda)$ be the solution of the Cauchy problem
\begin{equation}\label{phi}
    L\varphi_{\alpha}=\lambda\varphi_{\alpha},\ \varphi_{\alpha}(0,\lambda)=\sin\alpha,\ \varphi'_{\alpha}(0,\lambda)=\cos\alpha.
\end{equation}
Denote by $W\{\psi_+,\psi_-\}(\lambda):=\psi_+'(x,\lambda)\psi_-(x,\lambda)-\psi_+(x,\lambda)\psi_-'(x,\lambda)$ the Wronskian of two Bloch solutions. In \cite{Kurasov-Naboko-2007} Kurasov and Naboko have found the following asymptotics of generalised eigenvectors at critical points.

\begin{prop}[\cite{Kurasov-Naboko-2007}]\label{prop Kurasov-Naboko}
Let the operator $\mathcal L_{\alpha}$ be defined by \eqref{rule} and \eqref{domain} where $L$ is given by \eqref{L}, and let the conditions \eqref{conditions} hold. Let $\nu_{j\pm}$ and $\varphi_{\alpha}$ be defined by \eqref{intro critical points} and \eqref{phi}, respectively. For every critical point $\nu_{cr}\in\{\nu_{j+},\nu_{j-},j\ge0\}$ there exist $\alpha_{cr}(c,\omega,\delta,\gamma)\in[0,\pi)$, $\beta_{cr}(c,\omega,\delta,\gamma)\ge0$, $\phi_{cr}(c,\omega,\delta,\gamma)\in\mathbb R$ and $d_{cr\pm}(c,\omega,\delta,\gamma)\in\mathbb R\backslash\{0\}$ such that, as $x\rightarrow+\infty$,
\begin{multline}\label{intro phi critical - asympt gamma=1}
    \varphi_{\alpha_{cr}}(x,\nu_{cr})=d_{cr-}x^{-\beta_{cr}}
    \\
    \times
    (ie^{\frac i2\phi_{cr}}\psi_-(x,\nu_{cr})-ie^{-\frac i2\phi_{cr}}\psi_+(x,\nu_{cr})+o(1)),\text{ if }\gamma=1,
\end{multline}
\begin{multline}\label{intro phi critical - asympt gamma<1}
    \varphi_{\alpha_{cr}}(x,\nu_{cr})=d_{cr-}\exp\left(-\frac{\beta_{cr}x^{1-\gamma}}{1-\gamma}\right)
    \\
    \times
    (ie^{\frac i2\phi_{cr}}\psi_-(x,\nu_{cr})-ie^{-\frac i2\phi_{cr}}\psi_+(x,\nu_{cr})+o(1)),\text{ if }\gamma\in\left(\frac12,1\right),
\end{multline}
and for every $\alpha\neq\alpha_{cr}$
\begin{multline}\label{intro phi critical + asympt gamma=1}
    \varphi_{\alpha}(x,\nu_{cr})=d_{cr+}\sin(\alpha-\alpha_{cr})x^{\beta_{cr}}
    \\
    \times
    (e^{\frac i2\phi_{cr}}\psi_-(x,\nu_{cr})+e^{-\frac i2\phi_{cr}}\psi_+(x,\nu_{cr})+o(1)),\text{ if }\gamma=1,
\end{multline}
\begin{multline}\label{intro phi critical + asympt gamma<1}
    \varphi_{\alpha}(x,\nu_{cr})=d_{cr+}\sin(\alpha-\alpha_{cr})\exp\left(\frac{\beta_{cr}x^{1-\gamma}}{1-\gamma}\right)
    \\
    \times
    (e^{\frac i2\phi_{cr}}\psi_-(x,\nu_{cr})+e^{-\frac i2\phi_{cr}}\psi_+(x,\nu_{cr})+o(1)),\text{ if }\gamma\in\left(\frac12,1\right).
\end{multline}
\end{prop}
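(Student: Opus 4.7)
The natural framework is the multiple-scale / Harris--Lutz reduction in the Bloch basis. I would write any solution of $L\psi = \nu_{cr}\psi$ as $\psi(x) = A_+(x)\psi_+(x,\nu_{cr}) + A_-(x)\psi_-(x,\nu_{cr})$. Variation of parameters produces a first-order system $A'(x) = V(x)A(x)$ whose matrix $V$ is $W\{\psi_+,\psi_-\}^{-1}$ times $(q_{WN}(x,\gamma) + q_1(x))$ times an explicit quadratic combination of $\psi_\pm$. The summable $q_1$ can be absorbed at the end via a standard Levinson-type argument, so the analysis concentrates on $q_{WN}$. Because the Bloch solutions factor as $\psi_\pm(x,\nu_{cr}) = e^{\pm i k(\nu_{cr})x/a}p_\pm(x)$ with $p_\pm$ of period $a$, each entry of $V$ is $x^{-\gamma}$ times a product of $\sin(2\omega x + \delta)$ with a periodic function, Fourier-expandable in $x$ with frequencies of the form $\pm 2\omega \pm 2k(\nu_{cr})/a + 2\pi n/a$.

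By the definition \eqref{intro critical points} of critical points, at $\nu_{cr} = \nu_{j\pm}$ exactly one harmonic of one diagonal entry of $V$ loses its oscillation, producing a resonance. Accordingly $V$ splits as $V_{\text{res}}(x) + V_{\text{osc}}(x)$, where $V_{\text{res}}$ is diagonal of order $c_{\text{res}}/x^{\gamma}$ with a nonzero constant $c_{\text{res}} \in \mathbb C$, and $V_{\text{osc}}$ is oscillatory of the same order. A single Harris--Lutz change of variables $A = (I + T(x))B$, with $T$ obtained by antidifferentiating $V_{\text{osc}}$ divided by its phase, is well defined because the condition $2a\omega/\pi \notin \mathbb Z$ keeps the non-resonant frequencies bounded away from zero; it satisfies $T(x) = O(x^{-\gamma})$ and transforms the equation into $B'(x) = (V_{\text{res}}(x) + R(x))B(x)$ with $R \in L_1(\mathbb R_+)$. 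The assumption $\gamma > 1/2$ is precisely what makes the quadratic remainder $x^{-2\gamma}$ integrable.

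The transformed system is diagonal up to an $L_1$-perturbation, so Levinson's theorem supplies two fundamental solutions asymptotic to $\exp\bigl(\pm \int^x c_{\text{res}} s^{-\gamma}\,ds\bigr)(e_\pm + o(1))$. Writing $c_{\text{res}}$ in terms of $\beta_{cr}$ and $\phi_{cr}$ in an appropriate normalisation and computing the antiderivative yields the factor $\exp(\pm \beta_{cr}x^{1-\gamma}/(1-\gamma))$ for $\gamma \in (1/2,1)$ and the power $x^{\pm\beta_{cr}}$ for $\gamma = 1$, multiplied by the displayed complex combinations of $\psi_+$ and $\psi_-$. Exactly one solution decays; its Cauchy data at $x = 0$ single out a unique boundary parameter $\alpha_{cr}$ for which $\varphi_{\alpha_{cr}}$ coincides with it. For $\alpha \ne \alpha_{cr}$, one decomposes $\varphi_\alpha$ as a linear combination of $\varphi_{\alpha_{cr}}$ and the growing solution; the coefficient of the latter equals $W\{\varphi_\alpha,\varphi_{\alpha_{cr}}\}(0) = \sin(\alpha - \alpha_{cr})$ up to a nonzero normalising constant, which is the origin of the stated prefactor.

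The main technical obstacle is controlling the Harris--Lutz step: one must verify that the specific Fourier coefficient responsible for resonance at $\nu_{cr}$ is indeed nonzero (so that $\beta_{cr}$ is well defined and $d_{cr\pm}\neq 0$), that no other frequency resonates simultaneously there, and that the $L_1$-bound on the remainder is robust enough to drive Levinson's theorem. The threshold $\gamma = 1/2$ is sharp for a single such transformation, which is precisely why the statement excludes $\gamma \le 1/2$; extending below this threshold would require iterated transformations and substantially finer estimates.
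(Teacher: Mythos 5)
Your overall framework—variation of parameters in the Bloch basis, Fourier expansion, Harris--Lutz removal of the non-resonant part, Levinson's theorem, and identifying $\sin(\alpha-\alpha_{cr})$ as a linear functional vanishing at $\alpha_{cr}$—is indeed the route taken in the paper's own re-derivation (Lemma~\ref{lem reduction}). But there is a concrete error in where you locate the resonance, and it is not cosmetic.

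In the system $\widehat w' = \widehat S\,\widehat w$ obtained from variation of parameters, the diagonal entries of $\widehat S$ are proportional to $\psi_+\psi_- = p_+p_-$, which is purely $a$-periodic and carries no factor $e^{\pm 2ik(\lambda)x/a}$. Multiplying by $\sin(2\omega x + \delta)/x^\gamma$, the diagonal harmonics have frequencies $\pm 2\omega + 2\pi n/a$ only, and these are uniformly bounded away from zero precisely because $2a\omega/\pi\notin\mathbb Z$. The diagonal \emph{never} resonates. The terms that can resonate are the off-diagonal ones, proportional to $\psi_\pm^2 = e^{\pm 2ik(\lambda)x/a}p_\pm^2$, whose harmonics have frequencies $\pm 2\omega \pm 2k(\lambda)/a + 2\pi n/a$; the critical-point condition \eqref{intro critical points} is exactly the vanishing of one of these. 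Thus $V_{\mathrm{res}}$ is \emph{anti-diagonal}, of the form $\frac1{x^\gamma}\begin{pmatrix}0 & z_{cr}\\ \overline{z_{cr}} & 0\end{pmatrix}$ with $z_{cr}\in\mathbb C$, $|z_{cr}|=\beta_{cr}$, $\arg z_{cr}=\phi_{cr}$—not diagonal as you claim.

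This matters for the conclusion. After Harris--Lutz removes the oscillatory part you are left with an anti-diagonal leading term plus an $L_1$ remainder, and you must diagonalize the anti-diagonal part before Levinson applies. Its eigenvectors are (up to normalization) $\bigl(e^{\tfrac i2\phi_{cr}},\ \pm e^{-\tfrac i2\phi_{cr}}\bigr)$, and it is precisely this rotation that produces the combinations $e^{\tfrac i2\phi_{cr}}\psi_- + e^{-\tfrac i2\phi_{cr}}\psi_+$ and $ie^{\tfrac i2\phi_{cr}}\psi_- - ie^{-\tfrac i2\phi_{cr}}\psi_+$ appearing in \eqref{intro phi critical - asympt gamma<1}--\eqref{intro phi critical + asympt gamma<1}. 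Moreover, the rotated system has real coefficients, which is what forces the decaying solution to have real initial data and hence singles out a unique $\alpha_{cr}\in[0,\pi)$. In your version, with the resonance assumed diagonal, Levinson would give asymptotics proportional to $\psi_+$ and $\psi_-$ separately, the phase $\phi_{cr}$ would never enter, and the reality argument identifying $\alpha_{cr}$ would not go through; you would not recover the statement as written. The fix is to keep track of the anti-diagonal resonant structure and perform the explicit similarity $\widetilde w \mapsto \begin{pmatrix}e^{\frac i2\phi_{cr}}&0\\0&e^{-\frac i2\phi_{cr}}\end{pmatrix}\begin{pmatrix}1&i\\1&-i\end{pmatrix}w$, as in Lemma~\ref{lem reduction}, before invoking Levinson.
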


\begin{rem}
In fact, formulae for $\beta_{cr}$ and $\phi_{cr}$ are known:
\begin{equation}\label{beta cr}
    \beta_{j\pm}:=\frac{|c|}{2a|W\{\psi_+,\psi_-\}(\nu_{j\pm})|}
    \left|\int\limits_0^a\psi^2_{\pm}(t,\nu_{j\pm})e^{2i\omega t}dt\right|,
\end{equation}
\begin{equation}\label{phinotvar}
    \phi_{j\pm}:=\pm\left(\delta+\text{arg}\,\int_0^a\psi_{\pm}^2(t,\nu_{j\pm})e^{2i\omega t}dt\right).
\end{equation}
The expression \eqref{beta cr} was found in \cite{Naboko-Simonov-2012} and the expression \eqref{phinotvar} for the case $\gamma\in(\frac12,1)$ is given by Lemma \ref{lem reduction} of the present paper (for the case $\gamma=1$ a slight modification of this lemma is needed).
\end{rem}

In \cite{Naboko-Simonov-2012} we had the following result.

\begin{prop}[\cite{Naboko-Simonov-2012}]\label{thm gamma=1}
Let the operator $\mathcal L_{\alpha}$ be defined by \eqref{rule} and \eqref{domain} where $L$ is given by \eqref{L}, and let the conditions \eqref{conditions} hold with $\gamma=1$. Let $\rho_{\alpha}'$ be the spectral density of $\mathcal L_{\alpha}$, let $\nu_{j\pm}$ and $\varphi_{\alpha}$ be defined by \eqref{intro critical points} and \eqref{phi}, respectively. Let $\nu_{cr}\in\{\nu_{j+},\nu_{j-},j\ge0\}$ be one of the critical points and $\alpha_{cr}$ be defined in Proposition \ref{prop Kurasov-Naboko}. If $\alpha\neq\alpha_{cr}$, then there exist two non-zero one-side limits
\begin{equation*}
    \lim_{\lambda\rightarrow(\nu_{cr})^{\pm}}\frac{\rho'_{\alpha}(\lambda)}
    {|\lambda-\nu_{cr}|^{2\beta_{cr}}},
\end{equation*}
where $\beta_{cr}$ is given by the expression \eqref{beta cr}.
\end{prop}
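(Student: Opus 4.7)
The plan is to express the spectral density via a coefficient of the expansion of $\varphi_\alpha$ in a basis of perturbed Bloch solutions of $L\psi=\lambda\psi$ and then to analyse that coefficient as $\lambda\to\nu_{cr}$ by matched asymptotics.

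\textbf{Step 1: Spectral density as a connection coefficient.} For $\lambda$ in the interior of a spectral band with $\lambda\neq\nu_{cr}$, the equation $L\psi=\lambda\psi$ admits two solutions $u_\pm(x,\lambda)$ with $u_\pm(x,\lambda)=\psi_\pm(x,\lambda)(1+o(1))$ as $x\to+\infty$ and $u_-=\overline{u_+}$; their construction follows from a Harris--Lutz transformation plus a Levinson-type theorem, since the Wigner--von Neumann term is non-resonant off $\nu_{cr}$. Writing $\varphi_\alpha(x,\lambda)=A_\alpha(\lambda)u_+(x,\lambda)+\overline{A_\alpha(\lambda)}u_-(x,\lambda)$, standard Titchmarsh--Weyl theory yields
\begin{equation*}
    \rho'_\alpha(\lambda)=\frac{|W\{\psi_+,\psi_-\}(\lambda)|}{4\pi|A_\alpha(\lambda)|^2},
\end{equation*}
so it suffices to prove $|A_\alpha(\lambda)|^2\sim C_\pm|\lambda-\nu_{cr}|^{-2\beta_{cr}}$ with non-zero one-sided constants $C_\pm$.

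\textbf{Step 2: Reduction to a model resonant system.} Perform variation of parameters relative to the unperturbed Bloch basis: set $\varphi_\alpha(x,\lambda)=A(x,\lambda)\psi_+(x,\lambda)+B(x,\lambda)\psi_-(x,\lambda)$ with the usual auxiliary constraint on $A',B'$, then remove the non-resonant oscillations produced by $q_{WN}$ via a Harris--Lutz gauge transformation. The resulting first-order system takes the form
\begin{equation*}
    \frac{d}{dx}\begin{pmatrix}A\\ B\end{pmatrix}=\frac{1}{x}\Bigl(M_0(\lambda)+M_1(x,\lambda)\Bigr)\begin{pmatrix}A\\ B\end{pmatrix}+R(x,\lambda)\begin{pmatrix}A\\ B\end{pmatrix},
\end{equation*}
where $M_1(x,\lambda)\to 0$ as $x\to+\infty$, $R(\cdot,\lambda)\in L_1([x_0,\infty))$ uniformly for $\lambda$ near $\nu_{cr}$, and $M_0(\nu_{cr})$ has eigenvalues $\pm\beta_{cr}$ with eigenvectors corresponding to the two asymptotic regimes of Proposition \ref{prop Kurasov-Naboko}. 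For $\lambda\neq\nu_{cr}$ the off-diagonal entries of $M_0(\lambda)$ acquire an oscillating factor $e^{\pm 2i\Delta(\lambda)x}$ with $\Delta(\lambda)\asymp\lambda-\nu_{cr}$.

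\textbf{Step 3: Matched asymptotics and conclusion.} Study the system on two scales. For $1\ll x\ll|\Delta(\lambda)|^{-1}$ the off-diagonal terms are effectively non-oscillating and the solution selected by the boundary condition behaves, by \eqref{intro phi critical + asympt gamma=1}, as the growing mode with prefactor $d_{cr+}\sin(\alpha-\alpha_{cr})$; thus $(A,B)\sim c_1 x^{\beta_{cr}}(e^{i\phi_{cr}/2},e^{-i\phi_{cr}/2})$ with $c_1$ proportional to $\sin(\alpha-\alpha_{cr})$. For $x\gg|\Delta(\lambda)|^{-1}$ a secondary averaging kills the off-diagonal terms, the evolution becomes diagonal and Bloch-type, and $(A,B)$ stabilises to $(A_\alpha(\lambda),\overline{A_\alpha(\lambda)})$. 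Matching at $x\sim|\Delta(\lambda)|^{-1}$ yields $|A_\alpha(\lambda)|\sim c_\pm|\sin(\alpha-\alpha_{cr})||\lambda-\nu_{cr}|^{-\beta_{cr}}$ with non-zero $c_\pm$, which, substituted into the formula of Step 1, proves the claim.

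\textbf{Main obstacle.} The principal difficulty is that the Harris--Lutz denominators used in Step 2 vanish precisely at $\nu_{cr}$, so the transformation is not uniform in $\lambda$ and the transition zone $x\sim|\Delta(\lambda)|^{-1}$ requires a $\lambda$-dependent construction whose remainders must still be shown integrable. Moreover, obtaining \emph{non-zero} one-sided limits rather than mere upper bounds demands tracking the eigenvectors of $M_0(\lambda)$ to first order in $\lambda-\nu_{cr}$ and ruling out accidental cancellation between the two modes at the matching scale; this is exactly where the explicit formulae \eqref{beta cr} and \eqref{phinotvar} must be used.
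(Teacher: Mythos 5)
Your proposal targets the $\gamma=1$ proposition via a \emph{continuous} model system and a two-region matched asymptotics. This is a genuinely different route from the paper's cited proof: the result is quoted from \cite{Naboko-Simonov-2012}, where it is obtained by reducing to a \emph{discrete} model problem. The present paper explicitly remarks that the continuous reduction carried out here for $\gamma<1$ also works for $\gamma=1$ and is ``easier,'' so your route is legitimate; it in fact parallels Sections \ref{section preliminaries}--\ref{section reduction} (the Titchmarsh--Weyl formula of Proposition \ref{prop Titchmarsh--Weyl formula} and the reduction of Lemma \ref{lem reduction}) and then replaces the five-region analysis needed for $\gamma<1$ by a simpler inner/outer split. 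The key simplification at $\gamma=1$ is that the rescaling $x=t/\varepsilon$ leaves no small parameter on the derivative, so there is no turning point to resolve with Airy functions. What the discrete route of \cite{Naboko-Simonov-2012} buys is applicability to Jacobi matrices; what your continuous route buys is a shorter argument for the Schr\"odinger case. One concrete imprecision: the obstacle you locate in Step 2 is mislocated. The Harris--Lutz transformation there removes only \emph{non-resonant} terms, whose denominators $|k(\lambda)+\pi n\pm a\omega|$, $n\neq n_{cr}$, stay bounded away from zero uniformly for $\lambda\in U_{cr}$ (this is exactly what Lemma \ref{lem reduction estimate of T} establishes), so the model system is reached uniformly in $\lambda$. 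The non-uniformity you genuinely need to control lives entirely in Step 3: proving that the inner-region solution (for $1\ll x\ll|\Delta|^{-1}$) converges to the $\varepsilon=0$ solution of Proposition \ref{prop Kurasov-Naboko}, justifying the secondary averaging in the outer region, and making the matching constant non-vanishing. These are precisely the estimates the paper carries out for $\gamma<1$ in Sections \ref{section I} and \ref{section V} (cf.\ Lemmas \ref{lem I convergence of the free term}--\ref{lem I convergence of the solution} for the inner region and Lemma \ref{lem V result for u V2} for the outer); simpler analogues would be needed here. Finally, your Titchmarsh--Weyl prefactor ($|W|/4\pi$) disagrees with \eqref{Titchmatsh-Weyl formula} ($1/(2\pi|W|)$); presumably a normalization slip, harmless for the conclusion since $W\{\psi_+,\psi_-\}$ is continuous and non-vanishing near $\nu_{cr}$.
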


This means that the spectral density of the operator $\mathcal L_{\alpha}$ at a critical point in the generic situation $\alpha\neq\alpha_{cr}$ has zeroes of the power type, and the power is twice the rate of decay of the subordinate solution at this critical point. The main result of the present paper is the following theorem.

\begin{thm}\label{thm gamma<1}
Let the operator $\mathcal L_{\alpha}$ be defined by \eqref{rule} and \eqref{domain} where $L$ is given by \eqref{L}, and let the conditions \eqref{conditions} hold with $\gamma\in(\frac12,1)$. Let the potential $q_1$ satisfy the estimate
\begin{equation}\label{restriction on q1}
    |q_1(x)|<\frac{c_1}{x^{1+\alpha_1}},\ x\in[0,+\infty)
\end{equation}
with some $\alpha_1,c_1>0$. Let $\nu_{j\pm}$ and $\varphi_{\alpha}$ be defined by \eqref{intro critical points} and \eqref{phi}, respectively. Let $\nu_{cr}\in\{\nu_{j+},\nu_{j-},j\ge0\}$ be one of the critical points and $\alpha_{cr}$ be defined in Proposition \ref{prop Kurasov-Naboko}. If $\alpha\neq\alpha_{cr}$, then the spectral density $\rho_{\alpha}'$ of the operator $\mathcal L_{\alpha}$ has the following asymptotics:
\begin{equation}\label{answer}
    \rho'_{\alpha}(\lambda)=\frac{a_{cr}}{d_{cr+}^2\sin^2(\alpha-\alpha_{cr})}
    \exp\left(-\frac{2c_{cr}}{|\lambda-\nu_{cr}|^{\frac{1-\gamma}{\gamma}}}\right)(1+o(1))
    \text{ as }\lambda\rightarrow\nu_{cr},
\end{equation}
where
\begin{equation}\label{c cr}
    c_{cr}:=\frac{(2\beta_{cr})^{\frac1{\gamma}}}{4\gamma}B\left(\frac32,\frac{1-\gamma}{2\gamma}\right)
    \left(\frac{a}{2\pi k'(\nu_{cr})}\right)^{\frac{1-\gamma}{\gamma}}
\end{equation}
and
\begin{multline}\label{a cr}
    a_{cr}:=\frac1{\pi|W\{\psi_+,\psi_-\}(\nu_{cr})|}
    \exp\Biggl(-\int_0^{\frac{(2\beta_{cr})^{\frac1{\gamma}}}2}
    \frac{\gamma\left(1-\sqrt{1-\frac{t^{2\gamma}}{4\beta_{cr}^2}}\right)}{t\left(1-\frac{t^{2\gamma}}{4\beta_{cr}^2}\right)}dt
    \\
    +
    \int_{\frac{(2\beta_{cr})^{\frac1{\gamma}}}2}^{(2\beta_{cr})^{\frac1{\gamma}}}
    \frac{\gamma dt}{t\sqrt{1-\frac{t^{2\gamma}}{4\beta_{cr}^2}}}
    -\text{v.p.}\int_{\frac{(2\beta_{cr})^{\frac1{\gamma}}}2}^{+\infty}
    \frac{\gamma dt}{t\left(1-\frac{t^{2\gamma}}{4\beta_{cr}^2}\right)}\Biggr),
\end{multline}
$\beta_{cr}$ is given by the expression \eqref{beta cr}, $d_{cr+}$ is defined in \eqref{intro phi critical + asympt gamma<1},
$B$ is the beta function and $k$ is the quasi-momentum of the unperturbed periodic operator $\mathcal L_{per}$.
\end{thm}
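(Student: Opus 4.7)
The plan is to reduce the eigenfunction equation $L\varphi=\lambda\varphi$ near $\nu_{cr}$ to a $2\times 2$ system for the Bloch amplitudes, analyse that system by WKB through a single turning point, and then convert the resulting asymptotics of $\varphi_\alpha$ at infinity into asymptotics of $\rho_\alpha'$ via the standard Weyl formula. First I would write
\begin{equation*}
\varphi_\alpha(x,\lambda)=u(x,\lambda)\psi_+(x,\lambda)+v(x,\lambda)\psi_-(x,\lambda)
\end{equation*}
by variation of parameters in the $\lambda$-dependent Bloch basis of $\mathcal L_{per}$; the principal periodic part cancels, leaving a non-autonomous $2\times 2$ system whose inhomogeneity is $(q_{WN}+q_1)$ times quadratic combinations of $\psi_\pm(\cdot,\lambda)$. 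Applying Lemma \ref{lem reduction} (a Harris--Lutz / multiple-scale averaging made legitimate by $\frac{2a\omega}{\pi}\notin\mathbb Z$) eliminates every non-resonant oscillatory contribution; the perturbation from $q_1$, by \eqref{restriction on q1}, is summable with a decay rate strictly better than $1/x$ and so affects neither the exponential rate nor the finite prefactor of the answer. What survives is the single resonant interaction with amplitude $\beta_{cr}x^{-\gamma}$ and phase $\phi_{cr}$ exactly as in \eqref{beta cr}--\eqref{phinotvar}.

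Setting $\Lambda(x,\lambda):=\sqrt{\beta_{cr}^2 x^{-2\gamma}-\kappa(\lambda)^2}$ with $\kappa(\lambda):=\frac{2\pi k'(\nu_{cr})}{a}(\lambda-\nu_{cr})$ the Bloch-phase detuning from resonance, the reduced system is, up to a summable remainder,
\begin{equation*}
Y'(x)=\left(\begin{array}{cc} i\kappa(\lambda) & \beta_{cr}x^{-\gamma}e^{-i\phi_{cr}} \\ \beta_{cr}x^{-\gamma}e^{i\phi_{cr}} & -i\kappa(\lambda)\end{array}\right)Y(x),
\end{equation*}
with instantaneous eigenvalues $\pm\Lambda(x,\lambda)$ and a single turning point at $x_*(\lambda)=(\beta_{cr}/|\kappa(\lambda)|)^{1/\gamma}$. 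In the hyperbolic inner region $x\ll x_*$ the system reproduces the exponential growth/decay of Proposition \ref{prop Kurasov-Naboko} at $\lambda=\nu_{cr}$; in the elliptic outer region $x\gg x_*$ the Bloch amplitudes of $\varphi_\alpha$ stabilise to the limits that determine the spectral density. For $\alpha\neq\alpha_{cr}$ the Cauchy solution $\varphi_\alpha$ has a nontrivial projection on the growing mode, and the coefficient of the outgoing Bloch wave at $+\infty$ carries the tunnelling suppression $\exp\bigl(-\int_0^{x_*(\lambda)}\Lambda(x,\lambda)\,dx\bigr)$ times a finite prefactor. Rescaling $y=x/x_*(\lambda)$ and then $v=y^{2\gamma}$ reduces the action integral to $\kappa(\lambda)\,x_*(\lambda)\cdot\frac{1}{2\gamma}B\bigl(\frac32,\frac{1-\gamma}{2\gamma}\bigr)$, which after substitution of $x_*(\lambda)$ produces precisely $c_{cr}/|\lambda-\nu_{cr}|^{(1-\gamma)/\gamma}$ with $c_{cr}$ as in \eqref{c cr}. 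The three regularised pieces in \eqref{a cr} arise from the logarithmically divergent WKB amplitude correction $\int \Lambda'/(2\Lambda)\,dx$: one contribution from the inner WKB (cut off near $x_*/2$), one from the transition layer near $x_*$, and one from the outer WKB (taken as a principal value because of the integrable singularity at the turning point approached from outside).

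To convert these asymptotics into the spectral density, I would use the standard formula
\begin{equation*}
\rho_\alpha'(\lambda)=\frac{|W\{\psi_+,\psi_-\}(\lambda)|}{2\pi |A(\lambda)|^2},
\end{equation*}
where $A(\lambda)$ is the coefficient of the outgoing Bloch wave $\psi_+(\cdot,\lambda)$ in the infinity asymptotics of $\varphi_\alpha$. Matching at $\lambda=\nu_{cr}$ against \eqref{intro phi critical + asympt gamma<1} identifies the amplitude $d_{cr+}\sin(\alpha-\alpha_{cr})$, and assembling the WKB computations produces \eqref{answer}.

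The main obstacle is the WKB step: proving the approximation uniformly in the small parameter $|\lambda-\nu_{cr}|$ right down to and across $x_*(\lambda)$, and extracting the finite constant $a_{cr}$ with its precise three-piece regularised structure. One has to verify that the Harris--Lutz remainder and the $q_1$-perturbation contribute only $1+o(1)$ to the prefactor, and that the individually divergent amplitude corrections from the inner and outer WKB regions assemble, with the transition-layer contribution, into exactly the combination written in \eqref{a cr}; this is the core technical content of the paper.
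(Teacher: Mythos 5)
Your proposal correctly identifies the strategy the paper follows: (i) reduce the eigenfunction equation to a $2\times 2$ Bloch--amplitude model system in which the resonant Wigner--von Neumann interaction survives with amplitude $\beta_{cr}x^{-\gamma}$ (Lemma~\ref{lem reduction}), (ii) analyse that singularly perturbed system by WKB through a single turning point, and (iii) convert the limiting Bloch coefficient into $\rho_\alpha'$ by the Titchmarsh--Weyl formula of Proposition~\ref{prop Titchmarsh--Weyl formula}. That is exactly the route the paper takes. But what you have written is a plan, not a proof, and it contains one concrete error and leaves exactly the hard part unaddressed.

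\emph{The arithmetic slip.} After averaging, the slow system in the $x$-variable has detuning $\pm i\kappa/2$ on the diagonal, not $\pm i\kappa$: the resonant phase $e^{i\varepsilon_{cr}(\lambda)x}$ must be split symmetrically between the two Bloch branches (in the paper this is the substitution $u(x)=\bigl(\begin{smallmatrix}\cos(\varepsilon_0x/2) & \sin(\varepsilon_0x/2)\\ \sin(\varepsilon_0x/2) & -\cos(\varepsilon_0x/2)\end{smallmatrix}\bigr)u_1(x)$, producing $\frac{\varepsilon_0}{2}$, not $\varepsilon_0$, in the reduced coefficient matrix). Consequently the instantaneous eigenvalues are $\pm\sqrt{\beta_{cr}^2x^{-2\gamma}-\kappa^2/4}$ and the turning point sits at $x_*=(2\beta_{cr}/|\kappa|)^{1/\gamma}$, not $(\beta_{cr}/|\kappa|)^{1/\gamma}$. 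With your version the action integral evaluates to $\frac{\beta_{cr}^{1/\gamma}}{2\gamma}\bigl(\frac{a}{2\pi k'(\nu_{cr})|\lambda-\nu_{cr}|}\bigr)^{(1-\gamma)/\gamma}B\bigl(\frac32,\frac{1-\gamma}{2\gamma}\bigr)$, which differs from $c_{cr}/|\lambda-\nu_{cr}|^{(1-\gamma)/\gamma}$ by a factor $2^{1-1/\gamma}\ne1$ for $\gamma\in(\tfrac12,1)$. So the step ``after substitution of $x_*(\lambda)$ produces precisely $c_{cr}$'' does not follow from what you wrote; the correct constants emerge only with the $1/2$ in place.

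\emph{The gap you acknowledge is the content of the theorem.} You end by saying that proving the WKB approximation uniformly in $|\lambda-\nu_{cr}|$ across $x_*(\lambda)$ and extracting the three-piece constant $a_{cr}$ ``is the core technical content of the paper.'' That is true, but it means the proposal stops exactly where the proof begins. The paper devotes Sections~\ref{section the model problem}--\ref{section matching} to this: a five-region decomposition (origin, inner intermediate, Airy layer, outer intermediate, infinity), with Harris--Lutz transformations custom-built for each region (Lemmas~\ref{lem II estimate T hat}, \ref{lem IV estimate T hat}), Levinson-type asymptotics at each end, an Airy-function matrix solution at the turning point (Lemma~\ref{lem III main}), and a matching argument (Lemma~\ref{lem matching II-IV}). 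The intermediate regions~II and~IV, which your sketch omits, are essential: one cannot pass directly from the fixed-$t$ WKB regime to the Airy scaling because the remainder has to be controlled on a scale $z\sim\varepsilon^{-1/5}$ bridging the two. In addition, your claim that $q_1$ and the Harris--Lutz remainder ``affect neither the exponential rate nor the finite prefactor'' is not automatic: after the singular rescaling $x=t/|\varepsilon_0|^{1/\gamma}$ the remainder is amplified by $\varepsilon_0^{-1}$ or worse, and showing it is still integrably small near the turning point is exactly where hypothesis~\eqref{restriction on q1} with $\alpha_1>0$ is consumed (Lemmas~\ref{lem II R 2+ estimate}, \ref{lem IV R 2+ estimate}, \ref{lem III R 2+ estimate}); the paper does not know how to do this for a general $q_1\in L_1$. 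Finally, the multiscale coupling is only implicit in your sketch: the constant $\Phi(f)$ that multiplies the exponential is obtained from the limiting $\varepsilon_0=0$ evolution of the \emph{fast} system (Lemmas~\ref{lem model problem individual asymptotics}, \ref{lem I convergence of the free term}), and its appearance as the effective initial datum for the slow WKB flow is the quantitative version of your statement that ``$\varphi_\alpha$ has a nontrivial projection on the growing mode.'' Making that precise is part of what still needs proof.
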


Note that the subordinate solution at a critical point decays as $\exp\left(-\frac{\beta_{cr}x^{1-\gamma}}{1-\gamma}\right)$, while the spectral density vanishes as
\begin{equation*}
    \exp\left(-\frac{(2\beta_{cr})^{\frac1{\gamma}}B\left(\frac32,\frac{1-\gamma}{2\gamma}\right)}{2\gamma}
    \left(\frac{a}{2\pi k'(\nu_{cr})|\lambda-\nu_{cr}|}\right)^{\frac{1-\gamma}{\gamma}}\right).
\end{equation*}
Relation between, on the one hand, the behaviour at infinity of the subordinate solution (whenever it exists) compared to the behaviour of a non-subordinate one and, on the other hand, the normal boundary behaviour of the Weyl's $m$-function was, in a general situation, established by Jitomirskaya--Last \cite{Jitomirskaya-Last-1996} and Remling \cite{Remling-1997}. However, in our case it yields only a trivial result: that for $\alpha\neq\alpha_{cr}$ one has $|m_{\alpha}(\nu_{cr}+i\varepsilon)|=O(1)$ and for $\alpha=\alpha_{cr}$ one has $|m_{\alpha}(\nu_{cr}+i\varepsilon)|\asymp\frac1{\varepsilon}$ as $\varepsilon\rightarrow0^+$. Spectral density is related to the boundary behaviour of the Weyl's $m$-function, and one can say in these terms that we study $\rho_{\alpha}'(\nu_{cr}+\varepsilon)=\frac1{\pi}\Im m_{\alpha}(\nu_{cr}+\varepsilon+i0)$ as $\varepsilon\rightarrow0$, which, clearly, can behave quite differently to $\Im m_{\alpha}(\nu_{cr}+i\varepsilon)$.

The case $\gamma=1$ was earlier considered in detail by Hinton--Klaus--Shaw \cite{Hinton-Klaus-Shaw-1991}, Klaus \cite{Klaus-1991} and Behncke \cite{Behncke-1991-I,Behncke-1991-II,Behncke-1994}. They included no periodic background potentials. Hinton, Klaus and Shaw considered instead an infinite sum of Wigner--von Neumann terms, while Behncke added Dirac operators into consideration. Their methods are specific to the models they study, while in \cite{Naboko-Simonov-2012} we proposed an approach which is based on reducing the problem to a certain discrete linear system with small parameter which essentially models the behaviour of solutions of the Schr\"odinger equation (we call it the ``model problem''). This allowed for stating the result in certain generality and using it to study a discrete analogue, a Jacobi matrix \cite{Janas-Simonov-2010,Simonov-2012}.

Condition \eqref{restriction on q1} in the formulation of the result is of technical nature. We would like to note here that in \cite{Hinton-Klaus-Shaw-1991} for the case $\gamma=1$ an analogous condition was imposed on the summable part $q_1$ of the potential, however, with $\alpha_1=1$. It was left as an open question whether this condition could be weakened to include every $q_1\in L_1(\mathbb R_+)$, see the question (\emph{4}) in the final section of \cite{Hinton-Klaus-Shaw-1991}. For the case $\gamma=1$ the answer is positive, as our analysis in \cite{Naboko-Simonov-2012} suggests. For the case $\gamma\in(\frac12,1)$ we do not have the answer.

In the case $\gamma\in(\frac12,1)$ we reduce the problem to a model linear differential, rather than discrete, system. In fact, it is possible to do the same in the case $\gamma=1$, and then the idea of the method of \cite{Naboko-Simonov-2012} would work and should lead to the same result as there, but in an easier way (however, on this way we would lose the discrete case). For $\gamma<1$ this idea does not produce the result anymore, but can only be used at one of the stages (in Section \ref{section I}). On the whole, the method of the present paper is different from the method of \cite{Naboko-Simonov-2012}, which is insufficient for the situation considered here.

Wigner--von Neumann potentials originally aroused interest as giving an explicit example of an eigenvalue embedded into continuous spectrum \cite{Wigner-von-Neumann-1929}. Since then they have been studied by many authors (for example, the papers \cite{Behncke-1991-I, Behncke-1991-II, Behncke-1994, Buslaev-Matveev-1970, Buslaev-Skriganov-1974, Cruz-Sampedro-Herbst-Martinez-Avendano-2002, Hinton-Klaus-Shaw-1991, Klaus-1991, Kurasov-1992, Kurasov-1996, Kurasov-Naboko-2007, Lukic-2010, \cite{Lukic-Ong-2015}, Matveev-1973, Nesterov-2007} and many more). Embedded eigenvalues created by such potentials have been observed in experiment, see \cite{Capasso-et-al-1992}. Zeroes of density divide the absolutely continuous spectrum into independent parts and for this reason are sometimes called pseudogaps.

A paper by Kreimer, Last and Simon \cite{Kreimer-Last-Simon-2009} should be also mentioned as an example of the spectral density analysis (in that case of the discrete Schr\"odinger operator with slowly decaying potential, near the endpoints of the absolutely continuous spectrum). Results on the behaviour of the spectral density have been recently used by Lukic \cite{Lukic-2013} to construct test sequences in the proof of higher-order Szeg\H{o} theorems for CMV matrices.

The paper is organized  as follows.
In Section \ref{section preliminaries} we introduce the Titchmarsh--Weyl formula for the spectral density of the operator $\mathcal L_{\alpha}$ which was proved in \cite{Kurasov-Simonov-2013}. The formula expresses $\rho_{\alpha}'(\lambda)$ in terms of asymptotic coefficients of the solution $\varphi_{\alpha}(x,\lambda)$ as $x\rightarrow+\infty$.
In Section \ref{section reduction} we rewrite the eigenfunction equation of $\mathcal L_{\alpha}$ as a model system with a small parameter $\varepsilon$ and express the spectral density in terms of a certain solution of that system.
In Section \ref{section the model problem} we transform the model system to a new form  and determine five regions of the positive coordinate half-line where asymptotic analysis of solutions as $\varepsilon\rightarrow0$ should be carried out in different ways.
In Sections \ref{section I}--\ref{section III} we consider each of the regions separately and find asymptotics of solutions of the model system.
In Section \ref{section matching} we match the results in order to obtain the double asymptotics, in the coordinate and in the small parameter, of the solution of the model system in terms of which the spectral density of $\mathcal L_{\alpha}$ was earlier expressed.
In Section \ref{section proof} we prove the main result of the paper, Theorem \ref{thm gamma<1}.

We denote by $M^{2\times2}(\mathbb R)$ and $M^{2\times2}(\mathbb C)$ matrices of two rows and two columns with real and, respectively, complex entries and use the following notation for two basic vectors in $\mathbb C^2$:
\begin{equation}\label{e +-}
    e_+:=
    \left(
      \begin{array}{c}
        1 \\
        0 \\
      \end{array}
    \right),\
    e_-:=
    \left(
      \begin{array}{c}
        0 \\
        1 \\
      \end{array}
    \right).
\end{equation}

\section{Preliminaries}\label{section preliminaries}
The spectrum of the periodic Schr\"odinger operator $\mathcal L_{per}$ consists of infinitely many intervals \eqref{prelim spectrum of periodic operator}, see \cite[Theorem 2.3.1]{Eastham-1973}, where $ \lambda_j $ and $ \mu_j $ are the eigenvalues of the Sturm--Liouville problem defined by the differential expression $L$ on the interval $[0,a]$ with, respectively, periodic and antiperiodic boundary conditions.  Let us denote by $\partial$ the set of the endpoints of the spectral bands of $\mathcal L_{per}$,
\begin{equation}\label{prelim boundary of the spectrum}
    \partial:=\{\lambda_j,\mu_j,\ j\ge0\},
\end{equation}
including the case when the endpoints of the neighbouring bands coincide. Spectral properties of the operator $\mathcal L_{per}$ are related to the quasi-momentum
\begin{equation*}
    k(\lambda):=-i\ln\left(\frac{D(\lambda)+\sqrt{D^2(\lambda)-4}}2\right),
\end{equation*}
where the entire function $D$ is the trace of the monodromy matrix of the periodic equation. We can choose the branch of $k$ so that
\begin{equation*}
\begin{array}{c}
	k(\lambda_0)=0,k(\mu_0)=k(\mu_1)=\pi,k(\lambda_1)=k(\lambda_2)=2\pi,...,
	\\
    \begin{array}{rl}
    k(\lambda)\in\mathbb R\text{ and }k'(\lambda)>0,&\text{if }\lambda\in\sigma(\mathcal L_{per})\backslash\partial,
    \\
    k(\lambda)\in\mathbb C_+,&\text{if }\lambda\in\mathbb C_+,
    \end{array}
    \end{array}
\end{equation*}
see \cite[Theorem 2.3.1]{Eastham-1973}. The eigenfunction equation of $\mathcal L_{per}$,
\begin{equation*}
    -\psi''(x)+q(x)\psi(x)=\lambda\psi(x),
\end{equation*}
has two solutions $\psi_+(x,\lambda)$ and $\psi_-(x,\lambda)$ (Bloch solutions) which satisfy the quasi-periodic conditions:
\begin{equation}\label{prelim Bloch solutions}
    \psi_{\pm}(x+a,\lambda)\equiv e^{\pm ik(\lambda)}\psi_{\pm}(x,\lambda),
\end{equation}
Each of them is defined uniquely up to multiplication by a coefficient which depends on $\lambda$.
It is possible to choose these coefficients so that for every $x\ge0$ the functions $\psi_{\pm}(x,\cdot)$ and $\psi_{\pm}'(x,\cdot)$, and hence their Wronskian $W\{\psi_+,\psi_-\}(\cdot)$, are analytic in $\mathbb C_+$ and continuous up to the set
$\sigma(\mathcal L_{per})\backslash\partial$. Moreover, it is possible for every $\lambda\in\sigma(\mathcal L_{per})\backslash\partial$ to have
\begin{equation*}
    \psi_+(x,\lambda)\equiv\overline{\psi_-(x,\lambda)}\text{ and }iW\{\psi_+,\psi_-\}(\lambda)<0.
\end{equation*}
In what follows we assume that such a choice of the coefficients is made. Bloch solutions can be written in another form:
\begin{equation*}
    \psi_{\pm}(x,\lambda)=e^{\pm ik(\lambda)\frac xa}p_{\pm}(x,\lambda),
\end{equation*}
where the functions $p_+(x,\lambda)$ and $p_-(x,\lambda)$ are periodic with the period $a$ in the variable $x$ and have the same properties as $\psi_{\pm}(x,\lambda)$ in the variable $\lambda$.

It is well known that some of spectral properties of one-dimensional Schr\"odinger operators can be written in terms of the asymptotic behaviour of their generalised eigenvectors (see, for example, \cite{Gilbert-Pearson-1987}). In particular, the spectral density of the operator can be expressed in terms of the Jost function by the Titchmarsh--Weyl formula. The eigenfunction equation $L\psi=\lambda\psi$ is a small perturbation of the periodic equation $-\psi''(x)+q(x)\psi(x)=\lambda\psi(x)$ in the sense that asymptotically $\varphi_{\alpha}$ is some linear combination of two Bloch solutions.

\begin{prop}[\cite{Kurasov-Simonov-2013}]\label{prop Titchmarsh--Weyl formula}
Let the operator $\mathcal L_{\alpha}$ be defined by \eqref{rule} and \eqref{domain} where $L$ is given by \eqref{L}, and let the conditions \eqref{conditions} hold. Let $\rho_{\alpha}'$ be the spectral density of the operator $\mathcal L_{\alpha}$, and $\nu_{j\pm}$, $\varphi_{\alpha}$, $\sigma(\mathcal L_{per})$, $\partial$, $\psi_{\pm}$ be defined by \eqref{intro critical points}, \eqref{phi}, \eqref{prelim spectrum of periodic operator}, \eqref{prelim boundary of the spectrum}, \eqref{prelim Bloch solutions}, respectively. For every fixed $\lambda\in\sigma(\mathcal L_{per})\backslash(\partial\cup\{\nu_{j+},\nu_{j-},j\ge0\})$ there exists $A_{\alpha}(\lambda)$ such that, as $x\rightarrow+\infty$,
\begin{equation}\label{asymptotics of phi-alpha}
\begin{array}{l}
    \varphi_{\alpha}(x,\lambda)=A_{\alpha}(\lambda)\psi_-(x,\lambda)+
    \overline{A_{\alpha}(\lambda)}\psi_+(x,\lambda)+o(1),
    \\
    \varphi_{\alpha}'(x,\lambda)=A_{\alpha}(\lambda)\psi_-'(x,\lambda)+
    \overline{A_{\alpha}(\lambda)}\psi_+'(x,\lambda)+o(1),
\end{array}
\end{equation}
and the following equality (the Titchmarsh--Weyl formula) holds:
\begin{equation}\label{Titchmatsh-Weyl formula}
    \rho'_{\alpha}(\lambda)=\frac1{2\pi|W\{\psi_+,\psi_-\}(\lambda)| \; |A_{\alpha}(\lambda)|^2}.
\end{equation}
\end{prop}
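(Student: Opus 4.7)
The approach is to substitute into the Titchmarsh--Weyl formula \eqref{Titchmatsh-Weyl formula} and to read the asymptotic \eqref{answer} off from the behaviour of the coefficient $A_\alpha(\lambda)$. Since $|W\{\psi_+,\psi_-\}(\lambda)|$ is continuous and nonzero at $\nu_{cr}$, the entire exponential singularity of $\rho'_\alpha$ is encoded in $|A_\alpha(\lambda)|^{-2}$; the task is therefore to extract from \eqref{asymptotics of phi-alpha} the precise double asymptotic of $A_\alpha(\lambda)$ in both $x$ and $\varepsilon := \lambda - \nu_{cr}$.

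I would begin by expanding $\varphi_\alpha$ in the basis of Bloch solutions, writing $\varphi_\alpha = \sigma_+\psi_+ + \sigma_-\psi_-$ with the standard variation-of-parameters constraint $\sigma_+'\psi_+ + \sigma_-'\psi_- = 0$. Then $L\varphi_\alpha = \lambda\varphi_\alpha$ becomes the linear system
\begin{equation*}
    \sigma_\pm'(x,\lambda) = \mp\,\frac{(q_{WN}(x,\gamma)+q_1(x))\,\psi_\mp(x,\lambda)}{W\{\psi_+,\psi_-\}(\lambda)}\bigl(\sigma_+\psi_+ + \sigma_-\psi_-\bigr)(x,\lambda).
\end{equation*}
The factors $\psi_\pm^2$ carry quasi-periodic oscillations $e^{\pm 2ik(\lambda)x/a}$ while $q_{WN}$ contains $e^{\pm i(2\omega x+\delta)}$, so the critical condition \eqref{intro critical points} is exactly what makes one combination of these phases stationary at $\lambda = \nu_{cr}$ while the others genuinely oscillate. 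A Harris--Lutz-type sequence of integrations by parts absorbs the non-resonant terms into an integrable remainder, and a periodic change of variables based on $p_\pm(\cdot,\nu_{cr})$ sends the periodic Bloch multipliers into a slowly varying dressing. The result is a ``model problem''
\begin{equation*}
    \vec u\,'(x) = \left[\frac{M(\varepsilon)}{x^\gamma} + i\varepsilon\kappa\,J + R(x,\varepsilon)\right]\vec u(x),
\end{equation*}
where $M(0)$ has eigenvalues $\pm\beta_{cr}$ (and encodes $\phi_{cr}$), $i\varepsilon\kappa J$ captures the linearised detuning with a positive $\kappa$ built from $a$ and $k'(\nu_{cr})$, and $R$ is integrable. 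The hypothesis $|q_1(x)|<c_1 x^{-1-\alpha_1}$ is used exactly here to force the $q_1$-contribution into $R$.

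Next I would study this model system as $\varepsilon \to 0$. The two leading terms balance at the turning point $x_\ast(\varepsilon) \asymp (\beta_{cr}/(|\varepsilon|\kappa))^{1/\gamma}$, and this forces the decomposition of $(0,+\infty)$ into the five regions of the outline: a bounded inner region governed directly by Proposition \ref{prop Kurasov-Naboko}; a mesoscopic exponential region where Liouville--Green/WKB applies with real characteristic exponent $p(x) := \sqrt{\beta_{cr}^2 x^{-2\gamma} - \varepsilon^2\kappa^2}$; a turning-point layer around $x_\ast(\varepsilon)$ treated by a Langer-type local model; a mesoscopic oscillatory region where $p$ is imaginary; and an outer Bloch region where $\vec u$ tends to a constant combination, producing $A_\alpha(\lambda)$ itself. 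The leading exponential factor of $|A_\alpha(\lambda)|$ arises from $\exp\bigl(\int_{x_0}^{x_\ast(\varepsilon)} p(x)\,dx\bigr)$, and the substitution $u = (x/x_\ast)^{2\gamma}$ reduces this integral to $\frac{\beta_{cr}}{2\gamma}\,x_\ast^{1-\gamma} B(\tfrac{1-\gamma}{2\gamma},\tfrac32)$, which together with $x_\ast^{1-\gamma} \propto |\varepsilon|^{-(1-\gamma)/\gamma}$ reproduces the constant $c_{cr}$ of \eqref{c cr}. The three integrals in \eqref{a cr} emerge as the logarithmic WKB-prefactor corrections: the first from subtracting the singular behaviour of $p$ near the origin of the exponential region, the second as the amplitude weight $|p|^{-1/2}$ integrated across the turning layer, and the principal value from cancelling the $1/(x-x_\ast)$ pole against matching with the oscillating Bloch tail. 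The factor $d_{cr+}\sin(\alpha-\alpha_{cr})$ in the denominator of \eqref{answer} is inherited directly from the innermost region through \eqref{intro phi critical + asympt gamma<1}.

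The principal obstacle will be the uniform matching across the turning-point layer together with careful bookkeeping of all order-one constants in the WKB prefactor; this is precisely where the somewhat unusual form of $a_{cr}$ in \eqref{a cr} with its principal-value integral is born. Standard Langer-type connection formulae handle the inner/outer matching at leading order, but tracking the prefactor through requires isolating $\varepsilon$-independent logarithmic divergences from genuine $\varepsilon$-dependent contributions. The decay rate $\alpha_1 > 0$ in \eqref{restriction on q1} is exploited here: it is what guarantees that the $q_1$-remainder in the model problem remains exponentially negligible against the WKB-growing solution throughout the mesoscopic exponential region, a conclusion that would fail for a mere $L_1$ bound. Once the double asymptotic of $|A_\alpha(\lambda)|$ has been established with the correct exponential, power, and Beta-function factors, substitution into \eqref{Titchmatsh-Weyl formula} and absorption of $\pi|W\{\psi_+,\psi_-\}(\nu_{cr})|$ into the prefactor $a_{cr}$ yields the claimed formula \eqref{answer}.
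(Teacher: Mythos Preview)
Your proposal does not address the stated proposition at all. Proposition~\ref{prop Titchmarsh--Weyl formula} asserts two things: (i) the existence of the asymptotic coefficient $A_\alpha(\lambda)$ in \eqref{asymptotics of phi-alpha}, and (ii) the identity \eqref{Titchmatsh-Weyl formula} expressing $\rho'_\alpha(\lambda)$ in terms of $|A_\alpha(\lambda)|$. This is a cited preliminary result from \cite{Kurasov-Simonov-2013}; the present paper does not prove it but \emph{uses} it as the starting point for the main theorem.

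What you have written is instead a sketch of the proof of Theorem~\ref{thm gamma<1}: you take \eqref{Titchmatsh-Weyl formula} for granted in your first paragraph and then outline how to obtain the asymptotics \eqref{answer} of $\rho'_\alpha$ near $\nu_{cr}$ via the five-region WKB/turning-point analysis. That outline is broadly consonant with the paper's strategy for the main theorem (reduction to a model system, scaling $x = t\,|\varepsilon_0|^{-1/\gamma}$, the turning point at $t_0 = (2\beta_{cr})^{1/\gamma}$, and matching across regions), but it is simply not a proof of the proposition you were asked to prove. A proof of Proposition~\ref{prop Titchmarsh--Weyl formula} would need to establish, for $\lambda$ away from the critical and band-edge points, that the variation-of-parameters coefficients $\sigma_\pm$ converge as $x\to+\infty$ (giving $A_\alpha(\lambda)$), and then relate $|A_\alpha(\lambda)|$ to the spectral measure via the Weyl $m$-function --- none of which appears in your text.
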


Here $A_{\alpha}$ is essentially the Jost function $M_{\alpha}$ (up to a coefficient: $A_{\alpha}(\lambda)=-\frac{M_{\alpha}(\lambda)}{W\{\psi_+,\psi_-\}(\lambda)}$). This result is a generalisation of the classic Titchmarsh--Weyl (or Kodaira) formula \cite{Kodaira-1949,Titchmarsh-1946-2}. A variant of this formula for the Schr\"odinger operator with Wigner--von Neumann potential without the periodic background ($q(x)\equiv0$) follows from results of \cite{Matveev-1973}. In the case of discrete Schr\"odinger operator with the Wigner--von Neumann potential an analogous formula is also known, see \cite{Damanik-Simon-2006,Janas-Simonov-2010}.

\section{Reduction to the model problem}\label{section reduction}
In this section we reduce the eigenfunction equation $L\psi=\lambda\psi$ to a specially constructed linear differential system and express the modulus of the coefficient $A_{\alpha}$ from the Titchmarsh--Weyl formula \eqref{Titchmatsh-Weyl formula} in terms of a certain solution of this new system.

\begin{lem}\label{lem reduction}
Let the conditions of Theorem \ref{thm gamma<1} hold and let $\nu_{cr}\in\{\nu_{j+},\nu_{j-},j\ge0\}$ be one of the critical points.
\\
\textit{1.}
There exist the following objects which are determined by the data of the problem ($q$, $q_1$, $c$, $\omega$, $\gamma$, $\delta$ and $\alpha$) and have the following properties:
\\
$\cdot$ the neighbourhood $U_{cr}$ of the point $\nu_{cr}$ such that its closure lies inside the spectral band and does not contain the second critical point of that band,
\\
$\cdot$ the bijective real-valued function $\varepsilon_{cr}(\lambda)$ such that
\begin{equation}\label{epsilon 0}
    \varepsilon_{cr}(\lambda)=\frac{2\pi k'(\nu_{cr})}{a}(\lambda-\nu_{cr})+O((\lambda-\nu_{cr})^2)\text{ as }\lambda\rightarrow\nu_{cr},
\end{equation}
\\
$\cdot$ the $M^{2\times2}(\mathbb R)$-valued function $R_{cr}(x,\lambda)$ such that for every $x\in[0,+\infty)$ it is continuous in $U_{cr}$ as a function of $\lambda$, and for every $\lambda\in U_{cr}$ and $x\in[0,+\infty)$ satisfies the estimate
\begin{equation*}
    \|R_{cr}(x,\lambda)\|<c_2\left(|q_1(x)|+\frac1{(x^2+x)^{\gamma}}\right)
\end{equation*}
with some $c_2>0$,
\\
$\cdot$ the vector $g_{cr,\alpha}\in\mathbb R^2\backslash\{0\}$,
\\
$\cdot$ the solution $w_{cr,\alpha}(x,\lambda)$ of the linear differential system
\begin{equation}\label{system w}
    w_{cr}'(x)=
    \left(
    \frac{\beta_{cr}}{x^{\gamma}}
    \left(
    \begin{array}{cc}
    \cos(\varepsilon_{cr}(\lambda)x) & \sin(\varepsilon_{cr}(\lambda)x)
    \\
    \sin(\varepsilon_{cr}(\lambda)x) & -\cos(\varepsilon_{cr}(\lambda)x)
    \end{array}
    \right)
    +
    R_{cr}(x,\lambda)
    \right)
    w_{cr}(x),
\end{equation}
where $\beta_{cr}$ is given by \eqref{beta cr}, and such that $w_{cr,\alpha}(0,\cdot)$ is a continuous function in $U_{cr}$ and
$w_{cr,\alpha}(0,\nu_{cr})=g_{cr,\alpha}$.
\\
\textit{2.}
For every $\lambda\in U_{cr}\backslash\{\nu_{cr}\}$ the limit
$\lim\limits_{x\rightarrow+\infty}w_{cr,\alpha}(x,\lambda)$ exists
and
\begin{equation}\label{A=eta}
    |A_{\alpha}(\lambda)|=\left\|\lim_{x\rightarrow+\infty}w_{cr,\alpha}(x,\lambda)\right\|,
\end{equation}
where $A_{\alpha}$ is defined in Proposition \ref{prop Titchmarsh--Weyl formula}.
\\
\textit{3.}
With the same $\alpha_{cr}$, $d_{cr-}$ and $d_{cr+}$ as in Proposition \ref{prop Kurasov-Naboko} and with $\phi_{cr}$ given by \eqref{phinotvar} one has
\begin{equation}\label{asympt w-}
    w_{cr,\alpha_{cr}}(x,\nu_{cr})=
    d_{cr-}
    \exp\left(-\frac{\beta_{cr}x^{1-\gamma}}{1-\gamma}\right)
    (e_-+o(1))
\end{equation}
as $x\rightarrow+\infty$ and the asymptotics \eqref{intro phi critical - asympt gamma<1} holds. For every $\alpha\neq\alpha_{cr}$ one has
\begin{equation}\label{asympt w+}
    w_{cr,\alpha}(x,\nu_{cr})=
    d_{cr+}\sin(\alpha-\alpha_{cr})
    \exp\left(\frac{\beta_{cr}x^{1-\gamma}}{1-\gamma}\right)
    (e_++o(1))
\end{equation}
as $x\rightarrow+\infty$ and the asymptotics \eqref{intro phi critical + asympt gamma<1} holds. Vectors $e_{\pm}$ are defined by \eqref{e +-}.
\end{lem}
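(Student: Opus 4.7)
\medskip
\noindent\textbf{Proof plan for Lemma \ref{lem reduction}.}
My strategy is a three-step reduction: variation of parameters in the Bloch basis, a unitary change to real coordinates with a phase rotation by $\phi_{cr}/2$, and an averaging (normal-form) step that separates the slowly decaying resonant part of the perturbation from a truly negligible remainder. The initial condition dependent vector $g_{cr,\alpha}$ and the formula $|A_\alpha(\lambda)|=\|\lim w_{cr,\alpha}\|$ will follow from bookkeeping through these changes of variables, and part 3 will reduce to an explicit Levinson-type analysis of the system at $\lambda=\nu_{cr}$.

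First, for each $\lambda$ in a small neighbourhood of $\nu_{cr}$ (chosen to stay inside the spectral band and away from the other critical point so that $\psi_\pm$, $p_\pm$ and $W\{\psi_+,\psi_-\}(\lambda)$ are smooth and non-degenerate), I would write the solution $\varphi_\alpha$ of $L\varphi_\alpha=\lambda\varphi_\alpha$ as a linear combination of Bloch solutions with varying coefficients,
\begin{equation*}
    \varphi_\alpha(x,\lambda)=A_+(x,\lambda)\psi_+(x,\lambda)+A_-(x,\lambda)\psi_-(x,\lambda),
\end{equation*}
imposing the usual supplementary condition on $(A_+,A_-)'$. A direct computation gives a linear first-order system for $A=(A_+,A_-)^T$ whose coefficient matrix is $(q_{WN}+q_1)/W\{\psi_+,\psi_-\}$ times a matrix built out of Bloch products $\psi_+\psi_-$, $\psi_\pm^2$. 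Using $\psi_\pm(x,\lambda)=e^{\pm ik(\lambda)x/a}p_\pm(x,\lambda)$ and the explicit form of $q_{WN}$, this system has coefficients that are sums of (i) a resonant block with phase $e^{2i((k(\lambda)/a)\pm\omega)x}$ which becomes stationary precisely at $\lambda=\nu_{cr}$ by the definition \eqref{intro critical points}, and (ii) non-resonant oscillating terms together with the $q_1$ contribution.

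Next I would perform a unitary change $A=T w'$, where $T$ diagonalises the resonant block and absorbs the phase factor $e^{\pm i\phi_{cr}/2}$ in which $\phi_{cr}$ is chosen by \eqref{phinotvar} so that, after substitution, the stationary (resonant) part of the coefficient matrix becomes the real symmetric matrix
\begin{equation*}
    \frac{\beta_{cr}}{x^\gamma}\begin{pmatrix}\cos(\varepsilon_{cr}(\lambda)x) & \sin(\varepsilon_{cr}(\lambda)x) \\ \sin(\varepsilon_{cr}(\lambda)x) & -\cos(\varepsilon_{cr}(\lambda)x)\end{pmatrix},
\end{equation*}
with $\beta_{cr}$ given by \eqref{beta cr} and with $\varepsilon_{cr}(\lambda)$ encoding the shift of the resonance away from stationarity. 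The leading term of $\varepsilon_{cr}$ is $\frac{2\pi k'(\nu_{cr})}{a}(\lambda-\nu_{cr})$, which gives \eqref{epsilon 0}. A single step of averaging (the harmonic substitution $w=(I+\frac{1}{x^\gamma}S(x,\lambda))w_{cr}$ with $S$ periodic in $x$ and of mean zero) removes all non-resonant oscillations at $\lambda=\nu_{cr}$ at the cost of producing an additional term of size $O(x^{-2\gamma})=O((x^2+x)^{-\gamma})$; together with the contribution of $q_1$, this is what forms the remainder $R_{cr}(x,\lambda)$ and satisfies the claimed pointwise bound. Here $\gamma>\tfrac12$ guarantees that $x^{-2\gamma}$ is integrable, so one averaging step suffices.

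The definitions then force the statements of parts 2 and 3. For part 2, tracking the three linear substitutions back gives $(A_+(x,\lambda),A_-(x,\lambda))^T=T(\lambda)(I+x^{-\gamma}S)w_{cr,\alpha}(x,\lambda)$; as $x\to+\infty$ the correction $x^{-\gamma}S$ vanishes and $w_{cr,\alpha}(x,\lambda)$ tends to a limit because its equation has integrable coefficients away from $\nu_{cr}$. Comparing with \eqref{asymptotics of phi-alpha} and using that $T$ is unitary and $\psi_+=\overline{\psi_-}$ on the band yields $|A_\alpha(\lambda)|=\|\lim_{x\to\infty}w_{cr,\alpha}(x,\lambda)\|$. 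The vector $g_{cr,\alpha}$ is simply $T(\nu_{cr})^{-1}(I+S(0,\nu_{cr}))^{-1}(A_+(0,\nu_{cr}),A_-(0,\nu_{cr}))^T$, which depends only on $\sin\alpha,\cos\alpha$ and periodic data; the fact that $g_{cr,\alpha}$ is non-zero uses that $(\sin\alpha,\cos\alpha)\neq 0$. For part 3, at $\lambda=\nu_{cr}$ the matrix in \eqref{system w} reduces to $\frac{\beta_{cr}}{x^\gamma}\mathrm{diag}(1,-1)+R_{cr}(x,\nu_{cr})$ with $R_{cr}(\cdot,\nu_{cr})$ integrable, so a standard Levinson theorem produces two linearly independent solutions with asymptotics $\exp(\pm\frac{\beta_{cr}x^{1-\gamma}}{1-\gamma})(e_\pm+o(1))$; the constants $d_{cr-}$ and $d_{cr+}\sin(\alpha-\alpha_{cr})$ are the coordinates of $g_{cr,\alpha}$ in this basis, with $\alpha_{cr}$ characterised by the vanishing of the $e_+$-component of $g_{cr,\alpha}$. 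Translating these asymptotics back through $T$ and the phase rotation recovers exactly \eqref{intro phi critical - asympt gamma<1} and \eqref{intro phi critical + asympt gamma<1} with the $\phi_{cr}$ of \eqref{phinotvar}.

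The main obstacle I anticipate is the averaging step: making the non-resonant substitution depend smoothly on $\lambda\in U_{cr}$ while ensuring the resulting remainder obeys the \emph{pointwise} bound $c_2(|q_1(x)|+(x^2+x)^{-\gamma})$ uniformly in $\lambda$, rather than only an $L^1$ estimate. This requires choosing $S(x,\lambda)$ as the unique mean-zero antiderivative of the non-resonant block, verifying that its derivative $S_x$ combined with the commutator $[S,\text{resonant}]$ gives the claimed $O(x^{-2\gamma})$ size, and handling the behaviour near $x=0$ (where the factor $x^{-\gamma}$ in $q_{WN}$ would be singular) by modifying $q_{WN}$ harmlessly on a compact set, which is absorbed into $q_1$. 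Once this is in place, the bijectivity and asymptotics of $\varepsilon_{cr}(\lambda)$ near $\nu_{cr}$ follow from the implicit function theorem applied to the equation that defines the position of the resonance in terms of $k(\lambda)$.
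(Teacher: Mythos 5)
Your plan is essentially the paper's own proof: variation of parameters in the Bloch basis, Fourier decomposition of the coefficients to isolate the single resonant harmonic, removal of the non-resonant part by a Harris--Lutz-type substitution, and a phase rotation by $e^{\pm i\phi_{cr}/2}$ followed by conjugation with $\left(\begin{smallmatrix}1&i\\1&-i\end{smallmatrix}\right)$ to realify. You correctly identify the sensitive point, namely getting a uniform pointwise bound on the remainder as $\lambda$ ranges over $U_{cr}$, which the paper handles by taking the transformation in the integrated form $\widehat w=\exp(\widehat T_{cr})\widetilde w_{cr}$ with $\widehat T_{cr}(x,\lambda)=-\int_x^\infty S^{(2)}_{cr}$ and proving $\|\widehat T_{cr}(x,\lambda)\|\le c_3(x+1)^{-\gamma}$ uniformly via integration by parts; the delicate harmonic is the one whose frequency $2\pi(k(\lambda)-k(\nu_{cr}))/a$ degenerates as $\lambda\to\nu_{cr}$, and its contribution is controlled using differentiability of $b^+_{n}/W$ and $k$ together with $k'(\nu_{cr})\neq0$. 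Your variant $w=(I+x^{-\gamma}S)w_{cr}$ with $S$ periodic and mean zero is the leading-order version of the same normal-form step, and works provided you account for the $\lambda$-dependent small-divisor harmonic as above (it is not strictly periodic in $x$ for $\lambda\neq\nu_{cr}$). One small inaccuracy: since $\gamma\in(\frac12,1)$ the factor $x^{-\gamma}$ is already locally integrable, so no modification of $q_{WN}$ near the origin is needed; otherwise the proposal matches the paper's argument, including the Levinson-theorem identification of $\alpha_{cr}$ and the linear-functional argument giving the $\sin(\alpha-\alpha_{cr})$ factor.
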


\begin{proof}\textit{1--2.}
Let us start with the eigenfunction equation $L\psi=\lambda\psi$ and write it in the vector form,
\begin{equation*}
\left(
\begin{array}{c}
\psi(x)
\\
\psi'(x)
\end{array}
\right)'
=
\left(
  \begin{array}{cc}
    0 & 1
    \\
    q(x)+\frac{c\sin(2\omega x+\delta)}{x^{\gamma}}+q_1(x)-\lambda & 0
  \end{array}
\right)
\left(
  \begin{array}{c}
    \psi(x)
    \\
    \psi'(x)
  \end{array}
\right).
\end{equation*}
Let us make variation of parameters:
\begin{equation}\label{eta}
    \left(%
    \begin{array}{c}
    \psi(x) \\
    \psi'(x) \\
    \end{array}%
    \right)
    =
    \left(%
    \begin{array}{cc}
    \psi_-(x,\lambda) & \psi_+(x,\lambda) \\
    \psi_-'(x,\lambda) & \psi_+'(x,\lambda) \\
    \end{array}%
    \right)
    \widehat w(x).
\end{equation}
This leads to the system
\begin{equation}\label{system w hat}
    \widehat w\,'(x)=
    \frac{\frac{c\sin(2\omega x+\delta)}{x^{\gamma}}+q_1(x)}{W\{\psi_+,\psi_-\}(\lambda)}
    \left(%
    \begin{array}{cc}
    -\psi_+(x,\lambda)\psi_-(x,\lambda) & -\psi_+^2(x,\lambda) \\
    \psi_-^2(x,\lambda) & \psi_+(x,\lambda)\psi_-(x,\lambda) \\
    \end{array}%
    \right)
    \widehat w(x).
\end{equation}
Denote the summable part of its coefficient matrix as
\begin{equation}\label{R hat}
    \widehat R(x,\lambda):=\frac{q_1(x)}{W\{\psi_+,\psi_-\}(\lambda)}
    \left(%
    \begin{array}{cc}
    -\psi_+(x,\lambda)\psi_-(x,\lambda) & -\psi_+^2(x,\lambda) \\
    \psi_-^2(x,\lambda) & \psi_+(x,\lambda)\psi_-(x,\lambda) \\
    \end{array}%
    \right).
\end{equation}
For the remaining part we use Fourier series decompositions of the periodic functions $p_+p_-,p_+^2$ and $p_-^2=\overline{p_+^2}$ to write the entries of the matrix as follows:
\begin{equation}
\begin{array}{l}
    \psi_+(x,\lambda)\psi_-(x,\lambda)=\sum\limits_{n\in\mathbb Z}b_n(\lambda)e^{2i\pi n\frac xa},
    \\
    \psi_+^2(x,\lambda)=\sum\limits_{n\in\mathbb Z}b_n^+(\lambda)e^{2i(\pi n+k(\lambda))\frac xa},
    \\
    \psi_-^2(x,\lambda)=\sum\limits_{n\in\mathbb Z}\overline{b_{-n}^+(\lambda)}e^{2i(\pi n-k(\lambda))\frac xa}.
\end{array}
\end{equation}
Now let us choose some band of the spectrum with the index $j$ and one of two critical points in it, $\nu_{cr}=\nu_{j+}$. We give the detailed proof for the choice of the sign ``$+$'', and for the sign ``$-$'' formulae should be modified in a natural way. Take the neighbourhood of the critical point $U_{j+}$ so that its closure lies inside the band with the index $j$ and does not contain the point $\nu_{j-}$. Let
\begin{equation*}
    n_{j+}:=-\left(j+1+\left\lfloor\frac{a\omega}{\pi}\right\rfloor\right),
\end{equation*}
so that
\begin{equation*}
    2i(\pi n_{j+}+k(\lambda))x+2ia\omega x=2i(k(\lambda)+\pi n_{j+}+a\omega)x,
\end{equation*}
and
\begin{equation}\label{n j+}
    \pi n_{j+}+a\omega=-\pi\left(j+1-\left\{\frac{a\omega}{\pi}\right\}\right)=-k(\nu_{j+}).
\end{equation}
Then we can write
\begin{multline*}
    \frac{c\sin(2\omega x+\delta)}{x^{\gamma}W\{\psi_+,\psi_-\}(\lambda)}
    \left(%
    \begin{array}{cc}
    -\psi_+(x,\lambda)\psi_-(x,\lambda) & -\psi_+^2(x,\lambda) \\
    \psi_-^2(x,\lambda) & \psi_+(x,\lambda)\psi_-(x,\lambda) \\
    \end{array}%
    \right)
\\
=\frac{c(e^{2i\omega x+i\delta}-e^{-2i\omega x-i\delta})}{2ix^{\gamma}W\{\psi_+,\psi_-\}(\lambda)}
\sum_{n\in\mathbb Z}
\left(
  \begin{array}{cc}
    -b_n(\lambda)e^{2i\pi n\frac xa}
    &
    -b_n^+(\lambda)e^{2i(\pi n+k(\lambda))\frac xa}
    \\
    \overline{b_n^+(\lambda)}e^{-2i(\pi n+k(\lambda))\frac xa}
    &
    b_n(\lambda)e^{2i\pi n\frac xa}\\
  \end{array}
\right)
\\
=\sum_{n\in\mathbb Z}S_{n}(x,\lambda)=S_{j+}^{\,(1)}(x,\lambda)+S_{j+}^{\,(2)}(x,\lambda),
\end{multline*}
where
\begin{multline}\label{S n}
    S_{n}(x,\lambda):=\frac{c(e^{2i\omega x+i\delta}-e^{-2i\omega x-i\delta})}{2ix^{\gamma}W\{\psi_+,\psi_-\}(\lambda)}
    \\
    \times
    \left(
    \begin{array}{cc}
    -b_n(\lambda)e^{2i\pi n\frac xa}
    &
    -b_n^+(\lambda)e^{2i(\pi n+k(\lambda))\frac xa}
    \\
    \overline{b_n^+(\lambda)}e^{-2i(\pi n+k(\lambda))\frac xa}
    &
    b_n(\lambda)e^{2i\pi n\frac xa}\\
    \end{array}
    \right),
\end{multline}
\begin{multline}\label{S j+ 1}
    S_{j+}^{\,(1)}(x,\lambda):=\frac{c}{2ix^{\gamma}W\{\psi_+,\psi_-\}(\nu_{j+})}
    \\
    \times
    \left(
  \begin{array}{cc}
    0
    &
    -b_{n_{j+}}^+(\nu_{j+})e^{i\delta}e^{2i\pi(k(\lambda)-k(\nu_{j+}))\frac xa}
    \\
    -\overline{b_{n_{j+}}^+(\nu_{j+})e^{i\delta}}e^{-2i\pi(k(\lambda)-k(\nu_{j+}))\frac xa}
    &
    0
  \end{array}
\right).
\end{multline}
and
\begin{equation}\label{S j+2}
    S_{j+}^{\,(2)}(x,\lambda):=\sum_{n\in\mathbb Z\backslash\{n_{j+}\}}S_{n}(x,\lambda)+(S_{n_{j+}}(x,\lambda)-S_{j+}^{\,(1)}(x,\lambda)).
\end{equation}
In this notation the system \eqref{system w hat} reads as
\begin{equation}\label{system w hat 2}
    \widehat w\,'=(S^{\,(1)}_{j,+}+S^{\,(2)}_{j,+}+\widehat R)\,\widehat w.
\end{equation}
To eliminate the non-resonating term $S^{\,(2)}_{j,+}$ from this system we use the Harris--Lutz transformation based on the matrix
\begin{equation}\label{T w}
    \widehat T_{j+}(x,\lambda)=-\int_x^{+\infty}S_{j+}^{\,(2)}(t,\lambda)dt.
\end{equation}
First we need to see that this integral is convergent.

\begin{lem}\label{lem reduction estimate of T}
The integral in the definition \eqref{T w} is convergent, the function $\widehat T_{j+}(x,\lambda)$ is continuous in $\lambda$ for every $x\in[0,+\infty)$ and satisfies the estimate
\begin{equation*}
    \|\widehat T_{j+}(x,\lambda)\|\le\frac{c_3}{(x+1)^{\gamma}}
\end{equation*}
for every $x\in[0,+\infty)$ and $\lambda\in U_{j+}$ with some $c_3>0$.
\end{lem}

\begin{proof}
Coefficients $b_n$ and $b_n^+$ have the same analyticity properties as the function $k$, and satisfy the following estimates (see, for example, \cite{Kurasov-Simonov-2013}): there exists $c_4>0$ such that for every $\lambda\in U_{j+}$ and $n\in\mathbb Z$
\begin{equation}\label{estimate for S 1}
    |b_n(\lambda)|,|b_n^+(\lambda)|<\frac{c_4}{n^2+1}.
\end{equation}
One can choose, if necessary, $c_4$ large enough to ensure that for every $\lambda\in U_{j+}$
\begin{equation}\label{estimate for S 2}
    \frac1{|W\{\psi_+,\psi_-\}(\lambda)|}<c_4.
\end{equation}
We also need the following rough estimate: if $N_1<N_2$, then
\begin{equation}\label{estimate for S 3}
    \left|\int_{N_1}^{N_2}\frac{e^{i\xi t}dt}{t^{\gamma}}\right|\le\frac{2^{\gamma}\left(\frac2{|\xi|}+\frac1{1-\gamma}\right)}{(N_1+1)^{\gamma}}.
\end{equation}
To see this let us consider three cases.
\\\textit{1.}
If $0\le N_1<N_2\le1$, then
\begin{equation*}
    \left|\int_{N_1}^{N_2}\frac{e^{i\xi t}dt}{t^{\gamma}}\right|\le\int_{0}^{1}\frac{dt}{t^{\gamma}}
    =\frac1{1-\gamma}\le\frac{2^{\gamma}\left(\frac2{|\xi|}+\frac1{1-\gamma}\right)}{(N_1+1)^{\gamma}}.
\end{equation*}
\textit{2.}
If $1\le N_1<N_2$, then
\begin{equation*}
    \left|\int_{N_1}^{N_2}\frac{e^{i\xi t}dt}{t^{\gamma}}\right|\le\frac2{|\xi|N_1^{\gamma}}
    \le\frac{2^{\gamma}\left(\frac2{|\xi|}+\frac1{1-\gamma}\right)}{(N_1+1)^{\gamma}}
\end{equation*}
from integrating by parts.
\\\textit{3.}
If $0\le N_1<1<N_2$, then
\begin{equation*}
    \left|\int_{N_1}^{N_2}\frac{e^{i\xi t}dt}{t^{\gamma}}\right|
    \le\left|\int_{N_1}^1\frac{e^{i\xi t}dt}{t^{\gamma}}\right|+\left|\int_{1}^{N_2}\frac{e^{i\xi t}dt}{t^{\gamma}}\right|
    \le\frac1{1-\gamma}+\frac2{|\xi|}
    \le\frac{2^{\gamma}\left(\frac2{|\xi|}+\frac1{1-\gamma}\right)}{(N_1+1)^{\gamma}}
\end{equation*}
using the intermediate estimates from the first case for the first summand and from the second case for the second.

Using the estimates \eqref{estimate for S 1}, \eqref{estimate for S 2} and \eqref{estimate for S 3} we see that there exists $c_5>0$ such that for every $\lambda\in U_{j+}, n\in\mathbb Z$ and $N_1<N_2$
\begin{equation*}
    \left\|\int_{N_1}^{N_2}S_{n}(t,\lambda)dt\right\|
    \le\frac{c_5}{(n^2+1)(N_1+1)^{\gamma}\min\limits_{\lambda\in U_{j+}}\{|\pi n\pm a\omega|, |k(\lambda)+\pi n\pm a\omega|\}}.
\end{equation*}
This estimate works for $n\neq n_{j+}$, because
\begin{equation*}
    \min\limits_{n\in\mathbb Z\backslash\{n_{j+}\},\lambda\in U_{j+}}\{|\pi n\pm a\omega|, |k(\lambda)+\pi n\pm a\omega|\}>0,
\end{equation*}
while for $n=n_{j+}$ one has
\begin{equation*}
    \min\limits_{\lambda\in U_{j+}}\{|k(\lambda)+\pi n_{j+}\pm a\omega|\}=|k(\nu_{j+})+\pi n_{j+}\pm a\omega|=0.
\end{equation*}
One can choose $c_5$ so large that for every $\lambda\in U_{j+}$ and $N_1<N_2$ the estimate
\begin{equation*}
    \left\|\int_{N_1}^{N_2}S_{n}(t,\lambda)dt\right\|\le\frac{c_5}{(n^2+1)(N_1+1)^{\gamma}}
\end{equation*}
holds for every $n\in\mathbb Z\backslash\{n_{j+}\}$, and so with some $c_6>0$ one has
\begin{equation}\label{estimate almost S 2}
    \left\|\int_{N_1}^{N_2}\sum_{n\in\mathbb Z\backslash\{n_{j+}\}}S_{n}(t,\lambda)dt\right\|\le\frac{c_6}{(N_1+1)^{\gamma}}.
\end{equation}
In order to estimate $S_{j+}^{\,(2)}(x,\lambda)$  consider the difference
\begin{multline*}
    S_{n_{j+}}(x,\lambda)-S_{j+}^{\,(1)}(x,\lambda)
    \\
    =\frac{ce^{i(2\omega x+\delta)}}{2ix^{\gamma}W\{\psi_+,\psi_-\}(\lambda)}
    \left(
    \begin{array}{cc}
    -b_{n_{j+}}(\lambda)e^{2i\pi n_{j+}\frac xa}
    &
    0
    \\
    \overline{b_{n_{j+}}^+(\lambda)}e^{-2i(\pi n_{j+}+k(\lambda))\frac xa}
    &
    b_{n_{j+}}(\lambda)e^{2i\pi n_{j+}\frac xa}
    \end{array}
\right)
\\
+  \frac{ce^{-i(2\omega x+\delta)}}{2ix^{\gamma}W\{\psi_+,\psi_-\}(\lambda)}
    \left(
    \begin{array}{cc}
    b_{n_{j+}}(\lambda)e^{2i\pi n_{j+}\frac xa}
    &
    b_{n_{j+}}^+(\lambda)e^{2i(\pi n_{j+}+k(\lambda))\frac xa}
    \\
    0
    &
    -b_{n_{j+}}(\lambda)e^{2i\pi n_{j+}\frac xa}
    \end{array}
\right)
\\
+\frac{ce^{i\delta}}{2ix^{\gamma}}
\left(\frac{b_{n_{j+}}^+(\nu_{j+})}{W\{\psi_+,\psi_-\}(\nu_{j+})}-\frac{b_{n_{j+}}^+(\lambda)}{W\{\psi_+,\psi_-\}(\lambda)}\right)
e^{2i\pi(k(\lambda)-k(\nu_{j+}))\frac xa}
\left(
  \begin{array}{cc}
    0 & 1 \\
    0 & 0 \\
  \end{array}
\right)
\\
+\frac{ce^{-i\delta}}{2ix^{\gamma}}
\left(\frac{\overline{b_{n_{j+}}^+(\nu_{j+})}}{W\{\psi_+,\psi_-\}(\nu_{j+})}-\frac{\overline{b_{n_{j+}}^+(\lambda)}}{W\{\psi_+,\psi_-\}(\lambda)}\right)
e^{-2i\pi(k(\lambda)-k(\nu_{j+}))\frac xa}
\left(
  \begin{array}{cc}
    0 & 0 \\
    1 & 0 \\
  \end{array}
\right).
\end{multline*}
The first two summands can be estimated using \eqref{estimate for S 1}--\eqref{estimate for S 3} and added into \eqref{estimate almost S 2} under the integral with, possibly, a change of $c_6$. The integral of the third and the fourth summands from $N_1$ to $N_2$ can be estimated using \eqref{estimate for S 2} and \eqref{estimate for S 3} by
\begin{equation*}
    \frac{c_7\left|\frac{b_{n_{j+}}^+(\nu_{j+})}{W\{\psi_+,\psi_-\}(\nu_{j+})}-\frac{b_{n_{j+}}^+(\lambda)}{W\{\psi_+,\psi_-\}(\lambda)}\right|}
    {|k(\lambda)-k(\nu_{j+})|(N_1+1)^{\gamma}}
\end{equation*}
with some $c_7>0$. This can in turn be estimated by $\frac{c_8}{(N_1+1)^{\gamma}}$ with some $c_8>0$, since both functions $\frac{b_n^+}{W\{\psi_+,\psi_-\}}$ and $k$ are differentiable at the point $\nu_{j+}$ and since $k\,'(\nu_{j+})\neq0$. Combining this and \eqref{estimate almost S 2} we see that there exists $c_8>0$ such that for every $\lambda\in U_{j+}$ and $N_1<N_2$ we have the estimate
\begin{equation}\label{estimate S 2}
        \left\|\int_{N_1}^{N_2}S_{j+}^{\,(2)}(t,\lambda)dt\right\|\le\frac{c_3}{(N_1+1)^{\gamma}}
\end{equation}
with some $c_3>0$. This means that the integral $\int_{x}^{+\infty}S_{j+}^{\,(2)}(t,\lambda)dt$ exists for every $x\in[0,+\infty)$, the definition \eqref{T w} of $\widehat T_{j+}$ is correct and that $\widehat T_{j+}(x,\lambda)$ is continuous in $\lambda$ and is estimated in norm by $\frac{c_3}{(x+1)^{\gamma}}$.
\end{proof}

From now on we start writing the index ``cr'' instead of ``$j+$'', because all the formulae remain valid  if one changes the index to ``$j-$'', that is, for the second type of critical points. Let us make the Harris-Lutz transformation by substituting
\begin{equation}\label{eta widehat}
    \widehat w(x)=\exp(\widehat T_{cr}(x,\lambda))\widetilde w_{cr}(x)
\end{equation}
to the system \eqref{system w hat}. This leads, with the use of the equality $\widehat T_{cr}'=S^{\,(2)}_{cr}$, to the system
\begin{equation*}
    \widetilde w_{cr}'=e^{-\widehat T_{cr}}((S^{\,(1)}_{cr}+S^{\,(2)}_{cr}+\widehat R)e^{\widehat T_{cr}}
    -(e^{\widehat T_{cr}})')\widetilde w_{cr}=(S^{\,(1)}_{cr}+\widetilde R_{cr})\widetilde w_{cr}
\end{equation*}
with
\begin{equation}\label{R tilde}
    \widetilde R_{cr}=e^{-\widehat T_{cr}}(S^{\,(1)}_{cr}+S^{\,(2)}_{cr})e^{\widehat T_{cr}}
    -(S^{\,(1)}_{cr}+S^{\,(2)}_{cr})+e^{-\widehat T_{cr}}\widehat Re^{\widehat T_{cr}}
    -e^{-\widehat T_{cr}}((e^{\widehat T_{cr}})'-\widehat T_{cr}').
\end{equation}
Let us estimate this remainder. For every $\lambda\in U_{cr}$ and $x\in(0,+\infty)$ we have $\|S^{\,(1)}_{cr}+S^{\,(2)}_{cr}\|<\frac{c_9}{x^{\gamma}}$ with some $c_9>0$ and $\|\widehat T_{cr}(x,\lambda)\|<\frac{c_3}{(x+1)^{\gamma}}$. Therefore
\begin{equation*}
    \left\|e^{-\widehat T_{cr}(x,\lambda)}(S^{\,(1)}_{cr}(x,\lambda)+S^{\,(2)}_{cr}(x,\lambda))e^{\widehat T_{cr}(x,\lambda)}
    -(S^{\,(1)}_{cr}(x,\lambda)+S^{\,(2)}_{cr}(x,\lambda))\right\|
    <\frac{c_{10}}{(x^2+x)^{\gamma}}
\end{equation*}
and
\begin{multline*}
    \left\|(e^{\widehat T_{cr}(x,\lambda)})'-\widehat T_{cr}'(x,\lambda)\right\|
    =\left\|\sum_{n=2}^{\infty}\frac{(\widehat T_{cr}^n(x,\lambda))'}{n!}\right\|
    \le\left\|\widehat T_{cr}'(x,\lambda)\right\|\sum_{n=2}^{\infty}\frac{\|\widehat T_{cr}(x,\lambda)\|^{n-1}}{(n-1)!}
    \\
    =\|S^{\,(2)}_{cr}(x,\lambda)\|(e^{\|\widehat T_{cr}(x,\lambda)\|}-1)
    <\frac{c_{11}}{(x^2+x)^{\gamma}}.
\end{multline*}
Combining all this and the estimate of the summable term \eqref{R hat} we see that there exists $c_{12}>0$ such that for every $\lambda\in U_{cr}$ and $x\in[0,+\infty)$ one has
\begin{equation*}
    \|\widetilde R_{cr}(x,\lambda)\|<c_{12}\left(\frac1{(x^2+x)^{\gamma}}+|q_1(x)|\right).
\end{equation*}
Now we can rewrite $S_{cr}^{\,(1)}$ so that the system on $\widetilde w_{cr}$ reads:
\begin{equation}\label{system w tilde}
    \widetilde w\,'_{cr}(x)=\left(\frac1{x^{\gamma}}
    \left(
      \begin{array}{cc}
        0 & z_{cr}e^{i\varepsilon_{cr}(\lambda)} \\
        \overline{z_{cr}}e^{-i\varepsilon_{cr}(\lambda)} & 0 \\
      \end{array}
    \right)
    +\widetilde R_{cr}(x,\lambda)
    \right)
    \widetilde w(x),
\end{equation}
where
\begin{equation*}
    \varepsilon_{cr}(\lambda):=\frac{2\pi(k(\lambda)-k(\nu_{cr}))}a
\end{equation*}
and
\begin{equation}\label{z cr}
    z_{cr}:=-\frac{cb_{n_{cr}}^+(\nu_{cr})e^{i\delta}}{2iW\{\psi_+,\psi_-\}(\nu_{cr})}.
\end{equation}
One can check that
\begin{equation*}
    |z_{cr}|=\beta_{cr},\ \text{arg}\,z_{cr}=\phi_{cr},
\end{equation*}
with $\beta_{cr}$ and $\phi_{cr}$ given by \eqref{beta cr} and \eqref{phinotvar}, by substituting the expression for the Fourier coefficient
\begin{equation*}
    b_{n}^+(\lambda)=\frac1a\int_0^a\psi_+^2(t,\lambda)e^{-2i(k(\lambda)+\pi n)\frac ta}dt
\end{equation*}
into \eqref{z cr} and using the relation \eqref{n j+} involving $n_{cr}$.

Let us substitute $\widetilde w_{cr}$ in the form
\begin{equation*}
    \widetilde w_{cr}(x)=
    \left(
      \begin{array}{cc}
        e^{\frac i2\phi_{cr}} & 0 \\
        0 & e^{-\frac i2\phi_{cr}} \\
      \end{array}
    \right)
    \left(
      \begin{array}{cc}
        1 & i \\
        1 & -i \\
      \end{array}
    \right)
    w_{cr}(x),
\end{equation*}
into the system \eqref{system w tilde}. We get for $w_{cr}$ the system \eqref{system w}, and
\begin{multline}\label{R cr}
    R_{cr}(x,\lambda)=
    \left(
      \begin{array}{cc}
        1 & i \\
        1 & -i \\
      \end{array}
    \right)^{-1}
    \left(
      \begin{array}{cc}
        e^{-\frac i2\phi_{cr}} & 0 \\
        0 & e^{\frac i2\phi_{cr}} \\
      \end{array}
    \right)
    \\
    \times
    \widetilde R_{cr}(x,\lambda)
    \left(
      \begin{array}{cc}
        e^{\frac i2\phi_{cr}} & 0 \\
        0 & e^{-\frac i2\phi_{cr}} \\
      \end{array}
    \right)
    \left(
      \begin{array}{cc}
        1 & i \\
        1 & -i \\
      \end{array}
    \right).
\end{multline}
This expression for every $x\in[0,+\infty)$ is continuous in $U_{cr}$ as a function of $\lambda$. For every $\lambda\in U_{cr}$ and $x\in[0,+\infty)$ it can be estimated in norm by $c_2\left(|q_1(x)|+\frac1{(x^2+x)^{\gamma}}\right)$ with some $c_2>0$.

Let us check that the matrix $R_{cr}$ has real entries. Fix some $x\in[0,\infty)$ and $\lambda\in U_{cr}$. From the expression \eqref{R hat} for $\widehat R$ using that $q_1(x)\in\mathbb R$ and $iW\{\psi_+,\psi_-\}(\lambda)<0$ we see that the matrix $\widehat R(x,\lambda)$ has the following conjugation property:
\begin{equation*}
    \widehat R_{11}(x,\lambda)=\overline{\widehat R_{22}(x,\lambda)},\ \widehat R_{12}(x,\lambda)=\overline{\widehat R_{21}(x,\lambda)}.
\end{equation*}
This property is preserved for sums or products as well as for real analytic functions of such matrices. As it is clear from the formulae \eqref{S n}, \eqref{S j+ 1} and \eqref{S j+2}, this property holds for the matrices $S_n(x,\lambda)$, $S_{cr}^{\,(1)}(x,\lambda)$ and $S_{cr}^{\,(2)}(x,\lambda)$, and therefore, due to \eqref{T w}, for $\widehat T_{cr}(x,\lambda)$ as well as for $\exp(\widehat T_{cr}(x,\lambda))$ and for $(\exp(\widehat T_{cr}(x,\lambda)))'$. Hence it holds for $\widetilde R_{cr}(x,\lambda)$ given by \eqref{R tilde} and for
\begin{equation*}
        \left(
      \begin{array}{cc}
        e^{-\frac i2\phi_{cr}} & 0 \\
        0 & e^{\frac i2\phi_{cr}} \\
      \end{array}
    \right)
    \widetilde R_{cr}(x,\lambda)
    \left(
      \begin{array}{cc}
        e^{\frac i2\phi_{cr}} & 0 \\
        0 & e^{-\frac i2\phi_{cr}} \\
      \end{array}
    \right).
\end{equation*}
Since for every $a,b\in\mathbb C$
\begin{equation*}
    \left(
      \begin{array}{cc}
        1 & i \\
        1 & -i \\
      \end{array}
    \right)^{-1}
    \left(
      \begin{array}{cc}
        a & b \\
        \overline b & \overline a \\
      \end{array}
    \right)
    \left(
      \begin{array}{cc}
        1 & i \\
        1 & -i \\
      \end{array}
    \right)
    =
    \left(
      \begin{array}{cc}
        \Re a+\Re b & \Im b-\Im a \\
        \Im a+\Im b & \Re a-\Re b \\
      \end{array}
    \right),
\end{equation*}
we see from the expression \eqref{R cr} that the entries of $R_{cr}(x,\lambda)$ are real-valued. As a result $R_{cr}(\cdot,\lambda)\in L_1(\mathbb R_+,M^{2\times2}(\mathbb R))$ for every $\lambda\in U_{cr}$.

Consider the solution $w_{cr,\alpha}$ of the system \eqref{system w} which corresponds to the solution $\varphi_{\alpha}$ of the eigenfunction equation,
\begin{equation}\label{w cralpha}
    w_{cr,\alpha}(x,\lambda):=T_{cr}(x,\lambda)
    \left(
      \begin{array}{c}
        \varphi_{\alpha}(x,\lambda) \\
        \varphi_{\alpha}'(x,\lambda) \\
      \end{array}
    \right),
\end{equation}
where
\begin{multline*}
    T_{cr}(x,\lambda):=
    \left(
      \begin{array}{cc}
        1 & i \\
        1 & -i \\
      \end{array}
    \right)^{-1}
    \left(
      \begin{array}{cc}
        e^{-\frac i2\phi_{cr}} & 0 \\
        0 & e^{\frac i2\phi_{cr}} \\
      \end{array}
    \right)
    \exp(-\widehat T_{cr}(x,\lambda))
    \\
    \times
    \left(%
    \begin{array}{cc}
    \psi_-(x,\lambda) & \psi_+(x,\lambda) \\
    \psi_-'(x,\lambda) & \psi_+'(x,\lambda) \\
    \end{array}%
    \right)^{-1}.
\end{multline*}
The matrix
\begin{equation*}
    \left(
     \begin{array}{cc}
       e^{-\frac i2\phi_{cr}} & 0 \\
       0 & e^{\frac i2\phi_{cr}} \\
     \end{array}
   \right)
   \exp(-\widehat T_{cr}(x,\lambda))
\end{equation*}
has the same conjugation property as the matrix $\widehat R$. The matrix
\begin{equation*}
    \left(%
    \begin{array}{cc}
    \psi_-(x,\lambda) & \psi_+(x,\lambda) \\
    \psi_-'(x,\lambda) & \psi_+'(x,\lambda) \\
    \end{array}%
    \right)^{-1}
    =
    \frac1{W\{\psi_+,\psi_-\}(\lambda)}
    \left(%
    \begin{array}{cc}
    \psi_+'(x,\lambda) & -\psi_+(x,\lambda) \\
    -\psi_-'(x,\lambda) & \psi_-(x,\lambda) \\
    \end{array}%
    \right)
\end{equation*}
has the first row complex conjugate to the second row. Since for every $a,b,c\in\mathbb C$
\begin{equation*}
    \left(
      \begin{array}{cc}
        a & b \\
        \overline b & \overline a \\
      \end{array}
    \right)
    \left(
      \begin{array}{c}
        c \\
        \overline c \\
      \end{array}
    \right)
    =
    \left(
      \begin{array}{c}
        ac+b\overline c \\
        \overline a\overline c+\overline b c \\
      \end{array}
    \right),
    \
    \left(
      \begin{array}{cc}
        1 & i \\
        1 & -i \\
      \end{array}
    \right)^{-1}
    \left(
      \begin{array}{c}
        c \\
        \overline c \\
      \end{array}
    \right)
    =
    \left(
      \begin{array}{c}
        \Re c \\
        \Im c \\
      \end{array}
    \right),
\end{equation*}
the matrix $T_{cr}(x,\lambda)$ has real entries and therefore $w_{cr,\alpha}(x,\lambda)\in\mathbb R^2$ for every $x\in[0,+\infty)$, $\lambda\in U_{cr}$. The initial condition for the solution $w_{cr,\alpha}$ is
\begin{equation*}
    w_{cr,\alpha}(0,\lambda)=T_{cr}(0,\lambda)
    \left(
      \begin{array}{c}
        \sin\alpha \\
        \cos\alpha \\
      \end{array}
    \right),
\end{equation*}
which is continuous in $\lambda$. In particular,
\begin{equation*}
    w_{cr,\alpha}(0,\nu_{cr})=T_{cr}(0,\nu_{cr})
    \left(
      \begin{array}{c}
        \sin\alpha \\
        \cos\alpha \\
      \end{array}
    \right)=:g_{cr,\alpha}.
\end{equation*}
The matrix $T_{cr}(0,\nu_{cr})$ is non-degenerate, and hence the vector $g_{cr,\alpha}$ runs over all the directions in $\mathbb R^2$ as $\alpha$ runs over the interval $[0,\pi)$.

From the asymptotics \eqref{asymptotics of phi-alpha} due to the relation \eqref{eta} for every $\lambda\in U_{cr}\backslash\{\nu_{cr}\}$ we have
\begin{equation*}
    \lim\limits_{x\rightarrow+\infty}\widehat w_{cr,\alpha}(x,\lambda)
    =
    \left(
           \begin{array}{c}
             A_{\alpha}(\lambda) \\
             \overline{A_{\alpha}(\lambda)} \\
           \end{array}
    \right).
\end{equation*}
Since $\widehat T_{cr}(x,\lambda)\rightarrow0$ as $x\rightarrow+\infty$,
\begin{equation*}
    \lim\limits_{x\rightarrow+\infty}w_{cr,\alpha}(x,\lambda)
    =
    \left(
      \begin{array}{cc}
        1 & i \\
        1 & -i \\
      \end{array}
    \right)^{-1}
    \left(
      \begin{array}{cc}
        e^{-\frac i2\phi_{cr}} & 0 \\
        0 & e^{\frac i2\phi_{cr}} \\
      \end{array}
    \right)
   \left(
       \begin{array}{c}
         A_{\alpha}(\lambda) \\
         \overline{A_{\alpha}(\lambda)} \\
       \end{array}
     \right).
\end{equation*}
The matrix
$\frac1{\sqrt2}\left(
  \begin{array}{cc}
    1 & i \\
    1 & -i \\
  \end{array}
\right)$
is unitary. Therefore
\begin{equation*}
    \left\|\lim_{x\rightarrow+\infty}w_{cr,\alpha}(x,\lambda)\right\|
    =\frac{|A_{\alpha}(\lambda)|}{\sqrt2}
    \left\|
    \left(
      \begin{array}{c}
        1 \\
        1 \\
      \end{array}
    \right)
    \right\|
    =|A_{\alpha}(\lambda)|.
\end{equation*}
In the opposite direction, the relation \eqref{w cralpha} means that
\begin{multline}\label{varphi via w}
    \varphi_{\alpha}(x,\lambda)=(T_{cr}^{-1}(x,\lambda)w_{cr,\alpha}(x,\lambda),e_+)_{\mathbb C^2}
    \\
    =\left(w_{cr,\alpha}(x,\lambda),\left(
        \begin{array}{c}
        e^{-\frac i2\phi_{cr}}\overline{\psi_-(x,\lambda)}+e^{\frac i2\phi_{cr}}\overline{\psi_+(x,\lambda)}
        \\
        -ie^{-\frac i2\phi_{cr}}\overline{\psi_-(x,\lambda)}+ie^{\frac i2\phi_{cr}}\overline{\psi_+(x,\lambda)}
        \end{array}
      \right)+o(1)
    \right)_{\mathbb C^2}.
\end{multline}
\textit{3.} Consider the system \eqref{system w} for $\lambda=\nu_{cr}$:
\begin{equation*}
    w_{cr}'(x)=\left(
    \frac{\beta_{cr}}{x^{\gamma}}
    \left(
      \begin{array}{cc}
        1 & 0 \\
        0 & -1 \\
      \end{array}
    \right)
    +R_{cr}(x,\lambda)\right)w_{cr}(x).
\end{equation*}
By the asymptotic Levinson theorem \cite[Theorem 8.1]{Coddington-Levinson-1955} this system has two solutions $w_{cr}^{\pm}$ with the asymptotics
\begin{equation}\label{asympt wpm}
    w_{cr}^{\pm}(x)=\exp\left(\pm\frac{\beta_{cr}x^{1-\gamma}}{1-\gamma}\right)(e_{\pm}+o(1))
    \text{ as }x\rightarrow+\infty.
\end{equation}
Since the coefficients of the system are real-valued, $w_{cr}^-(x)\in\mathbb R^2$ for every $x\in[0,+\infty)$ (since $\overline{w_{cr}^-(x)}$ is also a solution which has the same asymptotics and hence is proportional to $w_{cr}^-(x)$ with the coefficient one). So there exists the unique $\alpha_{cr}\in[0,\pi)$ such that the vector $g_{cr,\alpha_{cr}}$ is proportional to $w_{cr}^-(0)$:
\begin{equation*}
    g_{cr,\alpha_{cr}}=d_{cr-}w_{cr}^-(0),
\end{equation*}
and hence
\begin{equation*}
    w_{cr,\alpha_{cr}}(x,\nu_{cr})=d_{cr-}w_{cr}^-(x),
\end{equation*}
From this and \eqref{asympt wpm} the asymptotics \eqref{asympt w-} follows, and from it using the relation \eqref{varphi via w} we get the asymptotics \eqref{intro phi critical - asympt gamma<1} of the solution $\varphi_{\alpha_{cr}}(x,\nu_{cr})$ as $x\rightarrow+\infty$. For every $\alpha\neq\alpha_{cr}$ due to \eqref{asympt wpm} and since $w_{cr,\alpha}(0,\nu_{cr})\nparallel w_{cr}^-(0)$ we have:
\begin{equation*}
    w_{cr,\alpha}(x,\nu_{cr})=d_{cr}(\alpha)\exp\left(\frac{\beta_{cr}x^{1-\gamma}}{1-\gamma}\right)(e_++o(1))
    \text{ as }x\rightarrow+\infty.
\end{equation*}
The coefficient $d_{cr}(\alpha)$ is a linear functional on $\mathbb R^2$, the space of initial conditions. Hence it can be expressed in terms of the scalar product with some fixed vector, or in terms of the angle between this vector and the vector
$\left(
  \begin{array}{c}
    \sin\alpha \\
    \cos\alpha \\
  \end{array}
\right)$.
Clearly this functional vanishes for $\alpha_{cr}$, therefore it should be
\begin{equation*}
    d_{cr}(\alpha)=d_{cr+}\sin(\alpha-\alpha_{cr})
\end{equation*}
with some $d_{cr+}\in\mathbb R$. From this we get the asymptotics \eqref{asympt w-}, and using the relation \eqref{varphi via w} the asymptotics \eqref{intro phi critical + asympt gamma<1} of $\varphi_{\alpha}(x,\nu_{cr})$ as $x\rightarrow+\infty$. This completes the proof.
\end{proof}

\section{The model problem}\label{section the model problem}
In this section we study the system \eqref{system w} in the general setting and use only the objects and the properties that are listed in Lemma \ref{lem reduction}. We pass from the spectral parameter $\lambda$ to the small parameter $\varepsilon_0$ supposing that the positive constant $\beta$, the remainder matrix $R(x,\varepsilon_0)$ with possibly complex entries and the vector of the initial condition $f\in\mathbb C^2$ are given. In such a setting we are able to establish the asymptotics as $x\rightarrow+\infty$ and then as $\varepsilon_0\rightarrow0$ of solutions of the model system
\begin{equation}\label{system model problem}
    u'(x)=
    \left(
    \frac{\beta}{x^{\gamma}}
    \left(
    \begin{array}{cc}
    \cos(\varepsilon_0x) & \sin(\varepsilon_0x)
    \\
    \sin(\varepsilon_0x) & -\cos(\varepsilon_0x)
    \end{array}
    \right)
    +
    R(x,\varepsilon_0)
    \right)
    u(x).
\end{equation}
We consider this system for $\varepsilon_0\in U_0$, where $U_0$ is some interval with the midpoint zero. Let $\alpha_r,c_r>0$ and
\begin{equation}\label{model problem r}
    r(x):=\frac{c_r}{(x+1)^{1+\alpha_r}}.
\end{equation}
We assume the following:
\begin{equation}\label{model problem conditions}
    \left\{
    \begin{array}{l}
    \beta>0,
    \\
    \gamma\in(\frac12,1),
    \\
    R(x,\cdot)\text{ for every }x\in[0,+\infty)\text{ is continuous in }U_0,
    \\
    \|R(x,\varepsilon)\|<r(x)\text{ for every }x\in[0,+\infty)\text{ and }\varepsilon_0\in U_0.
    \end{array}
    \right.
\end{equation}
Let us define for every $\varepsilon_0\in U_0$ and $f\in\mathbb C^2$ the solution $u(x,\varepsilon_0,f)$ of the Cauchy problem for the system \eqref{system model problem} with the initial condition
\begin{equation}\label{model problem initial condition}
    u(0,\varepsilon_0,f)=f.
\end{equation}
First we need to establish asymptotics of this solution as $x\rightarrow+\infty$ for every fixed $\varepsilon_0\in U_0$. We do this in the same way as in Lemma \ref{lem reduction}.

\begin{lem}\label{lem model problem individual asymptotics}
Let the conditions \eqref{model problem conditions} and \eqref{model problem r} hold, let $f\in\mathbb C^2$ and let $u(x,\varepsilon_0,f)$ be the solution of the system \eqref{system model problem} with the initial condition \eqref{model problem initial condition}.
\\
1. For every $\varepsilon_0\neq 0$ and $f\in\mathbb C^2$ there exists a finite non-zero limit $\lim\limits_{x\rightarrow+\infty}u(x,\varepsilon_0,f)$.
\\
2. For $\varepsilon_0=0$ and every $f\in\mathbb C^2$ the following asymptotics holds:
\begin{equation*}
    u(x,0,f)=\exp\left(\frac{\beta x^{1-\gamma}}{1-\gamma}\right)(\Phi(f)e_++o(1))
    \text{ as }x\rightarrow+\infty,
\end{equation*}
where $\Phi$ is a linear functional in $\mathbb C^2$. This functional has a one-dimensional kernel which consists of the vector $f_-$ (and its multiples) such that
\begin{equation*}
    u(x,0,f_-)=\exp\left(-\frac{\beta x^{1-\gamma}}{1-\gamma}\right)(e_-+o(1))
    \text{ as }x\rightarrow+\infty.
\end{equation*}
\end{lem}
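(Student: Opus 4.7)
The plan is to mirror the proof of Lemma \ref{lem reduction}, treating the non-oscillating case $\varepsilon_0=0$ via the asymptotic Levinson theorem and the oscillating case $\varepsilon_0\neq 0$ via a Harris--Lutz transformation.

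\textbf{Part 2.} At $\varepsilon_0=0$ the system becomes
\begin{equation*}
u'(x)=\left(\frac{\beta}{x^{\gamma}}
\left(\begin{array}{cc} 1 & 0 \\ 0 & -1 \end{array}\right)
+R(x,0)\right)u(x),
\end{equation*}
and $\|R(x,0)\|\le r(x)\in L_1(\mathbb R_+)$. Exactly as in part 3 of Lemma \ref{lem reduction}, the asymptotic Levinson theorem produces two independent solutions $u_{\pm}$ with
\begin{equation*}
u_{\pm}(x)=\exp\left(\pm\frac{\beta x^{1-\gamma}}{1-\gamma}\right)(e_{\pm}+o(1)).
\end{equation*}
Since $u_{\pm}(0)$ are independent, every $f\in\mathbb C^2$ decomposes uniquely as $f=\Phi(f)u_+(0)+c_-(f)u_-(0)$ with linear functionals $\Phi,c_-$, and correspondingly $u(x,0,f)=\Phi(f)u_+(x)+c_-(f)u_-(x)$. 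The decaying branch $u_-$ is absorbed into the $o(1)$ correction of $u_+$ whenever $\Phi(f)\neq 0$; for $f_-:=u_-(0)$ spanning $\ker\Phi$ the solution is $u_-$ itself, which displays the required decaying asymptotics.

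\textbf{Part 1.} For $\varepsilon_0\neq 0$ I would set
\begin{equation*}
T(x,\varepsilon_0):=-\int_x^{+\infty}\frac{\beta}{t^{\gamma}}
\left(\begin{array}{cc} \cos(\varepsilon_0 t) & \sin(\varepsilon_0 t)\\ \sin(\varepsilon_0 t) & -\cos(\varepsilon_0 t) \end{array}\right)dt.
\end{equation*}
Convergence of this oscillatory integral and the bound $\|T(x,\varepsilon_0)\|=O_{\varepsilon_0}((x+1)^{-\gamma})$ follow from the same integration-by-parts argument used to prove Lemma \ref{lem reduction estimate of T}, namely estimate \eqref{estimate for S 3} applied with $\xi=\pm\varepsilon_0$. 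Denote by $S$ the oscillating leading coefficient, so that $T'=S$. Substituting $u=e^T\tilde u$, as in the proof of Lemma \ref{lem reduction}, the system transforms into $\tilde u'=\widetilde R\,\tilde u$ with
\begin{equation*}
\widetilde R = e^{-T}Se^T-S+e^{-T}Re^T-e^{-T}\bigl((e^T)'-T'\bigr).
\end{equation*}
The first and last summands are of order $\|S\|\cdot\|T\|=O(x^{-2\gamma})$ and the middle one is $O(r(x))$; since $2\gamma>1$ and $r\in L_1$, the remainder $\widetilde R(\cdot,\varepsilon_0)$ is summable. A standard Gronwall argument then gives $\tilde u(x)\to\tilde u(+\infty)$ as $x\to+\infty$, and since $T(x,\varepsilon_0)\to 0$ this same limit is inherited by $u(x,\varepsilon_0,f)$. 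Non-vanishing for $f\neq 0$ is immediate: the map $f\mapsto\tilde u(0)=e^{-T(0,\varepsilon_0)}f\mapsto\tilde u(+\infty)$ is a composition of two invertible linear maps, the second being the propagator at infinity of an $L_1$-perturbation of the zero matrix.

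The only genuine technical issue is the control of the Harris--Lutz integral $T(x,\varepsilon_0)$ and of the resulting remainder, and this reduces essentially to the integration-by-parts estimate already proved in Lemma \ref{lem reduction estimate of T}. The hypothesis $\gamma>\frac12$ is decisive: only then is $2\gamma>1$ and the conjugation remainder $\widetilde R$ summable, which is what permits the Levinson-type conclusion. Uniformity in $\varepsilon_0$ (in particular as $\varepsilon_0\to 0$) is not needed here and would in fact fail, the constant in the bound on $\|T(x,\varepsilon_0)\|$ growing like $|\varepsilon_0|^{-1}$; such uniformity is the problem taken up in the later sections.
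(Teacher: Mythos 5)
Your proposal is correct and follows essentially the same route as the paper: a Harris--Lutz transformation to kill the oscillating leading term followed by Levinson (or, equivalently, a Gronwall argument on the $L_1$-perturbed trivial system) when $\varepsilon_0\neq 0$, and a direct application of the asymptotic Levinson theorem to the diagonally dominant system when $\varepsilon_0=0$. The only cosmetic difference is that you make the decomposition $u(\cdot,0,f)=\Phi(f)u_++c_-(f)u_-$ and the invertibility of the propagator explicit, whereas the paper simply cites the Levinson theorem and a formula for $\Phi$ from an earlier reference.
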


\begin{proof}
\textit{1.} Just as in the previous section we make a Harris--Lutz transformation $u(x)=\exp(\widehat T_u(x,\varepsilon_0))u_1(x)$ with
\begin{equation*}
    \widehat T_u(x,\varepsilon_0):=-\int_x^{+\infty}
    \left(
    \begin{array}{cc}
    \cos(\varepsilon_0x') & \sin(\varepsilon_0x')
    \\
    \sin(\varepsilon_0x') & -\cos(\varepsilon_0x')
    \end{array}
    \right)
    \frac{dx'}{x'^{\gamma}}.
\end{equation*}
For $\varepsilon_0\neq0$ one has $\widehat T_u(x,\varepsilon_0)=O\left(\frac1{x^{\gamma}}\right)$ as $x\rightarrow+\infty$ and using the same kind of estimates as in the proof of Lemma \ref{lem reduction} we arrive at the system
\begin{equation*}
    u_1'(x)=R_1(x,\varepsilon_0)u_1(x)
\end{equation*}
with
\begin{equation*}
    R_1(x,\varepsilon_0)=O\left(\frac1{x^{2\gamma}}+\frac1{x^{1+\alpha_1}}\right)\text{ as }x\rightarrow+\infty,
\end{equation*}
which means that $R_1(\cdot,\varepsilon_0)\in L_1(\mathbb R_+,M^{2\times2}(\mathbb C))$. The asymptotic Levinson theorem \cite[Theorem 8.1]{Coddington-Levinson-1955} is applicable to this system and yields the existence of two solutions which have limits $e_+$ and $e_-$ as $x\rightarrow+\infty$. Hence every solution $u_1(x)$ has a non-zero limit as $x\rightarrow+\infty$. The same is true for every solution $u(x)$, because $\widehat T_u(x,\varepsilon_0)$ goes to zero at infinity.
\\
\textit{2.} If $\varepsilon_0=0$, then the asymptotic Levinson theorem is directly applicable to the system \eqref{system model problem}. Using it we conclude that there exists a solution with the asymptotics
\begin{equation*}
    \exp\left(-\frac{\beta x^{1-\gamma}}{1-\gamma}\right)(e_-+o(1))
    \text{ as }x\rightarrow+\infty.
\end{equation*}
Let us take as $f_-$ the value of this solution at zero. Again by the Levinson theorem there exists another solution, with the asymptotics
\begin{equation*}
    \exp\left(\frac{\beta x^{1-\gamma}}{1-\gamma}\right)(e_++o(1))
    \text{ as }x\rightarrow+\infty.
\end{equation*}
Therefore
\begin{equation*}
    u(x,\varepsilon_0,f)=\exp\left(\frac{\beta x^{1-\gamma}}{1-\gamma}\right)(\Phi(f)e_++o(1))\text{ as }x\rightarrow+\infty.
\end{equation*}
with some coefficient $\Phi(f)$ which depends on $f$ linearly and which is such that $\Phi(f_-)=0$ and $\text{dim}\,\text{ker}\,\Phi=1$. From \cite[Lemma 4.2, case (2)]{Kurasov-Simonov-2013} one can write out the formula for $\Phi$:
\begin{equation}\label{Phi}
    \Phi(f)=\left(\left(f+\int_0^{+\infty}\exp\left(-\frac{\beta x^{1-\gamma}}{1-\gamma}\right)R(x,0)u(x,0,f)dx\right),e_+\right)_{\mathbb C^2}.
\end{equation}
This completes the proof.
\end{proof}

The main result concerning the model problem is given by the following theorem which establishes the behaviour of the limit $\lim\limits_{x\rightarrow+\infty}u(x,\varepsilon_0,f)$ as $\varepsilon_0\rightarrow0$ and which will be proved in Section \ref{section matching}.

\begin{thm}\label{thm model problem}
Let the conditions \eqref{model problem conditions} and \eqref{model problem r} hold, let $f\in\mathbb C^2\backslash\{0\}$ and let $u(x,\varepsilon_0,f)$ be the solution of the system \eqref{system model problem} with the initial condition \eqref{model problem initial condition}. The following asymptotic holds:
\begin{equation}\label{answer u}
    \lim_{x\rightarrow+\infty}\|u(x,\varepsilon_0,f)\|
    =
    C_{mp}(\beta,\gamma)\exp
    \left(
    \frac1{|\varepsilon_0|^{\frac{1-\gamma}{\gamma}}}\int_0^{(2\beta)^{\frac1{\gamma}}}\sqrt{\frac{\beta^2}{t^{2\gamma}}-\frac14}dt
    \right)(|\Phi(f)|+o(1))
\end{equation}
as $\varepsilon_0\rightarrow0$, where $\Phi$ is defined in Lemma \ref{lem model problem individual asymptotics} and
\begin{multline}\label{C mp}
    C_{mp}(\beta,\gamma):=\frac1{\sqrt2}
    \exp\Biggl(\int_0^{\frac{(2\beta)^{\frac1{\gamma}}}2}
    \frac{\gamma\left(1-\sqrt{1-\frac{\tau^{2\gamma}}{4\beta^2}}\right)}{2\tau\left(1-\frac{\tau^{2\gamma}}{4\beta^2}\right)}d\tau
    \\
    -
    \int_{\frac{(2\beta)^{\frac1{\gamma}}}2}^{(2\beta)^{\frac1{\gamma}}}
    \frac{\gamma d\tau}{2\tau\sqrt{1-\frac{\tau^{2\gamma}}{4\beta^2}}}
    +\text{v.p.}\int_{\frac{(2\beta)^{\frac1{\gamma}}}2}^{+\infty}
    \frac{\gamma d\tau}{2\tau\left(1-\frac{\tau^{2\gamma}}{4\beta^2}\right)}\Biggr).
\end{multline}
\end{thm}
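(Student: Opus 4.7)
The strategy is a matched asymptotic expansion as $\varepsilon_0\to 0$, organized around the turning point
$$ x_\ast(\varepsilon_0) := \Bigl(\tfrac{2\beta}{|\varepsilon_0|}\Bigr)^{1/\gamma}, $$
at which the amplitude $\beta/x^\gamma$ of the oscillating coefficient becomes comparable to the oscillation frequency $|\varepsilon_0|$. I would split $[0,+\infty)$ into five subintervals, centred on $x_\ast$, and perform three distinct types of analysis, matching the results to recover the global asymptotics of $\|u(+\infty,\varepsilon_0,f)\|$.

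\textbf{Step 1 (rotating out the fast oscillation).} Apply the substitution $u(x)=U(\varepsilon_0 x)\,v(x)$ with the rotation $U(\theta)=\left(\begin{smallmatrix}\cos(\theta/2) & -\sin(\theta/2)\\[2pt] \sin(\theta/2) & \cos(\theta/2)\end{smallmatrix}\right)$ which diagonalizes the matrix $\left(\begin{smallmatrix}\cos\theta & \sin\theta\\ \sin\theta & -\cos\theta\end{smallmatrix}\right)$. The system becomes
$$ v'(x)=\left[\begin{pmatrix}\beta/x^\gamma & \varepsilon_0/2 \\ -\varepsilon_0/2 & -\beta/x^\gamma\end{pmatrix}+\widetilde R(x,\varepsilon_0)\right]v(x), $$
with $\|\widetilde R(x,\varepsilon_0)\|\le r(x)+O(\varepsilon_0/x^\gamma)$ absorbable into the $o(1)$ budget. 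The new principal matrix has eigenvalues $\pm\Lambda(x,\varepsilon_0):=\pm\sqrt{(\beta/x^\gamma)^2-(\varepsilon_0/2)^2}$, real for $x<x_\ast$ (exponential regime) and purely imaginary for $x>x_\ast$ (oscillatory regime).

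\textbf{Step 2 (partition and Region I, near $0$ and near $+\infty$).} Choose scales $x_-\ll x_\ast\ll x_+$ with $x_-\to\infty$, $x_-/x_\ast\to 0$, $x_+/x_\ast\to\infty$, and a turning-point window of width $h(\varepsilon_0)$ about $x_\ast$. On $[0,x_-]$ the oscillating term is fast-rotating on the natural scale of the equation, so a Harris--Lutz integration as in the proof of Lemma \ref{lem model problem individual asymptotics} shows that the dynamics are governed by the $\varepsilon_0=0$ problem up to an $o(1)$ error. Hence, by Lemma \ref{lem model problem individual asymptotics},
$$ u(x_-,\varepsilon_0,f)=\exp\!\Bigl(\tfrac{\beta x_-^{1-\gamma}}{1-\gamma}\Bigr)\bigl(\Phi(f)e_++o(1)\bigr). $$
Symmetrically, on $[x_+,+\infty)$ the Harris--Lutz transformation yields convergence of $u(\cdot,\varepsilon_0,f)$ at infinity and expresses $\lim\|u\|$ explicitly in terms of $v(x_+)$. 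This is the subject of Section \ref{section I}.

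\textbf{Step 3 (Region II, WKB away from the turning point).} On $[x_-,x_\ast-h]$ apply the $x$-dependent diagonalization built from the eigenvectors of $\left(\begin{smallmatrix}\beta/x^\gamma & \varepsilon_0/2\\ -\varepsilon_0/2 & -\beta/x^\gamma\end{smallmatrix}\right)$. The system becomes $w'=(\mathrm{diag}(\Lambda,-\Lambda)+S)w$ with $S$ integrable over $[x_-,x_\ast-h]$ after one more Harris--Lutz step. The growing mode has amplitude $\exp\!\bigl(\int^x\Lambda\bigr)$; the substitution $t=|\varepsilon_0|^{1/\gamma}s$ converts
$$ \int_{x_-}^{x_\ast-h}\Lambda(s,\varepsilon_0)\,ds=|\varepsilon_0|^{-\frac{1-\gamma}{\gamma}}\int_{|\varepsilon_0|^{1/\gamma}x_-}^{(2\beta)^{1/\gamma}-o(1)}\sqrt{\tfrac{\beta^2}{t^{2\gamma}}-\tfrac14}\,dt. $$
Subtracting the divergent piece $\beta x_-^{1-\gamma}/(1-\gamma)$ (absorbed into the Region I prefactor) and letting $x_-\to\infty$, $h\to 0$ suitably, one recovers the exponent of \eqref{answer u}. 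The $x$-dependent diagonalizing transformation carries an explicit logarithmic amplitude correction; writing it as $\int(\cdots)ds$ and regularizing by subtracting its principal singular part at $x_\ast$ yields, in the $t$ variable, the first integral in \eqref{C mp}. The same analysis on $[x_\ast+h,x_+]$ (where $\Lambda$ is imaginary) produces the second integral in \eqref{C mp}, and combining the residual logarithmic correction with the Levinson-type remainder of Region I on $[x_+,\infty)$ yields the principal-value integral in \eqref{C mp}. This is Section \ref{section II}.

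\textbf{Step 4 (Region III, turning point).} Near $x_\ast$ the WKB ansatz fails because $\Lambda$ has a simple zero. Linearize the coefficients, rescale $x-x_\ast=\xi\,\ell(\varepsilon_0)$ with $\ell$ chosen so that the leading local system is a constant-coefficient model whose connection formula between the growing mode of the left WKB region and the two outgoing/incoming modes of the right WKB region can be computed explicitly. Writing the connection back in the $v$-basis and extracting moduli produces the unitary factor $1/\sqrt{2}$ in \eqref{C mp} (together with phase information that cancels in $\|\cdot\|$). This is Section \ref{section III}.

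\textbf{Step 5 (matching).} Compose the five transitions (Region I $\to$ II $\to$ III $\to$ II $\to$ I). Only the WKB mode generated by $\Phi(f)\neq 0$ at $x_-$ survives the trip through the turning point and contributes to the norm at infinity, yielding
$$ \lim_{x\to\infty}\|u(x,\varepsilon_0,f)\|=|\Phi(f)|\,C_{mp}(\beta,\gamma)\exp\!\Bigl(|\varepsilon_0|^{-\frac{1-\gamma}{\gamma}}\!\int_0^{(2\beta)^{1/\gamma}}\!\!\sqrt{\tfrac{\beta^2}{t^{2\gamma}}-\tfrac14}\,dt\Bigr)(1+o(1)), $$
with $C_{mp}$ assembled from the three integrals in Step 3 plus the $1/\sqrt 2$ of Step 4. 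This is done in Section \ref{section matching}.

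\textbf{Main obstacle.} The hard part is Step 4 combined with the delicate matching in Step 5: each of the three integrals in \eqref{C mp} is individually divergent (either at $t=(2\beta)^{1/\gamma}$ or at $t=+\infty$), and only the specific combination chosen in \eqref{C mp} is finite. One must choose $x_\pm$ and $h(\varepsilon_0)$ so that (i) the WKB approximations on either side of $x_\ast$ are uniform up to the turning-point window, (ii) the local model in Region III is valid throughout that window, and (iii) the logarithmic divergences produced in Regions II and II' cancel exactly against those of the connection-matrix asymptotics in Region III and against the Levinson-type remainder in Region I near infinity. The assumption $\alpha_r>0$ on the remainder $r(x)$ is just strong enough to absorb every subleading error produced by these transformations into $o(1)$.
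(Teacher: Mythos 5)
Your overall strategy --- matched asymptotics organised around the turning point $x_\ast = (2\beta/|\varepsilon_0|)^{1/\gamma}$, with inner, intermediate and outer regions, Harris--Lutz reduction of off-diagonal terms, and a local connection problem --- is the same as the paper's (Sections \ref{section the model problem}--\ref{section matching}, with the same five-region decomposition). However, three of your steps contain genuine errors that would derail the details.

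First, in Step~4 you claim the turning-point rescaling leads to a \emph{constant-coefficient} local model. It does not: linearising $\frac{\beta^2}{x^{2\gamma}}-\frac{\varepsilon_0^2}{4}$ and rescaling produces the Airy system $u'(\zeta)=\left(\begin{smallmatrix}0&1\\\zeta&0\end{smallmatrix}\right)u(\zeta)$, whose coefficient matrix depends on the rescaled variable $\zeta$. The $\frac1{\sqrt2}$ in \eqref{C mp} and the off-diagonal structure of the connection matrix in Lemma~\ref{lem matching II-IV} come precisely from the asymptotics of $\mathrm{Ai}$ and $\mathrm{Bi}$; a constant-coefficient model would give the wrong connection constants.

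Second, in Step~2 you treat the neighbourhood of the origin and the neighbourhood of infinity ``symmetrically.'' They are not. Near the origin the principal part is hyperbolic (real eigenvalues $\pm\lambda_I$), the Volterra operator $\mathcal K_I(\varepsilon)$ converges in \emph{operator norm} to $\mathcal K_I(0)$ (Lemma~\ref{lem I operator convergence}), and a dominated-convergence argument identifies $\Phi(f)$. Near infinity the principal part is elliptic (imaginary eigenvalues $\pm\lambda_V$); there the analogous operators $\mathcal K_V(\varepsilon)$ do \emph{not} converge in norm, only strongly, and one must invoke the Riemann--Lebesgue lemma (Lemma~\ref{lem V convergence of operators}). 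Your claimed ``same Harris--Lutz argument'' would silently fail on the elliptic side. Relatedly, each of the intermediate WKB regions must itself be split at an interior scale $Z_1(\varepsilon)=\varepsilon^{-1/5}$ (the paper's $[Z_0,Z_1]$ and $[Z_1,Z_2]$), because the $z$-scaled system only converges to the $\varepsilon$-free model on bounded $z$-sets while $z$ runs out to $O(\varepsilon^{-2/3})$; a single cutoff $h(\varepsilon_0)$ does not capture this two-stage structure.

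Third, your ``main obstacle'' paragraph inverts the structure of $C_{mp}$: the three integrals in \eqref{C mp} are each individually finite (the first is proper, the second has an integrable $(t_0-\tau)^{-1/2}$ singularity, the third is a convergent principal value). They arise as an algebraic regularisation of the single divergent combination $\int_0^{t_0-\Delta}S_{I,+}+\int_{t_0+\Delta}^{\infty}\Re S_{V,\pm}$ (formula \eqref{c 32}), not as three independent region-by-region logarithmic corrections, and the principal value has nothing to do with a ``Levinson-type remainder.''
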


\begin{rem}\textit{1.} The case when $f$ is proportional to $f_-$ is included, and in such a case this is of course an estimate, not an asymptotics.
\\
\textit{2.}
The integral in the exponent in the asymptotics \eqref{answer u} can be expressed in terms of the beta function:
\begin{multline}\label{exponent}
    \int_0^{(2\beta)^{\frac1{\gamma}}}\sqrt{\frac{\beta^2}{t^{2\gamma}}-\frac14}dt
    =\frac{(2\beta)^{\frac1{\gamma}}}2\int_0^1t_1^{-\gamma}\sqrt{1-t_1^{2\gamma}}dt_1
    \\
    =\frac{(2\beta)^{\frac1{\gamma}}}{4\gamma}\int_0^1t_2^{\frac1{2\gamma}-\frac32}(1-t_2)^{\frac12}dt_2
    =\frac{(2\beta)^{\frac1{\gamma}}}{4\gamma}B\left(\frac32,\frac{1-\gamma}{2\gamma}\right).
\end{multline}
\end{rem}

Note that the asymptotics of $\lim\limits_{x\rightarrow+\infty}u(x,\varepsilon_0,f)$ contains $\Phi(f)$ which is responsible for the behaviour of solutions for the fixed value of the parameter, $\varepsilon_0=0$. This is explained by the multiscale nature of the problem: $\Phi(f)$ may be considered as the result of development of the solution in the ``fast'' variable $x$ which then enters the initial condition for the scaled system in the ``slow'' variable $t$. We consider this scaled system below.

\subsection{Reformulation}
The system \eqref{system model problem} was used in \cite{Naboko-Simonov-2012} for study of the case $\gamma=1$. Let us see why in the present form it is no longer suitable for the case $\gamma\in(\frac12,1)$. Consider $\varepsilon_0>0$. Scaling of the independent variable $x=\frac{t}{\varepsilon_0}$ leads to the system which has in some sense the following limit as $\varepsilon_0\rightarrow0^+$:
\begin{equation*}
    v'(t)=
    \frac{\beta}{t}
    \left(
    \begin{array}{cc}
    \cos t & \sin t
    \\
    \sin t & -\cos t
    \end{array}
    \right)
    v(t),
\end{equation*}
and the convergence is uniform in $t$. This system can be further analysed as $t\rightarrow+\infty$.
If we did the same for the case $\gamma\in(\frac12,1)$, we would come the system
\begin{equation*}
    \varepsilon_0^{1-\gamma}v'(t)=
    \frac{\beta}{t^{\gamma}}
    \left(
    \begin{array}{cc}
    \cos t & \sin t
    \\
    \sin t & -\cos t
    \end{array}
    \right)
    v(t),
\end{equation*}
which contains a small parameter at the derivative. This is why the difference between the cases $\gamma=1$ and $\gamma\in(\frac12,1)$ is essential. Moreover, in our case one needs to consider a different scale of the independent variable rather than $x=\varepsilon_0^{-1}t$, namely $x=\varepsilon_0^{-\frac1{\gamma}}t$. This is not immediately clear: such a substitution does not eliminate the small parameter from the derivative and leads to growing oscillations in the coefficient matrix:
\begin{equation*}
    \varepsilon_0^{\frac{1-\gamma}{\gamma}}v'(t)=
    \frac{\beta}{t^{\gamma}}
    \left(
    \begin{array}{cc}
    \cos\Bigl(\varepsilon_0^{-\frac{1-\gamma}{\gamma}}t\Bigr) & \sin\Bigl(\varepsilon_0^{-\frac{1-\gamma}{\gamma}}t\Bigr)
    \\
    \sin\Bigl(\varepsilon_0^{-\frac{1-\gamma}{\gamma}}t\Bigr) & -\cos\Bigl(\varepsilon_0^{-\frac{1-\gamma}{\gamma}}t\Bigr)
    \end{array}
    \right)
    v(t).
\end{equation*}

In view of the above let us begin not with scaling, but with getting rid of the oscillations: make the substitution
\begin{equation*}
    u(x)=
    \left(
    \begin{array}{cc}
    \cos\left(\frac{\varepsilon_0 x}2\right) & \sin\left(\frac{\varepsilon_0 x}2\right)
    \\
    \sin\left(\frac{\varepsilon_0 x}2\right) & -\cos\left(\frac{\varepsilon_0 x}2\right)
    \end{array}
    \right)
    u_1(x),
\end{equation*}
which leads to the system
\begin{equation*}
    u_1'(x)=
    \left(
    \frac{\beta}{x^{\gamma}}
    \left(
    \begin{array}{cc}
    1 & 0
    \\
    0 &-1
    \end{array}
    \right)
    +
    \frac{\varepsilon_0}2
    \left(
    \begin{array}{cc}
    0 & -1
    \\
    1 & 0
    \end{array}
    \right)
    +R_1(x,\varepsilon_0)
    \right)
    u_1(x),
\end{equation*}
where
\begin{equation*}
    R_1(x,\varepsilon_0)=
    \left(
    \begin{array}{cc}
    \cos\left(\frac{\varepsilon_0 x}2\right) & \sin\left(\frac{\varepsilon_0 x}2\right)
    \\
    \sin\left(\frac{\varepsilon_0 x}2\right) & -\cos\left(\frac{\varepsilon_0 x}2\right)
    \end{array}
    \right)
    R(x,\varepsilon_0)
    \left(
    \begin{array}{cc}
    \cos\left(\frac{\varepsilon_0 x}2\right) & \sin\left(\frac{\varepsilon_0 x}2\right)
    \\
    \sin\left(\frac{\varepsilon_0 x}2\right) & -\cos\left(\frac{\varepsilon_0 x}2\right)
    \end{array}
    \right).
\end{equation*}
Now let us scale the independent variable so as to make the first and the second terms in the coefficient matrix of the same order,
\begin{equation}\label{t}
    x=\frac{t}{|\varepsilon_0|^{\frac1{\gamma}}},
\end{equation}
and substitute
\begin{equation*}
    u_1(x)=u_2\left(|\varepsilon_0|^{\frac1{\gamma}}x\right).
\end{equation*}
This leads to two systems
\begin{equation*}
    |\varepsilon_0|^{\frac{1-\gamma}{\gamma}}u_2'(t)=
    \left(
    \left(
    \begin{array}{cc}
    \frac{\beta}{t^{\gamma}} & \mp\frac12
    \\
    \pm\frac12 & -\frac{\beta}{t^{\gamma}}
    \end{array}
    \right)
    +\frac1{|\varepsilon_0|}R_1\left(|\varepsilon_0|^{-\frac1{\gamma}}t,\varepsilon_0\right)
    \right)
    u_2(t),
\end{equation*}
for two possible signs of the parameter $\varepsilon_0$: one has to consider the cases $\varepsilon_0\rightarrow0^+$ and $\varepsilon_0\rightarrow0^-$ separately. Here the signs in $\pm$ and $\mp$ correspond to the sign of $\varepsilon_0$.
Now we can define a new small positive parameter
\begin{equation}\label{epsilon}
  \varepsilon:=|\varepsilon_0|^{\frac{1-\gamma}{\gamma}}
\end{equation}
from the set
\begin{equation}\label{U}
  U:=\bigl\{|\varepsilon_0|^{\frac{1-\gamma}{\gamma}}, \varepsilon_0\in U_0\bigr\}\backslash\{0\},
\end{equation}
and write these systems as
\begin{equation}\label{system u 2}
    \varepsilon {u_2^+}'(t)=(A_2^+(t)+R_2^+(t,\varepsilon))u_2^+(t)
\end{equation}
and
\begin{equation}\label{system u 2-}
    \varepsilon {u_2^-}'(t)=(A_2^-(t)+R_2^-(t,\varepsilon))u_2^-(t),
\end{equation}
where
\begin{equation*}
    A_2^{\pm}(t):=
    \left(
    \begin{array}{cc}
    \frac{\beta}{t^{\gamma}} & \mp\frac12
    \\
    \pm\frac12 & -\frac{\beta}{t^{\gamma}}
    \end{array}
    \right)
\end{equation*}
and
\begin{multline}\label{R 2 pm}
    R_2^{\pm}(t,\varepsilon):=\varepsilon^{-\frac{\gamma}{1-\gamma}}
    R_1\left(\varepsilon^{-\frac1{1-\gamma}}t,\pm\varepsilon^{\frac{\gamma}{1-\gamma}}\right)
    \\
    =
    \varepsilon^{-\frac{\gamma}{1-\gamma}}
    \left(
    \begin{array}{cc}
    \cos\left(\frac{t}{2\varepsilon}\right) & \sin\left(\frac{t}{2\varepsilon}\right)
    \\
    \sin\left(\frac{t}{2\varepsilon}\right) & -\cos\left(\frac{t}{2\varepsilon}\right)
    \end{array}
    \right)
    R\left(\varepsilon^{-\frac1{1-\gamma}}t,\pm\varepsilon^{\frac{\gamma}{1-\gamma}}\right)
    \left(
    \begin{array}{cc}
    \cos\left(\frac{t}{2\varepsilon}\right) & \sin\left(\frac{t}{2\varepsilon}\right)
    \\
    \sin\left(\frac{t}{2\varepsilon}\right) & -\cos\left(\frac{t}{2\varepsilon}\right)
    \end{array}
    \right).
\end{multline}
Let us introduce solutions $u_2^+(t,\varepsilon,f)$ of the system \eqref{system u 2} and $u_2^-(t,\varepsilon,f)$ of the system \eqref{system u 2-}, which correspond to $u(x,\varepsilon_0,f)$, by the formula
\begin{equation}\label{u 2 pm}
    u_2^{\pm}(t,\varepsilon,f):=
    \left(
    \begin{array}{cc}
    \cos\left(\frac{t}{2\varepsilon}\right) & \sin\left(\frac{t}{2\varepsilon}\right)
    \\
    \sin\left(\frac{t}{2\varepsilon}\right) & -\cos\left(\frac{t}{2\varepsilon}\right)
    \end{array}
    \right)
    u\left(\varepsilon^{-\frac1{1-\gamma}}t,\pm\varepsilon^{\frac{\gamma}{1-\gamma}},f\right),
\end{equation}
so that they have initial conditions
\begin{equation}\label{u 2 pm initial condition}
    u_2^{\pm}(0,\varepsilon,f)=
    \left(
      \begin{array}{cc}
        1 & 0 \\
        0 & -1 \\
      \end{array}
    \right)
    f.
\end{equation}

Each of the systems \eqref{system u 2} and \eqref{system u 2-} has an analytic part ($A_2^+$ or $A_2^-$) of the coefficient matrix and a remainder ($R_2^+$ or $R_2^-$) which is small in some sense. We will show that these remainders can be ignored away from zero. If there were no remainders, the well developed analytic theory would work here, see \cite[Chapter VIII]{Wasow-1965}. The eigenvalues of both matrices $A_2^{\pm}(t)$ are the same: $\sqrt{\frac{\beta^2}{t^{2\gamma}}-\frac14}$ and $-\sqrt{\frac{\beta^2}{t^{2\gamma}}-\frac14}$, and
\begin{equation}\label{t 0}
    t_0:=(2\beta)^{\frac1{\gamma}}
\end{equation}
is the turning point for both systems \eqref{system u 2}. At this point eigenvalues of $A_2^{\pm}$ coincide, and each of two matrices is similar to a Jordan block. The behaviour as $\varepsilon\rightarrow0^+$ of solutions of both systems \eqref{system u 2} has different character in the intervals $(0,t_0)$ and $(t_0,+\infty)$, so one needs to consider these intervals separately. In order to match the results in these two intervals we consider a small neighbourhood of the turning point and introduce a different (now again ``fast'') variable $z$ there. However, this is still not enough: we need to consider intermediate regions at both sides of the turning point and use a different method there to treat the remainders $R_2^{\pm}$. Only then matching of all the results can be done to trace the behaviour of $u_2^{\pm}(t,\varepsilon,f)$ from $t=0$ to $t=+\infty$.

In the statement of Theorem \ref{thm model problem} one can rewrite the expression \eqref{answer u} in the following way:
\begin{equation}\label{answer u 3}
    \exp
    \left(
    -\frac1{\varepsilon}\int_0^{t_0}\sqrt{\frac{\beta^2}{t^{2\gamma}}-\frac14}dt
    \right)
    \lim_{t\rightarrow+\infty}\|u_2^{\pm}(t,\varepsilon,f)\|
    \rightarrow
    C_{mp}(\beta,\gamma)|\Phi(f)|
\end{equation}
as $\varepsilon\rightarrow0^+$. Note that due to the oscillations in \eqref{u 2 pm} $u_2^{\pm}(t,\varepsilon,f)$ cannot have limits as $t\rightarrow+\infty$, and only limits of their norms exist. This can be interpreted in the sense that the vector $\Phi(f)e_+$ plays the role of the initial condition for the systems \eqref{system u 2}, and the growth of norms of the solutions $u_2^{\pm}$ takes place on the interval $(0,t_0)$ at the rate determined by the positive eigenvalue of the matrices $A_2^{\pm}$.

\subsection{Regions of the half-line}
In the next five sections we analyse the asymptotic behaviour of solutions of the system \eqref{system u 2} in five different regions of the positive half-line. In each of these regions we need to use appropriate transformations in order to simplify the system. Then we combine the results to match asymptotics, and this gives the asymptotic behaviour of the limit of the norm of the solution $u_2^+(t,\varepsilon,f)$ at $t=+\infty$ as $\varepsilon\rightarrow0^+$. In Section \ref{section matching} we will see that there is no need to perform a parallel study for the second system \eqref{system u 2-} and the solution $u_2^-$. We take points $t_{I-II}\in(0,t_0)$ and $t_{IV-V}\in(t_0,+\infty)$ sufficiently close to the turning point $t_0$. On the interval $[t_{I-II},t_{IV-V}]$ we use the variable $z=\varepsilon^{-\frac23}\left(1-\frac{t^{2\gamma}}{4\beta^2}\right)$ for which this interval corresponds to $[-Z_2(\varepsilon),Z_2(\varepsilon)]$ (travelled in the opposite direction). We further divide this interval into three regions by the points $\pm Z_0$ which in the variable $t$ correspond to the points $t_{II-III}(\varepsilon)$ and $t_{III-IV}(\varepsilon)$. The regions are displayed on the following figure:
\\
\includegraphics[width=\textwidth]{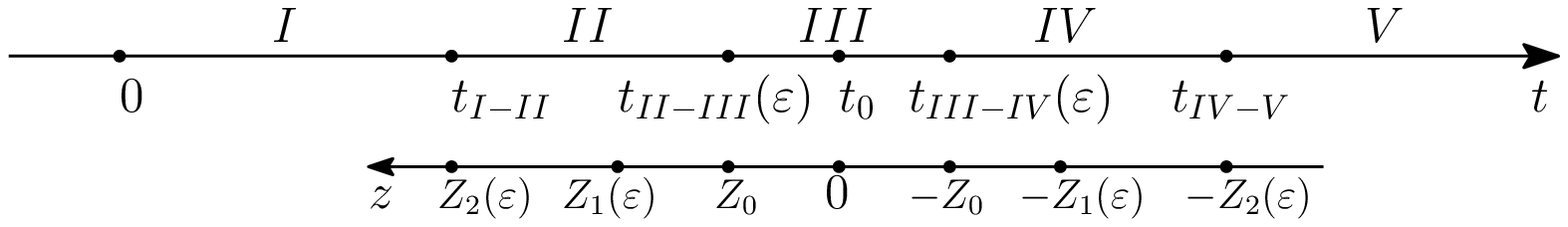}

In the region $I$ we obtain the asymptotics of the solution $u_2^+$, for the regions $II$ and $IV$ we find bases of solutions with known asymptotics, for the region $III$ we find a matrix solution and its asymptotics, and for the region $V$ we find a family of solutions determined by their behaviour as $t\rightarrow+\infty$ and establish their asymptotics as $\varepsilon\rightarrow0^+$.

In section devoted to the regions $II$--$V$ we formulate results in a general form, that is for systems of the kind of \eqref{system u 2}, imposing different sufficient conditions on remainders of these systems. We as well check that these conditions are satisfied for the remainder $R_2^+$ of the system \eqref{system u 2} itself. In notation for each of these systems and for other related objects we use indices which correspond to the region that is considered.

\section{Neighbourhood of the origin (region $I$): hyperbolic case }\label{section I}
We start with the system
\begin{equation}\label{system u I}
    \varepsilon u_2^{+'}(t)=
    \left(
    \left(
    \begin{array}{cc}
    \frac{\beta}{t^{\gamma}} & -\frac12
    \\
    \frac12 & -\frac{\beta}{t^{\gamma}}
    \end{array}
    \right)
    +R_2^+(t,\varepsilon)
    \right)
    u_2^+(t).
\end{equation}
Let us diagonalise the main term of the coefficient matrix with the transformation
\begin{equation}\label{u I1}
    u_2^+(t)=T_I(t)u_{I,1}(t),
\end{equation}
where
\begin{equation}\label{T I}
    T_I(t):=
    \left(
    \begin{array}{cc}
    1 & \frac{t^{\gamma}}{4\beta} \\
    \frac{t^{\gamma}}{2\beta\left(1+\sqrt{1-\frac{t^{2\gamma}}{4\beta^2}}\right)}
    &
    \frac12\left(1+\sqrt{1-\frac{t^{2\gamma}}{4\beta^2}}\right) \
    \end{array}
    \right)
\end{equation}
(the eigenvector in the second column is chosen so that it does not have a singularity at $t=0$). The substitution gives:
\begin{equation}\label{system u I1}
    u_{I,1}'(t)=\left(\frac{\lambda_I(t)}{\varepsilon}
    \left(
    \begin{array}{cc}
    1 & 0 \\
    0 & -1 \\
    \end{array}
    \right)
    +S_I(t)+R_{I,1}(t,\varepsilon)\right)u_{I,1}(t),
\end{equation}
where
\begin{equation}\label{lambda I}
    \lambda_I(t):=\sqrt{\frac{\beta^2}{t^{2\gamma}}-\frac14},
\end{equation}
\begin{equation}\label{S I}
    S_I(t):=\frac{\gamma}{8\beta t^{1-\gamma}\left(1-\frac{t^{2\gamma}}{4\beta^2}\right)}
    \left(
    \begin{array}{cc}
    \frac{t^{\gamma}}{\beta\left(1+\sqrt{1-\frac{t^{2\gamma}}{4\beta^2}}\right)}
    &
    -1-\sqrt{1-\frac{t^{2\gamma}}{4\beta^2}}
    \\
    -\frac{4}{1+\sqrt{1-\frac{t^{2\gamma}}{4\beta^2}}}
    &
    \frac{t^{\gamma}\left(1+2\sqrt{1-\frac{t^{2\gamma}}{4\beta^2}}\right)}{\beta\left(1+\sqrt{1-\frac{t^{2\gamma}}{4\beta^2}}\right)}
    \end{array}
    \right),
\end{equation}
\begin{equation}\label{R I1}
    R_{I,1}(t,\varepsilon):=\frac{T_I^{-1}(t)R_2^+(t,\varepsilon)T_I(t)}{\varepsilon}.
\end{equation}

The result for the region $I$ is given by the following lemma.

\begin{lem}\label{lem I result}
Let the conditions \eqref{model problem conditions} and \eqref{model problem r} hold and let for $f\in\mathbb C^2$ the function $u(x,\varepsilon_0,f)$ be the solution of the system \eqref{system model problem} with the initial condition \eqref{model problem initial condition}. Let $u_2^+(t,\varepsilon,f)$ be given by \eqref{u 2 pm} with the use of the definitions \eqref{t} of $t$ and \eqref{epsilon} of $\varepsilon$, and thus be a solution of the system \eqref{system u I} where $R_2^+(t,\varepsilon)$ is given by \eqref{R 2 pm}. For every $t\in(0,t_0)$ and $\varepsilon\in U$ the following asymptotics holds:
\begin{equation}\label{I asymptotics}
    u_2^+(t,\varepsilon,f)
    =
     T_I(t)\exp\left(
    \int_0^t\left(
    \frac{\lambda_I(\tau)}{\varepsilon}+
    S_{I,+}(\tau)
    \right)d\tau\right)
    \left(
    \Phi(f)e_+
    +o(1)
    \right),
\end{equation}
as $\varepsilon\rightarrow0^+$, where $T_I,\lambda_I,S_I$ and $e_+$ are given by the expressions \eqref{T I}, \eqref{lambda I}, \eqref{S I} and \eqref{e +-}, respectively, $S_{I,+}$ is the upper-left entry of the matrix $S_I$ and $\Phi$ is defined in Lemma \ref{lem model problem individual asymptotics}.
\end{lem}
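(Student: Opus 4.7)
The plan is to analyse \eqref{system u I1} by a WKB--Levinson construction on intervals bounded away from both endpoints of $(0, t_0)$, and then to pin down the amplitude of the growing mode by translating back to the original variables via \eqref{u 2 pm} and applying Lemma \ref{lem model problem individual asymptotics}. The argument is set up around an intermediate scale $t = \delta(\varepsilon) \to 0^+$ chosen so that $x(\varepsilon) := \delta(\varepsilon)\varepsilon^{-1/(1-\gamma)} \to +\infty$ (for instance $\delta(\varepsilon) = \varepsilon^{a}$ for any $a \in (0, \tfrac{1}{1-\gamma})$); the asymptotics of $u_2^+$ at a fixed $t \in (0, t_0)$ is deduced by matching at $t = \delta(\varepsilon)$.

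For the outer region $[\delta(\varepsilon), t]$ I substitute $u_{I,1}(\tau) = D(\tau, \varepsilon) w(\tau)$, where $D(\tau, \varepsilon)$ is the diagonal matrix with entries $\exp\bigl(\int_0^{\tau}(\pm\lambda_I/\varepsilon + S_{I,\pm})\,d\sigma\bigr)$ and $S_{I,-}$ denotes the lower-right entry of $S_I$. Both integrals are finite at $\sigma = 0$: $\lambda_I(\sigma) \sim \beta\sigma^{-\gamma}$ is integrable since $\gamma < 1$, and each entry of $S_I(\sigma)$ is $O(\sigma^{2\gamma-1})$ or $O(\sigma^{-(1-\gamma)})$, integrable since $\gamma > 0$. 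The resulting system for $w$ is purely off-diagonal in its leading part, the $(1,2)$ and $(2,1)$ entries carrying factors $\exp(\mp\frac{2}{\varepsilon}\int_0^{\tau}\lambda_I\,d\sigma)$: one exponentially suppressed, the other exponentially amplified. A contraction on a pair of Volterra integral equations (integrating the $e_+$-component forward from $\delta(\varepsilon)$ and the $e_-$-component backward from $t$, each kernel damped by the suppressed exponential) produces a basis $\Psi_{\pm}(\tau, \varepsilon) = D(\tau, \varepsilon)(e_{\pm} + o(1))$ uniformly on $[\delta(\varepsilon), t]$ as $\varepsilon \to 0^+$. The contraction holds because the $L^1$-norm of $R_{I,1}(\cdot, \varepsilon)$ on $[\delta(\varepsilon), t]$ tends to zero: after the substitution $s = \varepsilon^{-1/(1-\gamma)}\sigma$ the integral becomes $C\int_{x(\varepsilon)}^{\varepsilon^{-1/(1-\gamma)}t} r(s)\,ds$, which vanishes in the limit since $r \in L^1(\mathbb{R}_+)$.

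For the inner region I use \eqref{u 2 pm} to identify $u_2^+(\delta(\varepsilon), \varepsilon, f)$ with an orthogonal transform of $u(x(\varepsilon), \varepsilon^{\gamma/(1-\gamma)}, f)$. A Gr\"onwall-type comparison performed after the Harris--Lutz transformation used in the proof of part \textit{1} of Lemma \ref{lem model problem individual asymptotics} bounds the difference between the $\varepsilon_0 = \varepsilon^{\gamma/(1-\gamma)}$ and $\varepsilon_0 = 0$ solutions by $\|R_1(\cdot, \varepsilon^{\gamma/(1-\gamma)}) - R_1(\cdot, 0)\|_{L^1(0, x(\varepsilon))}$, which vanishes as $\varepsilon \to 0^+$ by dominated convergence from the pointwise continuity of $R(x, \cdot)$ at the origin and the uniform $L^1$-dominant $r$. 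Combining this with part \textit{2} of Lemma \ref{lem model problem individual asymptotics} yields
\begin{equation*}
u(x(\varepsilon), \varepsilon^{\gamma/(1-\gamma)}, f) = \exp\bigl(\tfrac{\beta x(\varepsilon)^{1-\gamma}}{1-\gamma}\bigr)(\Phi(f) e_+ + o(1)).
\end{equation*}
The identities $\int_0^{\delta(\varepsilon)} \lambda_I(\sigma)/\varepsilon\,d\sigma = \beta x(\varepsilon)^{1-\gamma}/(1-\gamma) + o(1)$, $\int_0^{\delta(\varepsilon)} S_{I,+}\,d\sigma \to 0$, and $T_I(\delta(\varepsilon)) \to I$ together fix the coefficient of $\Psi_+$ in the decomposition of $u_{I,1}(\delta(\varepsilon), \varepsilon, f)$ as $\Phi(f) + o(1)$; the coefficient of $\Psi_-$ contributes negligibly at any $\tau > \delta(\varepsilon)$ due to the exponential separation of the two modes. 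Propagating along $\Psi_+$ from $\delta(\varepsilon)$ to $t$ and undoing $u_2^+ = T_I u_{I,1}$ produces \eqref{I asymptotics}.

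The principal obstacle is the dual role of $\delta(\varepsilon)$: it must tend to zero slowly enough that the inner Gr\"onwall estimate remains $o(1)$ over the growing interval $[0, x(\varepsilon)]$ (which requires integrability-with-cancellation of the oscillatory part of the leading coefficient, of the type exploited in \eqref{estimate for S 3}), and fast enough that $R_{I,1}$ is uniformly $L^1$-small on $[\delta(\varepsilon), t]$ for the outer WKB construction. The assumption $\gamma > \tfrac{1}{2}$ and the polynomial decay \eqref{model problem r} together determine the admissible window for $\delta(\varepsilon)$; the availability of such a non-empty overlap window is what allows the two asymptotic regimes to be glued and produces the explicit amplitude $\Phi(f)$.
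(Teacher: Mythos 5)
Your approach is genuinely different from the paper's. The paper avoids an intermediate-scale matching entirely: after variation of parameters and the scaling \eqref{u I3}, it treats the problem as a \emph{single} Volterra integral equation on a fixed interval $[0,t_I]$ whose free term $h_{I,3}(t,\varepsilon,f)$ (formula \eqref{h I3}) already encodes the fast dynamics near the origin. The key observations are the a~priori estimate of Lemma~\ref{lem I a priori estimate} (which supplies a summable majorant), Lebesgue dominated convergence (Lemma~\ref{lem I convergence of the free term}) proving that $h_{I,3}(t,\varepsilon,f)\to\Phi(f)e_+$ pointwise, and operator-norm convergence of the Volterra kernel (Lemma~\ref{lem I operator convergence}); the limiting integral equation is then solved explicitly. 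Your two-region scheme with an intermediate scale $\delta(\varepsilon)$ is a legitimate alternative in spirit and your outer-region argument (the vanishing of $\int_{\delta(\varepsilon)}^t\|R_{I,1}(\cdot,\varepsilon)\|\,d\tau$ after the change of variables, then a Levinson-type construction) is sound.

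However, there is a genuine gap in your inner-region step. You propose to ``bound the difference between the $\varepsilon_0=\varepsilon^{\gamma/(1-\gamma)}$ and $\varepsilon_0=0$ solutions by $\|R_1(\cdot,\varepsilon^{\gamma/(1-\gamma)})-R_1(\cdot,0)\|_{L^1(0,x(\varepsilon))}$'' after the Harris--Lutz transformation of part~\textit{1} of Lemma~\ref{lem model problem individual asymptotics}. But that transformation does not exist at $\varepsilon_0=0$: the defining integral $\widehat T_u(x,0)=-\int_x^{+\infty}\operatorname{diag}(1,-1)\,\frac{dx'}{x'^{\gamma}}$ diverges because there is no oscillation to exploit, and the paper explicitly treats $\varepsilon_0=0$ by a direct application of the Levinson theorem precisely for this reason. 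So the quantity $R_1(\cdot,0)$ is not defined and the proposed comparison is ill-posed. If instead you compare the two solutions directly (without Harris--Lutz), the dominant contribution to the coefficient difference $A(\cdot,\varepsilon_0)-A(\cdot,0)$ on $[0,x(\varepsilon)]$ comes not from $R$ but from the rotation block $\frac{\beta}{x^{\gamma}}\bigl[\text{rot}(\varepsilon_0 x)-\operatorname{diag}(1,-1)\bigr]=O(\beta\varepsilon_0 x^{1-\gamma})$, whose $L^1$ norm over $[0,x(\varepsilon)]$ is of order $\varepsilon_0\,x(\varepsilon)^{2-\gamma}=\delta(\varepsilon)^{2-\gamma}\varepsilon^{-2}$. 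This forces an additional upper constraint $\delta(\varepsilon)=o(\varepsilon^{2/(2-\gamma)})$ on top of $\delta(\varepsilon)\varepsilon^{-1/(1-\gamma)}\to\infty$, and one must also verify a uniform-in-$\varepsilon_0$ growth bound $\|u(x,\varepsilon_0,f)\|\le C\exp(\beta x^{1-\gamma}/(1-\gamma))$ (the analogue of Lemma~\ref{lem I a priori estimate}) to make the Gr\"onwall comparison yield a \emph{relative} $o(1)$. You flag the scale window as ``the principal obstacle'' but attribute it to oscillatory cancellation and the exponent $\gamma>\tfrac12$; in fact the obstruction at this point is the plain $L^1$ size of the rotation discrepancy, and the window exists for all $\gamma\in(0,1)$. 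Without fixing the Harris--Lutz misuse, specifying the window, and establishing the a~priori growth bound, the inner matching step is incomplete.
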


First let us prove an a priori estimate.

\begin{lem}\label{lem I a priori estimate}
Let $t_I\in(0,t_0)$. Under the conditions of Lemma \ref{lem I result} there exists $c_{13}>0$ such that for every $t\in[0,t_I]$, $\varepsilon\in U$ and $f\in\mathbb C^2$
\begin{equation}\label{a priori estimate}
    \|u_2^+(t,\varepsilon,f)\|<c_{13}\exp\left(\frac1{\varepsilon}\int_0^t\lambda_I\right)\|f\|,
\end{equation}
where $\lambda_I$ is given by \eqref{lambda I}.
\end{lem}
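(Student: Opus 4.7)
The plan is to run a Gr\"onwall-type energy estimate on the diagonalised system \eqref{system u I1} for $u_{I,1}:=T_I^{-1}u_2^+$ and then transfer the resulting bound back to $u_2^+$. Since $t_I<t_0$ stays strictly away from the turning point, both $T_I(t)$ and $T_I^{-1}(t)$ are continuous and hence uniformly bounded on $[0,t_I]$; moreover, a direct inspection of \eqref{T I} shows $T_I(0)=\mathrm{Id}$, so by \eqref{u 2 pm initial condition} one has $\|u_{I,1}(0,\varepsilon,f)\|=\|u_2^+(0,\varepsilon,f)\|=\|f\|$. The lemma will therefore follow once I establish
\begin{equation*}
    \|u_{I,1}(t,\varepsilon,f)\|\le C\exp\Bigl(\tfrac{1}{\varepsilon}\int_0^t\lambda_I(\tau)d\tau\Bigr)\|f\|
\end{equation*}
for some constant $C$ independent of $t\in[0,t_I]$, $\varepsilon\in U$ and $f\in\mathbb C^2$.

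To this end I would differentiate the scalar $V(t):=\|u_{I,1}(t)\|^2$. The leading matrix $(\lambda_I/\varepsilon)\,\mathrm{diag}(1,-1)$ of \eqref{system u I1} is Hermitian, so it contributes $(2\lambda_I/\varepsilon)(|u_+|^2-|u_-|^2)\le(2\lambda_I/\varepsilon)V$ to $V'$, while $S_I+R_{I,1}$ contributes at most $2(\|S_I(t)\|+\|R_{I,1}(t,\varepsilon)\|)V$. Gr\"onwall's inequality then yields
\begin{equation*}
    \|u_{I,1}(t,\varepsilon,f)\|\le\|f\|\exp\Bigl(\tfrac{1}{\varepsilon}\int_0^t\lambda_I(\tau)d\tau+\int_0^{t_I}(\|S_I(\tau)\|+\|R_{I,1}(\tau,\varepsilon)\|)d\tau\Bigr),
\end{equation*}
reducing the problem to the uniform boundedness of the second integral as $\varepsilon\to 0^+$. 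The contribution of $\|S_I\|$ is harmless: by inspection of \eqref{S I}, $S_I(\tau)$ has only an integrable $\tau^{-(1-\gamma)}$ singularity at $\tau=0$ and is bounded on compact subsets of $(0,t_0)$, so $\int_0^{t_I}\|S_I(\tau)\|d\tau<+\infty$.

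The main obstacle is the uniform control of $\int_0^{t_I}\|R_{I,1}(\tau,\varepsilon)\|d\tau$. By \eqref{R I1}, $R_{I,1}=\varepsilon^{-1}T_I^{-1}R_2^+T_I$; thanks to the uniform bounds on $\|T_I^{\pm1}\|$ on $[0,t_I]$, it suffices to bound $\int_0^{t_I}\varepsilon^{-1}\|R_2^+(\tau,\varepsilon)\|d\tau$. Combining \eqref{R 2 pm} with the pointwise estimate $\|R(x,\varepsilon_0)\|<r(x)=c_r/(x+1)^{1+\alpha_r}$ from \eqref{model problem conditions}--\eqref{model problem r}, and noting that the rotations in \eqref{R 2 pm} are orthogonal, gives
\begin{equation*}
    \|R_2^+(\tau,\varepsilon)\|\le\frac{c_r\varepsilon^{-\gamma/(1-\gamma)}}{(\varepsilon^{-1/(1-\gamma)}\tau+1)^{1+\alpha_r}}.
\end{equation*}
The substitution $s=\varepsilon^{-1/(1-\gamma)}\tau$ cancels the exponents of $\varepsilon$ exactly (the prefactor $\varepsilon^{-\gamma/(1-\gamma)-1}$ meets the Jacobian $\varepsilon^{1/(1-\gamma)}$, and $-\gamma/(1-\gamma)-1+1/(1-\gamma)=0$), producing
\begin{equation*}
    \int_0^{t_I}\frac{\|R_2^+(\tau,\varepsilon)\|}{\varepsilon}d\tau\le c_r\int_0^{+\infty}\frac{ds}{(s+1)^{1+\alpha_r}}<+\infty
\end{equation*}
uniformly in $\varepsilon\in U$. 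Multiplying by the uniform bound on $\|T_I\|$ used for the final passage from $u_{I,1}$ back to $u_2^+$ yields the constant $c_{13}$ with the required property.
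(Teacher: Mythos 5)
Your proposal is correct and follows essentially the same path as the paper: pass to the diagonalised system for $u_{I,1}=T_I^{-1}u_2^+$, extract the factor $\exp(\frac1\varepsilon\int_0^t\lambda_I)$ from the leading term, and reduce the claim to the uniform boundedness of $\int_0^{t_I}\|S_I\|+\|R_{I,1}\|$ in $\varepsilon$, with the key step being the scaling $s=\varepsilon^{-1/(1-\gamma)}\tau$ that converts $\varepsilon^{-1}\int_0^{t_I}\|R_2^+\|\,d\tau$ into $\int_0^\infty r(s)\,ds$. Your derivation via the energy $V=\|u_{I,1}\|^2$ and the Hermitian structure of $\frac{\lambda_I}{\varepsilon}\mathrm{diag}(1,-1)$ is a slight repackaging of the paper's one-line ``norm of the coefficient matrix'' Gr\"onwall bound, but the resulting inequality and the subsequent exponent bookkeeping $-\frac{\gamma}{1-\gamma}-1+\frac1{1-\gamma}=0$ coincide with what the paper does (in the $\varepsilon_0$ variable).
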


\begin{proof}
Rough estimate of the norm of the coefficient matrix of the system \eqref{system u I1} immediately gives:
\begin{equation*}
   \|u_{I,1}(t,\varepsilon,f)\|<\exp\left(\int_0^t\left(\frac{\lambda_I(\tau)}{\varepsilon}+\|S_I(\tau)\|+\|R_{I,1}(\tau,\varepsilon)\|\right)d\tau\right)\|f\|.
\end{equation*}
Since $S_I\in L_1((0,t_I),M^{2\times2}(\mathbb C))$, we need the estimate $\int_0^{t_I}\|R_{I,1}(\tau,\varepsilon)\|d\tau=O(1)$ as $\varepsilon\rightarrow0^+$. To see this first note that $T_I$ and $T_I^{-1}$ are bounded in $[0,t_I]$, and so, with some $c_{14}>0$,
\begin{equation*}
    \int_0^{t_I}\|R_{I,1}(\tau,\varepsilon)\|d\tau<\frac{c_{14}}{\varepsilon}\int_0^{t_I}\|R_2^+(\tau,\varepsilon)\|d\tau,
\end{equation*}
and so using \eqref{R 2 pm}, the condition on $R$ from \eqref{model problem conditions} and summability of $r$ due to \eqref{model problem r} we have
\begin{multline*}
    \int_0^{t_I}\|R_{I,1}(\tau,\varepsilon)\|d\tau
    <\frac{c_{14}}{\varepsilon}\int_0^{t_I}\|R_2^+(\tau,\varepsilon)\|d\tau
    =\frac{c_{14}}{\varepsilon_0\varepsilon}\int_0^{t_I}\left\|R\left(\varepsilon_0^{-\frac1{\gamma}}\tau,\varepsilon_0\right)\right\|d\tau
    \\
    <\frac{c_{14}}{\varepsilon_0^{\frac1{\gamma}}}\int_0^{t_I}r\left(\varepsilon_0^{-\frac1{\gamma}}\tau\right)d\tau
    \le c_{14}\int_0^{+\infty}r(x)dx,
\end{multline*}
which is finite and does not depend on $\varepsilon$. Using again boundedness of $T_I$ and the relation \eqref{u I1} we complete the proof.
\end{proof}

\begin{rem}
Note that the estimate \eqref{a priori estimate} is not valid for $t\ge t_0$, because $S_I\notin L_1(0,t_0)$. Neighbourhood of the turning point requires special attention.
\end{rem}

Let us make the variation of parameters
\begin{equation}\label{u I2}
    u_{I,1}(t)=
    \left(
      \begin{array}{cc}
        \exp\left(\frac1{\varepsilon}\int_0^t\lambda_I\right) & 0 \\
        0 & \exp\left(-\frac1{\varepsilon}\int_0^t\lambda_I\right) \\
      \end{array}
    \right)
    u_{I,2}(t),
\end{equation}
which gives
\begin{equation*}
    u_{I,2}'(t)=
    \left(
      \begin{array}{cc}
        \exp\left(-\frac1{\varepsilon}\int_0^t\lambda_I\right) & 0 \\
        0 & \exp\left(\frac1{\varepsilon}\int_0^t\lambda_I\right) \\
      \end{array}
    \right)
    (S_I(t)+R_{I,1}(t,\varepsilon))u_{I,1}(t).
\end{equation*}
Integrating this from $0$ to $t$ and returning to $u_{I,1}$ we get the integral equation for the solution of the system \eqref{system u I1},
\begin{equation}\label{u I1f}
    u_{I,1}(t,\varepsilon,f):=T_{I}^{-1}(t)u_2^+(t,\varepsilon,f),
\end{equation}
\begin{multline}\label{eq u I1f}
    u_{I,1}(t,\varepsilon,f)=
    \left(
      \begin{array}{cc}
        \exp\left(\frac1{\varepsilon}\int_0^t\lambda_I\right) & 0 \\
        0 & -\exp\left(-\frac1{\varepsilon}\int_0^t\lambda_I\right) \\
      \end{array}
    \right)
    f
    \\
    +\int_0^t
    \left(
      \begin{array}{cc}
        \exp\left(\frac1{\varepsilon}\int_\tau^t\lambda_I\right) & 0 \\
        0 & \exp\left(-\frac1{\varepsilon}\int_\tau^t\lambda_I\right) \\
      \end{array}
    \right)
    (S_I(\tau)+R_{I,1}(\tau,\varepsilon))u_{I,1}(\tau,\varepsilon,f)d\tau.
\end{multline}
Scaling
\begin{equation}\label{u I3}
    u_{I,3}(t,\varepsilon,f):=\exp\left(-\frac1{\varepsilon}\int_0^t\lambda_I\right)u_{I,1}(t,\varepsilon,f)
\end{equation}
we come to another integral equation,
\begin{multline}\label{I integral eq}
    u_{I,3}(t,\varepsilon,f)=
    \left(
      \begin{array}{cc}
        1 & 0 \\
        0 & -\exp\left(-\frac2{\varepsilon}\int_0^t\lambda_I\right) \\
      \end{array}
    \right)
    f
    \\
    +\int_0^t
    \left(
      \begin{array}{cc}
        1 & 0 \\
        0 & \exp\left(-\frac2{\varepsilon}\int_\tau^t\lambda_I\right) \\
      \end{array}
    \right)
    (S_I(\tau)+R_{I,1}(\tau,\varepsilon))u_{I,3}(\tau,\varepsilon,f)d\tau.
\end{multline}
Rewrite it as
\begin{equation}\label{I integral eq with kernel}
    u_{I,3}(t,\varepsilon,f)=h_{I,3}(t,\varepsilon,f)+\int_0^tK_I(t,\tau,\varepsilon)u_{I,3}(\tau,\varepsilon,f)d\tau,
\end{equation}
where
\begin{multline}\label{h I3}
    h_{I,3}(t,\varepsilon,f):=
    \left(
      \begin{array}{cc}
        1 & 0 \\
        0 & -\exp\left(-\frac2{\varepsilon}\int_0^t\lambda_I\right) \\
      \end{array}
    \right)
    f
    \\
    +\int_0^t
    \left(
      \begin{array}{cc}
        1 & 0 \\
        0 & \exp\left(-\frac2{\varepsilon}\int_\tau^t\lambda_I\right) \\
      \end{array}
    \right)
    R_{I,1}(\tau,\varepsilon)u_{I,3}(\tau,\varepsilon,f)d\tau
\end{multline}
and
\begin{equation}\label{K I}
    K_I(t,\tau,\varepsilon):=
    \left(
      \begin{array}{cc}
        1 & 0 \\
        0 & \exp\left(-\frac2{\varepsilon}\int_\tau^t\lambda_I\right) \\
      \end{array}
    \right)
    S_I(\tau).
\end{equation}
Define also
\begin{equation}\label{K I(0)}
    K_I(t,\tau,0):=    \left(
      \begin{array}{cc}
        1 & 0 \\
        0 & 0\\
      \end{array}
    \right)
    S_I(\tau).
\end{equation}
Now fix an arbitrary point $t_I\in(0,t_0)$. Consider the equation \eqref{I integral eq with kernel} as an equation in the Banach space $L_{\infty}((0,t_I),\mathbb C^2)$
\begin{equation}\label{I integral eq with operator}
    u_{I,3}(\varepsilon,f)=h_{I,3}(\varepsilon,f)+\mathcal K_I(\varepsilon)u_{I,3}(\varepsilon,f),
\end{equation}
 where $\mathcal K_I(\varepsilon)$ is the Volterra operator
 \begin{equation*}
    \mathcal K_I(\varepsilon):u(t)\mapsto\int_0^tK_I(t,\tau,\varepsilon)u(\tau)d\tau
\end{equation*}
(which makes sense for $\varepsilon=0$ too).

\begin{lem}\label{lem I convergence of the free term}
Let the conditions of Lemma \ref{lem I result} hold and let $h_{I,3}$ be given by \eqref{h I3} with the use of the relations \eqref{u I1f} and \eqref{u I3}.
\\
1. For every $t\in(0,t_0)$ and $f\in\mathbb C^2$ the following limit exists:
\begin{equation}\label{h I3(0)}
    \lim_{\varepsilon\rightarrow0^+}h_{I,3}(t,\varepsilon,f)=\Phi(f)e_+=:h_{I,3}(t,0,f),
\end{equation}
where $\Phi$ is defined in Lemma \ref{lem model problem individual asymptotics}.
\\
2. There exists  $c_{15}>0$ such that for every $t\in(0,t_0)$, $\varepsilon\in U$ and $f\in\mathbb C^2$ one has $\|h_{I,3}(t,\varepsilon,f)\|<c_{15}\|f\|$.
\end{lem}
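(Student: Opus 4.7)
The plan is to treat the two assertions separately. For Part 2, the uniform bound follows quickly from the a priori estimate of Lemma \ref{lem I a priori estimate}. Indeed, combining the relations \eqref{u I1f}--\eqref{u I3} with the boundedness of $T_I^{-1}$ on $[0,t_I]$, the bound \eqref{a priori estimate} immediately yields $\|u_{I,3}(\tau,\varepsilon,f)\| \le c \|f\|$ uniformly for $\tau \in [0,t_I]$ and $\varepsilon \in U$. Substituting into \eqref{h I3} and recalling that $\int_0^{t_I} \|R_{I,1}(\tau,\varepsilon)\|\,d\tau$ was shown to be uniformly bounded in $\varepsilon$ inside the proof of the a priori estimate, one obtains $\|h_{I,3}(t,\varepsilon,f)\| < c_{15}\|f\|$.

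For Part 1, the first summand of \eqref{h I3} is immediate: for $t \in (0,t_0)$ one has $\int_0^t \lambda_I > 0$, hence $\exp\bigl(-\tfrac{2}{\varepsilon}\int_0^t \lambda_I\bigr) \to 0$ and the summand tends to $(f,e_+)\,e_+$. The crux is the integral term, and I would treat it by reverting to the original coordinate $x = \varepsilon^{-1/(1-\gamma)}\tau$, $\varepsilon_0 = \varepsilon^{\gamma/(1-\gamma)}$. The rotation-like matrix appearing both in \eqref{u 2 pm} and in \eqref{R 2 pm} is involutive (squares to the identity), and with this in hand, direct bookkeeping of the factor $\varepsilon^{-\gamma/(1-\gamma)}$ in \eqref{R 2 pm}, the division by $\varepsilon$ in \eqref{R I1} and the Jacobian $d\tau = \varepsilon^{1/(1-\gamma)}\,dx$ shows that all powers of $\varepsilon$ cancel exactly, giving
\begin{equation*}
R_{I,1}(\tau,\varepsilon)\,u_{I,3}(\tau,\varepsilon,f)\,d\tau = \exp\Bigl(-\tfrac{1}{\varepsilon}\int_0^\tau \lambda_I\Bigr)\, T_I^{-1}(\tau)\, N\bigl(\tfrac{\varepsilon_0 x}{2}\bigr)\, R(x,\varepsilon_0)\, u(x,\varepsilon_0,f)\,dx,
\end{equation*}
where $N(\theta) := \bigl(\begin{smallmatrix}\cos\theta & \sin\theta\\ \sin\theta & -\cos\theta\end{smallmatrix}\bigr)$ and $\tau = \varepsilon^{1/(1-\gamma)}x$.

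For fixed $x$, as $\varepsilon \to 0^+$ one has $\tau \to 0$, so $T_I^{-1}(\tau) \to T_I^{-1}(0) = I$, $N(\varepsilon_0 x/2) \to \mathrm{diag}(1,-1)$, and by continuity $R(x,\varepsilon_0)\to R(x,0)$, $u(x,\varepsilon_0,f) \to u(x,0,f)$; furthermore $\lambda_I(s) = \beta/s^\gamma + O(s^\gamma)$ near zero together with $\gamma > 1/2$ give $\tfrac{1}{\varepsilon}\int_0^\tau \lambda_I \to \beta x^{1-\gamma}/(1-\gamma)$. To pass to the limit under the integral sign I would exploit the sharp bound $\lambda_I(s) \le \beta/s^\gamma$: combined with \eqref{a priori estimate} and the orthogonality of $N$, this gives $\exp\bigl(-\tfrac{1}{\varepsilon}\int_0^\tau\lambda_I\bigr)\|u(x,\varepsilon_0,f)\| \le c_{13}\|f\|$ uniformly in $\varepsilon$, and multiplication by $\|R(x,\varepsilon_0)\| \le r(x)$ yields the $\varepsilon$-independent dominating function $C\|f\|\,r(x)$. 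Dominated convergence together with the explicit formula \eqref{Phi} for $\Phi$ then identifies the first-component limit of the integral as $\Phi(f) - (f,e_+)$, while the second component vanishes thanks to the extra factor $\exp\bigl(-\tfrac{2}{\varepsilon}\int_\tau^t \lambda_I\bigr)\le 1$ that tends pointwise to zero on $(0,t)$ and for which the same dominating function applies.

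The main technical obstacle is the bookkeeping needed to verify that the powers of $\varepsilon$ cancel in the change of variables, which is what allows the resulting integrand to admit an $\varepsilon$-independent integrable majorant. The two ingredients that make everything work are the involution identity $N^2 = I$ and the sharp upper bound $\lambda_I(s) \le \beta/s^\gamma$, the first giving the algebraic simplification and the second providing the exponential cancellation against the a priori growth of $u$.
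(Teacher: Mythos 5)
Your proposal is correct and follows essentially the same route as the paper: change variables to the ``fast'' coordinate $x=\varepsilon_0^{-1/\gamma}\tau$ in the integral of \eqref{h I3}, use $N^2=I$ to collapse the rotation factors, then pass to the limit by dominated convergence with the $\varepsilon$-independent majorant obtained from the a priori estimate \eqref{a priori estimate} together with $\|R(x,\varepsilon_0)\|<r(x)$, and identify the limit through the formula \eqref{Phi}. Your Part 2 is also a valid (and slightly more direct) variant: the paper instead extracts the uniform bound as a byproduct of the same summable majorant.

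One remark on the closing paragraph: the bound $\lambda_I(s)\le\beta/s^{\gamma}$ is \emph{not} what makes the majorant work, and listing it as one of ``the two ingredients'' slightly misrepresents the mechanism. The cancellation $\exp\bigl(-\tfrac1{\varepsilon}\int_0^{\tau}\lambda_I\bigr)\|u(x,\varepsilon_0,f)\|\le c_{13}\|f\|$ is exact because \eqref{a priori estimate} already controls $\|u_2^+\|$ by $\exp\bigl(\tfrac1{\varepsilon}\int_0^{\tau}\lambda_I\bigr)$, and the orthogonality of $N$ transfers this to $\|u(x,\varepsilon_0,f)\|$; the inequality $\lambda_I\le\beta/s^{\gamma}$ plays no role in this step. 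Indeed, had you instead tried to bound $\|u(x,\varepsilon_0,f)\|\le C\exp\bigl(\tfrac{\beta x^{1-\gamma}}{1-\gamma}\bigr)\|f\|$ via a crude Gronwall estimate on the model system and then cancel it against $\exp\bigl(-\tfrac1{\varepsilon}\int_0^{\tau}\lambda_I\bigr)$, the inequality goes the wrong way: the difference $\tfrac{\beta x^{1-\gamma}}{1-\gamma}-\tfrac1{\varepsilon}\int_0^{\tau}\lambda_I$ is nonnegative and in fact grows unboundedly along $x\sim\varepsilon_0^{-1/\gamma}t$ as $\varepsilon\to 0^+$, so no uniform majorant would result. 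The argument works only because \eqref{a priori estimate} is phrased in terms of $\lambda_I$ itself; that is the essential ingredient, not the comparison with $\beta/s^{\gamma}$.
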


\begin{rem}
Note that there is no convergence $h_{I,3}(\varepsilon,f)\rightarrow h_{I,3}(0,f)$ as $\varepsilon\rightarrow0^+$ in the norm of $L_{\infty}((0,t_I),\mathbb C^2)$.
\end{rem}

\begin{proof}
Rewrite the second summand in \eqref{h I3} in the following way:
\begin{multline}\label{I eq free term}
    \int_0^t
    \left(
      \begin{array}{cc}
        1 & 0 \\
        0 & \exp\left(-\frac2{\varepsilon}\int_\tau^t\lambda_I\right) \\
      \end{array}
    \right)
    R_{I,1}(\tau,\varepsilon)u_{I,3}(\tau,\varepsilon,f)d\tau
    \\
    =
    \int_0^t
    \left(
      \begin{array}{cc}
        1 & 0 \\
        0 & \exp\left(-\frac2{\varepsilon}\int_\tau^t\lambda_I\right) \\
      \end{array}
    \right)
    T_I^{-1}(\tau)R_2^+(\tau,\varepsilon)T_I(\tau)u_{I,1}(\tau,\varepsilon,f)e^{-\frac1{\varepsilon}\int_0^\tau\lambda_I}\frac{d\tau}{\varepsilon}
    \\
    =
    \int_0^t
    \left(
      \begin{array}{cc}
        1 & 0 \\
        0 & \exp\left(-\frac2{\varepsilon}\int_\tau^t\lambda_I\right) \\
      \end{array}
    \right)
    T_I^{-1}(\tau)R_2^+(\tau,\varepsilon)u_2^+(\tau,\varepsilon,f)e^{-\frac1{\varepsilon}\int_0^\tau\lambda_I}\frac{d\tau}{\varepsilon}
    \\
    =
    \int_0^t
    \left(
      \begin{array}{cc}
        1 & 0 \\
        0 & \exp\left(-\frac2{\varepsilon}\int_\tau^t\lambda_I\right) \\
      \end{array}
    \right)
    T_I^{-1}(\tau)
    \\
    \times
    \left(
    \begin{array}{cc}
    \cos\left(\frac{\tau}{2\varepsilon}\right) & \sin\left(\frac{\tau}{2\varepsilon}\right)
    \\
    \sin\left(\frac{\tau}{2\varepsilon}\right) & -\cos\left(\frac{\tau}{2\varepsilon}\right)
    \end{array}
    \right)
    R\left(\varepsilon_0^{-\frac1{\gamma}}\tau,\varepsilon_0\right)u\left(\varepsilon_0^{-\frac1{\gamma}}\tau,\varepsilon_0,f\right)
    e^{-\frac1{\varepsilon}\int_0^\tau\lambda_I}\frac{d\tau}{\varepsilon_0^{\frac1{\gamma}}}
    \\
    =
    \int_0^{\varepsilon_0^{-\frac1{\gamma}}t}
    \left(
      \begin{array}{cc}
        1 & 0 \\
        0 & \exp\left(-\frac2{\varepsilon}\int_{\varepsilon_0^{\frac1{\gamma}}x}^t\lambda_I\right) \\
      \end{array}
    \right)
    T_I^{-1}(\varepsilon_0^{\frac1{\gamma}}x)
    \\
    \times
    \left(
    \begin{array}{cc}
    \cos\left(\frac{\varepsilon_0 x}{2}\right) & \sin\left(\frac{\varepsilon_0 x}{2}\right)
    \\
    \sin\left(\frac{\varepsilon_0 x}{2}\right) & -\cos\left(\frac{\varepsilon_0 x}{2}\right)
    \end{array}
    \right)
    R(x,\varepsilon_0)u(x,\varepsilon_0,f)
    \exp\left(-\frac1{\varepsilon}\int_0^{\varepsilon_0^{\frac1{\gamma}}x}\lambda_I\right)dx.
\end{multline}
Consider the expression $\exp\left(-\frac2{\varepsilon}\int_{\varepsilon_0^{\frac1{\gamma}}x}^t\lambda_I\right)$ which is positive and is less than one. For every fixed $x$ and sufficiently small $\varepsilon$ it is less then $\exp\left(-\frac2{\varepsilon}\int_{\frac t2}^t\lambda_I\right)$ which converges to zero as $\varepsilon\rightarrow0^+$. Moreover,
\begin{multline*}
    \exp\left(-\frac1{\varepsilon}\int_0^{\varepsilon_0^{\frac1{\gamma}}x}\lambda_I\right)
    =\exp\left(-\frac1{\varepsilon}\int_0^{\varepsilon_0^{\frac1{\gamma}}x}\sqrt{\frac{\beta^2}{t^{2\gamma}}-\frac14}dt\right)
    \\
    =\exp\left(-\varepsilon_0\int_0^x\sqrt{\frac{\beta^2}{\varepsilon_0^2y^{2\gamma}}-\frac14}dy\right)
    \rightarrow
    \exp\left(-\int_0^x\frac{\beta dy}{y^{\gamma}}\right)=\exp\left(-\frac{\beta x^{1-\gamma}}{1-\gamma}\right)
\end{multline*}
as $\varepsilon\rightarrow0^+$.  Since $T_I(0)=I$ and the functions $R(x,\cdot)$ and $u(x,\cdot,f)$ are continuous in $U_0$, the expression under the integral in the result of the calculation \eqref{I eq free term} for every fixed $x$ converges to
\begin{equation*}
    \left(
      \begin{array}{cc}
        1 & 0 \\
        0 & 0 \\
      \end{array}
    \right)
    R(x,0)u(x,0,f)
    \exp\left(-\frac{\beta x^{1-\gamma}}{1-\gamma}\right)dx.
\end{equation*}
Since
\begin{equation*}
    \|u(x,\varepsilon_0,f)\|=\left\|u_2^+\left(\varepsilon_0^{\frac1{\gamma}}x,\varepsilon,f\right)\right\|
    <c_{13}\exp\left(\frac1{\varepsilon}\int_0^{\varepsilon_0^{\frac1{\gamma}}x}\lambda_I\right)\|f\|,
\end{equation*}
by Lemma \ref{lem I a priori estimate}, the estimate $\|R(x,\varepsilon_0)\|<r(x)$ provides a summable majorant for the expression under the integral, and by the Lebesgue's dominated convergence theorem we get that for every $t\in(0,t_I]$ there exists a limit of $h_{I,3}(t,\varepsilon,f)$ as $\varepsilon\rightarrow0^+$ which equals to
\begin{equation*}
    \left(
      \begin{array}{cc}
        1 & 0 \\
        0 & 0 \\
      \end{array}
    \right)\left(f+
    \int_0^{+\infty}
    R(x,0)u(x,0,f)
    \exp\left(-\frac{\beta x^{1-\gamma}}{1-\gamma}\right)dx\right)
    =\Phi(f)e_+,
\end{equation*}
according to the formula \eqref{Phi} for $\Phi$. The uniform boundedness of the $L_{\infty}((0,t_I),\mathbb C^2)$ norm of $h_{I,3}(t,\varepsilon,f)$ also follows from the existence of a summable majorant. This completes the proof.
\end{proof}

We denote by $\mathcal B(L_{\infty}((0,t_I),\mathbb C^2))$ the Banach space of bounded operators in $L_{\infty}((0,t_I),\mathbb C^2)$.

\begin{lem}\label{lem I operator convergence}
Let the conditions of Lemma \ref{lem I result} hold and $K_I(\varepsilon)$ be given by \eqref{K I} for $\varepsilon\neq0$ and by \eqref{K I(0)} for $\varepsilon=0$. Let $h_{I,3}(\varepsilon,f)$ for $\varepsilon\neq0$ be given by \eqref{h I3} with the use of the relations \eqref{u I1f}, \eqref{u I3}, and for $\varepsilon=0$ be defined in Lemma \ref{lem I convergence of the free term}. Let $t_I\in(0,t_0)$. Then the following holds.
\\
1. $\mathcal K_I(\varepsilon)\rightarrow\mathcal K_I(0)$ as $\varepsilon\rightarrow0^+$ in the norm of $\mathcal B(L_{\infty}((0,t_I),\mathbb C^2))$.
\\
2. $\mathcal K_I(0)h_{I,3}(\varepsilon,f)\rightarrow\mathcal K_I(0)h_{I,3}(0,f)$ as $\varepsilon\rightarrow0^+$ in the norm of $L_{\infty}((0,t_I),\mathbb C^2)$.
\end{lem}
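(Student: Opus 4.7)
The plan is to use the standard bound $\|\mathcal K_I(\varepsilon) - \mathcal K_I(0)\|_{\mathcal B(L_{\infty}((0,t_I),\mathbb C^2))} \le \sup_{t \in (0, t_I)} \int_0^t \|K_I(t, \tau, \varepsilon) - K_I(t, \tau, 0)\| d\tau$ that is standard for Volterra operators on $L_\infty$. The kernel difference consists of one non-trivial row, namely $\exp\bigl(-\frac{2}{\varepsilon}\int_\tau^t \lambda_I\bigr)$ times the second row of $S_I(\tau)$. Two ingredients drive the estimate: first, since $t_I < t_0 = (2\beta)^{1/\gamma}$, the function $\lambda_I$ is bounded below by a positive constant $\lambda_I(t_I)$ on $(0, t_I)$, so $\int_\tau^t \lambda_I \ge \lambda_I(t_I)(t-\tau)$; second, the explicit formula \eqref{S I} yields an estimate $\|S_I(\tau)\| \le C \tau^{\gamma-1}$ that is integrable on $(0, t_I)$ because $\gamma > \frac12 > 0$ and because $1 - t^{2\gamma}/(4\beta^2)$ is bounded away from zero on this interval. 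Given $\eta > 0$, I would split the $\tau$-integral at a small fixed $\delta$: on $(0, \delta)$, bound the exponential by $1$ to get $\int_0^\delta \|S_I\| d\tau = O(\delta^\gamma)$; on $(\delta, t)$, use the uniform bound $\|S_I(\tau)\| \le C\delta^{\gamma-1}$ and integrate the exponential explicitly, obtaining a factor of order $\varepsilon/\delta^{1-\gamma}$. Choosing $\delta$ small first and then $\varepsilon$ small yields the operator norm convergence uniformly in $t$.

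\textbf{Part 2.} By Lemma \ref{lem I convergence of the free term}, $h_{I,3}(\tau, \varepsilon, f)$ converges pointwise in $\tau$ to $\Phi(f)e_+$ and satisfies the uniform bound $\|h_{I,3}(\tau, \varepsilon, f)\| \le c_{15}\|f\|$. Since $K_I(t,\tau,0)$ does not depend on $t$, pointwise (in $t$) convergence of $F_\varepsilon(t) := (\mathcal K_I(0)(h_{I,3}(\varepsilon, f) - h_{I,3}(0, f)))(t)$ to zero follows at once from Lebesgue's dominated convergence theorem, with the integrable majorant $(c_{15}\|f\| + |\Phi(f)|)\|S_I(\tau)\|$. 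The remaining task is to upgrade this to uniform convergence on $[0, t_I]$. The plan is to observe that the family $\{F_\varepsilon\}$ is uniformly equicontinuous in $\varepsilon$, since $\|F_\varepsilon(t) - F_\varepsilon(t')\| \le (c_{15}\|f\| + |\Phi(f)|)\int_{t'}^{t}\|S_I(\tau)\| d\tau$, and $\|S_I(\tau)\|d\tau$ defines an absolutely continuous measure on $[0, t_I]$. A standard Arzel\`a--Ascoli-style argument (cover $[0,t_I]$ with finitely many short intervals so that equicontinuity controls the oscillation, then pick $\varepsilon$ small enough to make $F_\varepsilon$ small at each of the endpoints) then converts pointwise convergence plus equicontinuity into uniform convergence.

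\textbf{Main obstacle.} The sharpest step is the uniform operator norm estimate in Part 1 near $\tau = 0$: there the exponential $\exp\bigl(-\frac{2}{\varepsilon}\int_\tau^t \lambda_I\bigr)$ provides no effective decay when $t$ is also small, while the kernel $S_I$ is simultaneously singular. The saving fact is precisely that the singularity is a mild $\tau^{\gamma-1}$ and that $\int_0^\delta\tau^{\gamma-1}d\tau = \delta^\gamma/\gamma \to 0$ as $\delta \to 0$ independently of the exponential; without this balance the uniform-in-$t$ bound would fail. In Part 2, the corresponding subtlety is that $h_{I,3}(\varepsilon, f)$ is not norm-convergent in $L_\infty$ (as noted in the remark after Lemma \ref{lem I convergence of the free term}), but applying $\mathcal K_I(0)$ smooths the pointwise convergence into uniform convergence thanks to the integrability of $\|S_I\|$.
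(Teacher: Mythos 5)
Your Part 1 is essentially identical to the paper's argument: both split the $\tau$-integral at a small cut-off (the paper's $t(\Delta)$, your $\delta$), use $\|S_I(\tau)\| = O(\tau^{\gamma-1})$ near zero to control the tail, and use the lower bound on $\lambda_I$ over the compact subinterval to integrate the exponential and gain a factor of $\varepsilon$; the order of quantifiers (fix $\delta$, then $\varepsilon$) is the same. Part 2, however, is a detour. The paper observes that the integrand under $\mathcal K_I(0)$ is dominated pointwise in $\tau$, so
\begin{equation*}
\sup_{t\in[0,t_I]}\left\|\int_0^t K_I(t,\tau,0)\bigl(h_{I,3}(\tau,\varepsilon,f)-h_{I,3}(\tau,0,f)\bigr)d\tau\right\|
\le\int_0^{t_I}\|S_I(\tau)\|\,\|h_{I,3}(\tau,\varepsilon,f)-h_{I,3}(\tau,0,f)\|\,d\tau,
\end{equation*}
a single $t$-independent integral to which dominated convergence applies directly; this immediately gives $L_\infty$-convergence with no intermediate step. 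Your route --- first establish pointwise-in-$t$ convergence by dominated convergence, then upgrade to uniform convergence via equicontinuity of the family $\{F_\varepsilon\}$ (which you correctly derive from absolute continuity of $\int\|S_I\|$) --- is also valid, and the key underlying facts (pointwise convergence and uniform boundedness of $h_{I,3}$, summability of $\|S_I\|$) are the same in both versions. What the paper's argument buys is that it bypasses the Arzel\`a--Ascoli-style covering step entirely: since the supremum over $t$ of $\bigl\|\int_0^t(\cdot)\,d\tau\bigr\|$ is automatically bounded by $\int_0^{t_I}\|\cdot\|\,d\tau$, the passage from pointwise-in-$\tau$ convergence to $L_\infty$-in-$t$ convergence is a single inequality rather than a two-stage limiting process.
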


\begin{proof}
\emph{1.} It suffices to prove that
\begin{equation*}
    \max\limits_{t\in[0,t_I]}\int_0^t\|K_I(t,\tau,\varepsilon)-K_I(t,\tau,0)\|d\tau\rightarrow0\text{ as }\varepsilon\rightarrow0^+,
\end{equation*}
or that
\begin{equation*}
    \max\limits_{t\in[0,t_I]}\int_0^t\exp\left(-\frac2{\varepsilon}\int_\tau^t\lambda_I\right)\|S_I(\tau)\|d\tau\rightarrow0
    \text{ as }\varepsilon\rightarrow0^+.
\end{equation*}
On the interval $[0,t_I]$ we can use estimates $\lambda_I(t)>c_{16}$ and $\|S_I(t)\|<\frac{c_{17}}{t^{1-\gamma}}$ with some $c_{16},c_{17}>0$ which can be seen directly from the expressions \eqref{lambda I} and \eqref{S I}. Take an arbitrary small $\Delta>0$. Firstly, there exists $t(\Delta)$ such that
\begin{equation*}
    \int_0^{t(\Delta)}\|S_I(\tau)\|d\tau<\frac{\Delta}2.
\end{equation*}
Thus
\begin{equation}\label{I est 1}
    \max\limits_{t\in[0,t(\Delta)]}\int_0^t\exp\left(-\frac2{\varepsilon}\int_\tau^t\lambda_I\right)\|S_I(\tau)\|d\tau<\frac{\Delta}2.
\end{equation}
Secondly,
\begin{multline}\label{I est 2}
    \max\limits_{t\in[t(\Delta),t_I]}\int_0^t\exp\left(-\frac2{\varepsilon}\int_\tau^t\lambda_I\right)\|S_I(\tau)\|d\tau
    \\
    \le
    \frac{\Delta}2+\max\limits_{t\in[t(\Delta),t_I]}\int_{t(\Delta)}^t\exp\left(-\frac2{\varepsilon}\int_\tau^t\lambda_I\right)\|S_I(\tau)\|d\tau
    \\
    \le
    \frac{\Delta}2+\frac{c_{17}}{(t(\Delta))^{1-\gamma}}
    \max\limits_{t\in[0,t_I]}\int_0^t\exp\left(-\frac{2c_{16}}{\varepsilon}(t-\tau)\right)
    \le
    \frac{\Delta}2+\frac{c_{17}}{(t(\Delta))^{1-\gamma}}\frac{\varepsilon}{2c_{16}}.
\end{multline}
One can choose $\varepsilon(\Delta)>0$ so that for every $\varepsilon\in(0,\varepsilon(\Delta))$ holds $\frac{c_{17}}{(t(\Delta))^{1-\gamma}}\frac{\varepsilon}{2c_{16}}<\frac{\Delta}2$, and so, from the estimates \eqref{I est 1} and \eqref{I est 2},
\begin{equation*}
    \max\limits_{t\in[0,t_I]}\int_0^t\exp\left(-\frac2{\varepsilon}\int_\tau^t\lambda_I\right)\|S_I(\tau)\|d\tau<\Delta.
\end{equation*}
This proves the convergence.
\\
\emph{2.} We have:
\begin{multline*}
    \|K_I(0)(h_{I,3}(\varepsilon,f)-h_{I,3}(0,f))\|_{L_{\infty}(0,t_I)}
    \\
    \le
    \max\limits_{t\in[0,t_I]}\left\|\int_0^t
\left(
  \begin{array}{cc}
    1 & 0 \\
    0 & 0 \\
  \end{array}
\right)
S_I(\tau)(h_{I,3}(\tau,\varepsilon,f)-h_{I,3}(\tau,0,f))d\tau\right\|
\\
\le
\int_0^{t_I}\|S_I(\tau)\|\|h_{I,3}(\tau,\varepsilon,f)-h_{I,3}(\tau,0,f)\|d\tau.
\end{multline*}
In the expression under the integral $S_I$ is summable and $h_{I,3}$ is point-wise convergent to zero and uniformly bounded by Lemma \ref{lem I convergence of the free term}. Therefore, by the Lebegue's dominated convergence theorem, the integral converges to zero as $\varepsilon\rightarrow0^+$.
\end{proof}

Now we are able to prove convergence of the solution.

\begin{lem}\label{lem I convergence of the solution}
Let the conditions of Lemma \ref{lem I result} hold and let $u_{I,3}$ be defined by \eqref{u I3}. For every $t\in(0,t_0)$ there exists the limit
\begin{equation*}
    \lim_{\varepsilon\rightarrow0^+}u_{I,3}(t,\varepsilon,f)=:u_{I,3}(t,0,f),
\end{equation*}
which satisfies the following integral equation on the interval $[0,t_0)$ :
\begin{equation}\label{I integral eq in the limit}
    u_{I,3}(t,0,f)=h_{I,3}(t,0,f)+\int_0^tK_I(t,\tau,0)u_{I,3}(\tau,0,f)d\tau,
\end{equation}
where $K_I(0)$ is given by \eqref{K I(0)}, and $h_{I,3}(0,f)$ is defined in Lemma \ref{lem I convergence of the free term}.
\end{lem}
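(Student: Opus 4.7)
The plan is to route around the obstruction highlighted in the remark, namely that $h_{I,3}(\varepsilon,f)$ fails to converge in the $L_{\infty}$ norm, by peeling it off the solution. Introduce
$$v_{I,3}(\varepsilon,f):=u_{I,3}(\varepsilon,f)-h_{I,3}(\varepsilon,f)=\mathcal K_I(\varepsilon)u_{I,3}(\varepsilon,f).$$
Substituting back into \eqref{I integral eq with operator} one obtains
$$(I-\mathcal K_I(\varepsilon))v_{I,3}(\varepsilon,f)=\mathcal K_I(\varepsilon)h_{I,3}(\varepsilon,f),$$
and the crucial point is that this new inhomogeneous term \emph{does} converge in $L_{\infty}((0,t_I),\mathbb C^2)$. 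Indeed, splitting
$$\mathcal K_I(\varepsilon)h_{I,3}(\varepsilon,f)-\mathcal K_I(0)h_{I,3}(0,f)=(\mathcal K_I(\varepsilon)-\mathcal K_I(0))h_{I,3}(\varepsilon,f)+\mathcal K_I(0)(h_{I,3}(\varepsilon,f)-h_{I,3}(0,f)),$$
the first summand tends to zero by the operator-norm convergence from Lemma \ref{lem I operator convergence}(1) combined with the uniform bound on $h_{I,3}(\varepsilon,f)$ established in Lemma \ref{lem I convergence of the free term}(2), and the second summand does so by Lemma \ref{lem I operator convergence}(2).

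Next I would show that $I-\mathcal K_I(\varepsilon)$ is boundedly invertible for every $\varepsilon\in U\cup\{0\}$ with inverse bounded uniformly in $\varepsilon$, and that the inverses converge in operator norm. Since each $\mathcal K_I(\varepsilon)$ is a Volterra operator whose kernel is pointwise dominated in norm by $\|S_I(\tau)\|$, and $S_I\in L_1(0,t_I)$, iteration produces the standard bound $\|\mathcal K_I(\varepsilon)^n\|\le\frac1{n!}\bigl(\int_0^{t_I}\|S_I(\tau)\|d\tau\bigr)^n$, so the Neumann series $\sum_{n\ge0}\mathcal K_I(\varepsilon)^n$ converges uniformly in $\varepsilon$ and supplies a uniformly bounded inverse. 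Combining this with the operator-norm convergence $\mathcal K_I(\varepsilon)\to\mathcal K_I(0)$ of Lemma \ref{lem I operator convergence}(1) via the resolvent identity $(I-A)^{-1}-(I-B)^{-1}=(I-A)^{-1}(A-B)(I-B)^{-1}$ yields the desired operator-norm convergence $(I-\mathcal K_I(\varepsilon))^{-1}\to(I-\mathcal K_I(0))^{-1}$.

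Putting the two ingredients together gives $v_{I,3}(\varepsilon,f)\to v_{I,3}(0,f)$ in $L_{\infty}((0,t_I),\mathbb C^2)$, where $v_{I,3}(0,f)$ is the unique solution of $(I-\mathcal K_I(0))v_{I,3}(0,f)=\mathcal K_I(0)h_{I,3}(0,f)$. Setting $u_{I,3}(\cdot,0,f):=v_{I,3}(0,f)+h_{I,3}(\cdot,0,f)$ and adding $h_{I,3}(\cdot,0,f)$ to both sides of this identity produces precisely \eqref{I integral eq in the limit}. Pointwise convergence $u_{I,3}(t,\varepsilon,f)\to u_{I,3}(t,0,f)$ for each fixed $t\in(0,t_0)$ then follows from the $L_{\infty}$-convergence of $v_{I,3}$ together with the pointwise convergence of $h_{I,3}(t,\varepsilon,f)$ from Lemma \ref{lem I convergence of the free term}(1); the a priori restriction $t\le t_I$ is cosmetic, since $t_I\in(0,t_0)$ is arbitrary and can be chosen between any given $t$ and $t_0$. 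The main hurdle is conceptual rather than computational, and is exactly the one addressed at the outset: converting the non-uniform behaviour of the inhomogeneous term into a uniform one by exploiting the smoothing effect of the Volterra operator $\mathcal K_I(\varepsilon)$.
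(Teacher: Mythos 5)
Your proof is correct and follows essentially the same route as the paper: the paper likewise isolates $u_{I,3}(\varepsilon,f)-h_{I,3}(\varepsilon,f)=(I-\mathcal K_I(\varepsilon))^{-1}\mathcal K_I(\varepsilon)h_{I,3}(\varepsilon,f)$, shows the right-hand side converges in $L_\infty$ using the same splitting you employ, deduces the uniform bound on the resolvents from the Volterra structure, and then passes to the integral equation by applying $I-\mathcal K_I(0)$. Your use of the resolvent identity in place of the paper's algebraic rearrangement of $(I-\mathcal K_I(\varepsilon))^{-1}$ is only a cosmetic difference.
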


\begin{rem}
Note that again, as with $h_{I,3}(\varepsilon,f)$, there is no convergence in the norm of $L_{\infty}((0,t_I),\mathbb C^2)$. However, the difference $u_{I,3}(\varepsilon,f)-h_{I,3}(\varepsilon,f)$ converges in this norm.
\end{rem}

\begin{proof} Take some $t_I\in(0,t_0)$. Let us rewrite the equation \eqref{I integral eq with operator} as
\begin{equation*}
    u_{I,3}(\varepsilon,f)-h_{I,3}(\varepsilon,f)=\mathcal K_I(\varepsilon)h_{I,3}(\varepsilon,f)+\mathcal K_I(\varepsilon)(u_{I,3}(\varepsilon,f)-h_{I,3}(\varepsilon,f))
\end{equation*}
and then as
\begin{equation}\label{I eq 1}
    u_{I,3}(\varepsilon,f)-h_{I,3}(\varepsilon,f)=(I-\mathcal K_I(\varepsilon))^{-1}\mathcal K_I(\varepsilon)h_{I,3}(\varepsilon,f).
\end{equation}
By Lemma \ref{lem I operator convergence}, due to the boundedness of $h_{I,3}(\varepsilon,f)$ in the norm of $L_{\infty}((0,t_I),\mathbb C^2)$ provided by Lemma \ref{lem I convergence of the solution}, we have:
\begin{equation*}
    \mathcal K_I(\varepsilon)h_{I,3}(\varepsilon,f)
    =(\mathcal K_I(\varepsilon)-\mathcal K_I(0))h_{I,3}(\varepsilon,f)+\mathcal K_I(0)h_{I,3}(\varepsilon,f)
    \rightarrow\mathcal K_I(0)h_{I,3}(0,f)
\end{equation*}
 as $\varepsilon\rightarrow0^+$. Since $\mathcal K_I(\varepsilon)$ are Volterra operators, the norms $\|(I-\mathcal K_I(\varepsilon))^{-1}\|\le\exp(\|\mathcal K_I(\varepsilon)\|)$ are bounded as $\varepsilon\rightarrow0^+$. Therefore
\begin{equation*}
    (I-\mathcal K_I(\varepsilon))^{-1}=(I+(I-\mathcal K_I(\varepsilon))^{-1}(\mathcal K_I(\varepsilon)-\mathcal K_I(0)))(I-\mathcal K_I(0))^{-1}\rightarrow(I-\mathcal K_I(0))^{-1}
\end{equation*}
in the norm of $\mathcal B(L_{\infty}((0,t_I),\mathbb C^2))$. Hence in the equality \eqref{I eq 1} there exists the limit
\begin{equation*}
    \lim_{\varepsilon\rightarrow0^+}(u_{I,3}(\varepsilon,f)-h_{I,3}(\varepsilon,f))=(I-\mathcal K_I(0))^{-1}\mathcal K_I(0)h_{I,3}(0,f)
\end{equation*}
in the norm of $L_{\infty}((0,t_I),\mathbb C^2)$ which means that $u_{I,3}(t,\varepsilon,f)-h_{I,3}(t,\varepsilon,f)$ has a limit for every $t\in[0,t_I]$ and uniformly in $t$. Since $t_I$ was chosen arbitrarily, the limit exists for every $t\in(0,t_0)$, however, without uniformity. By Lemma \ref{lem I convergence of the free term} for every $t\in(0,t_0)$ the function $h_{I,3}(t,\varepsilon,f)$ has a limit as $\varepsilon\rightarrow0^+$, also not uniform in $t$. On the interval $[0,t_I]$ we have the equality
\begin{equation*}
    u_{I,3}(0,f)=h_{I,3}(0,f)+(I-\mathcal K_I(0))^{-1}\mathcal K_I(0)h_{I,3}(0,f).
\end{equation*}
Applying $I-\mathcal K_I(0)$ to both sides we arrive at the formula \eqref{I integral eq in the limit}.
\end{proof}

Now we can prove Lemma \ref{lem I result}.

\begin{proof}[Proof of Lemma \ref{lem I result}]
Equation \eqref{I integral eq in the limit} is in fact simpler than it looks: it is merely an equation for the upper component of $u_{I,3}$ which can be solved explicitly. Indeed, using the formulae \eqref{h I3(0)} and \eqref{K I(0)} for the initial condition and the kernel we get the equation
\begin{equation*}
    u_{I,3}(t,0,f)=\Phi(f)e_++\int_0^t
    \left(
      \begin{array}{cc}
        1 & 0 \\
        0 & 0 \\
      \end{array}
    \right)
    S_I(\tau)
    u_{I,3}(\tau,0,f)d\tau,
\end{equation*}
and the solution is given by the expression
\begin{equation}\label{I asymptotics u I3}
    u_{I,3}(t,0,f)=\Phi(f)\exp\left(\int_0^tS_{I,+}(\tau)d\tau\right)e_+.
\end{equation}
Putting this into \eqref{u I3} and \eqref{u I1} we obtain the asymptotics \eqref{I asymptotics}, which completes the proof.
\end{proof}

\section{Neighbourhood of the infinity (region $V$): elliptic case }\label{section V}
We start with the same system in different notation:
\begin{equation}\label{system u V}
    \varepsilon u_{V}'(t)=
    \left(
    \left(
    \begin{array}{cc}
    \frac{\beta}{t^{\gamma}} & -\frac12
    \\
    \frac12 & -\frac{\beta}{t^{\gamma}}
    \end{array}
    \right)
    +R_{V}(t,\varepsilon)
    \right)
    u_{V}(t).
\end{equation}
In the same way as in the region $I$ let us diagonalise the main term of the coefficient matrix by the transformation
\begin{equation}\label{u V1}
    u_V(t)=T_V(t)u_{V,1}(t),
\end{equation}
where
\begin{equation}\label{T V}
    T_V(t):=
    \left(
    \begin{array}{cc}
    1
    &
    1
    \\
    \frac{2\beta}{t^{\gamma}}+i\sqrt{1-\frac{4\beta^2}{t^{2\gamma}}}
    &
    \frac{2\beta}{t^{\gamma}}-i\sqrt{1-\frac{4\beta^2}{t^{2\gamma}}}
    \end{array}
    \right).
\end{equation}
Substitution into the system \eqref{system u V} gives:
\begin{equation}\label{system u V1}
    u_{V,1}'(t)=\left(\frac{\lambda_V(t)}{\varepsilon}
    \left(
    \begin{array}{cc}
    1 & 0 \\
    0 & -1 \\
    \end{array}
    \right)
    +S_V(t)+R_{V,1}(t,\varepsilon)\right)u_{I,1}(t),
\end{equation}
where
\begin{equation}\label{lambda V}
    \lambda_V(t):=-\frac i2\sqrt{1-\frac{4\beta^2}{t^{2\gamma}}},
\end{equation}
\begin{equation}\label{S V}
    S_V(t):=\frac{\beta\gamma}{\left(1-\frac{4\beta^2}{t^{2\gamma}}\right)t^{1+\gamma}}
    \left(
    \begin{array}{cc}
    -\frac{2\beta}{t^{\gamma}}-i\sqrt{1-\frac{4\beta^2}{t^{2\gamma}}}
    &
    \frac{2\beta}{t^{\gamma}}-i\sqrt{1-\frac{4\beta^2}{t^{2\gamma}}}
    \\
    \frac{2\beta}{t^{\gamma}}+i\sqrt{1-\frac{4\beta^2}{t^{2\gamma}}}
    &
    -\frac{2\beta}{t^{\gamma}}+i\sqrt{1-\frac{4\beta^2}{t^{2\gamma}}}
    \end{array}
    \right),
\end{equation}
\begin{equation}\label{R V1}
    R_{V,1}(t,\varepsilon):=\frac{T_V^{-1}(t)R_{V}(t,\varepsilon)T_V(t)}{\varepsilon}.
\end{equation}

Here we consider solutions which are defined not by their values at zero, but rather by their asymptotics at infinity. Asymptotics of solutions in the region $V$ is given by the following lemma.

\begin{lem}\label{lem V answer}
Let $\beta>0,\gamma\in(\frac12,1),t_0=(2\beta)^{\frac1{\gamma}}$ and $g\in\mathbb C^2$. If
\begin{equation}\label{condition R V integral}
    \int_{t_0}^{+\infty}\|R_V(t,\varepsilon)\|dt=o(\varepsilon)\text{ as }\varepsilon\rightarrow0^+,
\end{equation}
then for every $\varepsilon\in U$ there exists a solution $u_V(t,\varepsilon,g)$ of the system \eqref{system u V} on the interval $(t_0,+\infty)$ with the following asymptotics:
\begin{multline}\label{V asymptotics}
    u_V(t,\varepsilon,g)
    =
     T_V(t)
     \\
     \times
     \Biggl(
     \exp
     \Biggl(
    \int_{t_0}^t
    \frac{\lambda_V(\tau)}{\varepsilon}
    \left(
      \begin{array}{cc}
        1 & 0 \\
        0 & -1 \\
      \end{array}
    \right)
    d\tau
    -\int_t^{+\infty}
    \text{\emph{diag}}\,
    S_{V}(\tau)
    d\tau
    \Biggr)
    g
    +o(1)
    \Biggr)
\end{multline}
as $\varepsilon\rightarrow0^+$, where the convergence of the term $o(1)$ is uniform with respect to $t\in[t_V,+\infty)$ for every $t_V\in(t_0,+\infty)$. Moreover, for every $\varepsilon\in U$
\begin{equation}\label{V convergence u V t->infty}
    \|u_V(t,\varepsilon,g)\|\rightarrow\sqrt2\|g\|\text{ as }t\rightarrow+\infty.
\end{equation}
\end{lem}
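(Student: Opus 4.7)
The plan is to reduce the system by a sequence of transformations to one whose coefficient matrix has $L_1$-norm on $[t_V,+\infty)$ vanishing as $\varepsilon\to 0^+$; the asymptotic Levinson theorem applied backwards from $+\infty$ then furnishes the desired solution, and undoing the transformations yields \eqref{V asymptotics}. First I would diagonalise the leading symbol via \eqref{u V1}, obtaining \eqref{system u V1} with $\lambda_V$ purely imaginary on $(t_0,+\infty)$ and with $\operatorname{diag} S_V = O(t^{-1-\gamma})$ integrable at infinity. Then I would absorb the expected phase and the diagonal of $S_V$ by setting $u_{V,1}(t)=\Lambda(t,\varepsilon)u_{V,2}(t)$ with
\begin{equation*}
\Lambda(t,\varepsilon):=\exp\!\left(\int_{t_0}^t\!\tfrac{\lambda_V(s)}{\varepsilon}\operatorname{diag}(1,-1)\,ds-\int_t^{+\infty}\!\operatorname{diag} S_V(s)\,ds\right),
\end{equation*}
which is diagonal and uniformly bounded on $[t_V,+\infty)$. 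The equation for $u_{V,2}$ retains only the off-diagonal part $S_V^{\mathrm{off}}$ (conjugated by $\Lambda$, hence carrying rapidly oscillating phases $\exp(\pm 2\varepsilon^{-1}\!\int_{t_0}^{\cdot}\!\lambda_V)$ in its off-diagonal entries) and the conjugated remainder $\Lambda^{-1}R_{V,1}\Lambda$.

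The central issue is that $\|S_V^{\mathrm{off}}\|=O(t^{-1-\gamma})$ is $\varepsilon$-independent, so the tail integral does not shrink with $\varepsilon$ directly; the needed gain has to come entirely from the oscillations. I would extract it by one further Harris--Lutz transformation $u_{V,2}=(I+\widetilde T(t,\varepsilon))u_{V,3}$ with
\begin{equation*}
\widetilde T(t,\varepsilon):=-\!\int_t^{+\infty}\!\Lambda^{-1}(\tau,\varepsilon)S_V^{\mathrm{off}}(\tau)\Lambda(\tau,\varepsilon)\,d\tau.
\end{equation*}
Integration by parts (legitimate because $\lambda_V$ is bounded away from zero on $[t_V,+\infty)$ and $S_V^{\mathrm{off}}/\lambda_V$ is $C^1$ with $L_1$-derivative there) would give $\|\widetilde T(t,\varepsilon)\|=O(\varepsilon)$ uniformly in $t\ge t_V$. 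The new system reads $u_{V,3}'=\widetilde R(t,\varepsilon)u_{V,3}$ where $\widetilde R=(I+\widetilde T)^{-1}\bigl[\Lambda^{-1}S_V^{\mathrm{off}}\Lambda\,\widetilde T+\Lambda^{-1}R_{V,1}\Lambda\,(I+\widetilde T)\bigr]$: the first summand contributes $O(\varepsilon)$ to the $L_1$-norm, while the second satisfies $\int_{t_V}^{+\infty}\!\|\Lambda^{-1}R_{V,1}\Lambda\|=o(1)$ by \eqref{condition R V integral} combined with the factor $\varepsilon^{-1}$ in \eqref{R V1} and uniform boundedness of $T_V^{\pm 1}$ on $[t_V,+\infty)$. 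Hence $\|\widetilde R(\cdot,\varepsilon)\|_{L_1([t_V,+\infty))}=o(1)$ as $\varepsilon\to 0^+$.

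For each fixed $\varepsilon\in U$ the Volterra iteration on $u_{V,3}(t)=g-\int_t^{+\infty}\!\widetilde R(\tau,\varepsilon)u_{V,3}(\tau)\,d\tau$ produces the unique solution with $u_{V,3}(t,\varepsilon,g)\to g$ as $t\to+\infty$, extended to $(t_0,+\infty)$ by the ODE; Gr\"onwall's inequality then yields $\|u_{V,3}(t,\varepsilon,g)-g\|\le\|g\|\bigl(\exp(\|\widetilde R\|_{L_1([t,+\infty))})-1\bigr)$, which is $o(1)$ uniformly in $t\ge t_V$ as $\varepsilon\to 0^+$ by the previous step. Combining with $\widetilde T=O(\varepsilon)$ and unwinding the chain of substitutions gives \eqref{V asymptotics}. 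The norm limit \eqref{V convergence u V t->infty} is then immediate: as $t\to+\infty$ one has $T_V(t)\to\bigl(\begin{smallmatrix}1 & 1\\ i & -i\end{smallmatrix}\bigr)$, which is $\sqrt 2$ times an isometry on $\mathbb C^2$, while $\Lambda(t,\varepsilon)$ becomes asymptotically a unitary diagonal matrix and $u_{V,2}(t,\varepsilon,g)=(I+\widetilde T)u_{V,3}\to g$ for each fixed $\varepsilon$. The hardest step will be justifying the $O(\varepsilon)$ estimate on $\widetilde T$ uniformly on the whole semi-infinite interval $[t_V,+\infty)$: the phase-cancellation gain produced by integration by parts must be controlled simultaneously at infinity (where $S_V^{\mathrm{off}}$ decays but $\lambda_V$ tends to a nonzero limit) and near the left endpoint $t_V$, with constants that must stay bounded as $t_V$ is allowed to approach the turning point $t_0$.
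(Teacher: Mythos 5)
Your proof is correct, but it takes a genuinely different route from the paper at the crucial step. Both proofs begin the same way: diagonalise via $T_V$ and strip off the phase by a diagonal variation-of-parameters factor (you fold $\operatorname{diag}S_V$ into this factor as well, the paper does so a step later; this is a cosmetic difference). The key divergence is how to handle the oscillating off-diagonal part $\Lambda^{-1}S_V^{\mathrm{off}}\Lambda$, whose $L_1$-norm over $[t_V,+\infty)$ is \emph{not} $o(1)$ in $\varepsilon$. The paper sets up the integral equation $u_{V,2}=g+\mathcal K_V(\varepsilon)u_{V,2}$ directly and proves that, although $\mathcal K_V(\varepsilon)\not\to\mathcal K_V(0)$ in operator norm, it does converge \emph{strongly}; the mechanism for the strong convergence is the Riemann--Lebesgue lemma applied, after the change of variable $s(\tau)=\int_{t_0}^{\tau}\sqrt{1-4\beta^2\tau'^{-2\gamma}}\,d\tau'$, to the oscillatory integral $\int S_{V,12}u_2\,e^{is/\varepsilon}\,ds$ (Lemma 6.6). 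Your proposal instead removes the oscillating off-diagonal part by an additional Harris--Lutz-type conjugation $(I+\widetilde T)$, and uses integration by parts against the non-degenerate phase $\pm\tfrac{2}{\varepsilon}\int\lambda_V$ to show $\|\widetilde T\|_\infty=O(\varepsilon)$ uniformly on $[t_V,+\infty)$, after which the remaining coefficient has $o(1)$ $L_1$-norm and a Volterra/Gr\"onwall argument closes the proof.

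The two approaches have genuinely different trade-offs. Your integration-by-parts argument is more elementary (no operator theory) and is quantitative: it yields a convergence rate $O(\varepsilon)+$ (the rate of the remainder bound) rather than mere $o(1)$. Its price is the extra regularity hypothesis you correctly flagged: you need $(S_V^{\mathrm{off}}/\lambda_V)$, together with the bounded $e^{\chi}$ factor from $\operatorname{diag}S_V$, to be $C^1$ with $L_1$ derivative on $[t_V,+\infty)$. This is easily checked from the explicit formula \eqref{S V}, since everything there is smooth away from the turning point and decays like $t^{-1-\gamma}$ with derivative $O(t^{-2-\gamma})$, so the hypothesis is harmless \emph{in this paper}, but it is an assumption beyond bare summability. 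The paper's Riemann--Lebesgue route needs only $S_V u\in L_1$, so it is slightly more robust, at the cost of being non-quantitative. Your identification of the hard step is accurate: the uniform $O(\varepsilon)$ estimate on $\widetilde T$ is where the constant depends on $t_V$ (it blows up as $t_V\downarrow t_0$ because $\lambda_V$ vanishes there), but since the lemma is stated for fixed $t_V>t_0$ and the later matching point $t_{IV-V}$ is bounded away from $t_0$, this causes no trouble. One small remark: for the mere \emph{existence} claim for every $\varepsilon\in U$ (not just small $\varepsilon$), you should, as the paper does, invoke Levinson's theorem directly rather than rely on invertibility of $I+\widetilde T$, which you only control for small $\varepsilon$.
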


Let us rewrite the condition \eqref{condition R V integral} in terms of the remainder $R_{V,1}$.

\begin{lem}\label{lem V remainder}
Under the conditions of Lemma \ref{lem V answer}, if for some $t_V\in(t_0,+\infty)$
\begin{equation*}
   \int_{t_V}^{+\infty}\|R_V(t,\varepsilon)\|dt=o(\varepsilon)\text{ as }\varepsilon\rightarrow0^+,
   \text{ then }
   \int_{t_V}^{+\infty}\|R_{V,1}(t,\varepsilon)\|dt\rightarrow0\text{ as }\varepsilon\rightarrow0^+.
\end{equation*}
\end{lem}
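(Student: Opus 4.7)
The plan is to exploit the definition \eqref{R V1} of $R_{V,1}$ directly: since
\[
R_{V,1}(t,\varepsilon)=\frac{T_V^{-1}(t)\,R_V(t,\varepsilon)\,T_V(t)}{\varepsilon},
\]
the submultiplicativity of the operator norm yields
\[
\|R_{V,1}(t,\varepsilon)\|\le\frac{\|T_V^{-1}(t)\|\,\|T_V(t)\|}{\varepsilon}\,\|R_V(t,\varepsilon)\|.
\]
Hence the whole statement will follow once I check that $\|T_V(t)\|\,\|T_V^{-1}(t)\|$ is bounded uniformly on $[t_V,+\infty)$ by some constant $C$ depending only on $\beta,\gamma,t_V$.

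For the boundedness of $T_V$ and $T_V^{-1}$ on $[t_V,+\infty)$ I would argue as follows. Inspecting the explicit formula \eqref{T V}, the entries of $T_V(t)$ are bounded on $[t_V,+\infty)$ because $\frac{2\beta}{t^\gamma}\le\frac{2\beta}{t_V^\gamma}$ and $\sqrt{1-\tfrac{4\beta^2}{t^{2\gamma}}}\in[0,1)$. A direct computation gives
\[
\det T_V(t)=-2i\sqrt{1-\tfrac{4\beta^2}{t^{2\gamma}}},
\]
and since $t_V>t_0=(2\beta)^{1/\gamma}$, the quantity $|\det T_V(t)|$ is bounded below by $2\sqrt{1-\tfrac{4\beta^2}{t_V^{2\gamma}}}>0$ uniformly in $t\in[t_V,+\infty)$. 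Combined with the boundedness of the entries (via the cofactor formula for the inverse), this supplies the desired uniform constant $C$.

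With that in hand, the conclusion is immediate:
\[
\int_{t_V}^{+\infty}\|R_{V,1}(t,\varepsilon)\|\,dt\le\frac{C^2}{\varepsilon}\int_{t_V}^{+\infty}\|R_V(t,\varepsilon)\|\,dt=\frac{C^2}{\varepsilon}\cdot o(\varepsilon)=o(1)
\]
as $\varepsilon\to0^+$, which is exactly the claim. There is no real obstacle here; the only quantitative point requiring care is that the lower bound on $|\det T_V|$ degenerates as $t_V\downarrow t_0$, which is why the lemma is stated for $t_V$ strictly greater than the turning point $t_0$ and why the factor $1/\varepsilon$ in \eqref{R V1} is compensated exactly by the hypothesis $\int_{t_V}^{+\infty}\|R_V\|\,dt=o(\varepsilon)$ rather than merely $O(\varepsilon)$.
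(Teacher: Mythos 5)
Your proof is correct and takes essentially the same route as the paper: both use the definition \eqref{R V1}, pull out the factor $\frac1\varepsilon$, and invoke boundedness of $T_V$ and $T_V^{-1}$ on $[t_V,+\infty)$ so that the hypothesis $\int_{t_V}^{+\infty}\|R_V\|\,dt=o(\varepsilon)$ gives the conclusion. You merely spell out the boundedness claim (via the determinant formula $\det T_V=-2i\sqrt{1-4\beta^2/t^{2\gamma}}$, bounded away from zero on $[t_V,+\infty)$), which the paper asserts without comment.
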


\begin{proof}
Since $T_V$ and $T_V^{-1}$ are bounded in $[t_V,+\infty) $, one has with some $c_{18}>0$ using the definition \eqref{R V1}:
\begin{equation*}
    \int_{t_V}^{+\infty}\|R_{V,1}(\tau,\varepsilon)\|d\tau<\frac{c_{18}}{\varepsilon}\int_{t_V}^{+\infty}\|R_V(\tau,\varepsilon)\|d\tau,
\end{equation*}
which goes to zero as $\varepsilon\rightarrow0^+$.
\end{proof}

Now let us see that the condition \eqref{condition R V integral} is satisfied if we take $R_V=R_2^+$.

\begin{lem}\label{lem V R 2+ estimate}
Let $R_2^+$ be given by \eqref{R 2 pm}, $r\in L_1(\mathbb R_+)$ and conditions \eqref{model problem conditions} hold. Then
\begin{equation*}
   \int_{t_0}^{+\infty}\|R_2^+(t,\varepsilon)\|dt=o(\varepsilon)\text{ as }\varepsilon\rightarrow0^+.
\end{equation*}
\end{lem}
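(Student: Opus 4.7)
The plan is to bound $\|R_2^+(t,\varepsilon)\|$ pointwise by using the explicit form of \eqref{R 2 pm} and then to perform a change of variables in the integral that converts the prefactor $\varepsilon^{-\gamma/(1-\gamma)}$ into an overall factor of $\varepsilon$, with the remaining integral being a tail of $r \in L_1(\mathbb{R}_+)$.

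More precisely, I would first observe that the matrices
$\left(\begin{smallmatrix}\cos(t/(2\varepsilon)) & \sin(t/(2\varepsilon)) \\ \sin(t/(2\varepsilon)) & -\cos(t/(2\varepsilon))\end{smallmatrix}\right)$
are orthogonal (in fact reflections), so their operator norm equals $1$. Combined with the estimate $\|R(x,\varepsilon_0)\| < r(x)$ from \eqref{model problem conditions}, this immediately gives the pointwise bound
\begin{equation*}
    \|R_2^+(t,\varepsilon)\| \le \varepsilon^{-\frac{\gamma}{1-\gamma}}\, r\!\left(\varepsilon^{-\frac{1}{1-\gamma}} t\right).
\end{equation*}

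Next, I would perform the substitution $x = \varepsilon^{-\frac{1}{1-\gamma}} t$, so that $dt = \varepsilon^{\frac{1}{1-\gamma}}\,dx$, obtaining
\begin{equation*}
    \int_{t_0}^{+\infty} \|R_2^+(t,\varepsilon)\|\, dt
    \le \varepsilon^{-\frac{\gamma}{1-\gamma}} \cdot \varepsilon^{\frac{1}{1-\gamma}} \int_{\varepsilon^{-\frac{1}{1-\gamma}} t_0}^{+\infty} r(x)\, dx
    = \varepsilon \int_{\varepsilon^{-\frac{1}{1-\gamma}} t_0}^{+\infty} r(x)\, dx.
\end{equation*}
The combinatorics of the exponents is the sole numerical point of the argument: the powers $-\gamma/(1-\gamma)$ and $1/(1-\gamma)$ add up to $(1-\gamma)/(1-\gamma) = 1$.

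Finally, since $\gamma \in (\tfrac12,1)$ we have $\varepsilon^{-\frac{1}{1-\gamma}} t_0 \to +\infty$ as $\varepsilon \to 0^+$, and $r \in L_1(\mathbb{R}_+)$ by \eqref{model problem r}, so the tail integral $\int_{\varepsilon^{-1/(1-\gamma)} t_0}^{+\infty} r(x)\,dx$ tends to $0$. Therefore $\int_{t_0}^{+\infty} \|R_2^+(t,\varepsilon)\|\, dt = \varepsilon \cdot o(1) = o(\varepsilon)$, as required. There is no genuine obstacle here; the lemma is essentially a bookkeeping statement that confirms the scaling chosen in \eqref{t}--\eqref{epsilon} is compatible with the summability of $r$ so that the hypothesis \eqref{condition R V integral} of Lemma \ref{lem V answer} is satisfied.
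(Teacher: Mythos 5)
Your proof is correct and follows exactly the same route as the paper's own proof: use the orthogonality of the rotation matrices to reduce $\|R_2^+(t,\varepsilon)\|$ to $\varepsilon^{-\gamma/(1-\gamma)}\|R(\varepsilon^{-1/(1-\gamma)}t,\varepsilon_0)\|$, bound by $r$, and rescale $x=\varepsilon^{-1/(1-\gamma)}t$ so the prefactor becomes exactly $\varepsilon$ and the remaining factor is a vanishing tail of the integrable majorant. The paper merely phrases the intermediate powers in terms of $\varepsilon_0=\varepsilon^{\gamma/(1-\gamma)}$ rather than $\varepsilon$; the computation is identical.
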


\begin{proof}
Using the definition \eqref{R 2 pm}, the estimate of $R$ from \eqref{model problem conditions}, the relation \eqref{epsilon} and summability of the function $r$ we have:
\begin{multline}\label{V estimate R V1}
    \int_{t_0}^{+\infty}\|R_2^+(\tau,\varepsilon)\|d\tau
    =\frac1{\varepsilon_0}\int_{t_0}^{+\infty}\left\|R\left(\varepsilon_0^{-\frac1{\gamma}}\tau,\varepsilon_0\right)\right\|d\tau
    \\
    \le\varepsilon\varepsilon_0^{-\frac1{\gamma}}\int_{t_0}^{+\infty}r\left(\varepsilon_0^{-\frac1{\gamma}}\tau\right)d\tau
    =\varepsilon\int_{\varepsilon_0^{-\frac1{\gamma}}t_0}^{+\infty}r(x)dx=o(\varepsilon)\text{ as }\varepsilon\rightarrow0^+.
\end{multline}
This gives the result.
\end{proof}

It is more convenient for the later use to reformulate this result in different terms. Making the variation of parameters in the system \eqref{system u V1},
\begin{equation}\label{u V2}
    u_{V,1}(t)=
    \left(
      \begin{array}{cc}
        \exp\left(\frac1{\varepsilon}\int_{t_0}^t\lambda_V\right) & 0 \\
        0 & \exp\left(-\frac1{\varepsilon}\int_{t_0}^t\lambda_V\right) \\
      \end{array}
    \right)
    u_{V,2}(t),
\end{equation}
we come to the system
\begin{multline}\label{system u V2}
    u_{V,2}'(t)=
    \left(
      \begin{array}{cc}
        \exp\left(-\frac1{\varepsilon}\int_{t_0}^t\lambda_V\right) & 0 \\
        0 & \exp\left(\frac1{\varepsilon}\int_{t_0}^t\lambda_V\right) \\
      \end{array}
    \right)
    (S_V(t)+R_{V,1}(t,\varepsilon))
    \\
    \times
    \left(
      \begin{array}{cc}
        \exp\left(\frac1{\varepsilon}\int_{t_0}^t\lambda_V\right) & 0 \\
        0 & \exp\left(-\frac1{\varepsilon}\int_{t_0}^t\lambda_V\right) \\
      \end{array}
    \right)
    u_{V,2}(t).
\end{multline}
The following lemma will be used as the result for the region $V$  in Section \ref{section matching}.

\begin{lem}\label{lem V result for u V2}
Let $\beta>0,\gamma\in(\frac12,1),t_0=(2\beta)^{\frac1{\gamma}},g\in\mathbb C^2$ and
\begin{equation}\label{V condition on R V}
    \int_{t_0}^{+\infty}\|R_V(t,\varepsilon)\|dt=o(\varepsilon)\text{ as }\varepsilon\rightarrow0^+.
\end{equation}
There exists the solution $u_{V,2}(t,\varepsilon,g)$ of the system \eqref{system u V2} such that the following holds.
\\
1. For every  $\varepsilon\in U$
\begin{equation*}
    u_{V,2}(t,\varepsilon,g)\rightarrow g\text{ as }t\rightarrow+\infty.
\end{equation*}
2. For every $t\in(t_0,+\infty)$
\begin{equation}\label{V asymptotics u V2}
    u_{V,2}(t,\varepsilon,g)\rightarrow\exp\left(-\int_t^{+\infty}\text{\emph{diag}}\,S_V(\tau)d\tau\right)g
    \text{ as }\varepsilon\rightarrow0^+,
\end{equation}
and the limit is uniform with respect to $t\in[t_V,+\infty)$ for every $t_V\in(t_0,+\infty)$.
\end{lem}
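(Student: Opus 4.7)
The approach is to construct $u_{V,2}(t,\varepsilon,g)$ directly as the solution of a backward Volterra integral equation on a neighbourhood of $+\infty$, and then to extract the $\varepsilon\to 0^+$ asymptotic by decoupling the diagonal contribution.

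The key starting observation is that by \eqref{lambda V} the quantity $\lambda_V(t)$ is purely imaginary on $(t_0,+\infty)$, so the diagonal matrix $D(t,\varepsilon):=\exp\bigl(\tfrac1{\varepsilon}\int_{t_0}^{t}\lambda_V(\tau)\,d\tau\cdot\text{diag}(1,-1)\bigr)$ is unitary with $\|D^{\pm 1}(t,\varepsilon)\|=1$. Hence the coefficient matrix $M(t,\varepsilon):=D^{-1}(S_V+R_{V,1})D$ of \eqref{system u V2} satisfies $\|M(t,\varepsilon)\|\le\|S_V(t)\|+\|R_{V,1}(t,\varepsilon)\|$. From \eqref{S V} we have $\|S_V(t)\|=O(t^{-1-\gamma})$ at infinity, and Lemma \ref{lem V remainder} combined with assumption \eqref{V condition on R V} yields $\int_{t_V}^{+\infty}\|R_{V,1}(\cdot,\varepsilon)\|\,dt\to 0$ as $\varepsilon\to 0^+$ for every $t_V>t_0$; in particular $M(\cdot,\varepsilon)\in L_1([t_V,+\infty),M^{2\times2}(\mathbb C))$. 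For Part 1 I would then define $u_{V,2}(\cdot,\varepsilon,g)$ as the unique solution of the Volterra equation $u_{V,2}(t)=g-\int_t^{+\infty}M(s,\varepsilon)\,u_{V,2}(s)\,ds$, obtained by a contraction mapping on $[T,+\infty)$ with $T$ large enough that $\int_T^{+\infty}\|M(\cdot,\varepsilon)\|<\tfrac12$, and extended backwards to $(t_0,+\infty)$ by standard ODE theory. The defining equation yields $u_{V,2}(t)\to g$ as $t\to+\infty$ directly.

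For Part 2 I would make the further substitution $u_{V,2}(t,\varepsilon,g)=\exp\bigl(-\int_t^{+\infty}\text{diag}\,S_V(\tau)\,d\tau\bigr)\bigl(g+v(t,\varepsilon)\bigr)$ to remove the diagonal part of $S_V$. The resulting Volterra equation for $v$ has a right-hand side consisting of (i) the off-diagonal entries of $S_V$ multiplied by the oscillating factors $e^{\pm(2/\varepsilon)\int_{t_0}^{\cdot}\lambda_V}$ produced by the conjugation by $D$, and (ii) a term involving $D^{-1}R_{V,1}D$ acting on $g+v$. Contribution (ii) vanishes uniformly as $\varepsilon\to 0^+$ by the integrability estimate above. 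For (i), a single integration by parts, exploiting that $|\lambda_V(t)|$ is bounded below on $[t_V,+\infty)$ and that both $(S_V)_{\mathrm{off}}/\lambda_V$ and its derivative are $L_1$ there, shows that the oscillatory integrals are $O(\varepsilon)$ uniformly in the upper endpoint $t\in[t_V,+\infty)$. A Gronwall-type iteration in the resulting small-kernel Volterra equation then yields $v(t,\varepsilon)\to 0$ uniformly on $[t_V,+\infty)$, which is exactly \eqref{V asymptotics u V2}.

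The main obstacle is the uniform (in $t$) control of the oscillatory integrals of the off-diagonal entries of $S_V$: a positive lower bound on $|\lambda_V(t)|$ is essential to perform the integration by parts, which is precisely why Part 2 states uniformity on $[t_V,+\infty)$ only for $t_V>t_0$, and not up to the turning point $t_0$, where $\lambda_V$ vanishes and the integration-by-parts argument would break down.
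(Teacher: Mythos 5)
Your proposal is correct and arrives at the same conclusion, but the machinery is genuinely different from the paper's. The paper keeps the diagonal part of $S_V$ inside the kernel, defines $K_V(\tau,0)=-\operatorname{diag}S_V(\tau)$, and proves \emph{strong} (not norm) convergence of the Volterra operators $\mathcal K_V(\varepsilon)\to\mathcal K_V(0)$; the oscillatory integrals are killed by the Riemann--Lebesgue lemma after the substitution $s(\tau)=\int_{t_0}^\tau\sqrt{1-4\beta^2/\tau'^{2\gamma}}\,d\tau'$, and the convergence of $(I-\mathcal K_V(\varepsilon))^{-1}$ then follows from a term-by-term Neumann-series argument. You instead factor out $\exp\bigl(-\int_t^{+\infty}\operatorname{diag}S_V\bigr)$ explicitly, reduce to a Volterra equation for the remainder $v$ whose kernel is merely bounded but whose \emph{forcing term} vanishes, and estimate that forcing term by a single integration by parts rather than Riemann--Lebesgue. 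The integration by parts is legitimate: on $[t_V,+\infty)$ with $t_V>t_0$ the phase derivative $\operatorname{Im}\lambda_V$ is bounded away from zero and, from \eqref{S V}, both $(S_V)_{\mathrm{off}}/\lambda_V$ and its derivative decay like $t^{-1-\gamma}$ and $t^{-2-\gamma}$ respectively and are hence integrable, so the oscillatory contribution is $O(\varepsilon)$ uniformly in the lower limit $t$; the $R_{V,1}$ contribution is handled by Lemma \ref{lem V remainder} exactly as in the paper. Your approach is more elementary (no abstract strong operator convergence, no Neumann series), gives a quantitative rate $O(\varepsilon)$ where the paper only produces $o(1)$, and makes visible why the turning point obstructs uniformity. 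The small price is that it needs $(S_V)_{\mathrm{off}}/\lambda_V$ to have an $L_1$ derivative, which \eqref{S V} happens to provide; the paper's Riemann--Lebesgue route only needs $L_1$ amplitudes. One wording nit: the Volterra kernel acting on $v$ is \emph{bounded}, not small — what is small is the forcing term — but since the Gronwall bound $\|v\|_\infty\le e^{\|K\|_{L_1}}\|F\|_\infty$ only needs the former to be bounded and the latter to vanish, your conclusion stands.
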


\begin{proof}[Proof of the item 1.]
We can use directly the asymptotic Levinson theorem. The coefficient matrix of the system \eqref{system u V2} is summable near infinity: from \eqref{S V} it is clear that $S_V\in L_1((t_V,+\infty),M^{2\times2}(\mathbb C))$, and $R_{V,1}$ is summable due to Lemma \ref{lem V remainder}. Conditions of the asymptotic Levinson theorem \cite[Theorem 8.1]{Coddington-Levinson-1955} are satisfied which gives the result. We will prove the item \textit{2.} later.
\end{proof}

Let us integrate both sides of the equation \eqref{system u V2} from $t$ to $+\infty$:
\begin{multline}\label{V eq 2}
    u_{V,2}(t,\varepsilon,g)=g-
    \int_t^{+\infty}
    \left(
      \begin{array}{cc}
        \exp\left(-\frac1{\varepsilon}\int_{t_0}^\tau\lambda_V\right) & 0 \\
        0 & \exp\left(\frac1{\varepsilon}\int_{t_0}^\tau\lambda_V\right) \\
      \end{array}
    \right)
    \\
    \times
    (S_V(\tau)+R_{V,1}(\tau,\varepsilon))
    \left(
      \begin{array}{cc}
        \exp\left(\frac1{\varepsilon}\int_{t_0}^\tau\lambda_V\right) & 0 \\
        0 & \exp\left(-\frac1{\varepsilon}\int_{t_0}^\tau\lambda_V\right) \\
      \end{array}
    \right)
    u_{V,2}(\tau,\varepsilon,g)d\tau.
\end{multline}
Rewrite this as
\begin{equation}\label{V integral eq with kernel}
    u_{V,2}(t,\varepsilon,g)=g+\int_t^{+\infty}K_V(\tau,\varepsilon)u_{V,2}(\tau,\varepsilon,g)d\tau,
\end{equation}
where
\begin{multline}\label{K V}
    K_V(\tau,\varepsilon):=
    -
    \left(
      \begin{array}{cc}
        \exp\left(-\frac1{\varepsilon}\int_{t_0}^\tau\lambda_V\right) & 0 \\
        0 & \exp\left(\frac1{\varepsilon}\int_{t_0}^\tau\lambda_V\right) \\
      \end{array}
    \right)
    \\
    \times
    (S_V(\tau)+R_{V,1}(\tau,\varepsilon))
    \left(
      \begin{array}{cc}
        \exp\left(\frac1{\varepsilon}\int_{t_0}^\tau\lambda_V\right) & 0 \\
        0 & \exp\left(-\frac1{\varepsilon}\int_{t_0}^\tau\lambda_V\right) \\
      \end{array}
    \right).
\end{multline}
Define also
\begin{equation}\label{K V(0)}
    K_V(\tau,0):=-\text{diag}\,S_V(\tau).
\end{equation}
Take some $t_V\in(t_0,+\infty)$ and consider \eqref{V integral eq with kernel} as an equation in the Banach space $L_{\infty}((t_V,+\infty),\mathbb C^2)$
\begin{equation}\label{V integral eq with operator}
    u_{V,2}(\varepsilon,g)=g+\mathcal K_V(\varepsilon)u_{V,2}(\varepsilon,g),
\end{equation}
 where $\mathcal K_V(\varepsilon)$ is the Volterra operator
 \begin{equation}\label{K V operator}
    \mathcal K_V(\varepsilon):u(t)\mapsto\int_0^tK_V(\tau,\varepsilon)u(\tau)d\tau,
\end{equation}
which is also defined by this rule for  $\varepsilon=0$.

\begin{rem}
Until this moment we were following the scheme which was already used for the region $I$. This analogy does not go further: the operators $\mathcal K_V(\varepsilon)$ do not converge in the norm of $\mathcal B(L_{\infty}((t_V,+\infty),\mathbb C^2))$, while the solutions  $u_{V,2}(\varepsilon,g)$ do converge in the norm of $L_{\infty}((t_V,+\infty),\mathbb C^2)$.
\end{rem}

\begin{lem}\label{lem V convergence of operators}
Let the conditions of Lemma \ref{lem V result for u V2} hold, $t_V\in(t_0,+\infty)$ and $\mathcal K_V(\varepsilon)$ be the operator in the space $L_{\infty}((t_V,+\infty),\mathbb C^2)$ defined by \eqref{K V operator} with the use of expressions \eqref{K V} and \eqref{K V(0)}. Operator $\mathcal K_V(\varepsilon)$ converges in the strong sense to the operator $\mathcal K_V(0)$ as $\varepsilon\rightarrow0^+$, and $(I-\mathcal K_V(\varepsilon))^{-1}$ converges in the strong sense to $(I-\mathcal K_V(0))^{-1}$.
\end{lem}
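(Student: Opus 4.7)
My plan is to analyse the difference $K_V(\tau,\varepsilon)-K_V(\tau,0)$ explicitly. Let $M_\varepsilon(\tau):=\mathrm{diag}\bigl(e^{-\varepsilon^{-1}\int_{t_0}^\tau\lambda_V},e^{\varepsilon^{-1}\int_{t_0}^\tau\lambda_V}\bigr)$. Because $\lambda_V$ is purely imaginary on $(t_0,+\infty)$ by \eqref{lambda V}, each diagonal entry of $M_\varepsilon(\tau)$ has modulus one. Since conjugation by a diagonal matrix preserves diagonal matrices, splitting $S_V=\mathrm{diag}\,S_V+(S_V)_{\mathrm{off}}$ yields
\begin{equation*}
K_V(\tau,\varepsilon)-K_V(\tau,0)=-M_\varepsilon(\tau)(S_V(\tau))_{\mathrm{off}}M_\varepsilon(\tau)^{-1}-M_\varepsilon(\tau)R_{V,1}(\tau,\varepsilon)M_\varepsilon(\tau)^{-1}.
\end{equation*}
The norm of the second term equals $\|R_{V,1}(\tau,\varepsilon)\|$ entrywise, so its contribution to $\mathcal K_V(\varepsilon)-\mathcal K_V(0)$ is controlled by $\int_{t_V}^{+\infty}\|R_{V,1}(\tau,\varepsilon)\|\,d\tau$, which tends to zero by Lemma \ref{lem V remainder} applied with the standing hypothesis \eqref{V condition on R V}. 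Hence this contribution converges to zero even in operator norm.

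For the oscillatory $(S_V)_{\mathrm{off}}$ piece, I would fix $u\in L_\infty((t_V,+\infty),\mathbb C^2)$ and reduce the required strong convergence to the uniform Riemann--Lebesgue statement
\begin{equation*}
\sup_{s_0\ge 0}\Bigl|\int_{s_0}^{+\infty}e^{\pm is/\varepsilon}\phi(s)\,ds\Bigr|\longrightarrow 0\text{ as }\varepsilon\to 0^+\quad\text{for every }\phi\in L_1(\mathbb R_+).
\end{equation*}
The reduction is by the substitution $s=\int_{t_V}^\tau\mu$, where $\mu:=-2i\lambda_V>0$ is increasing on $(t_0,+\infty)$ and so bounded below by $\mu(t_V)>0$ on $[t_V,+\infty)$; the resulting $\phi$ is the product of an entry of $(S_V)_{\mathrm{off}}$ with a component of $u$ and $1/\mu$, which lies in $L_1$ because $(S_V)_{\mathrm{off}}$ decays like $\tau^{-1-\gamma}$ by \eqref{S V}. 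The uniform claim I would prove by density: approximate $\phi\in L_1$ by $\phi_n\in C^1_c$ with $\|\phi-\phi_n\|_1<\delta$, which gives an error $<\delta$ uniform in $s_0$ and $\varepsilon$, and on the smooth approximants integrate by parts to obtain an $O(\varepsilon)$ bound uniform in $s_0$. This uniformity in the integration endpoint is the technical heart of the argument and the main obstacle, since a pointwise application of Riemann--Lebesgue does not suffice.

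For the second assertion, the backward-Volterra structure of $\mathcal K_V(\varepsilon)$ yields the iterated estimate $\|\mathcal K_V(\varepsilon)^n\|\le M^n/n!$ with $M:=\sup_\varepsilon\int_{t_V}^{+\infty}\|K_V(\tau,\varepsilon)\|\,d\tau<+\infty$, which is finite by Lemma \ref{lem V remainder} (the $S_V$ part is integrable by \eqref{S V} and the $R_{V,1}$ part is even $o(1)$). Summation of the Neumann series then gives a uniform resolvent bound $\|(I-\mathcal K_V(\varepsilon))^{-1}\|\le e^M$. The resolvent identity
\begin{equation*}
(I-\mathcal K_V(\varepsilon))^{-1}-(I-\mathcal K_V(0))^{-1}=(I-\mathcal K_V(\varepsilon))^{-1}(\mathcal K_V(\varepsilon)-\mathcal K_V(0))(I-\mathcal K_V(0))^{-1},
\end{equation*}
applied to a fixed $u$ and combined with the already established strong convergence of $\mathcal K_V(\varepsilon)$ evaluated at the vector $(I-\mathcal K_V(0))^{-1}u$, then yields the strong convergence of the resolvents.
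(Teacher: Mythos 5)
Your proof is correct, and for the first assertion it follows the same decomposition as the paper: write $K_V(\tau,\varepsilon)-K_V(\tau,0)$ as the oscillatory $(S_V)_{\mathrm{off}}$ piece plus the $R_{V,1}$ piece, dispose of the latter via Lemma \ref{lem V remainder}, and treat the former via a Riemann--Lebesgue argument after changing variables. The one substantive point where you improve on the paper is that you make explicit that the Riemann--Lebesgue statement must be \emph{uniform in the integration endpoint}. The paper's displayed estimate $\|(\mathcal K_V(\varepsilon)-\mathcal K_V(0))u\|_{L_\infty}\le\|\int_{t_V}^{+\infty}(K_V(\tau,\varepsilon)-K_V(\tau,0))u(\tau)d\tau\|$ is not literally true (the $L_\infty$ norm on the left is $\sup_{t\ge t_V}$ of a family of backward integrals and is not dominated by the single value at $t=t_V$), and the paper then cites the ordinary Riemann--Lebesgue lemma rather than the uniform version; your density argument ($C^1_c$ approximation plus integration by parts) is exactly what is needed to close this and is correct. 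Two small remarks here: your $\mu:=-2i\lambda_V$ has the wrong sign given $\lambda_V(t)=-\tfrac i2\sqrt{1-4\beta^2/t^{2\gamma}}$ (you want $\mu:=2i\lambda_V>0$, or just $|\lambda_V|$), and the claim that $\|M_\varepsilon R_{V,1}M_\varepsilon^{-1}\|=\|R_{V,1}\|$ ``entrywise'' is exact for the max-entry or Frobenius norm and holds up to a fixed constant for any other norm, which is all that is needed.

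For the second assertion you take a genuinely different route from the paper. The paper establishes strong convergence of $(I-\mathcal K_V(\varepsilon))^{-1}$ by iterating the strong convergence to all powers $\mathcal K_V^n(\varepsilon)$ and then running an $\varepsilon/3$ argument on the (uniformly summable) Neumann series. You instead combine the uniform resolvent bound $\|(I-\mathcal K_V(\varepsilon))^{-1}\|\le e^M$ (coming from the Volterra structure, $\|\mathcal K_V^n(\varepsilon)\|\le M^n/n!$) with the second resolvent identity and the already-proved strong convergence of $\mathcal K_V(\varepsilon)$ applied to the fixed vector $(I-\mathcal K_V(0))^{-1}u$. Both arguments are valid; yours is shorter and more transparent, using only a single application of strong convergence rather than an induction, at the cost of invoking the resolvent identity (which the paper in fact uses elsewhere, e.g.\ in Lemma \ref{lem I convergence of the solution}).
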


\begin{proof}
Take some arbitrary $u\in L_{\infty}((t_V,+\infty),\mathbb C^2)$. We have:
\begin{multline*}
    \|(\mathcal K_V(\varepsilon)-\mathcal K_V(0))u\|_{L_{\infty}(t_V,+\infty)}
    \le
    \left\|\int_{t_V}^{+\infty}(K_V(\tau,\varepsilon)-K_V(\tau,0))u(\tau)d\tau\right\|
    \\
    \le
    \left|\int_{t_V}^{+\infty}S_{V,12}(\tau)u_2(\tau)\exp\left(-\frac2{\varepsilon}\int_{t_0}^\tau\lambda_V\right)d\tau\right|
    \\
    +
    \left|\int_{t_V}^{+\infty}S_{V,21}(\tau)u_1(\tau)\exp\left(\frac2{\varepsilon}\int_{t_0}^\tau\lambda_V\right)d\tau\right|
    \\
    +
    \|u\|_{L_{\infty}(t_V,+\infty)}\int_{t_V}^{+\infty}\|R_{V,1}(\tau,\varepsilon)\|d\tau,
\end{multline*}
where $S_{V,12}$ and $S_{V,21}$ are the upper-right and the lower-left entries of the matrix $S_V$, and $u_1$ and $u_2$ are the upper and the lower components of the vector $u$. From the estimate \eqref{V estimate R V1} it follows that the third term in the last formula goes to zero as $\varepsilon\rightarrow0^+$. For the first term after the change of the variable of integration to $s(\tau)=\int_{t_0}^\tau\sqrt{1-\frac{4\beta^2}{\tau'^{2\gamma}}}d\tau'$ we have
\begin{multline*}
    \int_{t_V}^{+\infty}S_{V,12}(\tau)u_2(\tau)\exp\left(-\frac 2{\varepsilon}\int_{t_0}^\tau\lambda_V(\tau')d\tau'\right)d\tau
    \\
    =\int_{t_V}^{+\infty}S_{V,12}(\tau)u_2(\tau)\exp\left(\frac i{\varepsilon}\int_{t_0}^\tau\sqrt{1-\frac{4\beta^2}{\tau'^{2\gamma}}}d\tau'\right)d\tau
    \\
    =
    \int_{s(t_V)}^{+\infty}\frac{S_{V,12}(\tau(s))u_2(\tau(s))}{\sqrt{1-\frac{4\beta^2}{\tau(s)^{2\gamma}}}}\exp\left(\frac{is}{\varepsilon}\right)ds
    \rightarrow0\text{ as }\varepsilon\rightarrow0^+,
\end{multline*}
by the  Riemann-Lebesgue lemma, because $S_Vu\in L_1((t_V,+\infty),M^{2\times2}(\mathbb C))$ and
\begin{equation*}
    \int_{s(t_V)}^{+\infty}\frac{|S_{V,12}(\tau(s))u_2(\tau(s))|}{\sqrt{1-\frac{4\beta^2}{\tau(s)^{2\gamma}}}}ds
    =\int_{t_V}^{+\infty}|S_{V,12}(\tau)u_2(\tau)|d\tau<\infty.
\end{equation*}
The second term goes to zero for the analogous reason and the third term by Lemma \ref{lem V remainder}. This proves strong convergence of operators $\mathcal K_V(\varepsilon)$.

It remains to prove  strong convergence of $(I-\mathcal K_V(\varepsilon))^{-1}=\sum_{n=0}^{\infty}\mathcal K_V^n(\varepsilon)$. This series converges in the norm of $\mathcal B(L_{\infty}((t_V,+\infty),\mathbb C^2))$ uniformly in $\varepsilon\in U\cup\{0\}$, because for every $\varepsilon\in U\cup\{0\}$ and $n\in\mathbb N$ one has
\begin{equation*}
    \|\mathcal K_V^n(\varepsilon)\|_{\mathcal B(L_{\infty})}\le\frac{(\|\mathcal K_V(\varepsilon)\|_{\mathcal B(L_{\infty})})^n}{n!}
    \text{ and }
    \|\mathcal K_V(\varepsilon)\|_{\mathcal B(L_{\infty})}\le\int_{t_V}^{+\infty}\|S_{V}(\tau)\|d\tau.
\end{equation*}

Take $u\in L_{\infty}((t_V,+\infty),\mathbb C^2)$. By induction one proves that for every $n\in\mathbb N$ it holds that $\mathcal K_V^n(\varepsilon)u\rightarrow\mathcal K_V^n(0)u$ as $\varepsilon\rightarrow0^+$:
\begin{equation*}
    \mathcal K_V^n(\varepsilon)u=\mathcal K_V(\varepsilon)(\mathcal K_V^{n-1}(\varepsilon)-\mathcal K_V^{n-1}(0))u+\mathcal K_V(\varepsilon)\mathcal K_V^{n-1}(0)u\rightarrow \mathcal K_V^n(0)u,
\end{equation*}
where the first term goes to zero by the induction hypothesis and the second converges to the result due to the strong convergence of $\mathcal K_V(\varepsilon)$. Take arbitrarily small $\Delta>0$. There exists $N(\Delta)$ such that for every $\varepsilon\in U\cup\{0\}$ and $N>N(\Delta)$ one has
\begin{equation*}
    \left\|\sum_{n=N}^{\infty}\mathcal K_V^n(\varepsilon)u\right\|<\frac{\Delta}3.
\end{equation*}
There also exists $\varepsilon(\Delta)>0$ such that for every $\varepsilon<\varepsilon(\Delta)$ it holds that
\begin{equation*}
    \left\|\sum_{n=0}^{N(\Delta)}(\mathcal K_V^n(\varepsilon)-\mathcal K_V^n(0))u\right\|<\frac{\Delta}3.
\end{equation*}
Therefore for every $\varepsilon<\varepsilon(\Delta)$ we have
\begin{equation*}
    \left\|\sum_{n=0}^{\infty}\mathcal K_V^n(\varepsilon)u-\sum_{n=0}^{\infty}\mathcal K_V^n(0)u\right\|
    <\frac{2\Delta}3+\left\|\sum_{n=0}^{N(\Delta)}(\mathcal K_V^n(\varepsilon)-\mathcal K_V^n(0))u\right\|<\Delta,
\end{equation*}
which proves that
\begin{equation*}
    (I-\mathcal K_V(\varepsilon))^{-1}u\rightarrow(I-\mathcal K_V(0))^{-1}u\text{ as }\varepsilon\rightarrow0^+.
\end{equation*}
Since $u\in L_{\infty}((t_V,+\infty),\mathbb C^2)$ was arbitrary, this proves the strong convergence and thus completes the proof of the lemma.
\end{proof}

Now we are able to prove the remaining part of Lemma \ref{lem V result for u V2}.

\begin{proof}[Proof of Lemma \ref{lem V result for u V2}, item \textit{2.}]
Rewriting the equation \eqref{V integral eq with operator} as
\begin{equation*}
    u_{V,2}(\varepsilon,g)=(I-\mathcal K_V(\varepsilon))^{-1}g
\end{equation*}
we use Lemma \ref{lem V convergence of operators} to conclude that $u_{V,2}(\varepsilon,g)\rightarrow u_{V,2}(0,g)$ as $\varepsilon\rightarrow0^+$ in the norm of $L_{\infty}((t_V,+\infty),\mathbb C^2)$, where
\begin{equation}\label{V eq 1}
    u_{V,2}(0,g):=(I-\mathcal K_V(0))^{-1}g,
\end{equation}
which means that for every $g\in\mathbb C^2$ and $t\in[t_V,+\infty)$,  $u_{V,2}(t,\varepsilon,g)$ converges as $\varepsilon\rightarrow0^+$ to $u_{V,2}(t,0,g)$ uniformly with respect to $t$ in this interval. Applying the operator $I-\mathcal K_V(0)$ to both sides of the equality \eqref{V eq 1} we get:
\begin{equation*}
    u_{V,2}(t,0,g)=g-\int_t^{+\infty}\text{diag}\,S_V(\tau)u_{V,2}(\tau,0,g)d\tau.
\end{equation*}
Solution of this equation is
\begin{equation*}
    u_{V,2}(t,0,g)=\exp\left(-\int_t^{+\infty}\text{diag}\,S_V(\tau)d\tau\right)g,
\end{equation*}
which coincides with the expression in \eqref{V asymptotics u V2}  and thus completes the proof.
\end{proof}

The proof of Lemma \ref{lem V answer} follows.

\begin{proof}[Proof of Lemma \ref{lem V answer}]
Convergence in \eqref{V asymptotics} follows from Lemma \ref{lem V result for u V2} and substitution of the limit \eqref{V asymptotics u V2} to the relations \eqref{u V2} and \eqref{u V1}. To prove convergence in \eqref{V convergence u V t->infty} first note that $u_V(t,\varepsilon,g)$ is bounded as $t\rightarrow+\infty$, because
\begin{equation*}
    \|u_V(t,\varepsilon,g)\|\le\|T_V(t)\|\|u_{V,2}(t,\varepsilon,g)\|
\end{equation*}
and $u_{V,2}(t,\varepsilon,g)$ is bounded by Lemma \ref{lem V result for u V2}. Then, since
\begin{equation*}
    T_V(t)\rightarrow
    \left(
      \begin{array}{cc}
        1 & 1 \\
        i & -i \\
      \end{array}
    \right)
    \text{ as }t\rightarrow+\infty
\end{equation*}
and the matrix
$\frac1{\sqrt2}\left(
  \begin{array}{cc}
    1 & 1 \\
    i & -i \\
  \end{array}
\right)$
is unitary, by Lemma \ref{lem V result for u V2} we have the convergence of the norm: $\|u_V(t,\varepsilon,g)\|\rightarrow\sqrt2\|g\|\text{ as }\rightarrow+\infty.$ This completes the proof.
\end{proof}

\section{Intermediate region $II$: hyperbolic case}\label{section II}
Cosider the system \eqref{system u 2}
\begin{equation*}
    \varepsilon u_{II}'(t)=
    \left(
    \left(
    \begin{array}{cc}
    \frac{\beta}{t^{\gamma}} & -\frac12
    \\
    \frac12 & -\frac{\beta}{t^{\gamma}}
    \end{array}
    \right)
    +R_{II}(t,\varepsilon)
    \right)
    u_{II}(t).
\end{equation*}
Let us again, like in the region $I$,  diagonalise the main term of the coefficient matrix, this time with the transformation
\begin{equation}\label{u II1}
    u_{II}(t)=T_{II}(t)u_{II,1}(t),
\end{equation}
where
\begin{equation}\label{T II}
    T_{II}(t):=
    \left(
    \begin{array}{cc}
    1
    &
    1
    \\
    \frac{t^{\gamma}}{2\beta\left(1+\sqrt{1-\frac{t^{2\gamma}}{4\beta^2}}\right)}
    &
    \frac{t^{\gamma}}{2\beta\left(1-\sqrt{1-\frac{t^{2\gamma}}{4\beta^2}}\right)}
   \end{array}
    \right).
\end{equation}
Note that the first column is the same as of the matrix $T_{I}(t)$ in \eqref{T I}. The substitution gives:
\begin{equation}\label{system u II1}
    u_{II,1}'(t)=\left(\frac{\lambda_{II}(t)}{\varepsilon}
    \left(
    \begin{array}{cc}
    1 & 0 \\
    0 & -1 \\
    \end{array}
    \right)
    +S_{II}(t)+R_{II,1}(t,\varepsilon)\right)u_{II,1}(t),
\end{equation}
where
\begin{equation}\label{lambda II}
    \lambda_{II}(t):=\sqrt{\frac{\beta^2}{t^{2\gamma}}-\frac14}=\lambda_{I}(t),
\end{equation}
\begin{equation}\label{S II}
    S_{II}(t):=\frac{\gamma}{2t\left(1-\frac{t^{2\gamma}}{4\beta^2}\right)}
    \left(
    \begin{array}{cc}
    1-\sqrt{1-\frac{t^{2\gamma}}{4\beta^2}}
    &
    -1-\sqrt{1-\frac{t^{2\gamma}}{4\beta^2}}
    \\
    -1+\sqrt{1-\frac{t^{2\gamma}}{4\beta^2}}
    &
    1+\sqrt{1-\frac{t^{2\gamma}}{4\beta^2}}
    \end{array}
    \right),
\end{equation}
\begin{equation*}
    R_{II,1}(t,\varepsilon):=\frac{T_{II}^{-1}(t)R_{II}(t,\varepsilon)T_{II}(t)}{\varepsilon}.
\end{equation*}

In the region $I$ our analysis was based on the fact that the term $S_I$ is summable over the whole interval. Since $S_I$ is not summable up to the turning point $t_0=(2\beta)^{\frac1{\gamma}}$, the integral containing $S_I$ diverges as $t$ approaches $t_0$, so Lemma \ref{lem I result} does not work for the interval $(0,t_0]$. In the same way and for the same reason Lemma \ref{lem V answer} does not work for the interval $[t_0,+\infty)$. This can be seen as an effect of the interplay between the first and the second terms in the coefficient matrix of the system \eqref{system u II1} which should be taken into account. We do this by scaling the independent variable near the turning point. To choose the new scale observe that near the turning point the first term has the order $\frac{\sqrt{t-t_0}}{\varepsilon}$, while the second term has the order $\frac1{t-t_0}$. To match these orders we have to consider the values of $t$ such that $t-t_0\approx\varepsilon^{\frac23}$. Therefore let us take
\begin{equation}\label{z}
    z:=\frac{1-\frac{t^{2\gamma}}{4\beta^2}}{\varepsilon^{\frac23}}.
\end{equation}
Since
\begin{equation*}
    1-\frac{t^{2\gamma}}{4\beta^2}=\frac{2\gamma(t_0-t)}{(4\beta)^{\frac1{\gamma}}}+O((t-t_0)^2)\text{ as }t\rightarrow t_0,
\end{equation*}
this is almost the same as taking $z'=\frac{2\gamma}{(4\beta)^{\frac1{\gamma}}}\frac{t_0-t}{\varepsilon^{\frac23}}$. These two substitutions lead to different systems, but essentially they are equivalent, because in the limit as $\varepsilon\rightarrow0^+$, asymptotically, solutions of these systems behave similarly. The first choice leads to simpler formulae, so we use the variable $z$ defined by \eqref{z}. One could also choose the sign differently, $\widetilde z:=\frac{\frac{t^{2\gamma}}{4\beta^2}-1}{\varepsilon^{\frac23}}=-z$, but our choice of sign will be more natural for the region $III$ where we use the same variable $z$.

Let us take
\begin{equation}\label{u II2}
    u_{II,1}(t)=u_{II,2}(z(t,\varepsilon))
\end{equation}
and substitute this into the system \eqref{system u II1}. We come to the system
\begin{equation}\label{system u II2}
    u_{II,2}'(z)=(\Lambda_{II,2}(z,\varepsilon)+S_{II,2}(z,\varepsilon)+R_{II,2}(z,\varepsilon))u_{II,2}(z)
\end{equation}
with
\begin{equation}\label{Lambda II2}
    \Lambda_{II,2}(z,\varepsilon):=
    \lambda_{II,2}(z,\varepsilon)
    \left(
      \begin{array}{cc}
        1 & 0 \\
        0 & -1 \\
      \end{array}
    \right),
\end{equation}
\begin{equation}\label{lambda II2}
    \lambda_{II,2}(z,\varepsilon):=\frac{\lambda_{II}(t(z,\varepsilon))t'(z,\varepsilon)}{\varepsilon}=\frac{-c_0\sqrt z}{(1-\varepsilon^{\frac23}z)^\varkappa},
\end{equation}
\begin{equation}\label{c_0 kappa}
    c_0:=\frac{t_0}{4\gamma},\ \varkappa:=\frac32-\frac1{2\gamma},
\end{equation}
\begin{equation}\label{S II2}
    S_{II,2}(z,\varepsilon):=S_{II}(t(z,\varepsilon))t'(z,\varepsilon)=
    -\frac1{4z(1-\varepsilon^{\frac23}z)}
    \left(
      \begin{array}{cc}
        1-\varepsilon^{\frac13}\sqrt z & -1-\varepsilon^{\frac13}\sqrt z \\
        -1+\varepsilon^{\frac13}\sqrt z & 1+\varepsilon^{\frac13}\sqrt z \\
      \end{array}
    \right)
\end{equation}
and
\begin{equation}\label{R II2}
    R_{II,2}(z,\varepsilon):=
    -\frac{t_0}{2\gamma\varepsilon^{\frac13}}
    \frac{T_{II}^{-1}(t(z,\varepsilon))R_{II}(t(z,\varepsilon),\varepsilon)T_{II}(t(z,\varepsilon))}
    {(1-\varepsilon^{\frac23}z)^{1-\frac1{2\gamma}}}.
\end{equation}
We consider the system \eqref{system u II2} on the interval $[Z_0,Z_2(\varepsilon)]$ with
\begin{equation}\label{Z 2}
    Z_2(\varepsilon)=\frac1{\varepsilon^{\frac23}}\left(1-\frac{t_{I-II}^{2\gamma}}{4\beta^2}\right)
\end{equation}
(so that $t_{II-III}(\varepsilon)=t_0(1-\varepsilon^{\frac23}Z_0)^{\frac1{2\gamma}}$), where the constant $Z_0$ should be large enough and the constant $t_{I-II}$ such that the distance $(t_0-t_{I-II})$ is small enough. We will choose these constants later.

Consider for a moment the free system. We need the next lemma to be formulated here, although its statement will automatically follow from our analysis in the proof of Lemma \ref{lem II result 1st part}, see Remark \ref{rem proof of Lemma II free system}.

\begin{lem}\label{lem II free system}
The system
\begin{equation}\label{system v II2}
    v_{II,2}'(z)=
    \left(
    -c_0\sqrt z
    \left(
      \begin{array}{cc}
        1 & 0 \\
        0 & -1 \\
      \end{array}
    \right)
    \right.
    \\
    \left.
    -\frac1{4z}
    \left(
      \begin{array}{cc}
        1 & -1 \\
        -1 & 1 \\
      \end{array}
    \right)
    \right)
    v_{II,2}(z)
\end{equation}
has for $z\in[1,+\infty)$ solutions $v_{II,2}^{\pm}(z)$ such that
\begin{equation}\label{II asymptotics v II2}
    v_{II,2}^{\pm}(z)=\frac{\exp\left({\mp}\frac{2c_0}3z^{\frac32}\right)}{z^{\frac14}}(e_{\pm}+o(1))
\end{equation}
as $z\rightarrow+\infty$.
\end{lem}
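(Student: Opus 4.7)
The approach is to apply the asymptotic Levinson theorem \cite[Theorem 8.1]{Coddington-Levinson-1955} after two preparatory transformations. The leading diagonal matrix $-c_0\sqrt z\,\mathrm{diag}(1,-1)$ is already in separated form; the $1/z$ perturbation, however, is not summable, so it must be partly absorbed into an algebraic prefactor and partly annihilated by a Harris--Lutz transformation before Levinson's theorem becomes applicable.

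First I would set $v_{II,2}(z)=z^{-1/4}w(z)$. Since $(z^{-1/4})'/z^{-1/4}=-1/(4z)$ and the diagonal part of $-\frac{1}{4z}\bigl(\begin{smallmatrix}1&-1\\-1&1\end{smallmatrix}\bigr)$ is $-\frac{1}{4z}I$, the system reduces to
\begin{equation*}
    w'(z)=\bigl(\Lambda(z)+V(z)\bigr)w(z), \qquad \Lambda(z):=-c_0\sqrt z\begin{pmatrix}1&0\\0&-1\end{pmatrix},\quad V(z):=\frac{1}{4z}\begin{pmatrix}0&1\\1&0\end{pmatrix}.
\end{equation*}
The $z^{-1/4}$ prefactor has absorbed the non-summable diagonal contribution; only the off-diagonal obstruction $V=O(1/z)$ remains, which is still not integrable at infinity.

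Next I would perform a Harris--Lutz transformation $w(z)=(I+Q(z))\tilde w(z)$ with off-diagonal $Q$ chosen to solve the cohomological equation $[\Lambda,Q]=-V$. Since $[\Lambda,Q]_{ij}=(\lambda_i-\lambda_j)Q_{ij}$ with $\lambda_\pm=\mp c_0\sqrt z$, this yields
\begin{equation*}
    Q(z)=\frac{1}{8c_0\,z^{3/2}}\begin{pmatrix}0&1\\-1&0\end{pmatrix}=O(z^{-3/2}).
\end{equation*}
Expanding $(I+Q)^{-1}=I-Q+O(Q^2)$ and collecting terms, the transformed system takes the form $\tilde w'=(\Lambda+R_1)\tilde w$, where $R_1$ is built from $[V,Q]$, $Q'$, $QV$, and $Q^2\Lambda$, each of which is $O(z^{-5/2})$. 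In particular $R_1\in L_1([1,+\infty),M^{2\times2}(\mathbb C))$.

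Finally, the exponentials $\exp\bigl(\mp\tfrac{2c_0}{3}z^{3/2}\bigr)$ generated by integrating the eigenvalues of $\Lambda$ are exponentially dichotomous on $[1,+\infty)$, so the asymptotic Levinson theorem produces two solutions with $\tilde w^{\pm}(z)=\exp\bigl(\mp\tfrac{2c_0}{3}z^{3/2}\bigr)(e_{\pm}+o(1))$ as $z\to+\infty$. Undoing the transformations via $v_{II,2}^{\pm}(z):=z^{-1/4}(I+Q(z))\tilde w^{\pm}(z)$ and using $\|Q(z)\|\to 0$ yields the asymptotics \eqref{II asymptotics v II2}. The main technical point, though not a conceptual obstacle, is the bookkeeping in the Harris--Lutz step confirming that every contribution to $R_1$ decays at least as $z^{-5/2}$; once this integrability is verified, the dichotomy condition of Levinson's theorem is immediate.
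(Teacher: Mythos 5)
Your proof is correct and its skeleton (pull out a scalar normalizing factor, kill the off-diagonal $O(1/z)$ perturbation by a Harris--Lutz substitution, then invoke the asymptotic Levinson theorem) coincides with the paper's. The difference is in how the Harris--Lutz step is realized. The paper deduces this lemma as a by-product of the machinery built for Lemma \ref{lem II result 1st part}: it uses the exact integral-form transformation matrix $\widehat T_{II}(z,0)$ from \eqref{T II hat}, which solves the full equation $[\Lambda,\widehat T]-\widehat T'=-S_{ad}$, and keeps the diagonal term $-\frac{1}{4z}I$ inside the Levinson application (producing the $z^{-1/4}$ factor there). You instead solve only the algebraic commutator equation $[\Lambda,Q]=-V$, obtaining the explicit closed-form $Q(z)=\frac{1}{8c_0z^{3/2}}\bigl(\begin{smallmatrix}0&1\\-1&0\end{smallmatrix}\bigr)$, and accept the leftover $Q'$ and all quadratic contributions into the remainder; you also extract the $z^{-1/4}$ prefactor up front. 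Both implementations are sound: your variant is more elementary and self-contained as a stand-alone proof of this particular lemma, while the paper's integral $\widehat T_{II}$ is the natural object because its $\varepsilon$-dependent version is reused across the whole intermediate region, including Lemma \ref{lem II result 2nd part}, where an exact (not merely leading-order) conjugation is needed. One small bookkeeping imprecision in your write-up: listing $[V,Q]$, $Q'$, $QV$ and $Q^2\Lambda$ as the sources of $R_1$ double-counts $QV$ (already inside $[V,Q]$) and omits $Q\Lambda Q$; but every contribution, correctly enumerated, is indeed $O(z^{-5/2})$, so the integrability on $[1,\infty)$ and hence the applicability of Levinson's theorem hold as you claim.
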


The main result for the region $II$ is the following lemma.

\begin{lem}\label{lem II result}
Let $c_0,\varkappa>0$ and let $R_{II,2}(z,\varepsilon)$ be given by \eqref{R II2} with the use of the expression \eqref{T II} and the definition \eqref{z} of $z$. There exist $Z_0>0$ and $t_{I-II}\in(0,t_0)$ such that, with $Z_2(\varepsilon)$ given by \eqref{Z 2}, if
\begin{equation}\label{condition R II integral}
    \int_{Z_0}^{Z_2(\varepsilon)}\frac{\|R_{II}(t(s,\varepsilon),\varepsilon)\|}{\sqrt{s}}ds=o\left(\varepsilon^{\frac23}\right)
   \text{ as }\varepsilon\rightarrow0^+,
\end{equation}
then for every sufficiently small $\varepsilon>0$ the system \eqref{system u II2} on the interval $[Z_0,Z_2(\varepsilon)]$ has two solutions $u_{II,2}^{\pm}(z,\varepsilon)$ such that, as $\varepsilon\rightarrow0^+$,
\begin{equation}\label{II answer}
    u_{II,2}^{\pm}(Z_2(\varepsilon),\varepsilon)=a_{II}^{\pm}\exp\left(\int_{Z_0}^{Z_2(\varepsilon)}
    \left(\mp\frac{c_0\sqrt s}{(1-\varepsilon^{\frac23}s)^{\varkappa}}-\frac1{4s(1\pm\varepsilon^{\frac13}\sqrt s)}\right)
    ds\right)(e_{\pm}+o(1)),
\end{equation}
where $a_{II}^{\pm}$ are positive constants and the vectors $e_{\pm}$ are given by \eqref{e +-}. Moreover, $u_{II,2}^{\pm}(z,\varepsilon)\rightarrow v_{II,2}^{\pm}(z)$ as $\varepsilon\rightarrow0^+$ for every fixed $z\ge Z_0$, where $v_{II,2}^{\pm}$ are defined in Lemma \ref{lem II free system}.
\end{lem}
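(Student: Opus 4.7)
The strategy is a uniform-in-$\varepsilon$ Levinson-type argument on the expanding interval $[Z_0,Z_2(\varepsilon)]$. First I would absorb the entire diagonal part of the coefficient matrix---namely $\Lambda_{II,2}+\mathrm{diag}\,S_{II,2}$---into explicit integrating factors $E_\pm(z,\varepsilon)$. The factorisation $1-\varepsilon^{2/3}z=(1-\varepsilon^{1/3}\sqrt z)(1+\varepsilon^{1/3}\sqrt z)$ simplifies the diagonal of $S_{II,2}$ from~\eqref{S II2} to $-\tfrac{1}{4z(1\pm\varepsilon^{1/3}\sqrt z)}$, which are precisely the terms appearing in the exponent of~\eqref{II answer}. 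Writing $S_{II,2}=D+N$ with $D$ diagonal and $N$ off-diagonal (with $O(1/z)$ entries away from $Z_2(\varepsilon)$), the substitution $u_{II,2}(z)=\mathrm{diag}(E_+,E_-)(z,\varepsilon)\,u_{II,3}(z)$ converts~\eqref{system u II2} into $u_{II,3}'=M(z,\varepsilon)\,u_{II,3}$, where $M$ has no diagonal part from $N$ and the off-diagonal entries carry exponential factors $E_\mp/E_\pm=\exp(\mp 2\int\lambda_{II,2})$ together with a conjugated remainder coming from $R_{II,2}$.

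Then I would build $u_{II,3}^\pm$ via a standard Levinson-dichotomy construction, each as the fixed point of a Volterra integral equation anchored so that the ``unstable'' mode is suppressed at the appropriate endpoint (right endpoint for the $+$ solution, left endpoint for the $-$ solution). Contractivity of the integral operator in $L_\infty([Z_0,Z_2(\varepsilon)],\mathbb{C}^2)$ rests on two ingredients, both uniform in $\varepsilon$: (i) a Laplace-type estimate of the form
\[
\int_{Z_0}^{Z_2(\varepsilon)}\frac{1}{\tau}\exp\Bigl(-2\Bigl|\int_\tau^{Z_2(\varepsilon)}\lambda_{II,2}\Bigr|\Bigr)\,d\tau\le\frac{C}{Z_0^{3/2}},
\]
which controls the contribution of $N$ because the main mass of the exponential concentrates near the anchor, where $|\lambda_{II,2}|\gtrsim\sqrt{Z_0}$; and (ii) hypothesis~\eqref{condition R II integral} combined with the same $1/\sqrt s$ Laplace gain, which absorbs the $\varepsilon^{-1/3}$ prefactor in $R_{II,2}$ coming from~\eqref{R II2}. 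Choosing $Z_0$ sufficiently large and $\varepsilon$ sufficiently small reduces the operator norms below $1/2$, giving unique solutions. Evaluating at $z=Z_2(\varepsilon)$ and undoing the substitution yields~\eqref{II answer} with positive constants $a_{II}^\pm$ equal to the limiting values of the corresponding Neumann iterations. The pointwise convergence $u_{II,2}^\pm(z,\varepsilon)\to v_{II,2}^\pm(z)$ at each fixed $z\ge Z_0$ then follows by Lebesgue dominated convergence applied to the integral equation together with the pointwise limits of $\Lambda_{II,2}$ and $S_{II,2}$; the limit is precisely the solution of~\eqref{system v II2}, which simultaneously proves Lemma~\ref{lem II free system}.

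The main obstacle is the non-uniformity at the right endpoint: both $|\lambda_{II,2}|$ and the factor $(1-\varepsilon^{2/3}z)^{-1+1/(2\gamma)}$ in $R_{II,2}$ become singular as $z\to Z_2(\varepsilon)$, so naive $L_1$-bounds on the transformed coefficient matrix diverge. The Laplace-type concentration above is exactly what is needed: it exploits the growth of $|\lambda_{II,2}|$ to produce a $1/\sqrt s$ gain that both kills the $1/s$ behaviour of $N$ and absorbs the singular prefactor in the remainder against the hypothesis~\eqref{condition R II integral}.
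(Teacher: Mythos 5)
Your plan has a genuine gap in the contraction step. After the substitution $u_{II,2}=\mathrm{diag}(E_+,E_-)u_{II,3}$ the transformed coefficient matrix $M$ has off-diagonal entries $N_{12}E_-/E_+$ and $N_{21}E_+/E_-$, where $N$ is the off-diagonal part of $S_{II,2}$ with $|N_{12}(z)|,|N_{21}(z)|\sim\tfrac1{4z}$. In the Levinson-dichotomy Volterra equation for, say, $u_{II,3}^+$ anchored at $Z_2(\varepsilon)$, only one of the two kernel entries carries the \emph{decaying} exponential $\exp\bigl(-\bigl|\int_z^s(\lambda^--\lambda^+)\bigr|\bigr)$; the other (the ``stable-projection'' forward piece) carries no exponential gain at all, so its contribution to the $L_\infty$ operator norm is $\int_{Z_0}^{Z_2(\varepsilon)}|N_{12}(s)|\,ds\asymp\log\bigl(Z_2(\varepsilon)/Z_0\bigr)\to\infty$. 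Thus ``choosing $Z_0$ sufficiently large'' does \emph{not} make the Volterra operator norm small; it diverges for every fixed $Z_0$ as $\varepsilon\to0^+$. Your Laplace estimate is correct but controls only the half of the kernel that already decays; it says nothing about the logarithmically divergent half. The same objection applies to $\mathcal K^2$: iterating once inserts one exponential gain, but the other step still contributes a factor $\log(Z_2(\varepsilon)/Z_0)$. This is exactly why the paper first performs a Harris--Lutz transformation $u_{II,2}=(I+\widehat T_{II})u_{II,3}$ (Subsection~\ref{subsection Harris-Lutz}, Lemma~\ref{lem II estimate T hat}) that \emph{eliminates} the non-summable off-diagonal $N$ and replaces it with $Q_{II,3}=O(z^{-5/2})$, which is uniformly $L_1$; only then does the dichotomy/Gronwall machinery (Lemma~\ref{lem II tech 1}) close. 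A weighted-$L_\infty$ norm tracking the decay of the stable component could also repair the argument, but that is a genuine modification, not the ``standard Levinson-dichotomy construction'' you invoke, and your stated contraction claim is false as written.

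A secondary, smaller point: to obtain $u_{II,2}^-(z,\varepsilon)\to v_{II,2}^-(z)$ you must anchor the growing solution at $Z_0$ by the value of $v_{II,3}^-(Z_0)$ itself (as the paper does via~\eqref{v+}), not by a generic multiple of $e_-$; with a generic anchoring the pointwise limit at fixed $z$ will be a different solution of~\eqref{system v II2}. On the positive side, your idea of carrying the full $\varepsilon$-dependent diagonal exponent and then passing to the limit at fixed $z$, rather than comparing to the $\varepsilon$-free system uniformly, is a sensible way to avoid the paper's split of $[Z_0,Z_2(\varepsilon)]$ at $Z_1(\varepsilon)=\varepsilon^{-1/5}$; that split exists precisely because $\int_{Z_0}^{Z_2(\varepsilon)}\bigl(\varepsilon^{2/3}s^{3/2}+\varepsilon^{1/3}s^{-1/2}\bigr)\,ds$ diverges, a problem you sidestep. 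But the Harris--Lutz step (or an equivalent) cannot be sidestepped.
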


Let us rewrite the condition \eqref{condition R II integral} in terms of the remainder $R_{II,2}$.

\begin{lem}\label{lem II remainder}
Under the conditions of Lemma \ref{lem II result}, for every $z_0>0$ and $\nu\in(0,1)$ with $z_2(\varepsilon)=\frac{\nu}{\varepsilon^{\frac23}}$, if
\begin{equation*}
 \int_{z_0}^{z_2(\varepsilon)}\frac{\|R_{II}(t(s,\varepsilon),\varepsilon)\|}{\sqrt{s}}ds=o\left(\varepsilon^{\frac23}\right),
   \text{ then }
   \int_{z_0}^{z_2(\varepsilon)}\|R_{II,2}(s,\varepsilon)\|ds\rightarrow0
\end{equation*}
as $\varepsilon\rightarrow0^+$.
\end{lem}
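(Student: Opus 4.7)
The proof reduces to a direct pointwise estimate of $\|R_{II,2}(z,\varepsilon)\|$ from the defining formula \eqref{R II2}, followed by integration. The delicate point is that the similarity transform $T_{II}$ is degenerate at the turning point, so its inverse blows up in a controlled way that is exactly matched by the scaling \eqref{z}.

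First I would compute the determinant of $T_{II}$ explicitly, obtaining
\begin{equation*}
\det T_{II}(t) \;=\; \frac{4\beta}{t^{\gamma}}\sqrt{1-\frac{t^{2\gamma}}{4\beta^{2}}},
\end{equation*}
which, in the scaled variable, becomes of order $\varepsilon^{1/3}\sqrt{z}$. On the $t$-interval corresponding to $z\in[z_{0},z_{2}(\varepsilon)]$, namely $t\in\bigl[t_{0}(1-\nu)^{1/(2\gamma)},\,t_{0}(1-\varepsilon^{2/3}z_{0})^{1/(2\gamma)}\bigr]$, all four entries of $T_{II}$ are uniformly bounded (the potentially singular $(2,2)$-entry $\tfrac{t^{\gamma}}{2\beta(1-\sqrt{1-t^{2\gamma}/(4\beta^{2})})}$ blows up only as $t\to 0$, and $t$ stays bounded away from $0$ by $t_{0}(1-\nu)^{1/(2\gamma)}>0$). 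Hence, via the adjugate formula, there exist constants $C_{1},C_{2}>0$, independent of $\varepsilon$, with
\begin{equation*}
\|T_{II}(t(z,\varepsilon))\|\le C_{2},\qquad \|T_{II}^{-1}(t(z,\varepsilon))\|\le \frac{C_{1}}{\varepsilon^{1/3}\sqrt{z}},
\end{equation*}
uniformly in $z\in[z_{0},z_{2}(\varepsilon)]$ for all sufficiently small $\varepsilon>0$.

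Next, since $\varepsilon^{2/3}z\le \nu<1$ on the whole interval and since $\gamma\in(\tfrac12,1)$ makes the exponent $1-\tfrac{1}{2\gamma}$ positive, the denominator in \eqref{R II2} is bounded below:
\begin{equation*}
(1-\varepsilon^{2/3}z)^{1-\frac{1}{2\gamma}}\;\ge\;(1-\nu)^{1-\frac{1}{2\gamma}}\;>\;0.
\end{equation*}
Combining the three bounds in \eqref{R II2} yields a pointwise estimate of the form
\begin{equation*}
\|R_{II,2}(z,\varepsilon)\|\;\le\;\frac{C_{3}\,\|R_{II}(t(z,\varepsilon),\varepsilon)\|}{\varepsilon^{2/3}\sqrt{z}}
\end{equation*}
with $C_{3}$ independent of $\varepsilon$. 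Integrating from $z_{0}$ to $z_{2}(\varepsilon)$ and invoking the hypothesis $\int_{z_{0}}^{z_{2}(\varepsilon)}\|R_{II}(t(s,\varepsilon),\varepsilon)\|/\sqrt{s}\,ds=o(\varepsilon^{2/3})$ gives the desired conclusion.

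There is no real obstacle; the only conceptual check is that the scaling fits together. The factor $\varepsilon^{-1/3}$ visible in \eqref{R II2} combines with the $\varepsilon^{-1/3}$ coming from the degeneracy of $T_{II}$ at the turning point (via $\det T_{II}$) to produce exactly the factor $\varepsilon^{-2/3}$ that the weighted integral hypothesis on $R_{II}$ is calibrated to absorb. This is a confirmation that the choice \eqref{z} of turning-point variable on the scale $\varepsilon^{2/3}$ is the correct one.
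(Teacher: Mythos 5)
Your proof is correct and follows essentially the same route as the paper: bound $\|T_{II}\|$ on $[t_{\nu},t_{0}]$, estimate $\|T_{II}^{-1}\|$ via the determinant $\det T_{II}(t)=\frac{4\beta}{t^{\gamma}}\sqrt{1-\frac{t^{2\gamma}}{4\beta^{2}}}\asymp\varepsilon^{1/3}\sqrt{z}$, combine with the explicit $\varepsilon^{-1/3}$ in \eqref{R II2} to get the pointwise bound $\|R_{II,2}\|\le C\varepsilon^{-2/3}z^{-1/2}\|R_{II}\|$, and integrate. The only (harmless) extras in your version are the explicit determinant formula and the remark about the sign of the exponent $1-\tfrac{1}{2\gamma}$, which in fact is not needed since $1-\varepsilon^{2/3}z$ is bounded in $[1-\nu,1]$ regardless.
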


\begin{proof}
Let
\begin{equation}\label{t nu}
    t_{\nu}:=t_0(1-\nu)^{\frac1{2\gamma}}.
\end{equation}
From the expression \eqref{T II} for $T_{II}$ it is clear that there exists $c_{19}>0$ such that $\|T_{II}(t)\|<c_{19}$ for every $t\in[t_{\nu},t_0]$ and that
\begin{equation*}
    \text{det}\,T_{II}(t)\sim2\sqrt{1-\frac{t^{2\gamma}}{4\beta^2}}=2\varepsilon^{\frac13}\sqrt{z(t,\varepsilon)}\text{ as }t\rightarrow t_0,
\end{equation*}
therefore there exists $c_{20}>0$ such that
\begin{equation*}
    \|T_{II}^{-1}(t(z,\varepsilon))\|<\frac{c_{20}}{\varepsilon^{\frac13}\sqrt z}
    \text{ for every }\varepsilon\in U\text{ and }z\in[z_0,z_2(\varepsilon)].
\end{equation*}
Hence from the definition \eqref{R II2} of $R_{II,2}$  one has that with some $c_{21}>0$
\begin{equation*}
    \int_{z_0}^{z_2(\varepsilon)}\|R_{II,2}(s,\varepsilon)\|ds
    <\frac{c_{21}}{\varepsilon^{\frac23}}\int_{z_0}^{z_2(\varepsilon)}\frac{\|R_2^+(t(s,\varepsilon),\varepsilon)\|ds}{\sqrt s}
\end{equation*}
which goes to zero as $\varepsilon\rightarrow0^+$ by the hypothesis.
\end{proof}

Now let us see that the condition \eqref{condition R II integral} is satisfied, if $R_{II}=R_2^+$.

\begin{lem}\label{lem II R 2+ estimate}
Let $R_2^+(t,\varepsilon)$ be given by \eqref{R 2 pm} and $t(z,\varepsilon)$ be defined by \eqref{z}.  Let the conditions \eqref{model problem conditions} and \eqref{model problem r} hold. Then for every $z_0>0$ and $\nu\in(0,1)$ with $z_2(\varepsilon)=\frac{\nu}{\varepsilon^{\frac23}}$, the following estimate holds:
\begin{equation*}
   \int_{z_0}^{z_2(\varepsilon)}\frac{\|R_2^+(t(s,\varepsilon),\varepsilon)\|}{\sqrt{s}}ds=O\left(\varepsilon^{\frac23}\varepsilon_0^{\frac{\alpha_r}{\gamma}}\right)
    \text{ as }\varepsilon\rightarrow0^+.
\end{equation*}
\end{lem}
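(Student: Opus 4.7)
The plan is to bound $\|R_2^+(t(s,\varepsilon),\varepsilon)\|$ pointwise on the region of integration by something uniform of order $\varepsilon\,\varepsilon_0^{\alpha_r/\gamma}$, and then multiply by $\int_{z_0}^{z_2(\varepsilon)} s^{-1/2}\,ds$, which is of order $\varepsilon^{-1/3}$ since $z_2(\varepsilon)=\nu/\varepsilon^{2/3}$. The product of these two factors is precisely $\varepsilon^{2/3}\varepsilon_0^{\alpha_r/\gamma}$.

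The key observation driving the first step is that on $s\in[z_0,z_2(\varepsilon)]$ the inverse coordinate $t(s,\varepsilon)=t_0(1-\varepsilon^{2/3}s)^{1/(2\gamma)}$ stays in a fixed interval $[t_\nu,t_0]$ with $t_\nu:=t_0(1-\nu)^{1/(2\gamma)}>0$ independent of $\varepsilon$. So after the rescaling built into \eqref{R 2 pm}, the argument $\varepsilon_0^{-1/\gamma}t$ fed into $R$ is uniformly of order $\varepsilon_0^{-1/\gamma}$, making $R$ small. Concretely, starting from \eqref{R 2 pm} and using that $\bigl(\begin{smallmatrix}\cos & \sin\\ \sin & -\cos\end{smallmatrix}\bigr)$ is orthogonal, together with $\varepsilon^{-\gamma/(1-\gamma)}=\varepsilon_0^{-1}$, I would obtain
\[
\|R_2^+(t,\varepsilon)\|\le \varepsilon_0^{-1}\bigl\|R\bigl(\varepsilon_0^{-1/\gamma}t,\varepsilon_0\bigr)\bigr\|.
\]
Then the hypothesis $\|R(x,\varepsilon_0)\|<r(x)=c_r(x+1)^{-1-\alpha_r}$ from \eqref{model problem conditions}--\eqref{model problem r} and the lower bound $t\ge t_\nu$ give
\[
\bigl\|R\bigl(\varepsilon_0^{-1/\gamma}t,\varepsilon_0\bigr)\bigr\|\le \frac{c_r}{(1+\varepsilon_0^{-1/\gamma}t_\nu)^{1+\alpha_r}}=O\bigl(\varepsilon_0^{(1+\alpha_r)/\gamma}\bigr)
\]
uniformly in $t\in[t_\nu,t_0]$. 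Multiplying by $\varepsilon_0^{-1}$ and using the identity $\varepsilon_0^{(1-\gamma)/\gamma}=\varepsilon$ from \eqref{epsilon}, the exponent collapses: $\varepsilon_0^{(1+\alpha_r)/\gamma-1}=\varepsilon_0^{\alpha_r/\gamma}\cdot\varepsilon_0^{(1-\gamma)/\gamma}=\varepsilon\,\varepsilon_0^{\alpha_r/\gamma}$. Hence the uniform pointwise bound $\|R_2^+(t(s,\varepsilon),\varepsilon)\|=O\bigl(\varepsilon\,\varepsilon_0^{\alpha_r/\gamma}\bigr)$ on the entire $s$-interval of integration.

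The second step is the trivial estimate $\int_{z_0}^{z_2(\varepsilon)}s^{-1/2}\,ds\le 2\sqrt{z_2(\varepsilon)}=2\sqrt{\nu}\,\varepsilon^{-1/3}$, and combining the two displays produces the claimed $O(\varepsilon^{2/3}\varepsilon_0^{\alpha_r/\gamma})$. I expect no real obstacle here: the argument is pure bookkeeping of the interplay between $\varepsilon$ and $\varepsilon_0$, made possible by the fact that on region $II$ the original coordinate $t$ is separated away from zero, so the slow decay of $r$ near the origin never enters, and one gains the full power $\alpha_r$ plus the ``loose'' exponent $1/\gamma-1$ that converts to a factor of $\varepsilon$.
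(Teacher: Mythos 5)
Your proof is correct, and it takes a genuinely different (and slightly cleaner) route than the paper's. The paper changes the integration variable from $s$ back to $t$, absorbing $\frac{ds}{\sqrt s}$ into a Jacobian that produces a factor $\varepsilon^{\frac23}\,\frac{dt}{\sqrt{t_0-t}}$, and then extracts $\varepsilon_0^{\alpha_r/\gamma}$ by writing $r(\varepsilon_0^{-1/\gamma}t)\le c_r\varepsilon_0^{(1+\alpha_r)/\gamma}t^{-1-\alpha_r}$ and bounding the remaining convergent integral $\int_{t_\nu}^{t_0}\frac{dt}{t^{1+\alpha_r}\sqrt{t_0-t}}$. You instead stay in the $s$-variable: the same observation that $t(s,\varepsilon)\in[t_\nu,t_0]$ is separated from zero gives a \emph{uniform} pointwise bound $\|R_2^+(t(s,\varepsilon),\varepsilon)\|=O(\varepsilon\,\varepsilon_0^{\alpha_r/\gamma})$, and then $\int_{z_0}^{z_2(\varepsilon)}s^{-1/2}\,ds\le 2\sqrt{z_2(\varepsilon)}=O(\varepsilon^{-1/3})$ finishes the job. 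The exponent bookkeeping ($\varepsilon^{-\gamma/(1-\gamma)}=\varepsilon_0^{-1}$, $\varepsilon^{-1/(1-\gamma)}=\varepsilon_0^{-1/\gamma}$, $\varepsilon_0^{(1-\gamma)/\gamma}=\varepsilon$) is the same in both versions and you execute it correctly. The paper's substitution makes the geometric picture of the turning-point neighbourhood explicit, but at the cost of introducing an integrable square-root singularity that has to be checked; your version avoids that check entirely and is preferable as a self-contained argument. One tiny remark: since the trigonometric factor in \eqref{R 2 pm} is orthogonal, conjugation actually preserves the operator norm exactly, so your first display is an equality, not just an inequality, but that of course does no harm.
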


\begin{proof}
Using the equalities \eqref{R 2 pm} and the estimate of the norm of $R$ from the conditions \eqref{model problem initial condition} we have
\begin{equation*}
    \int_{z_0}^{z_2(\varepsilon)}\frac{\|R_2^+(t(s,\varepsilon),\varepsilon)\|ds}{\sqrt s}
    <
    \frac{1}{\varepsilon_0}\int_{z_0}^{z_2(\varepsilon)}
    \left(\varepsilon_0^{-\frac1{\gamma}}t(s,\varepsilon)\right)\frac{ds}{\sqrt s}.
\end{equation*}
Since the derivative of $z$ is $\frac{dz(t,\varepsilon)}{dt}=-\frac{\gamma t^{2\gamma-1}}{2\beta^2\varepsilon^{\frac23}}$, one has
\begin{equation*}
    \frac{1}{\varepsilon_0}\int_{z_0}^{z_2(\varepsilon)}
    r\left(\varepsilon_0^{-\frac1{\gamma}}t(s,\varepsilon)\right)\frac{ds}{\sqrt s}
    <
    \frac{\gamma}{2\beta^2\varepsilon_0\varepsilon^{\frac13}}\int_{t_{\nu}}^{t_0}
    r\left(\varepsilon_0^{-\frac1{\gamma}}t\right)\frac{t^{2\gamma-1}dt}{\sqrt{1-\frac{t^{2\gamma}}{4\beta^2}}},
\end{equation*}
where $t_{\nu}$ is given by \eqref{t nu}. It follows that there exists $c_{22}>0$ such that
\begin{equation*}
    \frac{\gamma}{2\beta^2\varepsilon_0\varepsilon^{\frac13}}\int_{t_{\nu}}^{t_0}
    r\left(\varepsilon_0^{-\frac1{\gamma}}t\right)\frac{t^{2\gamma-1}dt}{\sqrt{1-\frac{t^{2\gamma}}{4\beta^2}}}
    <\frac{c_{22}\varepsilon^{\frac23}}{\varepsilon_0^{\frac1{\gamma}}}\int_{t_{\nu}}^{t_0}
    r\left(\varepsilon_0^{-\frac1{\gamma}}t\right)\frac{dt}{\sqrt{t_0-t}}.
\end{equation*}
Using the expression \eqref{model problem r} for $r$ we arrive at the estimate
\begin{equation*}
    \int_{z_0}^{z_2(\varepsilon)}\frac{\|R_2^+(t(s,\varepsilon),\varepsilon)\|ds}{\sqrt s}
    <c_rc_{22}\varepsilon^{\frac23}\varepsilon_0^{\frac{\alpha_r}{\gamma}}\int_{t_{\nu}}^{t_0}
    \frac{dt}{t^{1+\alpha_r}\sqrt{t_0-t}}=O\left(\varepsilon^{\frac23}\varepsilon_0^{\frac{\alpha_r}{\gamma}}\right)
\end{equation*}
as $\varepsilon\rightarrow0^+$. This completes the proof.
\end{proof}

To prove Lemma \ref{lem II result} we need to further divide the interval $[Z_0,Z_2(\varepsilon)]$ into two parts by the point $Z_1(\varepsilon)$ such that  $Z_1(\varepsilon)\rightarrow+\infty$ and $Z_1(\varepsilon)=o(\varepsilon^{-\frac23})$ as $\varepsilon\rightarrow0^+$ which we will choose later, in \eqref{Z 1}. On the first subinterval $[Z_0,Z_1(\varepsilon)]$ solutions of the system \eqref{system u II2} behave like solutions of the free system \eqref{system v II2} which does not contain $\varepsilon$. On the second subinterval $[Z_1(\varepsilon),Z_2(\varepsilon)]$ one cannot neglect the terms $\varepsilon^{\frac13}\sqrt z$ and $\varepsilon^{\frac23}z$, so the free system should depend on $\varepsilon$. Nevertheless, the answer in the second subinterval is even simpler, because $Z_1(\varepsilon)\rightarrow+\infty$ and solutions are already in their asymptotic regime as $z\rightarrow+\infty$.

\begin{rem}
Since we are interested only in the behaviour as $\varepsilon\rightarrow0^+$ we do not need to ensure that the  inequality $Z_0<Z_1(\varepsilon)<Z_2(\varepsilon)$ holds for every $\varepsilon\in U$. It is enough if it holds for sufficiently small values of $\varepsilon$.
\end{rem}

For both subintervals one has to prove that the remainder $R_{II,2}$ does not affect the asymptotics. Some difficulty in showing this is that the second term of the coefficient matrix has off-diagonal entries. To get rid of them we use the Harris--Lutz transformation, although slightly different for subintervals $[Z_0,Z_1(\varepsilon)]$ and $[Z_1(\varepsilon),Z_2(\varepsilon)]$. Let us see how it works generally in the hyperbolic case \cite{Harris-Lutz-1975}.

\subsection{Formulae for the Harris--Lutz transformation}\label{subsection Harris-Lutz}
Suppose that we start with the linear differential system in $\mathbb C^2$
\begin{equation*}\label{Harris--Lutz given}
    u'=(\Lambda+S+R)u,
\end{equation*}
where $\Lambda=\text{diag}\{\lambda_1,\lambda_2\}$ and $\Re\lambda_1>\Re\lambda_2$. The aim is to transform it to the system
\begin{equation*}\label{Harris--Lutz result}
    u_1'=(\Lambda+\text{diag}\,S+R_1)u_1
\end{equation*}
by the substitution $u=(I+\widehat T)u_1$.  We suppose that $(I+\widehat T)$ is invertible. This gives
\begin{equation*}
    (I+\widehat T)u'=(\Lambda+\Lambda \widehat T-\widehat T'+S+(S\widehat T+R(I+\widehat T)))u_1
\end{equation*}
and
\begin{equation*}
    u_1'=(\Lambda+\underbrace{[\Lambda,\widehat T]-\widehat T'+S}_{=\text{diag}\,S}+R_1)u_1,
\end{equation*}
where $[\cdot,\cdot]$ is the commutator of matrices and
\begin{equation*}
    R_1=(I+\widehat T)^{-1}((R(I+\widehat T)+S\widehat T)
    -\widehat T(\Lambda \widehat T-\widehat T'+S)
    +\widehat T^2\Lambda).
\end{equation*}
Let $S_d:=\text{diag}\,S$, $S_{ad}:=S-S_d$. We need  the following equality:
\begin{equation*}
    [\Lambda,\widehat T]-\widehat T'=-S_{ad}.
\end{equation*}
This, in particular, implies
\begin{equation}\label{Harris--Lutz R 1}
    R_1=(I+\widehat T)^{-1}R(I+\widehat T)+(I+\widehat T)^{-1}(S\widehat T-\widehat T S_d).
\end{equation}
If one represents $\widehat T$ as
\begin{equation*}
    \widehat T(x)=\exp\left(\int_0^x\Lambda\right)\widetilde T(x)\exp\left(-\int_0^x\Lambda\right),
\end{equation*}
differentiation gives
\begin{equation*}
    \widehat T'=[\Lambda,\widehat T]+\exp\left(\int_0^x\Lambda\right)\widetilde T'\exp\left(-\int_0^x\Lambda\right),
\end{equation*}
and so one needs to solve the equation
\begin{equation*}
    \exp\left(\int_0^x\Lambda\right)\widetilde T'\exp\left(-\int_0^x\Lambda\right)=S_{ad}.
\end{equation*}
Its solution is
\begin{equation*}
    \widetilde T(x)=
    \left(
    \begin{array}{cc}
      0
      &
      -\int_x^{\infty}S_{12}(x')\exp(\int_0^{x'}(\lambda_2-\lambda_1))dx'
      \\
      \int_0^xS_{21}(x')\exp(\int_0^{x'}(\lambda_1-\lambda_2))dx'
      &
      0
    \end{array}
    \right),
\end{equation*}
where $S_{12}$ and $S_{21}$ are the off-diagonal entries of the matrix $S$ and the choice of the domain of integration is determined by the sign of the real part of the exponent. This eventually gives
\begin{equation}\label{Harris--Lutz T tilde}
    \widehat T(x)=
    \left(
      \begin{array}{cc}
        0
        &
        -\int_x^{\infty}S_{12}(x')\exp(\int_x^{x'}(\lambda_2-\lambda_1))dx'
        \\
        \int_0^xS_{21}(x')\exp(\int_{x'}^{x}(\lambda_2-\lambda_1))dx'
        &
        0
      \end{array}
    \right),
\end{equation}
where both exponential terms are less than one in modulus. If $R$ is summable and $\Lambda,V$ are such that $\widehat T$ goes to zero at infinity, then $R_1$ can be also summable, which means that the transformation effectively eliminates the off-diagonal terms of $V$.

According to the argument above, for the system \eqref{system u II2} we take for $z\in[Z_0,Z_2(\varepsilon)]$
\begin{equation}\label{T II hat}
    \widehat T_{II}(z,\varepsilon)=
    \left(
      \begin{array}{cc}
        0 & \widehat T_{II_{12}}(z,\varepsilon) \\
        \widehat T_{II_{21}}(z,\varepsilon) & 0 \\
      \end{array}
    \right)
\end{equation}
with
\begin{equation*}
        \widehat T_{II_{12}}(z,\varepsilon):=
        \int_{Z_0}^{z}\frac{ds}{4s(1-\varepsilon^{\frac13}\sqrt s)}
        \exp
        \left(-\int_{s}^{z}\frac{2c_0\sqrt{\sigma}d\sigma}{(1-\varepsilon^{\frac23}\sigma)^{\varkappa}}\right),
\end{equation*}
\begin{equation*}
        \widehat T_{II_{21}}(z,\varepsilon):=
        -\int_{z}^{Z_2(\varepsilon)}\frac{ds}{4s(1+\varepsilon^{\frac13}\sqrt s)}
        \exp
        \left(-\int_{z}^{s}\frac{2c_0\sqrt{\sigma}d\sigma}{(1-\varepsilon^{\frac23}\sigma)^{\varkappa}}\right).
\end{equation*}
Note that we have interchanged the domains of integration according to the signs of the entries of $\Lambda_{II}$. Note also that the argument works with $Z_2(\varepsilon)$ in the upper limit instead of $+\infty$ for the lower-left entry.

\begin{lem}\label{lem II estimate T hat}
There exist $t_{I-II}\in(0,t_0)$ and $c_{II}>0$ such that for every $\varepsilon\in U\cup\{0\}$ and $z\in[1,Z_2(\varepsilon)]$ with $Z_2(\varepsilon)$ given by \eqref{Z 2} the matrix $\widehat T_{II}(z,\varepsilon)$ given by \eqref{T II hat} satisfies the following estimate:
\begin{equation}\label{II estimate of T hat}
\| \widehat T_{II}(z,\varepsilon)\|<\frac{c_{II}}{z^{\frac32}}.
\end{equation}
\end{lem}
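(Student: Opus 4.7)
The plan is to integrate by parts in each entry of $\widehat T_{II}$, using as antiderivative
\begin{equation*}
\phi(s,\varepsilon):=\int_0^s\frac{2c_0\sqrt\sigma}{(1-\varepsilon^{2/3}\sigma)^\varkappa}\,d\sigma,
\end{equation*}
so that the exponential factors in the integrands are $e^{\mp(\phi(z,\varepsilon)-\phi(s,\varepsilon))}$ with derivative $\pm\phi'(s,\varepsilon)\,e^{\mp(\phi(z,\varepsilon)-\phi(s,\varepsilon))}$. Writing $a_\pm(s,\varepsilon):=\frac{1}{4s(1\mp\varepsilon^{1/3}\sqrt s)}$ for the amplitude, the integrands in $\widehat T_{II_{12}}$ and $\widehat T_{II_{21}}$ read $\frac{a_+(s,\varepsilon)}{\phi'(s,\varepsilon)}\cdot\frac{d}{ds}e^{-(\phi(z,\varepsilon)-\phi(s,\varepsilon))}$ and $-\frac{a_-(s,\varepsilon)}{\phi'(s,\varepsilon)}\cdot\frac{d}{ds}e^{-(\phi(s,\varepsilon)-\phi(z,\varepsilon))}$, respectively, and integration by parts transfers the exponential to boundary terms plus a remainder involving $(a_\pm/\phi')'$.

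The key input is a uniform bound on these ratios. Factorising $1-\varepsilon^{2/3}s=(1-\varepsilon^{1/3}\sqrt s)(1+\varepsilon^{1/3}\sqrt s)$ gives
\begin{equation*}
\frac{a_\pm(s,\varepsilon)}{\phi'(s,\varepsilon)}=\frac{(1\mp\varepsilon^{1/3}\sqrt s)^{\varkappa-1}(1\pm\varepsilon^{1/3}\sqrt s)^{\varkappa}}{8c_0\,s^{3/2}}.
\end{equation*}
For $\gamma\in(\tfrac12,1)$ the constant $\varkappa=\tfrac32-\tfrac{1}{2\gamma}$ lies in $(\tfrac12,1)$, so the potentially singular exponent $\varkappa-1$ is strictly negative. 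Any fixed $t_{I-II}\in(0,t_0)$ confines $\varepsilon^{1/3}\sqrt s$ to the compact subinterval $[0,\sqrt{1-t_{I-II}^{2\gamma}/(4\beta^2)}]\subset[0,1)$ for all $s\in[Z_0,Z_2(\varepsilon)]$ and $\varepsilon\in U\cup\{0\}$, so both binomial factors remain bounded above and below by positive constants. This yields $|a_\pm/\phi'|\le C_1 s^{-3/2}$ and, by direct differentiation, $|(a_\pm/\phi')'|\le C_2(\varepsilon^{1/3}s^{-2}+s^{-5/2})$, all uniformly in $\varepsilon$.

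For $\widehat T_{II_{12}}$, integration by parts produces the boundary term $a_+(z,\varepsilon)/\phi'(z,\varepsilon)=O(z^{-3/2})$ at $s=z$, an exponentially small contribution at $s=Z_0$ (since $\phi(z,\varepsilon)-\phi(Z_0,\varepsilon)\ge\tfrac{4c_0}{3}(z^{3/2}-Z_0^{3/2})$), and a remainder $\int_{Z_0}^z(a_+/\phi')'(s,\varepsilon)\,e^{-(\phi(z,\varepsilon)-\phi(s,\varepsilon))}\,ds$. Splitting the remainder at $s=z/2$: on $[z/2,z]$ the substitution $u=\phi(z,\varepsilon)-\phi(s,\varepsilon)$ together with $\phi'(\sigma,\varepsilon)\ge 2c_0\sqrt{z/2}$ yields a contribution $O(z^{-3})$, while on $[Z_0,z/2]$ the bound $e^{-(\phi(z,\varepsilon)-\phi(z/2,\varepsilon))}\le e^{-c_3 z^{3/2}}$ swallows any polynomial growth of the amplitude. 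The $(2,1)$ entry is handled symmetrically on $[z,Z_2(\varepsilon)]$: the boundary at $s=z$ is again $O(z^{-3/2})$, the one at $s=Z_2(\varepsilon)$ is bounded by $a_-(Z_2(\varepsilon),\varepsilon)/\phi'(Z_2(\varepsilon),\varepsilon)=O(Z_2(\varepsilon)^{-3/2})\le O(z^{-3/2})$ since the corresponding exponential factor is at most $1$ and $z\le Z_2(\varepsilon)$, and the remainder is controlled by splitting at $s=2z$. The main obstacle is precisely the uniformity in $\varepsilon$: the factor $(1-\varepsilon^{1/3}\sqrt s)^{\varkappa-1}$ would be unbounded as $\varepsilon^{1/3}\sqrt s\to 1$, and the role of the restriction $t_{I-II}<t_0$ is exactly to prevent this, fixing the choice of constants. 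Collecting the contributions yields $\|\widehat T_{II}(z,\varepsilon)\|<c_{II}/z^{3/2}$ with $c_{II}$ independent of $\varepsilon\in U\cup\{0\}$ and $z\in[Z_0,Z_2(\varepsilon)]$, which is the claim.
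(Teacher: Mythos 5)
Your proof is correct and reaches the same conclusion, but by a genuinely different organization of the integration by parts. The paper first crudely bounds the integrand by an $\varepsilon$-independent one — dropping the factor $(1-\varepsilon^{2/3}\sigma)^{-\varkappa}\ge 1$ from the phase and using $1-\varepsilon^{1/3}\sqrt s>\tfrac12$ (hence the specific choice $t_{I-II}=t_0(4/5)^{1/(2\gamma)}$) to replace $\frac{1}{4s(1-\varepsilon^{1/3}\sqrt s)}$ by $\frac{1}{2s}$ — and only then integrates by parts in the resulting $\varepsilon$-free integral, where no uniformity issues can arise. You instead integrate by parts directly in the $\varepsilon$-dependent integral using $\phi(\cdot,\varepsilon)$ as antiderivative and then bound the ratios $a_\pm/\phi'$ and $(a_\pm/\phi')'$ uniformly; the factorisation $1-\varepsilon^{2/3}s=(1-\varepsilon^{1/3}\sqrt s)(1+\varepsilon^{1/3}\sqrt s)$ and the observation that the only dangerous exponent $\varkappa-1\in(-\tfrac12,0)$ is kept under control by $\varepsilon^{1/3}\sqrt s\le\sqrt{1-t_{I-II}^{2\gamma}/(4\beta^2)}<1$ is a nice structural insight the paper does not make explicit. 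Your route buys flexibility in the choice of $t_{I-II}$ (any point in $(0,t_0)$ will do, with $c_{II}$ depending on it, whereas the paper's shortcut requires $\varepsilon^{1/3}\sqrt s<\tfrac12$) and makes transparent exactly why the restriction is needed; the paper's route is a bit lighter on bookkeeping because after the crude bound it never has to control $\varepsilon$-derivatives. One small slip: on $[z/2,z]$ your bound $|(a_+/\phi')'|\le C_2(\varepsilon^{1/3}s^{-2}+s^{-5/2})$ together with $\int_{z/2}^z e^{-(\phi(z,\varepsilon)-\phi(s,\varepsilon))}\,ds=O(z^{-1/2})$ gives $O(\varepsilon^{1/3}z^{-5/2}+z^{-3})$, which is $O(z^{-5/2})$ rather than the stated $O(z^{-3})$ unless one additionally notes $\varepsilon^{1/3}=O(z^{-1/2})$ on the range $z\le Z_2(\varepsilon)$; either way it is $o(z^{-3/2})$ and the lemma follows.
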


\begin{proof}
Let us choose the point $t_{I-II}$ so close to $t_0$ that for every $z\in[0,Z_2(\varepsilon)]$ it holds that $\varepsilon^{\frac13}\sqrt z<\frac12$, which is equivalent to
\begin{equation*}
    t_0\left(\frac34\right)^{\frac1{2\gamma}}<t_{I-II}<t_0.
\end{equation*}
So let us take
\begin{equation}\label{t I-II}
    t_{I-II}:=t_0\left(\frac45\right)^{\frac1{2\gamma}}.
\end{equation}
Then
\begin{equation*}
    |\widehat T_{II_{12}}(z,\varepsilon)|
    <\int_{Z_0}^z\frac{\exp\left(-\int_s^z2c_0\sqrt\sigma d\sigma\right)ds}{2s}
    \\
    =\exp\left(-\frac{4c_0}{3}z^{\frac32}\right)\int_{Z_0}^z\exp\left(\frac{4c_0}{3}s^{\frac32}\right)\frac{ds}{2s}.
\end{equation*}
The last expression does not depend on $\varepsilon$ and can be estimated for large $z$ using integration by parts. Indeed, for $z>2Z_0$ one has:
\begin{multline*}
    \exp\left(-\frac{4c_0}{3}z^{\frac32}\right)\int_{Z_0}^z\exp\left(\frac{4c_0}{3}s^{\frac32}\right)\frac{ds}{2s}
    \\
    =\frac{1}{4c_0z^{\frac32}}
    -\frac{\exp\left(\frac{4c_0}{3}\left({Z_0}^{\frac32}-z^{\frac32}\right)\right)}{4c_0Z_0^{\frac32}}
    +\frac{3}{8c_0}\exp\left(-\frac{4c_0}{3}z^{\frac32}\right)\int_{Z_0}^z\frac{\exp\left(\frac{4c_0}{3}s^{\frac32}\right)ds}{s^{\frac52}}
    \\
    <\frac1{4c_0z^{\frac32}}+
    \frac{3}{8c_0}\exp\left(-\frac{4c_0}{3}z^{\frac32}\right)
    \left(
    \int_{Z_0}^{\frac z2}+\int_{\frac z2}^z
    \right)
    \frac{\exp\left(\frac{4c_0}{3}s^{\frac32}\right)ds}{s^{\frac52}}
    \\
    =\frac1{4c_0z^{\frac32}}
    +O\left(\exp\left(-\frac{4c_0}{3}z^{\frac32}\left(1-\frac1{2^{\frac32}}\right)\right)+\frac1{z^{\frac32}}\right)
    =O\left(\frac1{z^{\frac32}}\right)
\end{multline*}
as $z\rightarrow+\infty$, and hence we have a uniform estimate for the upper-right entry. For the lower-left entry one has:
\begin{multline*}
    |\widehat T_{II_{21}}(z,\varepsilon)|
    <\int_{z}^{Z_2(\varepsilon)}\frac{ds}{4s}\exp\left(-\int_z^s2c_0\sqrt\sigma d\sigma\right)
    \\
    <\int_{z}^{+\infty}\frac{ds}{4s}\exp\left(-\int_z^s2c_0\sqrt\sigma d\sigma\right)
    =\exp\left(\frac{4c_0}{3}z^{\frac32}\right)\int_z^{+\infty}\exp\left(-\frac{4c_0}{3}s^{\frac32}\right)\frac{ds}{4s}
    \\
    =\frac1{8c_0z^{\frac32}}-\frac3{16c_0}\int_z^{+\infty}\frac{\exp\left(-\frac{4c_0}{3}(s^{\frac32}-z^{\frac32})\right)ds}{s^{\frac52}}<\frac1{8c_0z^{\frac32}}
\end{multline*}
for every $z\in[Z_0,Z_2(\varepsilon)]$ and $\varepsilon\in U\cup\{0\}$. Therefore we have the estimate \eqref{II estimate of T hat}.
\end{proof}

On the first part $[Z_0,Z_1(\varepsilon)]$ of the region $II$ we treat the system \eqref{system u II2} as a perturbation of the free system \eqref{system v II2}. Let us see how to choose $Z_1(\varepsilon)$ in order to make this possible. Rewrite \eqref{system u II2} as
\begin{equation*}
    u_{II,2}'(z)=\left(
    -c_0\sqrt z
    \left(
      \begin{array}{cc}
        1 & 0 \\
        0 & -1 \\
      \end{array}
    \right)
    \right.
    \\
    \left.
    -\frac1{4z}
    \left(
      \begin{array}{cc}
        1 & -1 \\
        -1 & 1 \\
      \end{array}
    \right)
    +\widetilde R_{II,2}(z,\varepsilon)
    \right)u_{II,2}(z),
\end{equation*}
where
\begin{multline*}
    \widetilde R_{II,2}(z,\varepsilon)=R_{II,2}(z,\varepsilon)
    \\
    -c_0\sqrt{z}
    \left(
      \begin{array}{cc}
        1 & 0 \\
        0 & -1 \\
      \end{array}
    \right)
    \left(\frac1{(1-\varepsilon^{\frac23}z)^{\varkappa}}-1\right)
    +\frac{\varepsilon^{\frac13}}{4\sqrt{z}}
    \left(
      \begin{array}{cc}
        \frac1{1+\varepsilon^{\frac13}\sqrt z} & -\frac1{1-\varepsilon^{\frac13}\sqrt z} \\
        -\frac1{1+\varepsilon^{\frac13}\sqrt z} & \frac1{1-\varepsilon^{\frac13}\sqrt z} \\
      \end{array}
    \right).
\end{multline*}
From this we see that with some $c_{23}>0$ for every $z\in[Z_0,Z_2(\varepsilon)]$
\begin{equation*}
    \|\widetilde R_{II,2}(z,\varepsilon)\|\le\|R_{II,2}(z,\varepsilon)\|
    +c_{23}\left(\varepsilon^{\frac23}z^{\frac32}+\frac{\varepsilon^{\frac13}}{\sqrt z}\right).
\end{equation*}
Since by Lemma \ref{lem II remainder} one has
\begin{equation*}
    \int_{Z_0}^{Z_2(\varepsilon)}\|R_{II,2}(s,\varepsilon)\|ds\rightarrow0\text{ as }\varepsilon\rightarrow0^+,
\end{equation*}
the condition on $Z_1(\varepsilon)$ is the following:
\begin{equation*}
    \int_{Z_0}^{Z_1(\varepsilon)}\left(\varepsilon^{\frac23}s^{\frac32}+\frac{\varepsilon^{\frac13}}{\sqrt s}\right)ds\rightarrow0
    \text{ as }\varepsilon\rightarrow0^+.
\end{equation*}
This is equivalent to the condition
\begin{equation*}
    \varepsilon^{\frac23}(Z_1(\varepsilon))^{\frac52}+\varepsilon^{\frac13}(Z_1(\varepsilon))^{\frac12}\rightarrow0,
\end{equation*}
which in turn is equivalent to $Z_1(\varepsilon)=o\left(\varepsilon^{-\frac4{15}}\right)$. Therefore let us take
\begin{equation}\label{Z 1}
    Z_1(\varepsilon):=\frac{1}{\varepsilon^{\frac15}}.
\end{equation}
This ensures that
\begin{equation}\label{II remainder estimate 1st part}
    \int_{Z_0}^{Z_1(\varepsilon)}\|\widetilde R_{II,2}(s,\varepsilon)\|ds\rightarrow0\text{ as }\varepsilon\rightarrow0^+.
\end{equation}
Now we can obtain results for the subintervals $[Z_0,Z_1(\varepsilon)]$ and $[Z_1(\varepsilon),Z_2(\varepsilon)]$.

\begin{lem}\label{lem II result 1st part}
Let the conditions of Lemma \ref{lem II result} hold and let $Z_1(\varepsilon)$ be given by \eqref{Z 1}. There exists $Z_0>0$ such that if
\begin{equation*}
\int_{Z_0}^{Z_1(\varepsilon)}\frac{\|R_{II}(t(s,\varepsilon),\varepsilon)\|}{\sqrt{s}}ds=o\left(\varepsilon^{\frac23}\right)
   \text{ as }\varepsilon\rightarrow0^+,
\end{equation*}
then for every sufficiently small $\varepsilon>0$ the system \eqref{system u II2} on the interval $[Z_0,Z_1(\varepsilon)]$ has two solutions $\widetilde u_{II,2}^{\pm}(z,\varepsilon)$ such that, as $\varepsilon\rightarrow0^+$,
\begin{equation}\label{II answer 1st part}
    \widetilde u_{II,2}^{\pm}(Z_1(\varepsilon),\varepsilon)
    =\frac{\exp\left(\mp\frac{2c_0}3(Z_1(\varepsilon))^{\frac32}\right)}{(Z_1(\varepsilon))^{\frac14}}
    (e_{\pm}+o(1)),
\end{equation}
where the vectors $e_{\pm}$ are given by \eqref{e +-}. Moreover, $\widetilde u_{II,2}^{\pm}(z,\varepsilon)\rightarrow v_{II,2}^{\pm}(z)$ as $\varepsilon\rightarrow0^+$ for every fixed $z\ge Z_0$, where $v_{II,2}^{\pm}$ are defined in Lemma \ref{lem II free system}.
\end{lem}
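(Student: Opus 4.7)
The plan is to reduce the system \eqref{system u II2} to a diagonal one by the Harris--Lutz transformation $u_{II,2}(z) = (I + \widehat T_{II}(z,\varepsilon))u_{II,3}(z)$, using exactly the matrix $\widehat T_{II}$ introduced in \eqref{T II hat}. By Lemma \ref{lem II estimate T hat}, choosing $Z_0$ so large that $c_{II}Z_0^{-3/2}<1/2$ makes $I+\widehat T_{II}$ invertible with uniformly bounded inverse for all $\varepsilon \in U \cup \{0\}$ and $z \ge Z_0$. The Harris--Lutz identity \eqref{Harris--Lutz R 1} transforms the system into
\begin{equation*}
u_{II,3}'(z) = (\Lambda_{II,2}(z,\varepsilon) + \mathrm{diag}\,S_{II,2}(z,\varepsilon) + R_{II,3}(z,\varepsilon))u_{II,3}(z),
\end{equation*}
where $R_{II,3}$ splits into a commutator piece $(I+\widehat T_{II})^{-1}(S_{II,2}\widehat T_{II}-\widehat T_{II}\,\mathrm{diag}\,S_{II,2})$ of order $O(z^{-5/2})$ (uniformly $L_1$-integrable on $[Z_0,+\infty)$, with $L_1$-norm made arbitrarily small by enlarging $Z_0$) and the conjugate $(I+\widehat T_{II})^{-1}R_{II,2}(I+\widehat T_{II})$ of $R_{II,2}$, whose $L_1$-norm on $[Z_0,Z_1(\varepsilon)]$ vanishes as $\varepsilon\to 0^+$ by Lemma \ref{lem II remainder} applied to the hypothesis.

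At $\varepsilon=0$ the diagonalized system has principal part $(-c_0\sqrt z - \tfrac{1}{4z})\mathrm{diag}(1,-1)$ and an $L_1(Z_0,+\infty)$ remainder, so the asymptotic Levinson theorem \cite[Theorem 8.1]{Coddington-Levinson-1955} produces two solutions $v_{II,3}^\pm$ with $v_{II,3}^\pm(z) = \exp\bigl(\int_{Z_0}^z (\mp c_0\sqrt s - \tfrac{1}{4s})\,ds\bigr)(e_\pm + o(1))$ as $z \to \infty$, which I normalize so that $v_{II,2}^\pm := (I+\widehat T_{II}(\cdot,0))v_{II,3}^\pm$ matches the Lemma \ref{lem II free system} normalization; this simultaneously establishes that lemma. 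For small $\varepsilon>0$, I define $\widetilde u_{II,3}^\pm$ on $[Z_0, Z_2(\varepsilon))$ by $\widetilde u_{II,3}^\pm(Z_0, \varepsilon) := v_{II,3}^\pm(Z_0)$, so that at $\varepsilon = 0$ they coincide with $v_{II,3}^\pm$, and set $\widetilde u_{II,2}^\pm := (I+\widehat T_{II})\widetilde u_{II,3}^\pm$. Pointwise convergence $\widetilde u_{II,2}^\pm(z,\varepsilon) \to v_{II,2}^\pm(z)$ at each fixed $z\ge Z_0$ then follows from a Gronwall estimate on the bounded interval $[Z_0, z]$, using pointwise convergence of the coefficient matrices and dominated convergence (valid since the transformed remainder is bounded in $L_1$ uniformly in $\varepsilon$).

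To extract the asymptotic at $z = Z_1(\varepsilon)$ I write $\widetilde u_{II,3}^\pm(z,\varepsilon) = \phi_\pm^\varepsilon(z)(c_\pm(\varepsilon)e_\pm + r_\pm(z,\varepsilon))$ with $\phi_\pm^\varepsilon(z) := \exp\bigl(\int_{Z_0}^z(\pm\lambda_{II,2} + S_{II,2,\pm\pm})\,ds\bigr)$ and $c_\pm(\varepsilon)$ determined by the initial data $v_{II,3}^\pm(Z_0)$. Reducing to a Volterra integral equation for $r_\pm$ and exploiting the exponential dichotomy of $\Lambda_{II,2}$---integrating the $e_\mp$-component backward from $Z_1(\varepsilon)$ and the $e_\pm$-component forward from $Z_0$---the $L_1$-smallness of $R_{II,3}$ on $[Z_0, Z_1(\varepsilon)]$ gives $\|r_\pm(\cdot,\varepsilon)\|_{L^\infty[Z_0,Z_1(\varepsilon)]} \to 0$. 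A direct calculation using the bounds $\varepsilon^{2/3}Z_1(\varepsilon)^{5/2} = \varepsilon^{1/6}$ and $\varepsilon^{1/3}\sqrt{Z_1(\varepsilon)} = \varepsilon^{7/30}$ then shows
\begin{equation*}
\int_{Z_0}^{Z_1(\varepsilon)}\bigl[\pm\lambda_{II,2}(s,\varepsilon)+S_{II,2,\pm\pm}(s,\varepsilon)\bigr]\,ds = \int_{Z_0}^{Z_1(\varepsilon)}\bigl(\mp c_0\sqrt s - \tfrac{1}{4s}\bigr)\,ds + o(1),
\end{equation*}
so the exponential factors of $\phi_\pm^\varepsilon$ at $Z_1(\varepsilon)$ coincide, up to $1+o(1)$, with those of $\phi_\pm^0$; combined with the fact that $c_\pm(\varepsilon)\to c_\pm(0)$ equals the inverse of the constant linking $\phi_\pm^0$ to the Lemma \ref{lem II free system} normalization, the prefactors telescope to give exactly $Z_1(\varepsilon)^{-1/4}\exp(\mp\tfrac{2c_0}{3}Z_1(\varepsilon)^{3/2})$. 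Finally multiplying by $I+\widehat T_{II}(Z_1(\varepsilon),\varepsilon) = I+O(Z_1(\varepsilon)^{-3/2}) = I+o(1)$ yields \eqref{II answer 1st part}.

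The principal difficulty is the uniform control of $r_\pm(z,\varepsilon)$ on the unbounded (as $\varepsilon\to 0^+$) interval $[Z_0, Z_1(\varepsilon)]$: a naive Gronwall estimate loses to the exponentially growing mode, so the Volterra kernel must be split consistently with the dichotomy from the sign of $\lambda_{II,2}$, and one has to verify that both the forward- and backward-integral contributions go to zero. The scaling $Z_1(\varepsilon) = \varepsilon^{-1/5}$ is calibrated so that the $\varepsilon$-dependent corrections to the diagonal integrals are $o(1)$; any exponent strictly below $4/15$ would serve equally, but $1/5$ is convenient for the matching carried out in Section \ref{section matching}.
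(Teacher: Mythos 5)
There is a genuine gap in your treatment of the decaying (``$+$'') solution. You define both $\widetilde u_{II,3}^{\pm}$ by the Cauchy data $\widetilde u_{II,3}^{\pm}(Z_0,\varepsilon) := v_{II,3}^{\pm}(Z_0)$, and simultaneously claim the asymptotic \eqref{II answer 1st part} at $z = Z_1(\varepsilon)$. For the ``$-$'' solution this works, since the $e_-$ mode dominates forward evolution and any error in the $e_+$ direction is damped. For the ``$+$'' solution it cannot work: the value $v_{II,3}^+(Z_0)$ has a generically nonzero $e_-$-component (it is $Z_0^{-1/4}\exp(-\tfrac{2c_0}3Z_0^{3/2})\,v_{II,5}^+(Z_0)$, and $v_{II,5}^+(Z_0) = e_+ + O(Z_0^{-3/2})$ with the correction not vanishing), and the $\varepsilon$-perturbation of the coefficient matrix further mixes in $e_-$. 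Propagating forward from $Z_0$ to $Z_1(\varepsilon)$, any $e_-$-content is amplified relative to the $e_+$ mode by a factor of order $\exp\bigl(\tfrac{4c_0}3(Z_1(\varepsilon)^{3/2}-Z_0^{3/2})\bigr) = \exp\bigl(\tfrac{4c_0}3\varepsilon^{-3/10} + O(1)\bigr)$, which diverges much faster than the $o(1)$ control that the $L_1$-smallness of the remainder gives. So the Cauchy solution from $Z_0$ will not in general be $Z_1(\varepsilon)^{-1/4}\exp(-\tfrac{2c_0}3Z_1(\varepsilon)^{3/2})(e_+ + o(1))$ at $Z_1(\varepsilon)$; the growing mode takes over.

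Your own closing remark already diagnoses the disease (``a naive Gronwall estimate loses to the exponentially growing mode'') and gestures at the cure (split the Volterra kernel by the dichotomy, integrating the $e_-$ component backward from $Z_1(\varepsilon)$). But that split is incompatible with prescribing the full vector $\widetilde u_{II,3}^+(Z_0,\varepsilon)$: a backward integral for the $e_-$ component imposes a condition at the right endpoint, which is already used up once the initial data at $Z_0$ are fixed. The paper resolves this by \emph{not} prescribing Cauchy data for the small solution: $u_{II,5}^+$ is defined directly as the solution of the backward Volterra equation \eqref{system u II5}, so that $u_{II,5}^+(Z_1(\varepsilon),\varepsilon) = e_+$ holds by construction, and then Lemma \ref{lem II tech 1} controls it over the whole interval. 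The pointwise convergence $\widetilde u_{II,2}^+(z,\varepsilon) \to v_{II,2}^+(z)$ at fixed $z$ is then a \emph{consequence} (from the uniform estimate \eqref{II eq 2}), not a defining condition. The ``$-$'' solution, by contrast, \emph{is} defined by data at $Z_0$ (via \eqref{system u II6}), exactly as you do. So the two halves must be treated asymmetrically: terminal data for $+$, initial data for $-$.

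Two smaller remarks. First, you use $\widehat T_{II}(z,\varepsilon)$ rather than $\widehat T_{II}(z,0)$ in the Harris--Lutz step; the paper uses the latter on $[Z_0,Z_1(\varepsilon)]$ precisely so that the resulting free system is $\varepsilon$-independent and the Levinson solutions $v_{II,3}^{\pm}$ can serve directly as the comparison objects. Using $\widehat T_{II}(z,\varepsilon)$ is not fatal, but it forces you to carry $\varepsilon$-dependent leading exponents and then show separately that they reduce to the $\varepsilon = 0$ ones up to $o(1)$, which is exactly the extra work you do at the end; the paper's choice avoids it. Second, your calibration computations $\varepsilon^{2/3}Z_1^{5/2}=\varepsilon^{1/6}$, $\varepsilon^{1/3}\sqrt{Z_1}=\varepsilon^{7/30}$ and the remark that any exponent strictly below $\tfrac{4}{15}$ would do are correct and match the paper's reasoning around \eqref{Z 1}.
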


\begin{proof}
Let us choose
\begin{equation}\label{Z 0}
    Z_0:=\max\{(2c_{II})^{\frac23},(2c_{IV})^{\frac23}\}
\end{equation}
where $c_{II}$ is defined in Lemma \ref{lem II estimate T hat} and $c_{IV}$ will be defined in Lemma \ref{lem IV estimate T hat} independently. We now only need to know that $Z_0\ge(2c_{II})^{\frac23}$: this ensures that $\|\widehat T_{II}(z,\varepsilon)\|<\frac12$ for every $z\in[Z_0,Z_2(\varepsilon)]$ by Lemma \ref{lem II estimate T hat} and hence $(I+\widehat T_{II}(z,0))$ is invertible. Take
\begin{equation}\label{u II3}
    u_{II,2}(z)=(I+\widehat T_{II}(z,0))u_{II,3}(z).
\end{equation}
According to the argument for the Harris--Lutz transformation and due to the formula \eqref{Harris--Lutz R 1} this leads to the system
\begin{equation}\label{system u II3}
    u_{II,3}'(z)=\left(
    -c_0\sqrt z
    \left(
      \begin{array}{cc}
        1 & 0 \\
        0 & -1 \\
      \end{array}
    \right)
    \right.
    \\
    \left.
    -\frac1{4z}I+Q_{II,3}(z)+R_{II,3}(z,\varepsilon)
    \right)u_{II,3}(z)
\end{equation}
with
\begin{equation*}
    Q_{II,3}(z):=(I+\widehat T_{II}(z,0))^{-1}(S_{II,2}(z,0)\widehat T_{II}(z,0)-\widehat T_{II}(z,0) \text{diag}\,S_{II,2}(z,0))
\end{equation*}
and
\begin{equation}\label{R II3}
    R_{II,3}(z,\varepsilon):=(I+\widehat T_{II}(z,0))^{-1}R_{II,2}(z,\varepsilon)(I+\widehat T_{II}(z,0)).
\end{equation}
From the expression \eqref{S II2} for $S_{II,2}(z,0)$ and the estimate \eqref{II estimate of T hat} for $T_{II}(z,0)$ we have
\begin{equation}\label{II estimate Q II3}
    Q_{II,3}(z)=O\left(\frac1{z^{\frac52}}\right)\text{ as }z\rightarrow+\infty.
\end{equation}
Consider the free system
\begin{equation}\label{system v II3}
    v_{II,3}'(z)=\left(
    -c_0\sqrt z
    \left(
      \begin{array}{cc}
        1 & 0 \\
        0 & -1 \\
      \end{array}
    \right)
    \right.
    \\
    \left.
    -\frac1{4z}I+Q_{II,3}(z)
    \right)v_{II,3}(z).
\end{equation}
Since $\int_1^{\infty}\|Q_{II,3}(s)\|ds<\infty$, the asymptotic Levinson theorem is applicable and yields the existence of two solutions $v_{II,3}^{\pm}$ of the system \eqref{system v II3} with the asymptotics
\begin{equation}\label{v II3 pm}
    v_{II,3}^{\pm}(z)=\frac{\exp\left(\mp\frac{2c_0}3z^{\frac32}\right)}{z^{\frac14}}(e_{\pm}+o(1))\text{ as }z\rightarrow+\infty.
\end{equation}

\begin{rem}\label{rem proof of Lemma II free system}
Now we can easily obtain the proof of Lemma \ref{lem II free system}.
\end{rem}

\begin{proof}[Proof of Lemma \ref{lem II free system}]
The following definition of solutions $v_{II,2}^{\pm}$ of the system \eqref{system v II2} immediately gives the asymptotics \eqref{II asymptotics v II2} as stated in the lemma:
\begin{equation}\label{v II2 pm def}
    v_{II,2}^{\pm}(z):=(I+\widehat T_{II}(z,0)) v_{II,3}^{\pm}(z)=\frac{\exp\left(\mp\frac{2c_0}3z^{\frac32}\right)}{z^{\frac14}}(e_{\pm}+o(1))\text{ as }z\rightarrow+\infty.
\end{equation}
\end{proof}

From the integral estimate of the remainder \eqref{II remainder estimate 1st part} using Lemma \ref{lem II estimate T hat} we have the following estimate of $R_{II,3}$ defined in \eqref{R II3}
\begin{equation}\label{II estimate R II3}
    \int_{Z_0}^{Z_1(\varepsilon)}\|R_{II,3}(s,\varepsilon)\|ds\rightarrow0\text{ as }\varepsilon\rightarrow0^+.
\end{equation}
Let us prove that solutions $u_{II,3}$ of the system \eqref{system u II3} behave  on the interval $[Z_0,Z_1(\varepsilon)]$ similarly to solutions $v_{II,3}$ of the free system \eqref{system v II3}. This will imply that solutions $u_{II,2}$ of the system \eqref{system u II2} behave similarly to solutions $v_{II,2}$ of the free system \eqref{system v II2}. To do this make variation of parameters
\begin{equation}\label{u II4}
    u_{II,3}(z)=\frac1{z^{\frac14}}
    \left(
      \begin{array}{cc}
        \exp\left(-\frac{2c_0}{3}z^{\frac32}\right) & 0 \\
        0 & \exp\left(\frac{2c_0}{3}z^{\frac32}\right) \\
      \end{array}
    \right)
    u_{II,4}(z),
\end{equation}
which turns the system \eqref{system u II3} into the system
\begin{multline}\label{system u II4}
    u_{II,4}'(z)=
    \left(
      \begin{array}{cc}
        \exp\left(\frac{2c_0}{3}z^{\frac32}\right) & 0 \\
        0 & \exp\left(-\frac{2c_0}{3}z^{\frac32}\right) \\
      \end{array}
    \right)
    \\
    \times
    (Q_{II,3}(z)+R_{II,3}(z,\varepsilon))
    \left(
      \begin{array}{cc}
        \exp\left(-\frac{2c_0}{3}z^{\frac32}\right) & 0 \\
        0 & \exp\left(\frac{2c_0}{3}z^{\frac32}\right) \\
      \end{array}
    \right)
    u_{II,4}(z).
\end{multline}
At this point we need to consider separately the ``small'' and the ``large'' solutions.
\\
\emph{``Small'' solution.} Consider the following particular solution of the system \eqref{system u II4}:
\begin{multline*}
    u_{II,4}^+(z,\varepsilon)=e_+-\int_z^{Z_1(\varepsilon)}
    \left(
      \begin{array}{cc}
        \exp\left(\frac{2c_0}{3}s^{\frac32}\right) & 0 \\
        0 & \exp\left(-\frac{2c_0}{3}s^{\frac32}\right) \\
      \end{array}
    \right)
    \\
    \times
    (Q_{II,3}(s)+R_{II,3}(s,\varepsilon))
    \left(
      \begin{array}{cc}
        \exp\left(-\frac{2c_0}{3}s^{\frac32}\right) & 0 \\
        0 & \exp\left(\frac{2c_0}{3}s^{\frac32}\right) \\
      \end{array}
    \right)
    u_{II,4}^+(s,\varepsilon)ds.
\end{multline*}
Returning to $u_{II,3}$ we have:
\begin{multline*}
    u_{II,3}^+(z,\varepsilon)=\frac{\exp\left(-\frac{2c_0}{3}z^{\frac32}\right)}{z^{\frac14}}e_+
    \\
    -\int_z^{Z_1(\varepsilon)}\left(\frac sz\right)^{\frac14}
    \left(
      \begin{array}{cc}
        \exp\left(\frac{2c_0}{3}(s^{\frac32}-z^{\frac32})\right) & 0 \\
        0 & \exp\left(-\frac{2c_0}{3}(s^{\frac32}-z^{\frac32})\right) \\
      \end{array}
    \right)
    \\
    \times
    (Q_{II,3}(s)+R_{II,3}(s,\varepsilon))
    u_{II,3}^+(s,\varepsilon)ds.
\end{multline*}
Normalising the solution
\begin{equation}\label{u II5}
    u_{II,3}^+(z,\varepsilon)=\frac{\exp\left(-\frac{2c_0}{3}z^{\frac32}\right)}{z^{\frac14}}u_{II,5}^+(z,\varepsilon)
\end{equation}
we come to the integral equation
\begin{multline}\label{system u II5}
    u_{II,5}^+(z,\varepsilon)=e_+-\int_z^{Z_1(\varepsilon)}
    \left(
      \begin{array}{cc}
        1 & 0 \\
        0 & \exp\left(-\frac{4c_0}{3}(s^{\frac32}-z^{\frac32})\right) \\
      \end{array}
    \right)
    \\
    \times
    (Q_{II,3}(s)+R_{II,3}(s,\varepsilon))
    u_{II,5}^+(s,\varepsilon)ds.
\end{multline}
Repeating the same manipulations with the system \eqref{system v II3} we get
\begin{equation}\label{system v II5}
    v_{II,5}^+(z)=e_+-\int_z^{+\infty}
    \left(
      \begin{array}{cc}
        1 & 0 \\
        0 & \exp\left(-\frac{4c_0}{3}(s^{\frac32}-z^{\frac32})\right) \\
      \end{array}
    \right)
    Q_{II,3}(s)v_{II,5}^+(s)ds
\end{equation}
with
\begin{equation*}
    \widetilde v_{II,3}^{\,+}(z):=\frac{\exp\left(-\frac{2c_0}{3}z^{\frac32}\right)}{z^{\frac14}}v_{II,5}^+(z)
\end{equation*}
in place of \eqref{u II5}. Note that we need to formally distinguish this $\widetilde v_{II,3}^{\,+}$ from $v_{II,3}^{\,+}$ of \eqref{v II3 pm}, because we have not yet shown that they are the same. Subtracting \eqref{system v II5} from \eqref{system u II5} we obtain the equality
\begin{multline}\label{II eq 1}
    u_{II,5}^+(z,\varepsilon)-v_{II,5}^+(z)=
    -\int_z^{Z_1(\varepsilon)}
    \left(
      \begin{array}{cc}
        1 & 0 \\
        0 & \exp\left(-\frac{4c_0}{3}(s^{\frac32}-z^{\frac32})\right) \\
      \end{array}
    \right)
    \\
    \times
    R_{II,3}(s,\varepsilon)u_{II,5}^+(s,\varepsilon)ds
    +\int_{Z_1(\varepsilon)}^{+\infty}
    \left(
      \begin{array}{cc}
        1 & 0 \\
        0 & \exp\left(-\frac{4c_0}{3}(s^{\frac32}-z^{\frac32})\right) \\
      \end{array}
    \right)
    Q_{II,3}(s)
    v_{II,5}^+(s)ds
    \\
    -\int_z^{Z_1(\varepsilon)}
    \left(
      \begin{array}{cc}
        1 & 0 \\
        0 & \exp\left(-\frac{4c_0}{3}(s^{\frac32}-z^{\frac32})\right) \\
      \end{array}
    \right)
    Q_{II,3}(s)(u_{II,5}^+(s,\varepsilon)-v_{II,5}^+(s))ds.
\end{multline}
The following variant of Gronwall lemma helps to obtain an estimate on $u_{II,5}^+-v_{II,5}^+$.

\begin{lem}\label{lem II tech 1}
Let $-\infty\le N_1<N_2\le+\infty,v\in L_{\infty}(N_1,N_2)$, let $K(z,s)$ be a measurable matrix-valued function defined for $N_1<z<s<N_2$  (or $N_1<s<z<N_2$) and such that for every $z,s$ it satisfies the estimate $\|K(z,s)\|<k(s)$ with some $k\in L_1(N_1,N_2)$. Then the equation
\begin{equation*}
    u(z)=v(z)-\int_z^{N_2}K(z,s)u(s)ds
\end{equation*}
(or the equation
\begin{equation*}
    u(z)=v(z)+\int_{N_1}^zK(z,s)u(s)ds,
\end{equation*}
respectively) has the unique solution in $L_{\infty}(N_1,N_2)$ which satisfies the estimate
\begin{equation*}
    \|u\|_{L_{\infty}(N_1,N_2)}\le\exp\left(\int_{N_1}^{N_2}k(s)ds\right)\|v\|_{L_{\infty}(N_1,N_2)}.
\end{equation*}
\end{lem}

\begin{proof}
One has to consider the operator $\mathcal K$ in $L_{\infty}(N_1,N_2)$ defined as
\begin{equation*}
    \mathcal K:u\mapsto-\int_z^{N_2}K(z,s)u(s)ds\text{ (or }u\mapsto\int_z^{N_2}K(z,s)u(s)ds\text{, respectively) }
\end{equation*}
and to check that $\|\mathcal K\|_{\mathcal B(L_{\infty}(N_1,N_2))}\le\|k\|_{L_1(N_1,N_2)}$, that
\begin{equation*}
    \|\mathcal K^n\|_{\mathcal B(L_{\infty}(N_1,N_2))}\le\frac{\|\mathcal K\|_{\mathcal B(L_{\infty}(N_1,N_2))}^n}{n!}
\end{equation*}
and that hence
\begin{equation*}
    \|(I-\mathcal K^n)^{-1}\|_{\mathcal B(L_{\infty}(N_1,N_2))}\le\exp(\|k\|_{L_1(N_1,N_2)}),
\end{equation*}
which completes the proof.
\end{proof}

The lemma immediately yields for the equation \eqref{system u II5}:
\begin{equation}\label{II estimate u II5}
    \sup_{z\in[Z_0,Z_1(\varepsilon)]}\|u_{II,5}^+(z,\varepsilon)\|\le\exp\left(\int_{Z_0}^{Z_1(\varepsilon)}\|Q_{II,3}(s)+R_{II,3}(s,\varepsilon)\|ds\right),
\end{equation}
for the equation \eqref{system v II5}:
\begin{equation}\label{II estimate v II5}
    \sup_{z\in[Z_1(\varepsilon),+\infty)}\|v_{II,5}^+(z)\|\le\exp\left(\int_{Z_1(\varepsilon)}^{+\infty}\|Q_{II,3}(s)\|ds\right),
\end{equation}
and finally for the equality \eqref{II eq 1}:
\begin{multline}\label{II eq 2}
    \sup_{z\in[Z_0,Z_1(\varepsilon)]}\|u_{II,5}^+(z,\varepsilon)-v_{II,5}^+(z)\|
    \le\exp\left(\int_{Z_0}^{Z_1(\varepsilon)}\|Q_{II,3}(s)\|ds\right)
    \\
    \times
    \left(
    \sup_{z\in[Z_0,Z_1(\varepsilon)]}\|u_{II,5}^+(z,\varepsilon)\|\int_{Z_0}^{Z_1(\varepsilon)}\|R_{II,3}(s,\varepsilon)\|ds
    \right.
    \\
    \left.
    +\sup_{z\in[Z_1(\varepsilon),+\infty)}\|v_{II,5}^+(z)\|\int_{Z_1(\varepsilon)}^{+\infty}\|Q_{II,3}(s)\|ds
    \right)
    \\
    \le\exp\left(2\int_{Z_0}^{+\infty}\|Q_{II,3}(s)\|ds\right)
    \left(
    \exp\left(\int_{Z_0}^{Z_1(\varepsilon)}\|R_{II,3}(s,\varepsilon)\|ds\right)
    \right.
    \\
    \left.
    \times
    \int_{Z_0}^{Z_1(\varepsilon)}\|R_{II,3}(s,\varepsilon)\|ds
    +\int_{Z_1(\varepsilon)}^{+\infty}\|Q_{II,3}(s)\|ds
    \right)\rightarrow0
\end{multline}
as $\varepsilon\rightarrow0^+$ due to the estimate \eqref{II estimate Q II3} for $Q_{II,3}$ and the integral estimate \eqref{II estimate R II3} for $R_{II,3}$. Using again \eqref{II estimate Q II3} with \eqref{II estimate v II5} to estimate the integral in the equation \eqref{system v II5} we conclude that $v_{II,5}^+(z)\rightarrow e_+$ as $z\rightarrow+\infty$, and $\widetilde v_{II,3}^{\,+}(z)$ has the same exponentially vanishing asymptotics as $v_{II,3}^+(z)$. Therefore $\widetilde v_{II,3}^{\,+}(z)=v_{II,3}^+(z)$ and
\begin{equation}\label{v II2 via vII5}
    v_{II,2}^{+}(z):=(I+\widehat T_{II}(z,0))\frac{\exp\left(-\frac{2c_0}3z^{\frac32}\right)}{z^{\frac14}}v_{II,5}^{+}(z).
\end{equation}

Now, using the formulae \eqref{u II5} and \eqref{u II3} which establish the relation between $u_{II,2},u_{II,3}$ and $u_{II,5}$  we define:
\begin{equation*}
    \widetilde u_{II,2}^+(z,\varepsilon)=(I+\widehat T_{II}(z,0))\frac{\exp\left(-\frac{2c_0}{3}z^{\frac32}\right)}{z^{\frac14}}u_{II,5}^+(z,\varepsilon),
\end{equation*}
and this is a solution of the system \eqref{system u II2}. From the convergence in \eqref{II eq 2} and  the equality \eqref{v II2 via vII5} we conclude that
\begin{equation*}
    \widetilde u_{II,2}^+(z,\varepsilon)=v_{II,2}^+(z)+o(1)
\end{equation*}
as $\varepsilon\rightarrow0^+$ for every fixed $z\ge Z_0$. For $z=Z_1(\varepsilon)$ we use the estimate for $\widehat T_{II}(z,0)$ by Lemma \ref{lem II estimate T hat} and the fact that $u_{II,5}^+(Z_1(\varepsilon),\varepsilon)=e_+$ which follows from \eqref{system u II5}. Thus we have
\begin{equation*}
    \widetilde u_{II,2}^+(Z_1(\varepsilon),\varepsilon)
    =\frac{\exp\left(-\frac{2c_0}{3}(Z_1(\varepsilon))^{\frac32}\right)}{(Z_1(\varepsilon))^{\frac14}}(e_++o(1))
\end{equation*}
as $\varepsilon\rightarrow0^+$. This proves the part of Lemma \ref{lem II result 1st part} concerning the ``small'' solution.
\\
\emph{``Large'' solution.}  Take
\begin{equation}\label{v+}
    v^-:=Z_0^{\frac14}
    \left(
      \begin{array}{cc}
        \exp\left(\frac{2c_0}{3}(Z_0)^{\frac32}\right) & 0 \\
        0 & \exp\left(-\frac{2c_0}{3}(Z_0)^{\frac32}\right) \\
      \end{array}
    \right)
    v_{II,3}^-(Z_0),
\end{equation}
where $v_{II,3}^-$ is defined in \eqref{v II3 pm}. Consider the following solution of the system \eqref{system u II4}:
\begin{multline*}
    u_{II,4}^-(z,\varepsilon)=v^-+\int_{Z_0}^z
    \left(
      \begin{array}{cc}
        \exp\left(\frac{2c_0}{3}s^{\frac32}\right) & 0 \\
        0 & \exp\left(-\frac{2c_0}{3}s^{\frac32}\right) \\
      \end{array}
    \right)
    \\
    \times
    (Q_{II,3}(s)+R_{II,3}(s,\varepsilon))
    \left(
      \begin{array}{cc}
        \exp\left(-\frac{2c_0}{3}s^{\frac32}\right) & 0 \\
        0 & \exp\left(\frac{2c_0}{3}s^{\frac32}\right) \\
      \end{array}
    \right)
    u_{II,4}^-(s,\varepsilon)ds.
\end{multline*}
Returning to $u_{II,3}$ by the relation \eqref{system u II4} we have
\begin{multline*}
    u_{II,3}^-(z,\varepsilon)=\frac1{z^{\frac14}}
    \left(
      \begin{array}{cc}
        \exp\left(-\frac{2c_0}{3}z^{\frac32}\right) & 0 \\
        0 & \exp\left(\frac{2c_0}{3}z^{\frac32}\right) \\
      \end{array}
    \right)
    v^-
    \\
    +\int_{Z_0}^z\left(\frac sz\right)^{\frac14}
    \left(
      \begin{array}{cc}
        \exp\left(-\frac{2c_0}{3}(z^{\frac32}-s^{\frac32})\right) & 0 \\
        0 & \exp\left(\frac{2c_0}{3}(z^{\frac32}-s^{\frac32})\right) \\
      \end{array}
    \right)
    \\
    \times
    (Q_{II,3}(s)+R_{II,3}(s,\varepsilon))
    u_{II,3}^-(s,\varepsilon)ds.
\end{multline*}
Normalising the solution
\begin{equation}\label{u II6}
    u_{II,3}^-(z,\varepsilon)=\frac{\exp\left(\frac{2c_0}{3}z^{\frac32}\right)}{z^{\frac14}}u_{II,6}^-(z,\varepsilon)
\end{equation}
we come to the integral equation
\begin{multline}\label{system u II6}
    u_{II,6}^-(z,\varepsilon)=
    \left(
      \begin{array}{cc}
        \exp\left(-\frac{4c_0}{3}z^{\frac32}\right) & 0 \\
        0 &  1\\
      \end{array}
    \right)
    v^-
    \\
    +\int_{Z_0}^z
    \left(
      \begin{array}{cc}
        \exp\left(-\frac{4c_0}{3}(z^{\frac32}-s^{\frac32})\right) & 0 \\
        0 &  1\\
      \end{array}
    \right)
    (Q_{II,3}(s)+R_{II,3}(s,\varepsilon))
    u_{II,6}^-(s,\varepsilon)ds.
\end{multline}
Doing the same transformations with the free system \eqref{system v II3} we get the equation
\begin{multline}\label{system v II6}
    v_{II,6}^-(z)=
    \left(
      \begin{array}{cc}
        \exp\left(-\frac{4c_0}{3}z^{\frac32}\right) & 0 \\
        0 &  1\\
      \end{array}
    \right)
    v^-
    \\
    +\int_{Z_0}^z
    \left(
      \begin{array}{cc}
        \exp\left(-\frac{4c_0}{3}(z^{\frac32}-s^{\frac32})\right) & 0 \\
        0 &  1\\
      \end{array}
    \right)
    Q_{II,3}(s)v_{II,6}^-(s)ds.
\end{multline}
and define $\widetilde v_{II,3}^{\,-}(z):=\frac{\exp\left(\frac{2c_0}{3}z^{\frac32}\right)}{z^{\frac14}}v_{II,6}^-(z)$. Since
\begin{equation*}
    \widetilde v_{II,3}^{\,-}(Z_0)=\frac1{(Z_0)^{\frac14}}
    \left(
      \begin{array}{cc}
        \exp\left(-\frac{2c_0}{3}(Z_0)^{\frac32}\right) & 0 \\
        0 &  \exp\left(\frac{2c_0}{3}(Z_0)^{\frac32}\right)\\
      \end{array}
    \right)
    v^-=v_{II,3}^{\,-}(Z_0)
\end{equation*}
and both $\widetilde v_{II,3}^{\,-}$ and $v_{II,3}^-$ satisfy the same system \eqref{system v II3}, they coincide. From the asymptotics \eqref{v II3 pm} of $v_{II,3}^-(z)$ we conclude that $v_{II,6}^-(z)\rightarrow e_-$ as $z\rightarrow+\infty$. Subtracting \eqref{system v II6} from \eqref{system u II6} we get the equality
\begin{multline}\label{II eq 3}
    u_{II,6}^-(z,\varepsilon)-v_{II,6}^-(z)=
    \int_{Z_0}^z
    \left(
      \begin{array}{cc}
        \exp\left(-\frac{4c_0}{3}(z^{\frac32}-s^{\frac32})\right) & 0 \\
        0 &  1\\
      \end{array}
    \right)
    R_{II,3}(s,\varepsilon)u_{II,6}^-(s,\varepsilon)ds
    \\
    +\int_{Z_0}^z
    \left(
      \begin{array}{cc}
        \exp\left(-\frac{4c_0}{3}(z^{\frac32}-s^{\frac32})\right) & 0 \\
        0 &  1\\
      \end{array}
    \right)
    Q_{II,3}(s)(u_{II,6}^-(s,\varepsilon)-v_{II,6}^-(s))ds.
\end{multline}
Applying Lemma \ref{lem II tech 1} to the equation \eqref{system u II6} and the equality \eqref{II eq 3} we have:
\begin{equation*}
    \sup_{z\in[Z_0,Z_1(\varepsilon)]}\|u_{II,6}^-(z,\varepsilon)\|\le\|v^-\|\exp\left(\int_{Z_0}^{Z_1(\varepsilon)}\|Q_{II,3}(s)+R_{II,3}(s,\varepsilon)\|ds\right),
\end{equation*}
and
\begin{multline}\label{II eq 5}
    \sup_{z\in[Z_0,Z_1(\varepsilon)]}\|u_{II,6}^-(z,\varepsilon)-v_{II,6}^-(z)\|
    \le\exp\left(\int_{Z_0}^{Z_1(\varepsilon)}\|Q_{II,3}(s)\|ds\right)
    \\
    \times
    \sup_{z\in[Z_0,Z_1(\varepsilon)]}\|u_{II,6}^-(z,\varepsilon)\|\int_{Z_0}^{Z_1(\varepsilon)}\|R_{II,3}(s,\varepsilon)\|ds
    \le\|v^-\|\int_{Z_0}^{Z_1(\varepsilon)}\|R_{II,3}(s,\varepsilon)\|ds
    \\
    \times\exp\left(2\int_{Z_0}^{+\infty}\|Q_{II,3}(s)\|ds+\int_{Z_0}^{Z_1(\varepsilon)}\|R_{II,3}(s,\varepsilon)\|ds\right)\rightarrow0
\end{multline}
as $\varepsilon\rightarrow0^+$ due to the estimate \eqref{II estimate Q II3} of $Q_{II,3}$ and the integral estimate \eqref{II estimate R II3} of $R_{II,3}$.
Define $\widetilde u_{II,2}^-$ using the formulae \eqref{u II3} and \eqref{u II6} which relate $u_{II,2},u_{II,3}$ and $u_{II,6}$ as
\begin{equation*}
    \widetilde u_{II,2}^-(z,\varepsilon):=(I+\widehat T_{II}(z,0))\frac{\exp\left(\frac{2c_0}{3}z^{\frac32}\right)}{z^{\frac14}}u_{II,6}^-(z,\varepsilon),
\end{equation*}
and this is a solution of the system \eqref{system u II2}. For $z=Z_1(\varepsilon)$ from the convergence \eqref{II eq 5}, asymptotics $v_{II,6}^-(z)\rightarrow e_-$ as $z\rightarrow+\infty$ and the estimate for $\widehat T_{II}(z,0)$ by Lemma \ref{lem II estimate T hat} we have:
\begin{equation*}
    \widetilde u_{II,2}^-(Z_1(\varepsilon),\varepsilon)
    =\frac{\exp\left(\frac{2c_0}{3}(Z_1(\varepsilon))^{\frac32}\right)}{(Z_1(\varepsilon))^{\frac14}}(e_-+o(1))
\end{equation*}
as $\varepsilon\rightarrow0^+$. For every fixed $z\ge Z_0$ convergence in \eqref{II eq 5} implies that
\begin{equation*}
    \widetilde u_{II,2}^-(z,\varepsilon)
    \rightarrow
    (I+\widehat T_{II}(z,0))\frac{\exp\left(\frac{2c_0}{3}z^{\frac32}\right)}{z^{\frac14}}v_{II,6}^-(z)=v_{II,2}^-(z)
\end{equation*}
as $\varepsilon\rightarrow0^+$. This completes the proof of Lemma \ref{lem II result 1st part}.
\end{proof}

The result for the interval $[Z_1(\varepsilon),Z_2(\varepsilon)]$ is given by the following lemma.

\begin{lem}\label{lem II result 2nd part}
Let the conditions of Lemma \ref{lem II result} hold, $Z_1(\varepsilon)$ be given by \eqref{Z 1} and $Z_2(\varepsilon)$ by \eqref{Z 2} with $t_{I-II}$ given by \eqref{t I-II}. If
\begin{equation*}
   \int_{Z_1(\varepsilon)}^{Z_2(\varepsilon)}\frac{\|R_{II}(t(s,\varepsilon),\varepsilon)\|}{\sqrt{s}}ds=o\left(\varepsilon^{\frac23}\right)
   \text{ as }\varepsilon\rightarrow0^+,
\end{equation*}
then for every sufficiently small $\varepsilon>0$ the system \eqref{system u II2} on the interval $[Z_1(\varepsilon),Z_2(\varepsilon)]$ has two solutions $\widehat u_{II,2}^{\pm}(z,\varepsilon)$ such that, as $\varepsilon\rightarrow0^+$,
\begin{equation}\label{II answer 2nd part}
    \widehat u_{II,2}^{\pm}(z,\varepsilon)=\exp
    \left(
    \int_{Z_1(\varepsilon)}^z
    \left(
    \mp\frac{c_0\sqrt s}{(1-\varepsilon^{\frac23}s)^{\varkappa}}-\frac1{4s(1\pm\varepsilon^{\frac13}\sqrt s)}
    \right)ds
    \right)
    (e_{\pm}+o(1)),
\end{equation}
where the vectors $e_{\pm}$ are given by \eqref{e +-} and the remainder $o(1)$ converges uniformly with respect to $z\in[Z_1(\varepsilon),Z_2(\varepsilon)]$.
\end{lem}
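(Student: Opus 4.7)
The plan is to diagonalise the $S_{II,2}$ term of \eqref{system u II2} by applying the Harris--Lutz transformation from Subsection \ref{subsection Harris-Lutz} with the full, $\varepsilon$-dependent matrix $\widehat T_{II}(z,\varepsilon)$ of \eqref{T II hat}. In contrast to the first subinterval treated in Lemma \ref{lem II result 1st part}, on $[Z_1(\varepsilon),Z_2(\varepsilon)]$ the quantities $\varepsilon^{1/3}\sqrt z$ and $\varepsilon^{2/3}z$ are not small near the right endpoint, so the zero-$\varepsilon$ version $\widehat T_{II}(z,0)$ no longer kills the full off-diagonal part of $S_{II,2}(z,\varepsilon)$; the $\varepsilon$-dependent choice in \eqref{T II hat} is exactly the one designed for this purpose. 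Since the choice of $Z_0$ in \eqref{Z 0} guarantees $\|\widehat T_{II}(z,\varepsilon)\|<1/2$ on $[Z_0,Z_2(\varepsilon)]$ by Lemma \ref{lem II estimate T hat}, the matrix $I+\widehat T_{II}(z,\varepsilon)$ is uniformly invertible on the interval in question, and the substitution $u_{II,2}(z)=(I+\widehat T_{II}(z,\varepsilon))\widehat u_{II,3}(z)$ transforms \eqref{system u II2}, via formula \eqref{Harris--Lutz R 1}, into
\[
\widehat u_{II,3}'(z) = \bigl(\Lambda_{II,2}(z,\varepsilon)+\text{diag}\,S_{II,2}(z,\varepsilon)+Q_{II,3}(z,\varepsilon)+R_{II,3}(z,\varepsilon)\bigr)\widehat u_{II,3}(z),
\]
with $Q_{II,3}:=(I+\widehat T_{II})^{-1}(S_{II,2}\widehat T_{II}-\widehat T_{II}\,\text{diag}\,S_{II,2})$ and $R_{II,3}:=(I+\widehat T_{II})^{-1}R_{II,2}(I+\widehat T_{II})$.

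Next, I would check that $\int_{Z_1(\varepsilon)}^{Z_2(\varepsilon)}(\|Q_{II,3}\|+\|R_{II,3}\|)\,ds\to 0$ as $\varepsilon\to 0^+$. For $Q_{II,3}$ this rests on the fact that $\varepsilon^{2/3}z\le 1/5$ and $\varepsilon^{1/3}\sqrt z\le 1/\sqrt 5$ hold throughout the interval, by the choice \eqref{t I-II} of $t_{I-II}$; consequently $\|S_{II,2}(z,\varepsilon)\|=O(z^{-1})$ uniformly in $\varepsilon$, which combined with $\|\widehat T_{II}(z,\varepsilon)\|\le c_{II}/z^{3/2}$ from Lemma \ref{lem II estimate T hat} gives $\|Q_{II,3}(z,\varepsilon)\|=O(z^{-5/2})$, hence the $L^1$ norm is $O((Z_1(\varepsilon))^{-3/2})=O(\varepsilon^{3/10})\to 0$. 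For $R_{II,3}$ one applies Lemma \ref{lem II remainder} with $\nu=1/5$ to convert the hypothesis of the lemma into the required integral bound.

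With a diagonal leading part and a perturbation whose $L^1$-norm vanishes, I would then extract the asymptotics of the two solutions by setting
\[
\widehat u_{II,3}(z)=\exp\!\left(\int_{Z_1(\varepsilon)}^z\bigl(\Lambda_{II,2}(s,\varepsilon)+\text{diag}\,S_{II,2}(s,\varepsilon)\bigr)ds\right)\widehat u_{II,4}(z)
\]
and constructing $\widehat u_{II,4}^{\pm}$ by integral equations in which each component is integrated in the direction that makes the exponential factor $\exp(I_j-I_i)$ bounded by one. The Levinson dichotomy here is strong: $\Re(\lambda_1-\lambda_2)=2\lambda_{II,2}\le-2c_0\sqrt{Z_1(\varepsilon)}\to-\infty$, so the problematic exponential stays bounded on the appropriate half-line and Lemma \ref{lem II tech 1} applies to yield $\widehat u_{II,4}^{\pm}(z,\varepsilon)=e_{\pm}+o(1)$ uniformly in $z\in[Z_1(\varepsilon),Z_2(\varepsilon)]$. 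Undoing the Harris--Lutz transformation contributes only a factor $I+\widehat T_{II}(z,\varepsilon)=I+O(Z_1(\varepsilon)^{-3/2})$, so \eqref{II answer 2nd part} follows once the diagonal entries of $S_{II,2}$ are simplified by the identity $1-\varepsilon^{2/3}z=(1-\varepsilon^{1/3}\sqrt z)(1+\varepsilon^{1/3}\sqrt z)$, which turns them into $-1/(4z(1\pm\varepsilon^{1/3}\sqrt z))$.

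The main obstacle is not any single step but the uniformity required throughout: $L^1$-smallness of the perturbation, boundedness of auxiliary constants, and applicability of the Levinson dichotomy must all be established uniformly in $\varepsilon$ despite the growing interval length $Z_2(\varepsilon)-Z_1(\varepsilon)=O(\varepsilon^{-2/3})$. This is made possible by a double mechanism: $Z_1(\varepsilon)\to\infty$ forces $\widehat T_{II}$ and $Q_{II,3}$ to decay fast enough in $z$ so that their integrals over the entire right portion vanish, while the spectral gap $2|\lambda_{II,2}|\ge 2c_0\sqrt{Z_1(\varepsilon)}$ simultaneously grows, which yields the strong dichotomy needed to keep the off-diagonal exponentials under control in the Gronwall argument.
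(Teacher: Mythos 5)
Your proposal is correct and follows essentially the same route as the paper: the $\varepsilon$-dependent Harris--Lutz transformation $u_{II,2}=(I+\widehat T_{II}(z,\varepsilon))u_{II,7}$, variation of parameters against $\exp(\int\Lambda_{II,7})$, integral equations with the direction of integration chosen so the off-diagonal exponentials stay bounded, and the Gronwall bound of Lemma \ref{lem II tech 1} to get $e_{\pm}+o(1)$ uniformly. The only cosmetic difference is that you keep the $Q$-type commutator term $(I+\widehat T_{II})^{-1}(S_{II,2}\widehat T_{II}-\widehat T_{II}\,\mathrm{diag}\,S_{II,2})$ separate from the conjugated $R_{II,2}$, whereas the paper lumps both into a single $R_{II,7}$ before estimating; the bounds you derive for them individually match the paper's combined estimate.
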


\begin{proof}
First let us eliminate the off-diagonal entries of the matrix $V_{II}(z,\varepsilon)$ with the Harris--Lutz transformation
\begin{equation}\label{u II7}
    u_{II,2}(z)=(I+\widehat T_{II}(z,\varepsilon))u_{II,7}(z)
\end{equation}
where $\widehat T_{II}$ is given by \eqref{T II hat}. On the one hand, in contrast with the interval $[Z_0,Z_1(\varepsilon)]$, the transformation depends on $\varepsilon$, because one cannot ignore the difference between $V_{II}(z,\varepsilon)$ and $V_{II}(z,0)$ anymore. On the other hand, we do not need to prove convergence to solutions of some other system independent of $\varepsilon$ (it is no longer true that there is convergence to solutions of \eqref{system v II2}). With the substitution \eqref{u II7} we come to the system
\begin{equation}\label{system u II7}
    u_{II,7}'(z)=(\Lambda_{II,7}(z,\varepsilon)+R_{II,7}(z,\varepsilon))u_{II,7}(z),
\end{equation}
where, according to \eqref{Lambda II2}, \eqref{S II2} and \eqref{Harris--Lutz R 1},
\begin{equation}\label{Lambda II7}
    \Lambda_{II,7}(z,\varepsilon):=\Lambda_{II,2}(z,\varepsilon)+\text{diag}\, S_{II,2}(z,\varepsilon)
    =
\left(
  \begin{array}{cc}
    \lambda^+_{II,7}(z,\varepsilon) & 0 \\
    0 & \lambda^-_{II,7}(z,\varepsilon) \\
  \end{array}
\right),
\end{equation}
\begin{equation}\label{lambda II7}
    \lambda^{\pm}_{II,7}(z,\varepsilon):=\mp\frac{c_0\sqrt{z}}{(1-\varepsilon^{\frac23}z)^{\varkappa}}-\frac1{4z(1\pm\varepsilon^{\frac13}\sqrt{z})},
\end{equation}
\begin{multline}\label{R II7}
    R_{II,7}(z,\varepsilon):=(I+\widehat T_{II}(z,\varepsilon))^{-1}R_{II,2}(z,\varepsilon)(I+\widehat T_{II}(z,\varepsilon))
    \\
    +(I+\widehat T_{II}(z,\varepsilon))^{-1}(S_{II,2}(z,\varepsilon)\widehat T_{II}(z,\varepsilon)-\widehat T_{II}(z,\varepsilon)\, \text{diag}\, S_{II,2}(z,\varepsilon)).
\end{multline}
Making variation of parameters
\begin{equation}\label{u II8}
    u_{II,7}(z)=\exp\left(\int_{Z_1(\varepsilon)}^{z}\Lambda_{II,7}(\sigma,\varepsilon)d\sigma\right)u_{II,8}(z),
\end{equation}
and substituting to the system \eqref{system u II7} we have:
\begin{multline}\label{system u II8}
    u_{II,8}'(z)=\exp\left(-\int_{Z_1(\varepsilon)}^{z}\Lambda_{II,7}(\sigma,\varepsilon)d\sigma\right)
    \\
    \times
    R_{II,7}(z,\varepsilon)
    \exp\left(\int_{Z_1(\varepsilon)}^{z}\Lambda_{II,7}(\sigma,\varepsilon)d\sigma\right)u_{II,8}(z).
\end{multline}
Let us now introduce two solutions $u_{II,8}^{\pm}$ of this system which satisfy the following equations:
\begin{multline*}
    u_{II,8}^+(z,\varepsilon)=e_+-\int_z^{Z_2(\varepsilon)}\exp\left(-\int_{Z_1(\varepsilon)}^{s}\Lambda_{II,7}(\sigma,\varepsilon)d\sigma\right)
    \\
    \times
    R_{II,7}(s,\varepsilon)
    \exp\left(\int_{Z_1(\varepsilon)}^{s}\Lambda_{II,7}(\sigma,\varepsilon)d\sigma\right)u_{II,8}^+(s,\varepsilon)ds
\end{multline*}
and
\begin{multline*}
    u_{II,8}^-(z,\varepsilon)=e_-+\int_{Z_1(\varepsilon)}^{z}\exp\left(-\int_{Z_1(\varepsilon)}^{s}\Lambda_{II,7}(\sigma,\varepsilon)d\sigma\right)
    \\
    \times
    R_{II,7}(s,\varepsilon)
    \exp\left(\int_{Z_1(\varepsilon)}^{s}\Lambda_{II,7}(\sigma,\varepsilon)d\sigma\right)u_{II,8}^-(s,\varepsilon)ds.
\end{multline*}
For solutions $u_{II,7}^{\pm}(z,\varepsilon):=\exp\left(\int_{Z_1(\varepsilon)}^{z}\Lambda_{II,7}(\sigma,\varepsilon)d\sigma\right)u_{II,8}^{\pm}(z,\varepsilon)$ these equations read as follows:
\begin{multline*}
    u_{II,7}^+(z,\varepsilon)=\exp\left(\int_{Z_1(\varepsilon)}^{z}\lambda^+_{II,7}(\sigma,\varepsilon)d\sigma\right)e_+
    \\
    -\int_z^{Z_2(\varepsilon)}\exp\left(-\int_{z}^{s}\Lambda_{II,7}(\sigma,\varepsilon)d\sigma\right)
    R_{II,7}(s,\varepsilon)u_{II,7}^+(s,\varepsilon)ds
\end{multline*}
and
\begin{multline*}
    u_{II,7}^-(z,\varepsilon)=\exp\left(\int_{Z_1(\varepsilon)}^{z}\lambda^-_{II,7}(\sigma,\varepsilon)d\sigma\right)e_-
    \\
    +\int_{Z_1(\varepsilon)}^{z}\exp\left(\int_{s}^{z}\Lambda_{II,7}(\sigma,\varepsilon)d\sigma\right)
    R_{II,7}(s,\varepsilon)u_{II,7}^-(s,\varepsilon)ds.
\end{multline*}
Normalising these solutions by the substitution
\begin{equation}\label{u II9}
    u_{II,7}^{\pm}(z,\varepsilon)=\exp\left(\int_{Z_1(\varepsilon)}^z\lambda^{\pm}_{II,7}(\sigma,\varepsilon)d\sigma\right)u_{II,9}^{\pm}(z,\varepsilon)
\end{equation}
we come to the following equations:
\begin{multline*}
    u_{II,9}^+(z,\varepsilon)=e_+-\int_z^{Z_2(\varepsilon)}
    \left(
      \begin{array}{cc}
        1 & 0 \\
        0 & \exp\left(\int_z^s(\lambda^+_{II,7}(\sigma,\varepsilon)-\lambda^-_{II,7}(\sigma,\varepsilon))d\sigma\right) \\
      \end{array}
    \right)
    \\
    \times
    R_{II,7}(s,\varepsilon)
    u_{II,9}^+(s,\varepsilon)ds
\end{multline*}
and
\begin{multline*}
    u_{II,9}^-(z,\varepsilon)=e_-+\int_{Z_1(\varepsilon)}^{z}
    \left(
      \begin{array}{cc}
        \exp\left(\int_s^z(\lambda^+_{II,7}(\sigma,\varepsilon)-\lambda^-_{II,7}(\sigma,\varepsilon))d\sigma\right) & 0 \\
        0 & 1 \\
      \end{array}
    \right)
    \\
    \times
    R_{II,7}(s,\varepsilon)
    u_{II,9}^-(s,\varepsilon)ds.
\end{multline*}
These can be rewritten as
\begin{multline}\label{II eq 8}
    u_{II,9}^+(z,\varepsilon)-e_+
    \\
    =
    -\int_z^{Z_2(\varepsilon)}
    \left(
      \begin{array}{cc}
        1 & 0 \\
        0 & \exp\left(\int_z^s(\lambda^+_{II,7}(\sigma,\varepsilon)-\lambda^-_{II,7}(\sigma,\varepsilon))d\sigma\right) \\
      \end{array}
    \right)
    R_{II,7}(s,\varepsilon)
    e_+ds
    \\
    -\int_z^{Z_2(\varepsilon)}
    \left(
      \begin{array}{cc}
        1 & 0 \\
        0 & \exp\left(\int_z^s(\lambda^+_{II,7}(\sigma,\varepsilon)-\lambda^-_{II,7}(\sigma,\varepsilon))d\sigma\right) \\
      \end{array}
    \right)
    R_{II,7}(s,\varepsilon)
    \\
    \times
    (u_{II,9}^+(s,\varepsilon)-e_+)ds
\end{multline}
and
\begin{multline}\label{II eq 7}
    u_{II,9}^-(z,\varepsilon)-e_-
    \\
    =
    \int_{Z_1(\varepsilon)}^{z}
    \left(
      \begin{array}{cc}
        \exp\left(\int_s^z(\lambda^+_{II,7}(\sigma,\varepsilon)-\lambda^-_{II,7}(\sigma,\varepsilon))d\sigma\right) & 0 \\
        0 & 1 \\
      \end{array}
    \right)
    R_{II,7}(s,\varepsilon)
    e_-ds
    \\
    +
    \int_{Z_1(\varepsilon)}^{z}
    \left(
      \begin{array}{cc}
        \exp\left(\int_s^z(\lambda^+_{II,7}(\sigma,\varepsilon)-\lambda^-_{II,7}(\sigma,\varepsilon))d\sigma\right) & 0 \\
        0 & 1 \\
      \end{array}
    \right)
    R_{II,7}(s,\varepsilon)
    \\
    \times
    (u_{II,9}^-(s,\varepsilon)-e_-)ds.
\end{multline}
From the expression \eqref{S II2} for $S_{II,2}$, the estimate for $T_{II}$ by Lemmas \ref{lem II estimate T hat} and \ref{lem II remainder} we get:
\begin{equation}\label{II estimate R II7}
    \int_{Z_1(\varepsilon)}^{Z_2(\varepsilon)}\|R_{II,7}(s,\varepsilon)\|ds\rightarrow0\text{ as }\varepsilon\rightarrow0^+.
\end{equation}
We also have
\begin{equation*}
    \lambda^+_{II,7}(z,\varepsilon)-\lambda^-_{II,7}(z,\varepsilon)=-\frac{2\sqrt z}{1-\varepsilon^{\frac23}z}\left(c_0(1-\varepsilon^{\frac23}z)^{1-\varkappa}-\frac{\varepsilon^{\frac13}}{4z}\right).
\end{equation*}
Since $1-\varkappa=\frac{1-\gamma}{2\gamma}$ and the values  $1-\varepsilon^{\frac23}z$ and $z$ are separated from zero for $z\in[Z_1(\varepsilon),Z_2(\varepsilon)]$, the above expression is strictly negative for all $z$ from the interval considered, if $\varepsilon$ is small enough. Hence Lemma \ref{lem II tech 1} yields for both \eqref{II eq 7} and \eqref{II eq 8}:
\begin{multline}\label{II eq 9}
    \sup_{z\in[Z_1(\varepsilon),Z_2(\varepsilon)]}\left\|u_{II,9}^{\pm}(z,\varepsilon)-e_{\pm}\right\|
    \\
    \le
    \exp\left(\int_{Z_1(\varepsilon)}^{Z_2(\varepsilon)}\|R_{II,7}(s,\varepsilon)\|ds\right)
    \int_{Z_1(\varepsilon)}^{Z_2(\varepsilon)}\|R_{II,7}(s,\varepsilon)\|ds\rightarrow0
\end{multline}
as $\varepsilon\rightarrow0^+$. Now using the relations \eqref{u II7} and \eqref{u II9} we define functions
\begin{equation*}
    \widehat u_{II,2}^{\pm}(z,\varepsilon):=\exp\left(\int_{Z_1(\varepsilon)}^z\lambda^{\pm}_{II,7}(\sigma,\varepsilon)d\sigma\right)(I+\widehat T_{II}(z,\varepsilon))u_{II,9}^{\pm}(z,\varepsilon)
\end{equation*}
and they are solutions of the system \eqref{u II2}. Lemma \ref{lem II estimate T hat}, expressions \eqref{lambda II7} and convergence in \eqref{II eq 9} imply that these solutions have asymptotics \eqref{II answer 2nd part}. This completes the proof.
\end{proof}

The following trivial lemma helps to match the results in different intervals.

\begin{lem}\label{lem II tech 2}
Let $g(\varepsilon),f_+(\varepsilon),f_-(\varepsilon)$ be functions of $\varepsilon$ defined in some neighbourhood of the point $\varepsilon=0$ with values in $\mathbb C^2$ and such that $g(\varepsilon)\rightarrow g,f_{\pm}(\varepsilon)\rightarrow f_+$ as $\varepsilon\rightarrow0$, where the vectors $f_+$ and $f_-$ are linearly independent and $g=c_+f_++c_-f_-$. Then in the decomposition $g(\varepsilon)=c_+(\varepsilon)f_+(\varepsilon)+c_-(\varepsilon)f_-(\varepsilon)$ the coefficients converge: $c_{\pm}(\varepsilon)\rightarrow c_{\pm}$ as $\varepsilon\rightarrow0$.
\end{lem}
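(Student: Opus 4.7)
The plan is to recast the decomposition problem as a matrix inversion. Form the $2\times 2$ matrices
\begin{equation*}
F(\varepsilon) := \bigl(f_+(\varepsilon)\mid f_-(\varepsilon)\bigr), \qquad F := \bigl(f_+\mid f_-\bigr),
\end{equation*}
whose columns are the given vectors. By hypothesis $F(\varepsilon)\to F$ entry-wise, and the linear independence of $f_+$ and $f_-$ gives $\det F\ne 0$. Continuity of the determinant then ensures both $\det F(\varepsilon)\to \det F$ and $\det F(\varepsilon)\ne 0$ for all $\varepsilon$ in some punctured neighbourhood of zero, so $F(\varepsilon)$ is invertible there.

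The identity $g(\varepsilon)=c_+(\varepsilon)f_+(\varepsilon)+c_-(\varepsilon)f_-(\varepsilon)$ reads $F(\varepsilon)\left(\begin{smallmatrix}c_+(\varepsilon)\\ c_-(\varepsilon)\end{smallmatrix}\right)=g(\varepsilon)$, hence
\begin{equation*}
\begin{pmatrix}c_+(\varepsilon)\\ c_-(\varepsilon)\end{pmatrix} = F(\varepsilon)^{-1}g(\varepsilon).
\end{equation*}
Since each entry of $F(\varepsilon)^{-1}$ is a polynomial in the entries of $F(\varepsilon)$ divided by $\det F(\varepsilon)$, which is bounded away from zero near $\varepsilon=0$, we have $F(\varepsilon)^{-1}\to F^{-1}$. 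Combined with $g(\varepsilon)\to g$, this yields $\left(\begin{smallmatrix}c_+(\varepsilon)\\ c_-(\varepsilon)\end{smallmatrix}\right)\to F^{-1}g=\left(\begin{smallmatrix}c_+\\ c_-\end{smallmatrix}\right)$, as required. Equivalently, one could cite Cramer's rule directly: $c_+(\varepsilon)=\det(g(\varepsilon),f_-(\varepsilon))/\det(f_+(\varepsilon),f_-(\varepsilon))$ and similarly for $c_-(\varepsilon)$, both numerator and denominator being continuous in $\varepsilon$ and the denominator nonvanishing in the limit.

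There is no serious obstacle in this argument; it is pure linear algebra plus continuity. The only point worth flagging is that nondegeneracy of the limiting basis $\{f_+,f_-\}$ is what makes the inversion stable near $\varepsilon=0$. In the way this lemma is applied in Section \ref{section matching}, the role of $f_\pm(\varepsilon)$ will be played by pairs of solutions of the model system with known asymptotic leading vectors $e_\pm$, which are manifestly independent, so the hypothesis of the lemma is automatically satisfied.
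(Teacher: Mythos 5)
Your proof is correct and is essentially the same argument as the paper's, which is stated in one line (``consider scalar products with the vectors orthogonal to $f_{\pm}$''); pairing with the orthogonal complements is just Cramer's rule in disguise, i.e.\ in $\mathbb C^2$ the pairing $\langle g,\,f_{-}^{\perp}\rangle$ is proportional to $\det(g,f_-)$. Both routes reduce to inverting a $2\times2$ matrix that converges to an invertible limit, so there is nothing to add.
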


\begin{proof}
Consider scalar products with the vectors orthogonal to $f_{\pm}$ to immediately see the result.
\end{proof}

Combining the results of Lemmas \ref{lem II result 1st part} and \ref{lem II result 2nd part} we can now prove Lemma \ref{lem II result}.

\begin{proof}[Proof of Lemma \ref{lem II result}]
Let us first rewrite the formula \eqref{II answer 1st part} for the asymptotics from Lemma \ref{lem II result 1st part} using that
\begin{equation*}
    \frac{\exp\left(\mp\frac{2c_0}3(Z_1(\varepsilon))^{\frac32}\right)}{(Z_1(\varepsilon))^{\frac14}}
    =a^{\pm}_{II}\exp\left(\int_{Z_0}^{Z_1(\varepsilon)}
    \lambda_{II,7}^{\pm}(s,\varepsilon)ds\right)(1+o(1))
\end{equation*}
as $\varepsilon\rightarrow0^+$, where $a^{\pm}_{II}:=\frac{\exp\left(\mp\frac{2c_0}3Z_0^{\frac32}\right)}{Z_0^{\frac14}}$, which we can do because
\begin{equation*}
    \left|\lambda_{II,7}^{\pm}(s,\varepsilon)-
    \left(
    \mp c_0\sqrt s
    -\frac1{4s}
    \right)
    \right|
    <
    c_{24}
    \left(
    \varepsilon^{\frac23}s^{\frac32}+\frac{\varepsilon^{\frac13}}{\sqrt s},
    \right)
\end{equation*}
with some $c_{24}>0$ and
\begin{equation*}
    \int_{Z_0}^{Z_1(\varepsilon)}\left(\varepsilon^{\frac23}s^{\frac32}+\frac{\varepsilon^{\frac13}}{\sqrt s},
    \right)ds\rightarrow0\text{ as }\varepsilon\rightarrow0^+
\end{equation*}
due to the choice of $Z_1(\varepsilon)$ in \eqref{Z 1}.
Let us define for $z\in[Z_1(\varepsilon),Z_2(\varepsilon)]$
\begin{equation}\label{u II2-}
    u_{II,2}^-(z,\varepsilon):=a_{II}^-\exp
    \left(
    \int_{Z_0}^{Z_1(\varepsilon)}
    \lambda_{II,7}^-(s,\varepsilon)ds
    \right)
    \widehat u_{II,2}^{\,-}(z,\varepsilon).
\end{equation}
On the interval $[Z_0,Z_1(\varepsilon)]$ the continuation of the solution $\widehat u_{II,2}^{\,-}(z,\varepsilon)$ has a decomposition with some coefficients in terms of the basis of solutions $\widetilde u_{II,2}^{\pm}(z,\varepsilon)$, and at the point $Z_1(\varepsilon)$ one has:
\begin{equation*}
    \begin{array}{rl}
    \widehat u_{II,2}^{\,+}(Z_1(\varepsilon),\varepsilon)=&e_++o(1),
    \\
    \frac{\widetilde u_{II,2}^{\pm}(Z_1(\varepsilon),\varepsilon))}{a_{II}^{\pm}}
    \exp
    \left(
    -\int_{Z_0}^{Z_1(\varepsilon)}\lambda_{II,7}^{\pm}(s,\varepsilon)ds
    \right)
    =&e_{\pm}+o(1).
    \end{array}
\end{equation*}
By Lemma \ref{lem II tech 2} we conclude that for $z\in[Z_0,Z_1(\varepsilon)]$
\begin{multline*}
    \widehat u_{II,2}^{\,+}(z,\varepsilon)=(1+o(1))
    \frac{\widetilde u_{II,2}^{+}(z,\varepsilon))}{a_{II}^+}
    \exp
    \left(
    -\int_{Z_0}^{Z_1(\varepsilon)}\lambda_{II,7}^{+}(s,\varepsilon)ds
    \right)
    \\
    +o
    \left(
    \widetilde u_{II,2}^{-}(z,\varepsilon))
    \exp
    \left(
    -\int_{Z_0}^{Z_1(\varepsilon)}\lambda_{II,7}^-(s,\varepsilon)ds
    \right)
    \right)
\end{multline*}
and therefore by \eqref{u II2-}
\begin{multline*}
    u_{II,2}^{\,+}(z,\varepsilon)=(1+o(1))
    \widetilde u_{II,2}^{+}(z,\varepsilon)
    \\
    +o
    \left(
    \widetilde u_{II,2}^{-}(z,\varepsilon))
    \exp
    \left(
    \int_{Z_0}^{Z_1(\varepsilon)}(\lambda_{II,7}^+(s,\varepsilon)-\lambda_{II,7}^-(s,\varepsilon))ds
    \right)
    \right).
\end{multline*}
For every fixed $z\ge Z_0$ this means that
\begin{equation*}
    u_{II,2}^{\,+}(z,\varepsilon)\rightarrow v_{II,2}^+(z)\text{ as }\varepsilon\rightarrow0^+.
\end{equation*}
Asymptotics of $u_{II,2}^{\,+}(z,\varepsilon)$ at $z=Z_2(\varepsilon)$ is due to Lemma \ref{lem II result 2nd part}.

For the second solution define for $z\in[Z_0,Z_1(\varepsilon)]$
\begin{equation}\label{u II2+}
    u_{II,2}^-(z,\varepsilon):=\widetilde u_{II,2}^-(z,\varepsilon)
\end{equation}
Analogously, we have
\begin{equation*}
    \begin{array}{rl}
    \frac{\widetilde u_{II,2}^{-}(Z_1(\varepsilon),\varepsilon))}{a_{II}^{-}}
    \exp
    \left(
    -\int_{Z_0}^{Z_1(\varepsilon)}\lambda_{II,7}^{-}(s,\varepsilon)ds
    \right)
    =&e_-+o(1),
    \\
    \widehat u_{II,2}^{\pm}(Z_1(\varepsilon),\varepsilon)=&e_{\pm}+o(1)
    \end{array}
\end{equation*}
as $\varepsilon\rightarrow0^+$. Therefore for the continuation of $u_{II,2}^-=\widetilde u_{II,2}^-$ to the interval $[Z_1(\varepsilon),Z_2(\varepsilon)]$ one has
\begin{multline*}
    u_{II,2}^-(z,\varepsilon)=\widetilde u_{II,2}^-(z,\varepsilon)
    \\
    =a_{II}^{-}\exp
    \left(
    \int_{Z_0}^{Z_1(\varepsilon)}\lambda_{II,7}^{-}(s,\varepsilon)ds
    \right)
    ((1+o(1))\widehat u_{II,2}^{\,-}(z,\varepsilon)+o(\widehat u_{II,2}^{\,+}(z,\varepsilon))).
\end{multline*}
For $z=Z_2(\varepsilon)$ this means that
\begin{multline*}
    u_{II,2}^-(Z_2(\varepsilon),\varepsilon)=a_{II}^{-}
    \exp
    \left(
    \int_{Z_0}^{Z_2(\varepsilon)}\lambda_{II,7}^{-}(s,\varepsilon)ds
    \right)
    \\
    \times
    \left(
    e_-+o(1)+o
    \left(
    \exp
    \left(
    \int_{Z_1(\varepsilon)}^{Z_2(\varepsilon)}(\lambda_{II,7}^{+}(s,\varepsilon)-\lambda_{II,7}^{-}(s,\varepsilon))ds
    \right)
    \right)
    \right)
    \\
    =a_{II}^{-}
    \exp
    \left(
    \int_{Z_0}^{Z_2(\varepsilon)}\lambda_{II,7}^{-}(s,\varepsilon)ds
    \right)
    (e_-+o(1))
\end{multline*}
as $\varepsilon\rightarrow0^+$, which is the desired asymptotics \eqref{II answer}. Convergence
\begin{equation*}
    u_{II,2}^{\,-}(z,\varepsilon)\rightarrow v_{II,2}^{\,-}(z)\text{ as }\varepsilon\rightarrow0^+
\end{equation*}
for every fixed $z\ge Z_0$ is due to \eqref{u II2+} and Lemma \ref{lem II result 1st part}.
\end{proof}

\section{Intermediate region $IV$: elliptic case}\label{section IV}
Once again we start with the system \eqref{system u 2} written in the form
\begin{equation*}
    \varepsilon u_{IV}'(t)=
    \left(
    \left(
    \begin{array}{cc}
    \frac{\beta}{t^{\gamma}} & -\frac12
    \\
    \frac12 & -\frac{\beta}{t^{\gamma}}
    \end{array}
    \right)
    +R_{IV}(t,\varepsilon)
    \right)
    u_{IV}(t)
\end{equation*}
We diagonalise the main term of the coefficient matrix with the transformation
\begin{equation}\label{u IV1}
    u_{IV}(t)=T_{IV}(t)u_{IV,1}(t),
\end{equation}
where
\begin{equation}\label{T IV}
    T_{IV}(t):=
    \left(
    \begin{array}{cc}
    1
    &
    1
    \\
    \frac{t^{\gamma}}{2\beta\left(1-i\sqrt{\frac{t^{2\gamma}}{4\beta^2}-1}\right)}
    &
    \frac{t^{\gamma}}{2\beta\left(1+i\sqrt{\frac{t^{2\gamma}}{4\beta^2}-1}\right)}
   \end{array}
    \right).
\end{equation}
Note that this matrix coincides with $T_{V}(t)$ given by \eqref{T V} and with $T_{II}(t)$ given by \eqref{T II} with one of the possible choices of the branch of the square root. The substitution gives:
\begin{equation}\label{system u IV1}
    u_{IV,1}'(t)=\left(\frac{\lambda_{IV}(t)}{\varepsilon}
    \left(
    \begin{array}{cc}
    1 & 0 \\
    0 & -1 \\
    \end{array}
    \right)
    +S_{IV}(t)+R_{IV,1}(t,\varepsilon)\right)u_{IV,1}(t),
\end{equation}
where
\begin{equation}\label{lambda IV}
    \lambda_{IV}(t):=-i\frac{\beta}{t^{\gamma}}\sqrt{\frac{t^{2\gamma}}{4\beta^2}-1}=\lambda_{V}(t),
\end{equation}
\begin{equation}\label{S IV}
    S_{IV}(t):=\frac{\gamma}{2t\left(\frac{t^{2\gamma}}{4\beta^2}-1\right)}
    \left(
    \begin{array}{cc}
    -1-i\sqrt{\frac{t^{2\gamma}}{4\beta^2}-1}
    &
    1-i\sqrt{\frac{t^{2\gamma}}{4\beta^2}-1}
    \\
    1+i\sqrt{\frac{t^{2\gamma}}{4\beta^2}-1}
    &
    -1+i\sqrt{\frac{t^{2\gamma}}{4\beta^2}-1}
    \end{array}
    \right)=S_{V}(t),
\end{equation}
\begin{equation*}
    R_{IV,1}(t,\varepsilon):=\frac{T_{IV}^{-1}(t)R_{IV}(t,\varepsilon)T_{IV}(t)}{\varepsilon}=R_{V,1}(t,\varepsilon).
\end{equation*}

In the region $IV$ analysis goes along the same lines as in the region $II$. However, there are slight changes and formulae should be written in a different way. To avoid confusion we repeat the argument highlighting differences and omitting details which are the same. Let us take
\begin{equation}\label{u IV2}
    u_{IV,1}(t)=u_{IV,2}(z(t,\varepsilon))
\end{equation}
with
\begin{equation*}
    z(t,\varepsilon)=\frac1{\varepsilon^{\frac23}}\left(1-\frac{t^{2\gamma}}{4\beta^2}\right),
\end{equation*}
as in \eqref{z}, and substitute this into the system \eqref{system u IV1}. This gives the system
\begin{equation}\label{system u IV2}
    u_{IV,2}'(z)=(\Lambda_{IV,2}(z,\varepsilon)+S_{IV,2}(z,\varepsilon)+R_{IV,2}(z,\varepsilon))u_{IV,2}(z)
\end{equation}
with
\begin{equation}\label{Lambda IV2}
    \Lambda_{IV,2}(z,\varepsilon):=\lambda_{IV,2}(z,\varepsilon)
    \left(
      \begin{array}{cc}
        1 & 0 \\
        0 & -1 \\
      \end{array}
    \right),
\end{equation}
\begin{equation}\label{lambda IV2}
    \lambda_{IV,2}(z,\varepsilon):=\frac{\lambda_{IV}(t(z,\varepsilon))t'(z,\varepsilon)}{\varepsilon}=
    \frac{ic_0\sqrt{-z}}{(1-\varepsilon^{\frac23}z)^\varkappa},
\end{equation}
\begin{multline}\label{S IV2}
    S_{IV,2}(z,\varepsilon):=S_{IV}(t(z,\varepsilon))t'(z,\varepsilon)
    \\
    =-\frac1{4z(1-\varepsilon^{\frac23}z)}
    \left(
      \begin{array}{cc}
        1+i\varepsilon^{\frac13}\sqrt{-z} & -1+i\varepsilon^{\frac13}\sqrt{-z} \\
        -1-i\varepsilon^{\frac13}\sqrt{-z} & 1-i\varepsilon^{\frac13}\sqrt{-z} \\
      \end{array}
    \right)
\end{multline}
and
\begin{equation}\label{R IV2}
    R_{IV,2}(z,\varepsilon):=
    -\frac{t_0}{2\gamma\varepsilon^{\frac13}}
    \frac{T_{IV}^{-1}(t(z,\varepsilon))R_{IV}(t(z,\varepsilon),\varepsilon)T_{IV}(t(z,\varepsilon))}
    {(1-\varepsilon^{\frac23}z)^{1-\frac1{2\gamma}}},
\end{equation}
where $c_0$ and $\varkappa$ are given by \eqref{c_0 kappa}. We consider the system \eqref{system u IV2} on the interval $[-Z_2(\varepsilon),-Z_0]$ where $Z_0$ is given by \eqref{Z 0} and $Z_2(\varepsilon)$ by \eqref{Z 2} with $t_{I-II}$ given by \eqref{t I-II}. The point $z=-Z_0$ corresponds to the point $t=t_{III-IV}(\varepsilon)$,
\begin{equation}\label{t III-IV}
    t_{III-IV}(\varepsilon):=(2\beta)^{\frac1{\gamma}}(1+\varepsilon^{\frac23}Z_0)^{\frac1{2\gamma}},
\end{equation}
and the point $z=-Z_2(\varepsilon)$ corresponds to the point $t=t_{IV-V}$,
\begin{equation}\label{t IV-V}
    t_{IV-V}:=(2\beta)^{\frac1{\gamma}}(1+\varepsilon^{\frac23}Z_2(\varepsilon))^{\frac1{2\gamma}}=(8\beta^2-t_{I-II}^{2\gamma})^{\frac1{2\gamma}}.
\end{equation}

Consider the free system.

\begin{lem}\label{lem IV free system}
The system
\begin{equation}\label{system v IV2}
    v_{IV,2}'(z)=
    \left(
    ic_0\sqrt{-z}
    \left(
      \begin{array}{cc}
        1 & 0 \\
        0 & -1 \\
      \end{array}
    \right)
    \right.
    \\
    \left.
    -\frac1{4z}
    \left(
      \begin{array}{cc}
        1 & -1 \\
        -1 & 1 \\
      \end{array}
    \right)
    \right)
    v_{IV,2}(z)
\end{equation}
has for $z\in(-\infty,1]$ solutions $v_{IV,2}^{\pm}(z)$ such that
\begin{equation}\label{IV asymptotics v IV2}
    v_{IV,2}^{\pm}(z)=\frac{\exp\left({\mp}\frac{2ic_0}3(-z)^{\frac32}\right)}{(-z)^{\frac14}}(e_{\pm}+o(1))
\end{equation}
as $z\rightarrow-\infty$.
\end{lem}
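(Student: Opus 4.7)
The plan is to mirror the strategy used for the hyperbolic companion, Lemma \ref{lem II free system}, which was proved as a by-product of Lemma \ref{lem II result 1st part} (see Remark \ref{rem proof of Lemma II free system}). First I would apply a Harris--Lutz transformation
\begin{equation*}
v_{IV,2}(z) = (I+\widehat T_{IV}(z,0))\,v_{IV,3}(z),
\end{equation*}
with $\widehat T_{IV}(z,0)$ off-diagonal, constructed by the recipe of subsection \ref{subsection Harris-Lutz} applied to the diagonal part $\Lambda(z)=ic_0\sqrt{-z}\left(\begin{array}{cc}1 & 0\\ 0 & -1\end{array}\right)$ and to $S(z)=-\tfrac{1}{4z}\left(\begin{array}{cc}1 & -1\\ -1 & 1\end{array}\right)$. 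In contrast with the hyperbolic situation of Lemma \ref{lem II estimate T hat}, the exponential kernels $\exp\bigl(\mp 2ic_0\int_s^z\sqrt{-\sigma}\,d\sigma\bigr)$ are now purely oscillatory, so I can take both off-diagonal entries as integrals from $-\infty$ up to $z$; they converge conditionally. The target is the bound $\|\widehat T_{IV}(z,0)\|=O((-z)^{-3/2})$ as $z\to-\infty$, which in particular ensures invertibility of $I+\widehat T_{IV}(z,0)$ for sufficiently negative $z$.

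Plugging this substitution into the system and using the identity \eqref{Harris--Lutz R 1} yields
\begin{equation*}
v_{IV,3}'(z)=\left(ic_0\sqrt{-z}\left(\begin{array}{cc}1 & 0\\ 0 & -1\end{array}\right)-\frac{1}{4z}\,I+Q_{IV,3}(z)\right)v_{IV,3}(z),
\end{equation*}
with $Q_{IV,3}(z)=O((-z)^{-5/2})$, hence integrable on $(-\infty,-Z_0]$ for any $Z_0>0$. The asymptotic Levinson theorem \cite[Theorem 8.1]{Coddington-Levinson-1955} then applies, and its dichotomy hypothesis is immediate: the two eigenvalues $\pm ic_0\sqrt{-z}-\tfrac1{4z}$ differ by the purely imaginary $2ic_0\sqrt{-z}$, so the integrated real-part difference is identically zero. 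Levinson produces two solutions
\begin{equation*}
v_{IV,3}^{\pm}(z)=\exp\left(\int_{-Z_0}^{z}\left(\pm ic_0\sqrt{-\sigma}-\frac{1}{4\sigma}\right)d\sigma\right)(e_{\pm}+o(1))
\end{equation*}
as $z\to-\infty$. Evaluating the integrals gives $\int_{-Z_0}^{z}(\pm ic_0\sqrt{-\sigma})\,d\sigma=\mp\tfrac{2ic_0}{3}(-z)^{3/2}+\mathrm{const}$ and $\int_{-Z_0}^{z}(-\tfrac{1}{4\sigma})\,d\sigma=-\tfrac{1}{4}\ln(-z)+\mathrm{const}$; absorbing the constants into the normalization of $v_{IV,3}^{\pm}$ produces the prefactor $(-z)^{-1/4}\exp(\mp\tfrac{2ic_0}{3}(-z)^{3/2})$. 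Finally, setting $v_{IV,2}^{\pm}(z):=(I+\widehat T_{IV}(z,0))v_{IV,3}^{\pm}(z)$ transfers this asymptotics to $v_{IV,2}^{\pm}$, because $\widehat T_{IV}(z,0)\to 0$ as $z\to-\infty$; this is exactly \eqref{IV asymptotics v IV2}.

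The main obstacle is the $(-z)^{-3/2}$ decay of $\widehat T_{IV}(z,0)$. Unlike the hyperbolic case, where exponential domination makes the bound nearly automatic, here the naive absolute-value estimate of $\int_{-\infty}^{z}\tfrac{1}{4s}\,e^{\mp 2ic_0\int_s^z\sqrt{-\sigma}\,d\sigma}\,ds$ diverges logarithmically. The decay has to be extracted by integrating by parts against the growing phase derivative $\mp 2c_0\sqrt{-s}$: one integration by parts produces a boundary term of order $1/(s\sqrt{-s})$ evaluated at $s=z$, giving $(-z)^{-3/2}$, and a correction integral whose integrand is absolutely integrable and of the same order. Once this estimate is secured, the matching bound on $Q_{IV,3}$ coming from \eqref{Harris--Lutz R 1} together with the estimates \eqref{S IV2} on $S_{IV,2}(\cdot,0)$ follows, and the rest of the argument is essentially a transcription of Remark \ref{rem proof of Lemma II free system}.
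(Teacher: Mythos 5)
Your proposal is correct and mirrors the paper's own proof almost exactly: the paper proves this lemma (Remark \ref{rem proof of Lemma IV free system}) by exhibiting $v_{IV,2}^{\pm}=(I+\widehat T_{IV}(\cdot,0))v_{IV,3}^{\pm}$ via the Harris--Lutz transform \eqref{T IV hat} with $\varepsilon=0$, establishing $\|\widehat T_{IV}(z,0)\|=O((-z)^{-3/2})$ by the same integration-by-parts-against-the-phase argument in Lemma \ref{lem IV estimate T hat}, and then invoking Levinson on the system \eqref{system v IV3} with $Q_{IV,3}\in L_1$. You have also correctly identified both the trivial Levinson dichotomy in the elliptic case and the logarithmic-divergence obstacle that forces the stationary-phase estimate.
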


As in the previous section, the proof of this lemma will be given later, see Remark \ref{rem proof of Lemma IV free system}. The main result for the region $IV$ is the following.

\begin{lem}\label{lem IV result}
Let $c_0,\varkappa>0$ and $R_{IV,2}(z,\varepsilon)$ be given by \eqref{R IV2} with the use of the expression \eqref{T IV} and the definition \eqref{z}. Let $Z_0$ be given by \eqref{Z 0} and $Z_2(\varepsilon)$ by \eqref{Z 2} with $t_{I-II}$ given by \eqref{t I-II}. If
\begin{equation}\label{estimate R IV}
   \int_{-Z_2(\varepsilon)}^{-Z_0}\frac{\|R_{IV}(t(s,\varepsilon),\varepsilon)\|}{\sqrt{-s}}ds=o\left(\varepsilon^{\frac23}\right)
   \text{ as }\varepsilon\rightarrow0^+,
\end{equation}
then for every sufficiently small $\varepsilon>0$ the system \eqref{system u IV2} on the interval $[-Z_2(\varepsilon),-Z_0]$ has two solutions $u_{IV,2}^{\pm}(z,\varepsilon)$ such that, as $\varepsilon\rightarrow0^+$,
\begin{multline}\label{IV answer}
    u_{IV,2}^{\pm}(-Z_2(\varepsilon),\varepsilon)
    \\
    =a_{IV}^{\pm}
    \exp
    \left(
    -\int_{-Z_2(\varepsilon)}^{-Z_0}
    \left(
    \pm\frac{ic_0\sqrt{-s}}{(1-\varepsilon^{\frac23}s)^{\varkappa}}-\frac1{4s(1\mp i\varepsilon^{\frac13}\sqrt{-s})}
    \right)
    ds
    \right)(e_{\pm}+o(1)),
\end{multline}
where $a_{IV}^{\pm}$ are complex constants which are conjugate to each other and the vectors $e_{\pm}$ are given by \eqref{e +-}. Moreover, $u_{IV,2}^{\pm}(z,\varepsilon)\rightarrow v_{IV,2}^{\pm}(z)$ as $\varepsilon\rightarrow0^+$ for every fixed $z\le-Z_0$, where $v_{IV,2}^{\pm}$ are defined in Lemma \ref{lem IV free system}.
\end{lem}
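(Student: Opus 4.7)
The plan is to follow, step by step, the structure of the proof of Lemma \ref{lem II result} in Section \ref{section II}, replacing real hyperbolic exponentials $\exp(\mp\tfrac{2c_0}{3}z^{3/2})$ by their oscillatory counterparts $\exp(\mp\tfrac{2ic_0}{3}(-z)^{3/2})$. First, I would split the interval $[-Z_2(\varepsilon),-Z_0]$ into $[-Z_2(\varepsilon),-Z_1(\varepsilon)]$ and $[-Z_1(\varepsilon),-Z_0]$ with the same choice $Z_1(\varepsilon)=\varepsilon^{-1/5}$ as in \eqref{Z 1}. I would then introduce the Harris--Lutz correction analogous to \eqref{T II hat},
\begin{equation*}
\widehat T_{IV}(z,\varepsilon)=
\begin{pmatrix}
0 & \widehat T_{IV_{12}}(z,\varepsilon)\\
\widehat T_{IV_{21}}(z,\varepsilon) & 0
\end{pmatrix},
\end{equation*}
with the off-diagonal entries given by the formulae of Section \ref{subsection Harris-Lutz} for the eigenvalues $\lambda^{\pm}_{IV,7}(z,\varepsilon):=\mp ic_0\sqrt{-z}/(1-\varepsilon^{2/3}z)^{\varkappa}-1/(4z(1\mp i\varepsilon^{1/3}\sqrt{-z}))$. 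The integrand $S_{IV,2}$ has the same $1/(4z)$ singularity as $S_{II,2}$, but now the exponential factors $\exp(\pm\tfrac{2ic_0}{3}((-s)^{3/2}-(-z)^{3/2}))$ are unimodular; to establish the analog of Lemma \ref{lem II estimate T hat} (i.e. $\|\widehat T_{IV}(z,\varepsilon)\|<c_{IV}/(-z)^{3/2}$ for some $c_{IV}>0$, which justifies the choice \eqref{Z 0}) I would integrate by parts once in the oscillatory integrals, using that the derivative of the phase is $\pm 2c_0\sqrt{-s}/(1-\varepsilon^{2/3}s)^{\varkappa}$ and is bounded below by $\mathrm{const}\cdot\sqrt{-s}$.

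On the inner interval $[-Z_1(\varepsilon),-Z_0]$ I would treat the system as a perturbation of the $\varepsilon$-independent free system \eqref{system v IV2}, exactly as in the proof of Lemma \ref{lem II result 1st part}. After the substitution $u_{IV,2}=(I+\widehat T_{IV}(\cdot,0))u_{IV,3}$ I arrive at a system whose coefficient matrix is $ic_0\sqrt{-z}\,\mathrm{diag}(1,-1)-(1/4z)I+Q_{IV,3}(z)+R_{IV,3}(z,\varepsilon)$ with $Q_{IV,3}=O((-z)^{-5/2})$ summable at $-\infty$ and $\int_{-Z_1(\varepsilon)}^{-Z_0}\|R_{IV,3}\|\to0$. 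Applying the asymptotic Levinson theorem in the free system produces solutions $v_{IV,3}^{\pm}$ with the asymptotics \eqref{IV asymptotics v IV2} (which also proves Lemma \ref{lem IV free system}), and Gronwall-type estimates of the form of Lemma \ref{lem II tech 1} --- which require only $|\,\mathrm{kernel}\,|\le\|R_{IV,3}\|$ and therefore work because the remaining exponential factors have modulus one --- give the analog of \eqref{II answer 1st part} together with the pointwise convergence $u_{IV,2}^{\pm}(z,\varepsilon)\to v_{IV,2}^{\pm}(z)$.

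On the outer interval $[-Z_2(\varepsilon),-Z_1(\varepsilon)]$ I would repeat the scheme of the proof of Lemma \ref{lem II result 2nd part}: perform the $\varepsilon$-dependent Harris--Lutz transformation $u_{IV,2}=(I+\widehat T_{IV}(\cdot,\varepsilon))u_{IV,7}$, then vary parameters with $\exp(\int_{-Z_1(\varepsilon)}^{z}\Lambda_{IV,7})$. The resulting integral equations for $u_{IV,9}^{\pm}$ have unimodular kernels multiplied by $R_{IV,7}$, and since $\int\|R_{IV,7}\|\to0$ by Lemma \ref{lem II remainder} (adapted to region $IV$ with an identical proof), Lemma \ref{lem II tech 1} applies and gives $u_{IV,9}^{\pm}=e_{\pm}+o(1)$. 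Matching the two intervals with Lemma \ref{lem II tech 2} yields \eqref{IV answer} with some constants $a_{IV}^{\pm}$ coming from the values of $v_{IV,3}^{\pm}$ at $z=-Z_0$.

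The main new point, and the main obstacle, is the identification $a_{IV}^{+}=\overline{a_{IV}^{-}}$. This is a consequence of the reality of the coefficient matrix in \eqref{system u 2} together with the conjugation structure of $T_{IV}$ in \eqref{T IV}: the two columns of $T_{IV}$ are complex conjugates of each other, so that the substitution $u_{IV}=T_{IV}u_{IV,1}$ maps real vectors to vectors whose two components are complex conjugate, and this property is preserved by the $\varepsilon$-dependent transformations performed along the way. Complex conjugation therefore interchanges the two solutions $v_{IV,2}^{+}$ and $v_{IV,2}^{-}$ of \eqref{system v IV2} (both at $z=-Z_0$ and asymptotically as $z\to-\infty$), which forces the corresponding matching constants to be conjugate. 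The secondary technical point, already encountered in Section \ref{section II} but slightly more delicate here because no exponential decay is available, is to control the various oscillatory integrals appearing in the estimate of $\widehat T_{IV}$ and in the convergence of remainders; throughout I would use integration by parts against the non-vanishing phase derivative, which is the same mechanism that drove the strong-convergence argument in Lemma \ref{lem V convergence of operators}.
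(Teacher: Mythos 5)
Your proposal is correct and follows essentially the same route as the paper's proof: the same split at $Z_1(\varepsilon)=\varepsilon^{-1/5}$, the same Harris--Lutz correction with its $O(|z|^{-3/2})$ bound obtained by integrating by parts against the oscillatory phase, the same Gronwall-type estimates on each subinterval via Lemma~\ref{lem II tech 1}, and the same matching by Lemma~\ref{lem II tech 2}. The only superficial difference is in the identification $a_{IV}^{+}=\overline{a_{IV}^{-}}$: you derive it from the conjugation symmetry of the free system, whereas the paper simply computes $a^{\pm}_{IV}=Z_0^{-1/4}\exp\bigl(\mp\tfrac{2ic_0}{3}Z_0^{3/2}\bigr)$ explicitly from the asymptotics of $v_{IV,3}^{\pm}$ at $z=-Z_0$ and reads the conjugacy off directly.
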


The following lemma shows that the condition \eqref{estimate R IV} is satisfied, if $R_{IV}=R_2^+$.

\begin{lem}\label{lem IV R 2+ estimate}
Let $R_2^+(t,\varepsilon)$ be given by \eqref{R 2 pm} and $t(z,\varepsilon)$ be defined by \eqref{z}.  Let the conditions \eqref{model problem conditions} and \eqref{model problem r} hold. Then for every $z_0>0$ and $z_2(\varepsilon)=-\frac{\nu}{\varepsilon^{\frac23}}$ with $\nu\in(-\infty,0)$ the following estimate holds:
\begin{equation*}
   \int_{-z_2(\varepsilon)}^{-z_0}\frac{\|R_2^+(t(s,\varepsilon),\varepsilon)\|}{\sqrt{-s}}ds
   =O\left(\varepsilon^{\frac23}\varepsilon_0^{\frac{\alpha_r}{\gamma}}\right)
   \text{ as }\varepsilon\rightarrow0^+.
\end{equation*}
\end{lem}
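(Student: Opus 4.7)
The plan is to mirror the proof of Lemma \ref{lem II R 2+ estimate} almost verbatim, now working on the elliptic side of the turning point $t_0$. The only structural change is that the $t$-interval of integration lies above $t_0$ rather than below it, and the square root in the denominator reads $\sqrt{t^{2\gamma}/(4\beta^2)-1}$ instead of $\sqrt{1-t^{2\gamma}/(4\beta^2)}$; both have the same behaviour $\sim\mathrm{const}\cdot\sqrt{|t-t_0|}$ near $t_0$.

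First, I would use the defining formula \eqref{R 2 pm} together with the fact that the rotation matrices have norm one and the pointwise bound $\|R(x,\varepsilon_0)\|<r(x)$ coming from \eqref{model problem conditions} to obtain
\begin{equation*}
\|R_2^+(t,\varepsilon)\|\le\frac{1}{\varepsilon_0}\,r\!\left(\varepsilon_0^{-1/\gamma}t\right).
\end{equation*}
Second, I would change variables from $s$ back to $t=t(s,\varepsilon)$ using $\tfrac{dz}{dt}=-\tfrac{\gamma t^{2\gamma-1}}{2\beta^2\varepsilon^{2/3}}$. Under this substitution $s\in[-z_2(\varepsilon),-z_0]$ corresponds to a $t$-interval lying in $[t_{III-IV}(\varepsilon),t_{IV-V}]$ (using the definitions \eqref{t III-IV} and \eqref{t IV-V}, with $t_{III-IV}(\varepsilon)\to t_0^+$ as $\varepsilon\to0^+$ and the upper end fixed). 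Since $\sqrt{-s}=\sqrt{t^{2\gamma}/(4\beta^2)-1}/\varepsilon^{1/3}$, the factors $1/\sqrt{-s}$ and $|ds/dt|$ combine to give a prefactor of order $\varepsilon^{-1/3}$ in front of a $t$-integral whose integrand contains the factor $t^{2\gamma-1}/\sqrt{t^{2\gamma}/(4\beta^2)-1}$.

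Third, on the compact interval $[t_0,t_{IV-V}]$ we have $t^{2\gamma}/(4\beta^2)-1\ge c\,(t-t_0)$ for some $c>0$, so the singularity of $1/\sqrt{t^{2\gamma}/(4\beta^2)-1}$ at $t_0$ is an integrable square-root singularity. Inserting the explicit estimate $r(\varepsilon_0^{-1/\gamma}t)\le c_r\varepsilon_0^{(1+\alpha_r)/\gamma}t^{-1-\alpha_r}$ valid for small $\varepsilon_0$ and $t$ bounded below, the resulting $t$-integral is uniformly bounded in $\varepsilon$. Collecting the powers of $\varepsilon_0$ and $\varepsilon$ via $\varepsilon=\varepsilon_0^{(1-\gamma)/\gamma}$, exactly the same arithmetic as in Lemma \ref{lem II R 2+ estimate} yields the claimed bound $O\!\bigl(\varepsilon^{2/3}\varepsilon_0^{\alpha_r/\gamma}\bigr)$.

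No real obstacle is expected: the argument is purely a change of variables followed by a crude pointwise bound, and the only delicate points (the square-root singularity at $t_0$ and the balance of powers of $\varepsilon_0$ and $\varepsilon$) are already handled in the proof of Lemma \ref{lem II R 2+ estimate}.
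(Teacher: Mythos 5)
Your proposal is correct and follows the same approach as the paper's own (very terse) proof, which simply refers back to the argument of Lemma \ref{lem II R 2+ estimate} and reports the final estimate with $t_0-t$ replaced by $t-t_0$ in the integrand. Your steps — the pointwise bound $\|R_2^+(t,\varepsilon)\|\le\varepsilon_0^{-1}r(\varepsilon_0^{-1/\gamma}t)$, the change of variables $s\mapsto t$, the observation that the square-root singularity at $t_0$ is integrable on the elliptic side, the explicit form of $r$, and the power bookkeeping via $\varepsilon=\varepsilon_0^{(1-\gamma)/\gamma}$ — match the paper exactly; the only imprecision is that the lemma is stated for arbitrary $z_0>0$ and $\nu<0$, so the relevant $t$-interval is $[t_0(1+\varepsilon^{2/3}z_0)^{1/(2\gamma)},t_\nu]$ with $t_\nu=t_0(1-\nu)^{1/(2\gamma)}$, rather than literally $[t_{III-IV}(\varepsilon),t_{IV-V}]$, but this does not affect the estimate.
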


\begin{proof}
Using the same argument as in the proof of Lemma \ref{lem IV R 2+ estimate} we have with some $c_{25}>0$ and $t_{\nu}$ given by \eqref{t nu}:
\begin{equation*}
    \int_{-z_2(\varepsilon)}^{-z_0}\frac{\|R_2^+(t(s,\varepsilon),\varepsilon)\|}{\sqrt{-s}}ds
    <
    c_rc_{25}\varepsilon^{\frac23}\varepsilon_0^{\frac{\alpha_r}{\gamma}}\int_{t_0}^{t_{\nu}}
    \frac{dt}{t^{1+\alpha_r}\sqrt{t-t_0}}.
\end{equation*}
\end{proof}

Now let us rewrite the estimate \eqref{estimate R IV} in terms of $R_{IV,2}$.

\begin{lem}\label{lem IV remainder}
Under the conditions of Lemma \ref{lem IV result}, for every $z_0>0$ and $z_2(\varepsilon)=-\frac{\nu}{\varepsilon^{\frac23}}$ with $\nu\in(-\infty,0)$, if
\begin{equation*}
   \int_{-z_2(\varepsilon)}^{-z_0}\frac{\|R_{IV}(t(s,\varepsilon),\varepsilon)\|}{\sqrt{-s}}ds=o\left(\varepsilon^{\frac23}\right),
   \text{ then }
   \int_{-z_2(\varepsilon)}^{-z_0}\|R_{IV,2}(s,\varepsilon)\|ds\rightarrow0
\end{equation*}
as $\varepsilon\rightarrow0^+$.
\end{lem}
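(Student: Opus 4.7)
The plan is to follow the scheme of Lemma \ref{lem II remainder} verbatim, with cosmetic changes reflecting that we are now on the side $t>t_0$ of the turning point, where the eigenvalues of $A_2^{+}$ are purely imaginary. The key is to establish a two-sided control on $T_{IV}$ and $T_{IV}^{-1}$ uniform in $z\in[-z_2(\varepsilon),-z_0]$ that absorbs the $\varepsilon^{1/3}$ appearing in the prefactor of \eqref{R IV2}.

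First I would observe directly from the expression \eqref{T IV} that since $|1\pm i\sqrt{\frac{t^{2\gamma}}{4\beta^2}-1}|\ge 1$ and the variable $t$ ranges over the bounded interval $[t_{III-IV}(\varepsilon),t_{IV-V}]\subset[t_0,(8\beta^2-t_{I-II}^{2\gamma})^{1/(2\gamma)}]$, there exists a constant $c>0$ such that $\|T_{IV}(t(z,\varepsilon))\|<c$ uniformly on the relevant range. A direct computation of the determinant gives
\begin{equation*}
\det T_{IV}(t)=\frac{t^{\gamma}}{2\beta}\Bigl(\tfrac{1}{1+i\sqrt{\frac{t^{2\gamma}}{4\beta^2}-1}}-\tfrac{1}{1-i\sqrt{\frac{t^{2\gamma}}{4\beta^2}-1}}\Bigr)
=-2i\sqrt{1-\frac{4\beta^2}{t^{2\gamma}}},
\end{equation*}
so under the substitution $z=\varepsilon^{-2/3}(1-\frac{t^{2\gamma}}{4\beta^2})$ we get $|\det T_{IV}(t(z,\varepsilon))|=2\sqrt{\frac{\varepsilon^{2/3}(-z)}{1+\varepsilon^{2/3}(-z)}}$. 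For $z\in[-z_2(\varepsilon),-z_0]$ the factor $\varepsilon^{2/3}(-z)$ stays in the bounded interval $[\varepsilon^{2/3}z_0,-\nu]$, so the ratio $\frac{-z}{1+\varepsilon^{2/3}(-z)}$ is of order $-z$, which yields $\|T_{IV}^{-1}(t(z,\varepsilon))\|<\frac{c'}{\varepsilon^{1/3}\sqrt{-z}}$ with some $c'>0$ independent of $\varepsilon$.

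Plugging these bounds into the expression \eqref{R IV2} and using that the denominator $(1-\varepsilon^{2/3}z)^{1-1/(2\gamma)}$ is bounded above and below on the same interval, one obtains with some $c''>0$
\begin{equation*}
\int_{-z_2(\varepsilon)}^{-z_0}\|R_{IV,2}(s,\varepsilon)\|ds<\frac{c''}{\varepsilon^{2/3}}\int_{-z_2(\varepsilon)}^{-z_0}\frac{\|R_{IV}(t(s,\varepsilon),\varepsilon)\|}{\sqrt{-s}}ds,
\end{equation*}
and the right-hand side tends to zero as $\varepsilon\rightarrow0^+$ by the hypothesis.

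The only conceptual point that requires care, and hence the one I would double-check, is the uniform two-sided behaviour of $\det T_{IV}$ on the \emph{intermediate} range $[-z_2(\varepsilon),-z_0]$: one needs to verify that $\varepsilon^{2/3}(-z)$ does not approach $1$ from above (which would blow up $\|T_{IV}^{-1}\|$ further) — this is ensured by the choice $z_2(\varepsilon)=-\nu/\varepsilon^{2/3}$ with $\nu$ a \emph{fixed} negative number. Apart from this bookkeeping the argument is the exact mirror image of the hyperbolic case.
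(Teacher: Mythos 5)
Your proof is correct and follows essentially the same approach as the paper: the paper's own proof just says ``following the proof of Lemma~\ref{lem II remainder}'' and you have spelled out exactly those estimates for $T_{IV}$ and $\det T_{IV}$, arriving at the same bound $\|T_{IV}^{-1}(t(z,\varepsilon))\|<\frac{c'}{\varepsilon^{1/3}\sqrt{-z}}$ and the same final integral inequality.

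One small remark on your last paragraph: the worry that $\varepsilon^{2/3}(-z)$ might ``approach $1$ from above'' is misplaced --- on the side $t>t_0$ one has $1-\varepsilon^{2/3}z=1+\varepsilon^{2/3}(-z)\ge1$, so there is no singularity to avoid (that concern belongs to region~$II$, where $z>0$ and $1-\varepsilon^{2/3}z$ can vanish as $t\to0$). What the fixed cutoff $\nu$ actually buys you is an \emph{upper} bound $\varepsilon^{2/3}(-z)\le -\nu$, which keeps the denominator $1+\varepsilon^{2/3}(-z)$ bounded above and hence yields the lower bound $|\det T_{IV}|\ge\frac{2\varepsilon^{1/3}\sqrt{-z}}{\sqrt{1-\nu}}$ that you need. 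The bookkeeping in your estimate is nonetheless correct.
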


\begin{proof}
Following the proof of Lemma \ref{lem II remainder} we have with some $c_{26}>0$:
\begin{equation*}
    \int_{-z_2(\varepsilon)}^{-z_0}\|R_{IV,2}(s,\varepsilon)\|ds
    <c_{26}\int_{-z_2(\varepsilon)}^{-z_0}\frac{\|R_{IV}(t(s,\varepsilon),\varepsilon)\|}{\varepsilon^{\frac23}\sqrt{-s}}ds\rightarrow0
    \text{ as }\varepsilon\rightarrow0^+.
\end{equation*}
\end{proof}

In the elliptic case the formula for the Harris--Lutz transformation can be simplified. It is not difficult to check that one can take
\begin{equation*}
    \widehat T(x)=
    \left(
      \begin{array}{cc}
        0
        &
        \int_{-\infty}^xS_{12}(x')\exp(\int_{x'}^{x}(\lambda_1-\lambda_2))dx'
        \\
        \int_{-\infty}^xS_{21}(x')\exp(\int_{x'}^{x}(\lambda_2-\lambda_1))dx'
        &
        0
      \end{array}
    \right)
\end{equation*}
instead of \eqref{Harris--Lutz T tilde}. Therefore let us take for $z\in[-Z_2(\varepsilon),-Z_0]$
\begin{equation}\label{T IV hat}
    \widehat T_{IV}(z,\varepsilon)=
    \left(
      \begin{array}{cc}
        0 & \widehat T_{IV_{12}}(z,\varepsilon) \\
        \widehat T_{IV_{21}}(z,\varepsilon) & 0 \\
      \end{array}
    \right)
\end{equation}
with
\begin{equation*}
        \widehat T_{IV_{12}}(z,\varepsilon):=
        \int_{-Z_2(\varepsilon)}^{z}\frac{ds}{4s(1+i\varepsilon^{\frac13}\sqrt{-s})}
        \exp
        \left(\int_{s}^{z}\frac{2ic_0\sqrt{-\sigma}d\sigma}{(1-\varepsilon^{\frac23}\sigma)^{\varkappa}}\right),
\end{equation*}
\begin{equation*}
        \widehat T_{IV_{21}}(z,\varepsilon):=
        \int_{-Z_2(\varepsilon)}^{z}\frac{ds}{4s(1-i\varepsilon^{\frac13}\sqrt{-s})}
        \exp
        \left(-\int_{s}^{z}\frac{2ic_0\sqrt{-\sigma}d\sigma}{(1-\varepsilon^{\frac23}\sigma)^{\varkappa}}\right)=\overline{\widehat T_{IV_{12}}(z,\varepsilon)}.
\end{equation*}

\begin{lem}\label{lem IV estimate T hat}
There exists $c_{IV}>0$ such that for every $\varepsilon\in U\cup\{0\}$ and $z\in[-Z_2(\varepsilon),-1]$, where $Z_2(\varepsilon)$ is given by \eqref{Z 2} with $t_{I-II}$ given by \eqref{t I-II}, the matrix $\widehat T_{IV}(z,\varepsilon)$ given by \eqref{T IV hat} satisfies the following estimate:
\begin{equation}\label{IV estimate of T hat}
    \| \widehat T_{IV}(z,\varepsilon)\|<\frac{c_{IV}}{|z|^{\frac32}}.
\end{equation}
\end{lem}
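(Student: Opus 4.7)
The plan is to estimate the entries of $\widehat T_{IV}$ by integration by parts, exploiting the oscillatory character of the exponentials. This is the main structural difference between the elliptic region $IV$ and the hyperbolic region $II$: in the latter the exponents in \eqref{T II hat} are real and negative, so Lemma \ref{lem II estimate T hat} relied on exponential decay, while here the exponents are purely imaginary and the bound must come from a non-stationary phase argument.

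Concretely, I would write
\begin{equation*}
\widehat T_{IV_{12}}(z,\varepsilon)=\int_{-Z_2(\varepsilon)}^{z} h(s,\varepsilon)\, e^{i\phi(s,z,\varepsilon)}\, ds
\end{equation*}
with $h(s,\varepsilon):=\frac{1}{4s(1+i\varepsilon^{\frac13}\sqrt{-s})}$ and $\phi(s,z,\varepsilon):=\int_s^z \frac{2c_0\sqrt{-\sigma}}{(1-\varepsilon^{\frac23}\sigma)^{\varkappa}}\,d\sigma$. Since $\phi_s(s,z,\varepsilon)=-\frac{2c_0\sqrt{-s}}{(1-\varepsilon^{\frac23}s)^{\varkappa}}$ does not vanish on the range of integration, the identity $e^{i\phi}=(i\phi_s)^{-1}\partial_s e^{i\phi}$ yields, after integration by parts,
\begin{equation*}
\widehat T_{IV_{12}}(z,\varepsilon)=\left[\frac{h(s,\varepsilon)\,e^{i\phi(s,z,\varepsilon)}}{i\phi_s(s,z,\varepsilon)}\right]_{s=-Z_2(\varepsilon)}^{s=z}-\int_{-Z_2(\varepsilon)}^{z}\frac{d}{ds}\!\left(\frac{h(s,\varepsilon)}{i\phi_s(s,z,\varepsilon)}\right)e^{i\phi(s,z,\varepsilon)}\,ds.
\end{equation*}

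Next I would estimate each piece. The boundary contribution at $s=z$ has modulus $\frac{(1-\varepsilon^{\frac23}z)^{\varkappa}}{8c_0|z|^{\frac32}\,|1+i\varepsilon^{\frac13}\sqrt{-z}|}=O(|z|^{-\frac32})$, uniformly in $\varepsilon$, and this is precisely the source of the claimed estimate. The boundary contribution at $s=-Z_2(\varepsilon)$ is $O(Z_2(\varepsilon)^{-3/2})\le O(|z|^{-3/2})$ because $|z|\le Z_2(\varepsilon)$. A straightforward differentiation shows that $\bigl|\frac{d}{ds}(h/\phi_s)\bigr|\le C|s|^{-5/2}$ on $s\in[-Z_2(\varepsilon),-1]$ with $C$ independent of $\varepsilon$, so integrating this bound contributes once more $O(|z|^{-3/2})$.

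The main place I expect to have to be careful is uniformity of the constants in $\varepsilon$. The only $\varepsilon$-dependent factors in $h$ and $\phi_s$ are $(1-\varepsilon^{\frac23}s)^{\varkappa}$ and $(1+i\varepsilon^{\frac13}\sqrt{-s})^{\pm 1}$; both are harmless because $0\le\varepsilon^{\frac23}|s|\le\varepsilon^{\frac23}Z_2(\varepsilon)=1-t_{I-II}^{2\gamma}/(4\beta^2)<1$ by the choice of $t_{I-II}$ in \eqref{t I-II}, so $(1-\varepsilon^{\frac23}s)^{\varkappa}$ stays between fixed positive constants, and $|1+i\varepsilon^{\frac13}\sqrt{-s}|\ge 1$. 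Once the bound for $\widehat T_{IV_{12}}$ is established, the bound for $\widehat T_{IV_{21}}=\overline{\widehat T_{IV_{12}}}$ is immediate, yielding the stated control on $\|\widehat T_{IV}\|$; the case $\varepsilon=0$ is handled by taking $Z_2(0)=+\infty$, the improper oscillatory integral converging by exactly the same integration by parts.
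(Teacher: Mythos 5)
Your proposal is correct and matches the paper's proof in essence: both argue by integration by parts against the purely imaginary oscillatory exponent, estimate the boundary term at $s=z$ (which supplies the $|z|^{-3/2}$), the boundary term at $s=-Z_2(\varepsilon)$, and the $O(|s|^{-5/2})$ remainder integral, with uniformity in $\varepsilon$ secured by the constraint $\varepsilon^{2/3}|s|\le\nu_2<1$ coming from the choice of $t_{I-II}$. The only cosmetic difference is that the paper first rescales $s_1=\varepsilon^{2/3}s$ before integrating by parts, which you do directly in the original variable; the resulting bounds and constants are the same.
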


\begin{proof}
This is the place where the proof for the elliptic case differs from the corresponding proof in the hyperbolic case not only in notation. It is impossible to effectively estimate oscillating exponentials by something which is independent of $\varepsilon$, so one has to pay more attention. Denote $\nu_2:=\varepsilon^{\frac23}Z_2(\varepsilon)$. For $\varepsilon\in U$ we have:
\begin{multline*}
    \|\widehat T_{IV}(z,\varepsilon)\|=|\widehat T_{IV_{12}}(z,\varepsilon)|=|\widehat T_{IV_{21}}(z,\varepsilon)|
    =\left|
    \int_{-Z_2(\varepsilon)}^z
    \frac{\exp
    \left(
    \int_s^z\frac{2ic_0\sqrt{-\sigma}d\sigma}{(1-\varepsilon^{\frac23}\sigma)^{\varkappa}}
    \right)ds}{4s(1+i\varepsilon^{\frac13}\sqrt{-s})}
    \right|
    \\
    =\left|
    \int_{|z|}^{Z_2(\varepsilon)}
    \frac{\exp
    \left(
    \int_{|z|}^s\frac{2ic_0\sqrt{\sigma}d\sigma}{(1+\varepsilon^{\frac23}\sigma)^{\varkappa}}
    \right)ds}{4s(1+i\varepsilon^{\frac13}\sqrt{s})}
    \right|
    =
    \left|
    \int_{\varepsilon^{\frac23}|z|}^{\nu_2}
    \frac{\exp
    \left(
    \int_{\varepsilon^{\frac23}|z|}^{s_1}\frac{2ic_0\sqrt{\sigma_1}d\sigma_1}{\varepsilon(1+\sigma_1)^{\varkappa}}
    \right)ds_1}{4s_1(1+i\sqrt{s_1})}
    \right|
    \\
    =
    \varepsilon
    \left|
    \int_{\varepsilon^{\frac23}|z|}^{\nu_2}
    \frac{(1+s_1)^{\varkappa}d\left(\exp
    \left(
    \int_{\varepsilon^{\frac23}|z|}^{s_1}\frac{2ic_0\sqrt{\sigma_1}d\sigma_1}{\varepsilon(1+\sigma_1)^{\varkappa}}
    \right)\right)}{8c_0s_1^{\frac32}(1+i\sqrt{s_1})}
    \right|
    \\
    =
    \frac{\varepsilon}{8c_0}
    \left|
    \frac{(1+\nu_2)^{\varkappa}}{\nu_2^{\frac32}(1+i\sqrt{\nu_2})}
    \exp
    \left(
    \int_{\varepsilon^{\frac23}|z|}^{\nu_2}\frac{2ic_0\sqrt{\sigma_1}d\sigma_1}{\varepsilon(1+\sigma_1)^{\varkappa}}
    \right)
    -\frac{(1+\varepsilon^{\frac32}|z|)^{\varkappa}}{\varepsilon |z|^{\frac32}(1+i\varepsilon^{\frac13}\sqrt{|z|})}
    \right.
    \\
    \left.
    -\int_{\varepsilon^{\frac23}|z|}^{\nu_2}
    \exp
    \left(
    \int_{\varepsilon^{\frac23}|z|}^{s_1}\frac{2ic_0\sqrt{\sigma_1}d\sigma_1}{\varepsilon(1+\sigma_1)^{\varkappa}}
    \right)
    d\left(
    \frac{(1+s_1)^{\varkappa}}{s_1^{\frac32}(1+i\sqrt{s_1})}
    \right)
    \right|.
\end{multline*}
The first two terms can be estimated as follows:
\begin{equation*}
    \left|
    \frac{(1+\nu_2)^{\varkappa}}{\nu_2^{\frac32}(1+i\sqrt{\nu_2})}
    \exp
    \left(
    \int_{\varepsilon^{\frac23}|z|}^{\nu_2}\frac{2ic_0\sqrt{\sigma_1}d\sigma_1}{\varepsilon(1+\sigma_1)^{\varkappa}}
    \right)
    \right|\le\frac{(1+\nu_2)^{\varkappa}}{\nu_2^{\frac32}}\le\frac{(1+\nu_2)^{\varkappa}}{\varepsilon|z|^{\frac32}},
\end{equation*}
\begin{equation*}
    \left|
    \frac{(1+\varepsilon^{\frac32}|z|)^{\varkappa}}{\varepsilon |z|^{\frac32}(1+i\varepsilon^{\frac13}\sqrt{|z|})}
    \right|
    \le\frac{(1+\nu_2)^{\varkappa}}{\varepsilon|z|^{\frac32}}.
\end{equation*}
Furthermore,
\begin{multline*}
    \left|
    \left(
    \frac{(1+s_1)^{\varkappa}}{s_1^{\frac32}(1+i\sqrt{s_1})}
    \right)'
    \right|
    =
    \left|
    \left(
    \frac{(1+s_1)^{\varkappa}}{s_1^{\frac32}(1+i\sqrt{s_1})}
    \right)
    \left(
    \frac{\varkappa}{1+s_1}-\frac3{2s_1}-\frac i{2\sqrt{s_1}(1+i\sqrt{s_1})}
    \right)
    \right|
    \\
    \le
    \frac{(1+\nu_2)^{\varkappa}}{s_1^{\frac52}}\left(\varkappa+\frac32+\frac{\sqrt{\nu_2}}2\right),
\end{multline*}
and thus
\begin{multline*}
    \left|
    \int_{\varepsilon^{\frac23}|z|}^{\nu_2}
    \exp
    \left(
    \int_{\varepsilon^{\frac23}|z|}^{s_1}\frac{2ic_0\sqrt{\sigma_1}d\sigma_1}{\varepsilon(1+\sigma_1)^{\varkappa}}
    \right)
    d\left(
    \frac{(1+s_1)^{\varkappa}}{s_1^{\frac32}(1+i\sqrt{s_1})}
    \right)
    \right|
    \\
    \le(1+\nu_2)^{\varkappa}\left(\varkappa+\frac32+\frac{\sqrt{\nu_2}}2\right)
    \int_{\varepsilon^{\frac23}|z|}^{\nu_2}
    \frac{ds_1}{s_1^{\frac52}}
    \le\frac{2(1+\nu_2)^{\varkappa}\left(\varkappa+\frac32+\frac{\sqrt{\nu_2}}2\right)}{3\varepsilon|z|^{\frac32}}.
\end{multline*}
Combining everything we get:
\begin{equation*}
    \|\widehat T_{IV}(z,\varepsilon)\|
    \le
    \frac
    {(1+\nu_2)^{\varkappa}}{8c_0|z|^{\frac32}}
    \left(2+\frac23\left(\varkappa+\frac32+\frac{\sqrt{\nu_2}}2\right)\right).
\end{equation*}
The case $\varepsilon=0$ should be considered separately:
\begin{multline*}
    \|\widehat T_{IV}(z,0)\|
    =
    \left|\int_{-\infty}^z\frac{\exp\left(\int_s^z2ic_0\sqrt{-\sigma}d\sigma\right)ds}{4s}\right|
    =
    \left|\int_{|z|}^{+\infty}\frac{\exp\left(\frac{4ic_0}3s^{\frac32}\right)ds}{4s}\right|
    \\
    =
    \left|\frac i{8c_0|z|^{\frac32}}\exp\left(\frac{4ic_0}3|z|^{\frac32}\right)-\frac{3i}{16c_0}\int_{|z|}^{+\infty}
    \frac{\exp\left(\frac{4ic_0}3s^{\frac32}\right)ds}{s^{\frac52}}\right|
    \\
    \le
    \frac1{8c_0|z|^{\frac32}}+\frac3{16c_0}\int_{|z|}^{+\infty}\frac{ds}{s^{\frac52}}=\frac1{4c_0|z|^{\frac32}}.
\end{multline*}
Thus the estimate is proved for both cases.
\end{proof}

On the first part $[-Z_1(\varepsilon),-Z_0]$ of the region $IV$ we treat the system \eqref{system u IV2} as a perturbation of the free system \eqref{system v IV2}.
\begin{equation*}
    u_{IV,2}'(z)=\left(
    ic_0\sqrt{-z}
    \left(
      \begin{array}{cc}
        1 & 0 \\
        0 & -1 \\
      \end{array}
    \right)
    \right.
    \\
    \left.
    -\frac1{4z}
    \left(
      \begin{array}{cc}
        1 & -1 \\
        -1 & 1 \\
      \end{array}
    \right)
    +\widetilde R_{IV,2}(z,\varepsilon)
    \right)u_{IV,2}(z),
\end{equation*}
where
\begin{multline*}
    \widetilde R_{IV,2}(z,\varepsilon)=R_{IV,2}(z,\varepsilon)
    \\
    +ic_0\sqrt{-z}
    \left(
      \begin{array}{cc}
        1 & 0 \\
        0 & -1 \\
      \end{array}
    \right)
    \left(\frac1{(1-\varepsilon^{\frac23}z)^{\varkappa}}-1\right)
    +\frac{i\varepsilon^{\frac13}}{4\sqrt{-z}}
    \left(
      \begin{array}{cc}
        \frac1{1-i\varepsilon^{\frac13}\sqrt{-z}} & -\frac1{1+i\varepsilon^{\frac13}\sqrt{-z}} \\
        -\frac1{1-i\varepsilon^{\frac13}\sqrt{-z}} & \frac1{1+i\varepsilon^{\frac13}\sqrt{-z}} \\
      \end{array}
    \right).
\end{multline*}
From this we see that with some $c_{27}>0$ for every $z\in[-Z_2(\varepsilon),-Z_0]$
\begin{equation*}
    \|\widetilde R_{IV,2}(z,\varepsilon)\|\le\|R_{IV,2}(z,\varepsilon)\|
    +c_{27}\left(\varepsilon^{\frac23}|z|^{\frac32}+\frac{\varepsilon^{\frac13}}{\sqrt{|z|}}\right).
\end{equation*}
In the same way as in the region $II$ due to Lemma \ref{lem IV remainder} and the choice of $Z_1(\varepsilon)=\frac{1}{\varepsilon^{\frac15}}$ one has:
\begin{equation}\label{IV remainder estimate 1st part}
    \int_{-Z_1(\varepsilon)}^{-Z_0}\|\widetilde R_{IV,2}(s,\varepsilon)\|ds\rightarrow0\text{ as }\varepsilon\rightarrow0^+.
\end{equation}
Now we can obtain results for the subintervals $[-Z_1(\varepsilon),-Z_0]$ and $[-Z_2(\varepsilon),-Z_1(\varepsilon)]$.

\begin{lem}\label{lem IV result 1st part}
Let the conditions of Lemma \ref{lem IV result} hold and let $Z_1(\varepsilon)$ be given by \eqref{Z 1}. If
\begin{equation*}
   \int_{-Z_1(\varepsilon)}^{-Z_0}\frac{\|R_{IV}(t(s,\varepsilon),\varepsilon)\|}{\sqrt{-s}}ds=o\left(\varepsilon^{\frac23}\right)
   \text{ as }\varepsilon\rightarrow0^+,
\end{equation*}
then for every sufficiently small $\varepsilon>0$ the system \eqref{system u IV2} on the interval $[-Z_1(\varepsilon),-Z_0]$ has two solutions $\widetilde u_{IV,2}^{\pm}(z,\varepsilon)$ such that, as $\varepsilon\rightarrow0^+$,
\begin{equation}\label{IV answer 1st part}
    \widetilde u_{IV,2}^{\pm}(-Z_1(\varepsilon),\varepsilon)
    =\frac{\exp\left(\mp\frac{2ic_0}3(Z_1(\varepsilon))^{\frac32}\right)}{(Z_1(\varepsilon))^{\frac14}}
    (e_{\pm}+o(1)),
\end{equation}
where the vectors $e_{\pm}$ are given by \eqref{e +-}. Moreover, $\widetilde u_{IV,2}^{\pm}(z,\varepsilon)\rightarrow v_{IV,2}^{\pm}(z)$ as $\varepsilon\rightarrow0^+$ for every fixed $z\le-Z_0$, where $v_{IV,2}^{\pm}$ are defined in Lemma \ref{lem IV free system}.
\end{lem}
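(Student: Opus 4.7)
The plan is to follow closely the structure of the proof of Lemma \ref{lem II result 1st part}, but now for the elliptic case where the eigenvalues $\pm ic_0\sqrt{-z}$ are purely imaginary. The symmetry between the two solutions $v_{IV,2}^{\pm}$ will make the argument simpler than in the hyperbolic case, since we do not need to distinguish a ``small'' solution from a ``large'' one.

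First, apply the Harris--Lutz transformation at $\varepsilon=0$: set $u_{IV,2}(z)=(I+\widehat T_{IV}(z,0))u_{IV,3}(z)$. By the estimate \eqref{IV estimate of T hat} from Lemma \ref{lem IV estimate T hat}, the factor $(I+\widehat T_{IV}(z,0))$ is invertible for $z\in[-Z_2(\varepsilon),-Z_0]$ by the choice of $Z_0$ in \eqref{Z 0}. Using the general formula \eqref{Harris--Lutz R 1}, this substitution produces a system
\begin{equation*}
    u_{IV,3}'(z)=\left(ic_0\sqrt{-z}
    \left(\begin{array}{cc}1&0\\0&-1\end{array}\right)
    -\frac{1}{4z}I+Q_{IV,3}(z)+R_{IV,3}(z,\varepsilon)\right)u_{IV,3}(z),
\end{equation*}
where $Q_{IV,3}(z)=O(|z|^{-5/2})$ as $z\to-\infty$ (from \eqref{S IV2} and Lemma \ref{lem IV estimate T hat}), and where $R_{IV,3}$ satisfies $\int_{-Z_1(\varepsilon)}^{-Z_0}\|R_{IV,3}(s,\varepsilon)\|\,ds\to 0$ as $\varepsilon\to 0^+$ thanks to Lemma \ref{lem IV remainder} and the hypothesis. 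The corresponding free system (without $R_{IV,3}$) has a summable coefficient matrix near $-\infty$, so the asymptotic Levinson theorem produces two solutions $v_{IV,3}^{\pm}(z)=\frac{\exp(\mp\frac{2ic_0}{3}(-z)^{3/2})}{(-z)^{1/4}}(e_{\pm}+o(1))$ as $z\to-\infty$. Setting $v_{IV,2}^{\pm}(z):=(I+\widehat T_{IV}(z,0))v_{IV,3}^{\pm}(z)$ immediately proves Lemma \ref{lem IV free system}.

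Second, normalise by the substitution
\begin{equation*}
    u_{IV,3}(z)=\frac{1}{(-z)^{1/4}}
    \left(\begin{array}{cc}
    \exp\!\left(-\frac{2ic_0}{3}(-z)^{3/2}\right) & 0\\
    0 & \exp\!\left(\frac{2ic_0}{3}(-z)^{3/2}\right)
    \end{array}\right)
    u_{IV,4}(z),
\end{equation*}
which eliminates the leading diagonal part and turns the system into one whose coefficient matrix is $M(z,\varepsilon):=D^{-1}(z)(Q_{IV,3}(z)+R_{IV,3}(z,\varepsilon))D(z)$, where $D(z)$ is the diagonal exponential factor above. Crucially, in the elliptic case the conjugating exponentials have modulus one, so $\|M(z,\varepsilon)\|\le\|Q_{IV,3}(z)\|+\|R_{IV,3}(z,\varepsilon)\|$. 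For each sign $\pm$ define the candidate solution $u_{IV,4}^{\pm}$ of the integral equation
\begin{equation*}
    u_{IV,4}^{\pm}(z,\varepsilon)=e_{\pm}+\int_{-Z_1(\varepsilon)}^{z}M(s,\varepsilon)u_{IV,4}^{\pm}(s,\varepsilon)\,ds,
\end{equation*}
and let $v_{IV,4}^{\pm}$ denote the analogous solution of the free version (with $R_{IV,3}=0$) defined on $(-\infty,-Z_0]$ with initial condition $e_{\pm}$ at $-\infty$; by the above asymptotics $v_{IV,4}^{\pm}(z)\to e_{\pm}$ as $z\to-\infty$.

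Third, apply Lemma \ref{lem II tech 1} to these Volterra equations on $[-Z_1(\varepsilon),-Z_0]$: the kernel is majorised in norm by $\|Q_{IV,3}(s)\|+\|R_{IV,3}(s,\varepsilon)\|$, which is uniformly integrable. This gives $\sup_{z\in[-Z_1(\varepsilon),-Z_0]}\|u_{IV,4}^{\pm}(z,\varepsilon)\|=O(1)$. Subtracting the two integral equations (extending the free one to the interval $[-Z_1(\varepsilon),-Z_0]$ by writing $v_{IV,4}^{\pm}(z)-e_{\pm}=(v_{IV,4}^{\pm}(-Z_1(\varepsilon))-e_{\pm})+\int_{-Z_1(\varepsilon)}^z Q_{IV,3}v_{IV,4}^{\pm}$) and applying Lemma \ref{lem II tech 1} again yields
\begin{equation*}
    \sup_{z\in[-Z_1(\varepsilon),-Z_0]}\|u_{IV,4}^{\pm}(z,\varepsilon)-v_{IV,4}^{\pm}(z)\|
    \le C\left(\int_{-Z_1(\varepsilon)}^{-Z_0}\|R_{IV,3}(s,\varepsilon)\|ds+\|v_{IV,4}^{\pm}(-Z_1(\varepsilon))-e_{\pm}\|\right),
\end{equation*}
which tends to $0$ as $\varepsilon\to 0^+$ since $Z_1(\varepsilon)=\varepsilon^{-1/5}\to\infty$. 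Defining $\widetilde u_{IV,2}^{\pm}(z,\varepsilon)$ by unwinding the two substitutions, the value at $z=-Z_1(\varepsilon)$ is exactly $\frac{\exp(\mp\frac{2ic_0}{3}(Z_1(\varepsilon))^{3/2})}{(Z_1(\varepsilon))^{1/4}}(I+\widehat T_{IV}(-Z_1(\varepsilon),0))e_{\pm}=\frac{\exp(\mp\frac{2ic_0}{3}(Z_1(\varepsilon))^{3/2})}{(Z_1(\varepsilon))^{1/4}}(e_{\pm}+o(1))$, by Lemma \ref{lem IV estimate T hat}; for fixed $z\le-Z_0$, pointwise convergence $\widetilde u_{IV,2}^{\pm}(z,\varepsilon)\to v_{IV,2}^{\pm}(z)$ follows from the uniform convergence in the variable $z$ combined with continuity of $\widehat T_{IV}(z,0)$.

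The main obstacle, compared to the hyperbolic case, is conceptual rather than technical: one cannot obtain decay of the kernel in the Gronwall estimate by separating ``dominant'' and ``recessive'' modes, since both exponentials have modulus one. Fortunately, this very fact means the normalisation brings all the exponentials down to size $O(1)$, so the uniform integrable majorant $\|Q_{IV,3}\|+\|R_{IV,3}\|$ suffices, and no delicate cancellation (Riemann--Lebesgue or otherwise) is needed. A minor bookkeeping point is that one must verify that the free solution $v_{IV,4}^{\pm}$, originally defined on $(-\infty,-Z_0]$, is well controlled near $-Z_1(\varepsilon)$ as $\varepsilon\to 0^+$, which reduces to the convergence $v_{IV,4}^{\pm}(-Z_1(\varepsilon))\to e_{\pm}$ that is a direct consequence of the asymptotic Levinson theorem already invoked.
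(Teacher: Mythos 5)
Your proposal is correct and follows essentially the same route as the paper: Harris--Lutz at $\varepsilon=0$ to diagonalise, Levinson to define the free-system solutions, the unitary exponential normalisation, Volterra comparison via Lemma \ref{lem II tech 1}, and crucially the observation that the conjugating exponentials are unitary in the elliptic case so no dominant/recessive split is needed. The only quibble is a harmless phrasing slip — it is the off-diagonal remainder $Q_{IV,3}$ (and $R_{IV,3}$) that is integrable near $-\infty$, not the whole coefficient matrix — but your subsequent use of the Levinson theorem and the Gronwall estimate makes clear you understand this.
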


\begin{proof}
Take
\begin{equation}\label{u IV3}
    u_{IV,2}(z)=(I+\widehat T_{IV}(z,0))u_{IV,3}(z)
\end{equation}
with $\widehat T_{IV}$ given by \eqref{T IV hat}. According to the argument of the Subsection \ref{subsection Harris-Lutz} and due to the formula \eqref{Harris--Lutz R 1} this leads to the system
\begin{equation}\label{system u IV3}
    u_{IV,3}'(z)=\left(
    ic_0\sqrt{-z}
    \left(
      \begin{array}{cc}
        1 & 0 \\
        0 & -1 \\
      \end{array}
    \right)
    \right.
    \\
    \left.
    -\frac1{4z}I+Q_{IV,3}(z)+R_{IV,3}(z,\varepsilon)
    \right)u_{IV,3}(z)
\end{equation}
with
\begin{equation*}
    Q_{IV,3}(z):=(I+\widehat T_{IV}(z,0))^{-1}(S_{IV,2}(z,0)\widehat T_{IV}(z,0)-\widehat T_{IV}(z,0)\,\text{diag}\,S_{IV,2}(z,0))
\end{equation*}
and
\begin{equation*}
    R_{IV,3}(z,\varepsilon):=(I+\widehat T_{IV}(z,0))^{-1}R_{IV,2}(z,\varepsilon)(I+\widehat T_{IV}(z,0)).
\end{equation*}
From the expression \eqref{S IV2} for $S_{IV,2}$ and the estimate \eqref{IV estimate of T hat} for $T_{IV}$ we have
\begin{equation}\label{IV estimate Q IV3}
    Q_{IV,3}(z)=O\left(\frac1{|z|^{\frac52}}\right)\text{ as }z\rightarrow-\infty.
\end{equation}
Consider the free system
\begin{equation}\label{system v IV3}
    v_{IV,3}'(z)=\left(
    ic_0\sqrt{-z}
    \left(
      \begin{array}{cc}
        1 & 0 \\
        0 & -1 \\
      \end{array}
    \right)
    \right.
    \\
    \left.
    -\frac1{4z}I+Q_{IV,3}(z)
    \right)v_{IV,3}(z).
\end{equation}
Since $\int_{-\infty}^{-1}\|Q_{IV,3}(s)\|ds<\infty$, the asymptotic Levinson theorem \cite[Theorem 8.1]{Coddington-Levinson-1955} is applicable and yields the existence of two solutions $v_{IV,3}^{\pm}$ of the system \eqref{system v IV3} with asymptotics
\begin{equation}\label{v IV3 pm}
    v_{IV,3}^{\pm}(z)=\frac{\exp\left(\mp\frac{2ic_0}3(-z)^{\frac32}\right)}{(-z)^{\frac14}}(e_{\pm}+o(1))\text{ as }z\rightarrow-\infty.
\end{equation}

\begin{rem}\label{rem proof of Lemma IV free system}
Now the proof of Lemma \ref{lem IV free system} follows.
\end{rem}

\begin{proof}[Proof of Lemma \ref{lem IV free system}]
Define solutions $v_{IV,2}^{\pm}$ of the system \eqref{system v IV2} as
\begin{equation}\label{v IV2 pm def}
    v_{IV,2}^{\pm}(z):=(I+\widehat T_{IV}(z,0)) v_{IV,3}^{\pm}(z).
\end{equation}
Due to \eqref{v IV3 pm} and Lemma \ref{lem IV estimate T hat} they have asymptotics \eqref{IV asymptotics v IV2}
\begin{equation*}
    v_{IV,2}^{\pm}(z)=\frac{\exp\left(\mp\frac{2ic_0}3(-z)^{\frac32}\right)}{(-z)^{\frac14}}(e_{\pm}+o(1))\text{ as }z\rightarrow-\infty.
\end{equation*}
\end{proof}

From \eqref{IV estimate of T hat} and the integral estimate of the remainder \eqref{IV remainder estimate 1st part} we also have
\begin{equation}\label{IV estimate R IV3}
    \int_{-Z_1(\varepsilon)}^{-Z_0}\|R_{IV,3}(s,\varepsilon)\|ds\rightarrow0\text{ as }\varepsilon\rightarrow0^+.
\end{equation}
Let us make a variation of parameters transformation: denote
\begin{equation*}
    E_1(z):=\frac{2c_0}3(-z)^{\frac32}\left(
    \begin{array}{cc}
        1 & 0 \\
        0 & -1 \\
      \end{array}
    \right)
\end{equation*}
and take
\begin{equation}\label{u IV4}
    u_{IV,3}(z)=\frac{e^{-iE_1(z)}}{(-z)^{\frac14}}u_{IV,4}(z),
\end{equation}
which turns the system \eqref{system u IV3} into the system
\begin{equation}\label{system u IV4}
    u_{IV,4}'(z)=e^{iE_1(z)}(Q_{IV,3}(z)+R_{IV,3}(z,\varepsilon))e^{-iE_1(z)}u_{IV,4}(z).
\end{equation}
Consider the following particular solutions of the system \eqref{system u IV4}:
\begin{equation}\label{eq u IV4 pm}
    u_{IV,4}^{\pm}(z,\varepsilon)=e_{\pm}+\int_{-Z_1(\varepsilon)}^ze^{iE_1(s)}(Q_{IV,3}(s)+R_{IV,3}(s,\varepsilon))e^{-iE_1(s)}u_{IV,4}^{\pm}(s,\varepsilon)ds.
\end{equation}
Repeating the same manipulations with the system \eqref{system v IV3} we get
\begin{equation}\label{eq v IV4 pm}
    v_{IV,4}^{\pm}(z)=e_{\pm}+\int_{-\infty}^ze^{iE_1(s)}Q_{IV,3}(s)e^{-iE_1(s)}v_{IV,4}^{\pm}(s)ds.
\end{equation}
with
\begin{equation*}
    \widetilde v_{IV,3}^{\,\pm}(z):=\frac{e^{-iE_1(z)}}{(-z)^{\frac14}}v_{IV,4}^{\pm}(z),
\end{equation*}
instead of \eqref{u IV4}. As in the previous section we need to show that $\widetilde v_{IV,3}^{\,\pm}=v_{IV,3}^{\pm}$.
Subtracting \eqref{eq v IV4 pm} from \eqref{eq u IV4 pm} we obtain the equality
\begin{multline}\label{eq u-v IV4}
    u_{IV,4}^{\pm}(z,\varepsilon)-v_{IV,4}^{\pm}(z)=
    \int_{-Z_1(\varepsilon)}^ze^{iE_1(s)}R_{IV,3}(s,\varepsilon)e^{-iE_1(s)}u_{IV,4}^{\pm}(s,\varepsilon)ds
    \\
    -\int_{-\infty}^{-Z_1(\varepsilon)}e^{iE_1(s)}Q_{IV,3}(s)e^{-iE_1(s)}v_{IV,4}^{\pm}(s)ds
    \\
    +\int_{-Z_1(\varepsilon)}^ze^{iE_1(s)}Q_{IV,3}(s)e^{-iE_1(s)}(u_{IV,4}^{\pm}(s,\varepsilon)-v_{IV,4}^{\pm}(s))ds.
\end{multline}
Lemma \ref{lem II tech 1} yields for the equation \eqref{eq u IV4 pm}:
\begin{equation}\label{IV estimate u IV4}
    \sup_{z\in[-Z_1(\varepsilon),-Z_0]}\|u_{IV,4}^{\pm}(z,\varepsilon)\|\le\exp\left(\int_{-Z_1(\varepsilon)}^{-Z_0}\|Q_{IV,3}(s)+R_{IV,3}(s,\varepsilon)\|ds\right),
\end{equation}
for the equation \eqref{eq v IV4 pm}:
\begin{equation}\label{IV estimate v IV4}
    \sup_{z\in(-\infty,-Z_1(\varepsilon)]}\|v_{IV,4}^{\pm}(z)\|\le\exp\left(\int_{-\infty}^{-Z_1(\varepsilon)}\|Q_{IV,3}(s)\|ds\right),
\end{equation}
and finally for the equality \eqref{eq u-v IV4}:
\begin{multline}\label{IV estimate u-v IV4}
    \sup_{z\in[-Z_1(\varepsilon),-Z_0]}\|u_{IV,4}^{\pm}(z,\varepsilon)-v_{IV,4}^{\pm}(z)\|
    \le\exp\left(\int_{-Z_1(\varepsilon)}^{-Z_0}\|Q_{IV,3}(s)\|ds\right)
    \\
    \times
    \left(
    \sup_{z\in[-Z_1(\varepsilon),-Z_0]}\|u_{IV,4}^{\pm}(z,\varepsilon)\|\int_{-Z_1(\varepsilon)}^{-Z_0}\|R_{IV,3}(s,\varepsilon)\|ds
    \right.
    \\
    \left.
    +\sup_{z\in(-\infty,-Z_1(\varepsilon)]}\|v_{IV,4}^{\pm}(z)\|\int_{-\infty}^{-Z_1(\varepsilon)}\|Q_{IV,3}(s)\|ds
    \right)
    \\
    \times
    \le\exp\left(2\int_{-\infty}^{-Z_0}\|Q_{IV,3}(s)\|ds\right)
    \left(
    \exp\left(\int_{-Z_1(\varepsilon)}^{-Z_0}\|R_{IV,3}(s,\varepsilon)\|ds\right)
    \right.
    \\
    \left.
    \times
    \int_{-Z_1(\varepsilon)}^{-Z_0}\|R_{IV,3}(s,\varepsilon)\|ds
    +\int_{-\infty}^{-Z_1(\varepsilon)}\|Q_{IV,3}(s)\|ds
    \right)\rightarrow0
\end{multline}
as $\varepsilon\rightarrow0^+$ due to the estimate \eqref{IV estimate Q IV3} for $Q_{IV,3}$ and the integral estimate \eqref{IV estimate R IV3} for $R_{IV,3}$. Using \eqref{IV estimate Q IV3} with \eqref{IV estimate v IV4} to estimate the integral in the equation \eqref{eq v IV4 pm} we conclude that
\begin{equation*}
    v_{IV,4}^{\pm}(z)\rightarrow e_{\pm}\text{ as }z\rightarrow-\infty,
\end{equation*}
and $\widetilde v_{IV,3}^{\,\pm}(z)$ have the same
asymptotics as $v_{IV,3}^{\pm}(z)$. Therefore $\widetilde v_{IV,3}^{\,\pm}(z)=v_{IV,3}^{\pm}(z)$ and
\begin{equation}\label{v IV2 via vIV5}
    v_{IV,2}^{\pm}(z)=(I+\widehat T_{IV}(z,0))
    \frac{e^{-iE_1(z)}}{(-z)^{\frac14}}
    v_{IV,4}^{\pm}(z).
\end{equation}
Using \eqref{u IV4} and \eqref{u IV3} we define functions
\begin{equation*}
    \widetilde u_{IV,2}^{\pm}(z,\varepsilon):=
    (I+\widehat T_{IV}(z,0))\frac{e^{-iE_1(z)}}{(-z)^{\frac14}} u_{IV,4}^{\pm}(z,\varepsilon),
\end{equation*}
and they are solutions of the system \eqref{system u IV2}. From the convergence in \eqref{IV estimate u-v IV4} and  the equality \eqref{v IV2 via vIV5} we conclude that
\begin{equation*}
    \widetilde u_{IV,2}^{\pm}(z,\varepsilon)\rightarrow v_{IV,2}^{\pm}(z)
\end{equation*}
as $\varepsilon\rightarrow0^+$ for every fixed $z\le-Z_0$. For $z=-Z_1(\varepsilon)$ we use the estimate for $\widehat T_{IV}(z,0)$ by Lemma \ref{lem IV estimate T hat} and the fact that $u_{IV,4}^{\pm}(-Z_1(\varepsilon),\varepsilon)=e_{\pm}$ which follows from the equation \eqref{eq u IV4 pm} to get
\begin{equation*}
    \widetilde u_{IV,2}^{\pm}(-Z_1(\varepsilon),\varepsilon)=(I+o(1))\frac{e^{-iE_1(Z_1(\varepsilon))}}{(Z_1(\varepsilon))^{\frac14}}e_{\pm}
    =\frac{\exp\left(\mp\frac{2ic_0}{3}(Z_1(\varepsilon))^{\frac32}\right)}{(Z_1(\varepsilon))^{\frac14}}(e_{\pm}+o(1))
\end{equation*}
as $\varepsilon\rightarrow0^+$. This completes the proof of Lemma \ref{lem IV result 1st part}.
\end{proof}

The result for the interval $[-Z_2(\varepsilon),-Z_1(\varepsilon)]$ is given by the following lemma.

\begin{lem}\label{lem IV result 2nd part}
Let the conditions of Lemma \ref{lem IV result} hold and let $Z_1(\varepsilon)$ be  given by \eqref{Z 1}. If
\begin{equation*}
    \int_{-Z_2(\varepsilon)}^{-Z_1(\varepsilon)}\frac{\|R_{IV}(t(s,\varepsilon),\varepsilon)\|}{\sqrt{-s}}ds=o\left(\varepsilon^{\frac23}\right)
    \text{ as }\varepsilon\rightarrow0^+,
\end{equation*}
then for every sufficiently small $\varepsilon>0$ the system \eqref{system u IV2} on the interval $[-Z_2(\varepsilon),-Z_1(\varepsilon)]$ has two solutions $\widehat u_{IV,2}^{\pm}(z,\varepsilon)$ such that, as $\varepsilon\rightarrow0^+$,
\begin{equation}\label{IV answer 2nd part}
    \widehat u_{IV,2}^{\pm}(z,\varepsilon)
    =\exp
    \left(
    -\int_z^{-Z_1(\varepsilon)}
    \left(
    \pm\frac{ic_0\sqrt{-s}}{(1-\varepsilon^{\frac23}s)^{\varkappa}}-\frac1{4s(1\mp i\varepsilon^{\frac13}\sqrt{-s})}
    \right)
    ds
    \right)
    (e_{\pm}+o(1)),
\end{equation}
where the vectors $e_{\pm}$ are given by \eqref{e +-} and the remainder $o(1)$ converges uniformly with respect to $z\in[-Z_2(\varepsilon),-Z_1(\varepsilon)]$.
\end{lem}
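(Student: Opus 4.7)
The strategy mirrors Lemma \ref{lem II result 2nd part}, with simplifications afforded by the purely imaginary (elliptic) character of the leading eigenvalues. First, apply the $\varepsilon$-dependent Harris--Lutz transformation $u_{IV,2}(z)=(I+\widehat T_{IV}(z,\varepsilon))u_{IV,7}(z)$ with $\widehat T_{IV}$ as in \eqref{T IV hat}. Using the formulae of Subsection \ref{subsection Harris-Lutz} (adapted to the elliptic integration convention from $-\infty$), this produces
\begin{equation*}
    u_{IV,7}'(z)=(\Lambda_{IV,7}(z,\varepsilon)+R_{IV,7}(z,\varepsilon))u_{IV,7}(z),
\end{equation*}
where $\Lambda_{IV,7}=\Lambda_{IV,2}+\text{diag}\,S_{IV,2}=\text{diag}\{\lambda_{IV,7}^+,\lambda_{IV,7}^-\}$ with
\begin{equation*}
    \lambda_{IV,7}^{\pm}(z,\varepsilon):=\mp\frac{ic_0\sqrt{-z}}{(1-\varepsilon^{\frac23}z)^{\varkappa}}+\frac{1}{4z(1\mp i\varepsilon^{\frac13}\sqrt{-z})},
\end{equation*}
and $R_{IV,7}$ is defined exactly as in \eqref{R II7} with $II$ replaced by $IV$. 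Combining Lemma \ref{lem IV estimate T hat}, the explicit form \eqref{S IV2} of $S_{IV,2}$, and Lemma \ref{lem IV remainder} applied to the hypothesis on $R_{IV}$, one obtains the crucial integral bound
\begin{equation*}
    \int_{-Z_2(\varepsilon)}^{-Z_1(\varepsilon)}\|R_{IV,7}(s,\varepsilon)\|\,ds\to 0\text{ as }\varepsilon\to 0^+.
\end{equation*}

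Next, perform the variation of parameters $u_{IV,7}(z)=\exp\left(\int_{-Z_1(\varepsilon)}^{z}\Lambda_{IV,7}(\sigma,\varepsilon)d\sigma\right)u_{IV,8}(z)$, introduce the two solutions $u_{IV,8}^{\pm}$ via Volterra integral equations with data $u_{IV,8}^{\pm}(-Z_1(\varepsilon),\varepsilon)=e_{\pm}$, and normalise by setting $u_{IV,9}^{\pm}(z,\varepsilon):=\exp\left(-\int_{-Z_1(\varepsilon)}^{z}\lambda_{IV,7}^{\pm}(\sigma,\varepsilon)d\sigma\right)u_{IV,7}^{\pm}(z,\varepsilon)$. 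The resulting integral equations for $u_{IV,9}^{\pm}$ contain exponential kernel factors of the form $\exp\left(\int_s^z(\lambda_{IV,7}^+-\lambda_{IV,7}^-)\right)$; a direct computation gives
\begin{equation*}
    \lambda_{IV,7}^+(z,\varepsilon)-\lambda_{IV,7}^-(z,\varepsilon)=-\frac{2ic_0\sqrt{-z}}{(1-\varepsilon^{\frac23}z)^{\varkappa}}+\frac{i\varepsilon^{\frac13}\sqrt{-z}}{2z(1-\varepsilon^{\frac23}z)},
\end{equation*}
which is purely imaginary. Consequently these exponentials have modulus exactly one on the whole interval, uniformly in $\varepsilon$. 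This is the elliptic analogue of the estimate $\lambda_{II,7}^+-\lambda_{II,7}^-<0$ exploited in Lemma \ref{lem II result 2nd part}.

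Lemma \ref{lem II tech 1} now directly yields
\begin{equation*}
    \sup_{z\in[-Z_2(\varepsilon),-Z_1(\varepsilon)]}\|u_{IV,9}^{\pm}(z,\varepsilon)-e_{\pm}\|\le \exp\left(\int_{-Z_2(\varepsilon)}^{-Z_1(\varepsilon)}\|R_{IV,7}\|\right)\int_{-Z_2(\varepsilon)}^{-Z_1(\varepsilon)}\|R_{IV,7}\|\to 0,
\end{equation*}
and undoing the normalisation and the Harris--Lutz substitution produces solutions
\begin{equation*}
    \widehat u_{IV,2}^{\pm}(z,\varepsilon):=\exp\left(\int_{-Z_1(\varepsilon)}^{z}\lambda_{IV,7}^{\pm}(\sigma,\varepsilon)d\sigma\right)(I+\widehat T_{IV}(z,\varepsilon))u_{IV,9}^{\pm}(z,\varepsilon)
\end{equation*}
of the system \eqref{system u IV2} with the stated uniform asymptotics \eqref{IV answer 2nd part} (after rewriting the integration interval as $[z,-Z_1(\varepsilon)]$).

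The principal obstacle is verifying that $\int\|R_{IV,7}\|\to 0$. One must control not only $R_{IV,2}$ (handled by Lemma \ref{lem IV remainder}) but also the Harris--Lutz commutator $S_{IV,2}\widehat T_{IV}-\widehat T_{IV}\,\text{diag}\,S_{IV,2}$ on the interval $[-Z_2(\varepsilon),-Z_1(\varepsilon)]$, whose left endpoint tends to $-\infty$ and right endpoint to $0^-$ as $\varepsilon\to 0^+$. The estimate $\|\widehat T_{IV}(z,\varepsilon)\|\le c_{IV}/|z|^{3/2}$ from Lemma \ref{lem IV estimate T hat}, combined with $\|S_{IV,2}(z,\varepsilon)\|=O(1/|z|)$ uniformly in $\varepsilon$ on the region, makes the commutator integrand of order $1/|z|^{5/2}$, so its integral over the relevant subinterval is $O((Z_1(\varepsilon))^{-3/2})\to 0$ by the choice \eqref{Z 1}.
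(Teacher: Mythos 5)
Your plan mirrors the paper's own proof closely: the $\varepsilon$-dependent Harris--Lutz transformation with $\widehat T_{IV}(z,\varepsilon)$, variation of parameters normalising at $-Z_1(\varepsilon)$, the Volterra/Gronwall estimate of Lemma \ref{lem II tech 1}, and the observation that $\lambda_{IV,7}^+-\lambda_{IV,7}^-$ is purely imaginary so the kernel exponentials have modulus one uniformly in $\varepsilon$ are all the right ingredients, and the control on $\int\|R_{IV,7}\|$ via Lemma \ref{lem IV estimate T hat}, Lemma \ref{lem IV remainder} and the $O(|z|^{-5/2})$ bound on the commutator term is sound. The choice to anchor the Volterra equations at $-Z_1(\varepsilon)$ rather than $-Z_2(\varepsilon)$ (as the paper does) is harmless in the elliptic case, since the kernel modulus does not distinguish the two endpoints.

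However, there is a sign error in the diagonalised eigenvalues that propagates into the final formula. From $\Lambda_{IV,7}=\Lambda_{IV,2}+\mathrm{diag}\,S_{IV,2}$, using $(\Lambda_{IV,2})_{11}=\frac{ic_0\sqrt{-z}}{(1-\varepsilon^{2/3}z)^{\varkappa}}$ from \eqref{lambda IV2}, \eqref{Lambda IV2} and $(\mathrm{diag}\,S_{IV,2})_{11}=-\frac{1+i\varepsilon^{1/3}\sqrt{-z}}{4z(1-\varepsilon^{2/3}z)}=-\frac1{4z(1-i\varepsilon^{1/3}\sqrt{-z})}$ from \eqref{S IV2}, one gets
\begin{equation*}
    \lambda_{IV,7}^{\pm}(z,\varepsilon)=\pm\frac{ic_0\sqrt{-z}}{(1-\varepsilon^{\frac23}z)^{\varkappa}}-\frac1{4z\bigl(1\mp i\varepsilon^{\frac13}\sqrt{-z}\bigr)},
\end{equation*}
whereas you wrote the negative of this. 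Since you then form $\widehat u_{IV,2}^{\pm}$ with $\exp\bigl(\int_{-Z_1(\varepsilon)}^{z}\lambda_{IV,7}^{\pm}\bigr)=\exp\bigl(-\int_{z}^{-Z_1(\varepsilon)}\lambda_{IV,7}^{\pm}\bigr)$, the flipped $\lambda_{IV,7}^{\pm}$ gives an exponential $\exp\bigl(+\int_z^{-Z_1(\varepsilon)}\lambda_{IV,7}^{\pm}\bigr)$ rather than the required $\exp\bigl(-\int_z^{-Z_1(\varepsilon)}\lambda_{IV,7}^{\pm}\bigr)$ of \eqref{IV answer 2nd part}. The surrounding argument is unaffected (the modulus-one property used for the Gronwall bound is symmetric under this sign flip), so the fix is purely to correct the computation of $\mathrm{diag}\,S_{IV,2}$; with the correct signs the rest of your proof goes through and produces exactly the paper's result.
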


\begin{proof}
Let us make the Harris--Lutz transformation
\begin{equation}\label{u IV7}
    u_{IV,2}(z)=(I+\widehat T_{IV}(z,\varepsilon))u_{IV,7}(z)
\end{equation}
where $\widehat T_{IV}$ is given by formula \eqref{T IV hat}. The substitution gives
\begin{equation}\label{system u IV7}
    u_{IV,7}'(z)=(\Lambda_{IV,7}(z,\varepsilon)+R_{IV,7}(z,\varepsilon))u_{IV,7}(z),
\end{equation}
where
\begin{equation}\label{Lambda IV7}
    \Lambda_{IV,7}(z,\varepsilon):=i\left(\frac{c_0\sqrt{-z}}{(1-\varepsilon^{\frac32}z)^{\varkappa}}+\frac{\varepsilon^{\frac13}}{4\sqrt{-z}(1-\varepsilon^{\frac23}z)}\right)
    \left(
      \begin{array}{cc}
        1 & 0 \\
        0 & -1 \\
      \end{array}
    \right)
    -
    \frac1{4z(1-\varepsilon^{\frac23}z)}I,
\end{equation}
\begin{equation*}
    \Lambda_{IV,7}(z,\varepsilon)=\Lambda_{IV,2}(z,\varepsilon)+\text{diag}\,S_{IV,2}(z,\varepsilon)=
    \left(
      \begin{array}{cc}
        \lambda_{IV,7}^+(z,\varepsilon) & 0 \\
        0 & \lambda_{IV,7}^-(z,\varepsilon) \\
      \end{array}
    \right),
\end{equation*}
\begin{equation*}
    \lambda_{IV,7}^{\pm}(z,\varepsilon):=
    \pm\frac{ic_0\sqrt{-z}}{(1-\varepsilon^{\frac23}z)^{\varkappa}}-\frac1{4z(1\mp i\varepsilon^{\frac13}\sqrt{-z})}
\end{equation*}
and, from \eqref{Harris--Lutz R 1},
\begin{multline}\label{R IV7}
    R_{IV,7}(z,\varepsilon):=(I+\widehat T_{IV}(z,\varepsilon))^{-1}R_{IV,2}(z,\varepsilon)(I+\widehat T_{IV}(z,\varepsilon))
    \\
    +(I+\widehat T_{IV}(z,\varepsilon))^{-1}
    (S_{IV,2}(z,\varepsilon)\widehat T_{IV}(z,\varepsilon)-\widehat T_{IV}(z,\varepsilon)\,\text{diag}\,S_{IV,2}(z,\varepsilon))
\end{multline}
with $S_{IV,2}$ given by \eqref{S IV2}. From Lemma \ref{lem IV remainder} and the estimate of $T_{IV}(z,\varepsilon)$ by Lemma \ref{lem IV estimate T hat} we get:
\begin{equation}\label{IV estimate R IV7}
    \int_{-Z_2(\varepsilon)}^{-Z_1(\varepsilon)}\|R_{IV,7}(s,\varepsilon)\|ds\rightarrow0\text{ as }\varepsilon\rightarrow0^+.
\end{equation}
Variation of parameters
\begin{equation}\label{u IV8}
    u_{IV,7}(z)=\exp\left(-\int_z^{-Z_1(\varepsilon)}\Lambda_{IV,7}(\sigma,\varepsilon)d\sigma\right)u_{IV,8}(z),
\end{equation}
leads to the system
\begin{multline}\label{system u IV8}
    u_{IV,8}'(z)=\exp\left(\int_z^{-Z_1(\varepsilon)}\Lambda_{IV,7}(\sigma,\varepsilon)d\sigma\right)
    \\
    \times
    R_{IV,7}(z,\varepsilon)
    \exp\left(-\int_z^{-Z_1(\varepsilon)}\Lambda_{IV,7}(\sigma,\varepsilon)d\sigma\right)u_{IV,8}(z).
\end{multline}
Let us denote
\begin{equation*}
    E_2(z,\varepsilon):=\int_z^{-Z_1(\varepsilon)}
    \left(\frac{c_0\sqrt{-\sigma}}{(1-\varepsilon^{\frac32}\sigma)^{\varkappa}}+\frac{\varepsilon^{\frac13}}{4\sqrt{-\sigma}(1-\varepsilon^{\frac23}\sigma)}\right)
    \left(
      \begin{array}{cc}
        1 & 0 \\
        0 & -1 \\
      \end{array}
    \right)d\sigma,
\end{equation*}
and then
\begin{equation*}
    \exp\left(-\int_z^{-Z_1(\varepsilon)}\Lambda_{IV,7}(\sigma,\varepsilon)d\sigma\right)
    =e^{-iE_2(z,\varepsilon)}\exp\left(\int_z^{-Z_1(\varepsilon)}\frac{d\sigma}{4\sigma(1-\varepsilon^{\frac23}\sigma)}\right),
\end{equation*}
so that the equation reads
\begin{equation*}
    u_{IV,8}'(z)=e^{iE_2(z,\varepsilon)}R_{IV,7}(z,\varepsilon)e^{-iE_2(z,\varepsilon)}u_{IV,8}(z).
\end{equation*}
Let us now introduce two solutions $u_{IV,8}^{\pm}$ of this system which satisfy the following equations:
\begin{equation*}
    u_{IV,8}^{\pm}(z,\varepsilon)=e_{\pm}+\int_{-Z_2(\varepsilon)}^{z}e^{iE_2(s,\varepsilon)}R_{IV,7}(s,\varepsilon)e^{-iE_2(s,\varepsilon)}u_{IV,8}^{\pm}(s,\varepsilon)ds.
\end{equation*}
They can be rewritten as
\begin{multline}\label{IV eq 7}
    u_{IV,8}^{\pm}(z,\varepsilon)-e_{\pm}=\int_{-Z_2(\varepsilon)}^{z}e^{iE_2(s,\varepsilon)}R_{IV,7}(s,\varepsilon)e^{-iE_2(s,\varepsilon)}e_{\pm}ds
    \\
    +\int_{-Z_2(\varepsilon)}^{z}e^{iE_2(s,\varepsilon)}R_{IV,7}(s,\varepsilon)e^{-iE_2(s,\varepsilon)}(u_{IV,8}^{\pm}(s,\varepsilon)-e_{\pm})ds.
\end{multline}
For these equalities Lemma \ref{lem II tech 1} yields:
\begin{multline}\label{IV convergence u IV8}
    \sup_{z\in[-Z_2(\varepsilon),-Z_1(\varepsilon)]}\left\|u_{IV,8}^{\pm}(z,\varepsilon)-e_{\pm}\right\|
    \\
    \le
    \exp\left(\int_{-Z_2(\varepsilon)}^{-Z_1(\varepsilon)}\|R_{IV,7}(s,\varepsilon)\|ds\right)
    \int_{-Z_2(\varepsilon)}^{-Z_1(\varepsilon)}\|R_{IV,7}(s,\varepsilon)\|ds\rightarrow0
\end{multline}
as $\varepsilon\rightarrow0^+$. Now using the relations \eqref{u IV7} and \eqref{u IV8} we define
\begin{equation*}
    \widehat u_{IV,2}^{\pm}(z,\varepsilon):=(I+\widehat T_{IV}(z,\varepsilon))
    \exp\left(-\int_z^{-Z_1(\varepsilon)}\Lambda^{\pm}_{IV,7}(\sigma,\varepsilon)d\sigma\right)u_{IV,8}^{\pm}(z,\varepsilon)
\end{equation*}
which are solutions of the system \eqref{u IV2}. Lemma \ref{lem IV estimate T hat}, the expression \eqref{Lambda IV7} and convergence in \eqref{IV convergence u IV8} imply that these solutions have asymptotics \eqref{IV answer 2nd part}. This completes the proof.
\end{proof}

Combining the results of Lemmas \ref{lem IV result 1st part} and \ref{lem IV result 2nd part} we now come to the proof of Lemma \ref{lem IV result}.

\begin{proof}[Proof of Lemma \ref{lem IV result}]
First let us rewrite the formula \eqref{IV answer 1st part} for the asymptotics from Lemma \ref{lem IV result 1st part} using that
\begin{equation*}
    \frac{\exp\left(\mp\frac{2ic_0}3(Z_1(\varepsilon))^{\frac32}\right)}{(Z_1(\varepsilon))^{\frac14}}
    =a^{\pm}_{IV}\exp\left(-\int_{-Z_1(\varepsilon)}^{-Z_0}
    \lambda_{IV,7}^{\pm}(s,\varepsilon)ds\right)(1+o(1))
\end{equation*}
as $\varepsilon\rightarrow0^+$, where $a^{\pm}_{IV}:=\frac{\exp\left(\mp\frac{2ic_0}3Z_0^{\frac32}\right)}{Z_0^{\frac14}}$, which is true because
\begin{equation*}
    \left|\lambda_{IV,7}^{\pm}(s,\varepsilon)-
    \left(
    \pm ic_0\sqrt{-s}
    -\frac1{4s}
    \right)
    \right|
    <
    c_{28}
    \left(
    \varepsilon^{\frac23}|s|^{\frac32}+\frac{\varepsilon^{\frac13}}{\sqrt{|s|}},
    \right)
\end{equation*}
with some $c_{28}>0$ and
\begin{equation*}
    \int_{Z_0}^{Z_1(\varepsilon)}\left(\varepsilon^{\frac23}s^{\frac32}+\frac{\varepsilon^{\frac13}}{\sqrt s},
    \right)ds\rightarrow0\text{ as }\varepsilon\rightarrow0^+
\end{equation*}
due to the choice of $Z_1(\varepsilon)$ in \eqref{Z 1}.
Let us define for $z\in[-Z_2(\varepsilon),-Z_1(\varepsilon)]$
\begin{equation}\label{u IV2-}
    u_{IV,2}^{\pm}(z,\varepsilon):=a_{IV}^{\pm}\exp
    \left(-
    \int_{-Z_1(\varepsilon)}^{-Z_0}
    \lambda_{IV,7}^{\pm}(s,\varepsilon)ds
    \right)
    \widehat u_{IV,2}^{\pm}(z,\varepsilon).
\end{equation}
Continuations of the solutions $\widehat u_{IV,2}^{\pm}(z,\varepsilon)$ to the interval $[-Z_1(\varepsilon),-Z_0]$ have decompositions with some coefficients in terms of the basis of solutions $\widetilde u_{IV,2}^{\pm}(z,\varepsilon)$, and at the point $-Z_1(\varepsilon)$ one has:
\begin{equation*}
    \begin{array}{rl}
    \widehat u_{IV,2}^{\pm}(-Z_1(\varepsilon),\varepsilon)=&e_{\pm}+o(1),
    \\
    \frac{\widetilde u_{IV,2}^{\pm}(-Z_1(\varepsilon),\varepsilon))}{a_{IV}^{\pm}}
    \exp
    \left(
    \int_{-Z_1(\varepsilon)}^{-Z_0}\lambda_{IV,7}^{\pm}(s,\varepsilon)ds
    \right)
    =&e_{\pm}+o(1).
    \end{array}
\end{equation*}
By Lemma \ref{lem II tech 2} we conclude that for $z\in[-Z_1(\varepsilon),-Z_0]$
\begin{multline*}
    \widehat u_{IV,2}^{\pm}(z,\varepsilon)=(1+o(1))
    \frac{\widetilde u_{IV,2}^{\pm}(z,\varepsilon))}{a_{IV}^{\pm}}
    \exp
    \left(
    \int_{-Z_1(\varepsilon)}^{-Z_0}\lambda_{IV,7}^{\pm}(s,\varepsilon)ds
    \right)
    \\
    +o
    \left(
    \widetilde u_{IV,2}^{\mp}(z,\varepsilon))
    \exp
    \left(
    \int_{-Z_1(\varepsilon)}^{-Z_0}\lambda_{IV,7}^{\mp}(s,\varepsilon)ds
    \right)
    \right)
\end{multline*}
and, since $\lambda_{IV,7}^+(s,\varepsilon)=\overline{\lambda_{IV,7}^+(s,\varepsilon)}$, by \eqref{u IV2-} we have
\begin{equation*}
    u_{IV,2}^{\pm}(z,\varepsilon)=(1+o(1))
    \widetilde u_{IV,2}^{\pm}(z,\varepsilon)
    +o
    \left(
    \widetilde u_{IV,2}^{\mp}(z,\varepsilon))
    \right).
\end{equation*}
For every fixed $z\le-Z_0$ this means that
\begin{equation*}
    u_{IV,2}^{\pm}(z,\varepsilon)\rightarrow v_{IV,2}^{\pm}(z)\text{ as }\varepsilon\rightarrow0^+.
\end{equation*}
Asymptotics of $u_{IV,2}^{\pm}(z,\varepsilon)$ at $z=-Z_2(\varepsilon)$ are due to Lemma \ref{lem IV result 2nd part}.
\end{proof}

\section{Neighbourhood of the turning point (region $III$)}\label{section III}
Consider the system
\begin{equation}\label{system u III}
    \varepsilon u_{III}'(t)=
    \left(
    \left(
    \begin{array}{cc}
    \frac{\beta}{t^{\gamma}} & -\frac12
    \\
    \frac12 & -\frac{\beta}{t^{\gamma}}
    \end{array}
    \right)
    +R_{III}(t,\varepsilon)
    \right)
    u_{III}(t).
\end{equation}
The main term of its coefficient matrix is analytic near the turning point $t_0=(2\beta)^{\frac1{\gamma}}$ and degenerates as a matrix at $t_0$. Analytic theory for the case $R_{III}\equiv0$ is well known (see, for example, \cite[Chapter VIII]{Wasow-1965}). It suggests the transformation
\begin{equation}\label{u III1}
    u_{III}(t)=T_{III}(t)u_{III,1}(t)
\end{equation}
with
\begin{equation}\label{T III}
T_{III}(t):=
\left(
  \begin{array}{cc}
    1 & \frac{\beta}{t^{\gamma}}+\frac12 \\
    1 & -\frac{\beta}{t^{\gamma}}-\frac12 \\
  \end{array}
\right)
\end{equation}
which makes the structure of the main term simpler:
\begin{equation}\label{system u III1}
    \varepsilon u_{III,1}'(t)
    =\left(
\left(
  \begin{array}{cc}
    0 & 1 \\
    \frac{\beta^2}{t^{2\gamma}}-\frac14 & 0 \\
  \end{array}
\right)
+R_{III,1}(t,\varepsilon)
\right)
u_{III,1}(t),
\end{equation}
where
\begin{equation}\label{R III1}
    R_{III,1}(t,\varepsilon):=-\frac{\varepsilon\beta\gamma}{t^{1+\gamma}\left(\frac{\beta}{t^{\gamma}}+\frac12\right)}
\left(
  \begin{array}{cc}
    0 & 0 \\
    0 & 1 \\
  \end{array}
\right)
+
T_{III}^{-1}(t)R_{III}(t,\varepsilon)T_{III}(t).
\end{equation}
The problem comes from the remainder $R_{III,1}$ which is by no means analytic (we know that it wildly oscillates), but is small in the integral sense. Analytic theory would proceed with making the change of the variable $\tau(t)\sim\text{const}\cdot(t-t_0)$ as $t\rightarrow t_0$ and considering
\begin{equation*}
    u_{a}(\tau)=P(\tau,\varepsilon)u_{III,1}(t(\tau))
\end{equation*}
with the matrix-valued function $P$ analytic in both variables such that the system \eqref{system u III1} is transformed into
\begin{equation*}
    \varepsilon u_{a}'(\tau)=
    \left(
       \begin{array}{cc}
         0 & 1 \\
         \tau & 0 \\
       \end{array}
     \right)
    u_{a}(\tau).
\end{equation*}
Using the variable
\begin{equation*}
    \zeta=\frac{\tau}{\varepsilon^{\frac23}}
\end{equation*}
and the function $u_{a,1}$ defined by the equality
\begin{equation*}
    u_a(\tau)=
    \left(
      \begin{array}{cc}
        1 & 0 \\
        0 & \varepsilon^{\frac13} \\
      \end{array}
    \right)
    u_{a,1}\left(\frac{\tau}{\varepsilon^{\frac23}}\right)
\end{equation*}
this system can be further transformed into
\begin{equation*}
    u_{a,1}'(\zeta)=
    \left(
       \begin{array}{cc}
         0 & 1 \\
         \zeta & 0 \\
       \end{array}
     \right)
    u_{a,1}(\zeta).
\end{equation*}
Solutions of the latter are expressed in terms of Airy functions: the system has the following matrix solution:
\begin{equation*}
    U_{a,1}(\zeta)=
    \left(
      \begin{array}{cc}
        \Ai(\zeta) & \Bi(\zeta) \\
        \Ai'(\zeta) & \Bi'(\zeta) \\
      \end{array}
    \right).
\end{equation*}
In our case, due to presence of the remainder $R_{III,1}$, such transformations are not possible. Instead, in this section we show that in the scale of the variable $z$ which was used in the regions $II$ and $IV$, on any fixed interval $[-z_0,z_0]$, the presence of the remainder $R_{III}$ does not affect the asymptotics of solutions of the system \eqref{system u III}. In terms of the variable $t$ the interval $[-Z_0,Z_0]$ of the region $III$ corresponds to the interval $[t_{II-III}(\varepsilon),t_{III-IV}(\varepsilon)]$ which shrinks to the turning point $t_0$. Analytic method of \cite{Wasow-1965} gives the result for a fixed neighbourhood of the turning point in the scale of the variable $t$, the result which we do not have here. However, we already know what happens in the regions $II$ and $IV$.

Let consider the variable $z=\frac{1-\frac{t^{2\gamma}}{4\beta^2}}{\varepsilon^{\frac23}}$ and take
\begin{equation}\label{u III2}
    u_{III,1}(t)=
\left(
  \begin{array}{cc}
    2 & 0 \\
    0 & -\varepsilon^{\frac13} \\
  \end{array}
\right)
u_{III,2}(z(t,\varepsilon)).
\end{equation}
Substituting this to \eqref{system u III1} and simplifying the result we have:
\begin{multline*}
    u_{III,2}'(z)=\frac{c_0}{(1-\varepsilon^{\frac23}z)^{1-\frac1{2\gamma}}}
    \Bigg(
    \left(
      \begin{array}{cc}
        0 & 1 \\
        \frac{z}{1-\varepsilon^{\frac23}z} & 0 \\
      \end{array}
    \right)
    \\
    -\frac1{\varepsilon^{\frac23}}
    \left(
      \begin{array}{cc}
        \varepsilon^{\frac13} & 0 \\
        0 & -2 \\
      \end{array}
    \right)
    R_{III,1}(t(z,\varepsilon),\varepsilon)
    \left(
      \begin{array}{cc}
        2 & 0 \\
        0 & -\varepsilon^{\frac13} \\
      \end{array}
    \right)
    \Bigg)
    u_{III,2}(z)
\end{multline*}
with $c_0$ given by \eqref{c_0 kappa}. This system can be written in the form
\begin{equation}\label{system u III2}
    u_{III,2}'(z)
    =\left(
    c_0
    \left(
      \begin{array}{cc}
        0 & 1 \\
        z & 0 \\
      \end{array}
    \right)
    +
    R_{III,2}(z,\varepsilon)
    \right)
    u_{III,2}(z),
\end{equation}
where
\begin{multline}\label{R III2}
    R_{III,2}(z,\varepsilon):=c_0\left(\frac1{(1-\varepsilon^{\frac23}z)^{1-\frac1{2\gamma}}}
    \left(
      \begin{array}{cc}
        0 & 1 \\
        \frac{z}{1-\varepsilon^{\frac23}z} & 0 \\
      \end{array}
    \right)
    -
    \left(
      \begin{array}{cc}
        0 & 1 \\
        z & 0 \\
      \end{array}
    \right)
    \right)
    \\
    -
    \frac{c_0}{\varepsilon^{\frac23}(1-\varepsilon^{\frac23}z)^{1-\frac1{2\gamma}}}
    \left(
      \begin{array}{cc}
        \varepsilon^{\frac13} & 0 \\
        0 & -2 \\
      \end{array}
    \right)
    R_{III,1}(t(z,\varepsilon),\varepsilon)
    \left(
      \begin{array}{cc}
        2 & 0 \\
        0 & -\varepsilon^{\frac13} \\
      \end{array}
    \right).
\end{multline}
The following lemma is the main result for the region $III$.

\begin{lem}\label{lem III main}
Let $c_0,z_0>0$ and $R_{III,2}(z,\varepsilon)$ is given by \eqref{R III2}, \eqref{R III1} and \eqref{T III}. If
\begin{equation*}
    \int_{-z_0}^{z_0}\|R_{III}(t(s,\varepsilon),\varepsilon)\|ds=o\left(\varepsilon^{\frac23}\right)
    \text{ as }\varepsilon\rightarrow0^+,
\end{equation*}
then the system \eqref{system u III2} has the matrix solution $U_{III,2}(z,\varepsilon)$ such that for every $z\in[-z_0,z_0]$
\begin{equation}\label{U III2 asymptotics}
    U_{III,2}(z,\varepsilon)\rightarrow
    \left(
      \begin{array}{cc}
        \Ai(c_0^{\frac23}z) & \Bi(c_0^{\frac23}z) \\
        c_0^{-\frac13}\Ai\,'(c_0^{\frac23}z) & c_0^{-\frac13}\Bi\,'(c_0^{\frac23}z) \\
      \end{array}
    \right)
    \text{ as }\varepsilon\rightarrow0^+.
\end{equation}
\end{lem}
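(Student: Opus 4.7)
The free system $v' = c_0\begin{pmatrix} 0 & 1 \\ z & 0 \end{pmatrix}v$ is equivalent to the scalar equation $v_1'' = c_0^2 z v_1$, which under the rescaling $\zeta = c_0^{2/3}z$ becomes the Airy equation $v_1'' = \zeta v_1$. This identifies
\[
U_0(z) := \begin{pmatrix} \Ai(c_0^{2/3}z) & \Bi(c_0^{2/3}z) \\ c_0^{-1/3}\Ai'(c_0^{2/3}z) & c_0^{-1/3}\Bi'(c_0^{2/3}z) \end{pmatrix}
\]
as a matrix solution of the free system; its determinant equals $c_0^{-1/3}/\pi$ (the Airy Wronskian), so both $U_0$ and $U_0^{-1}$ are uniformly bounded on $[-z_0,z_0]$ by a constant $M=M(c_0,z_0)$. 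The plan is to define $U_{III,2}(\cdot,\varepsilon)$ as the matrix solution of \eqref{system u III2} satisfying $U_{III,2}(-z_0,\varepsilon)=U_0(-z_0)$, and to show $U_{III,2}(z,\varepsilon)\to U_0(z)$ uniformly in $z\in[-z_0,z_0]$ as $\varepsilon\to 0^+$. Variation of constants with base $U_0$ gives the integral equation
\[
U_{III,2}(z,\varepsilon) = U_0(z) + \int_{-z_0}^{z} U_0(z)U_0^{-1}(s)R_{III,2}(s,\varepsilon)U_{III,2}(s,\varepsilon)\,ds.
\]

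Everything then reduces to establishing $\int_{-z_0}^{z_0}\|R_{III,2}(s,\varepsilon)\|\,ds\to 0$, which I regard as the crux of the proof. Splitting $R_{III,2}$ according to \eqref{R III2}, the first, purely analytic summand $c_0\bigl((1-\varepsilon^{2/3}z)^{-(1-1/(2\gamma))}(\cdots) - (\cdots)\bigr)$ is $O(\varepsilon^{2/3})$ pointwise on $[-z_0,z_0]$ by Taylor expansion in $\varepsilon^{2/3}$, and therefore contributes $O(\varepsilon^{2/3})$ to the integral. The second summand carries the singular prefactor $\varepsilon^{-2/3}$; by \eqref{R III1}, $R_{III,1}$ itself decomposes as an $O(\varepsilon)$ analytic piece (which after multiplication by $\varepsilon^{-2/3}$ contributes $O(\varepsilon^{1/3})$) plus the genuine remainder $T_{III}^{-1}R_{III}T_{III}$. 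On a fixed $t$-neighbourhood of $t_0$ the matrices $T_{III}$ and $T_{III}^{-1}$ are bounded, and the diagonal matrices flanking $R_{III,1}$ in \eqref{R III2} have uniformly bounded norms, so this last piece is dominated by a constant times $\varepsilon^{-2/3}\|R_{III}(t(s,\varepsilon),\varepsilon)\|$. The hypothesis $\int_{-z_0}^{z_0}\|R_{III}(t(s,\varepsilon),\varepsilon)\|\,ds=o(\varepsilon^{2/3})$ is precisely what compensates the singular factor $\varepsilon^{-2/3}$, yielding $o(1)$.

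Once this integral bound is in hand, standard Gronwall estimates close the argument. Applying Gronwall directly to \eqref{system u III2} first gives a uniform bound $\|U_{III,2}(z,\varepsilon)\|\le C$ on $[-z_0,z_0]$ for all sufficiently small $\varepsilon$, since the coefficient matrix has $L^1$-norm bounded uniformly in $\varepsilon$; applying Gronwall to the difference $U_{III,2}-U_0$ in the integral equation above then yields
\[
\sup_{z\in[-z_0,z_0]}\|U_{III,2}(z,\varepsilon)-U_0(z)\| \le M C\int_{-z_0}^{z_0}\|R_{III,2}(s,\varepsilon)\|\,ds \longrightarrow 0.
\]
The main obstacle is the bookkeeping in the integral estimate: the change of variable from $t$ to $z$ produces the singular factor $\varepsilon^{-2/3}$, and the entire point of the hypothesis on $R_{III}$ is to be sharp enough to absorb it. Once this estimate is established, convergence to the Airy matrix is a routine consequence of variation of constants on a compact interval, in contrast with the regions $II$ and $IV$ where one had to fight non-compactness and oscillations simultaneously.
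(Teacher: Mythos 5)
Your proposal is correct and follows essentially the same route as the paper: identify the Airy matrix $V_{III,2}(z)$ as the fundamental solution of the free system, bound $\int_{-z_0}^{z_0}\|R_{III,2}(s,\varepsilon)\|\,ds\to 0$ by splitting off the $O(\varepsilon^{2/3})$ Taylor error and the $O(\varepsilon)$ analytic piece of $R_{III,1}$ and absorbing the genuine remainder with the hypothesis $\int\|R_{III}\|\,ds=o(\varepsilon^{2/3})$, then run Volterra/Gronwall on a compact interval. The paper packages the last step as a stand-alone technical lemma (Lemma~\ref{lem III tech}, stated for general $n\times n$ systems with $L_1$-convergent coefficient matrices), whereas you inline that argument; and the precise factor in your final display should include the exponential Gronwall correction and a second power of the bound $M$ on $\|U_0^{\pm1}\|$, but since the relevant integral tends to zero this is a cosmetic point.
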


First let us estimate the remainder in the system \eqref{system u III2}.

\begin{lem}\label{lem III remainder estimate}
Under the conditions of Lemma \ref{lem III main}, if
\begin{equation}\label{III eq integral convergence of the remainder}
    \int_{-z_0}^{z_0}\|R_{III}(t(s,\varepsilon),\varepsilon)\|ds=o\left(\varepsilon^{\frac23}\right),
    \text{ then }
    \int_{-z_0}^{z_0}\|R_{III,2}(s,\varepsilon)\|ds\rightarrow0
\end{equation}
as $\varepsilon\rightarrow0^+$.
\end{lem}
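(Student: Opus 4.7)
The plan is to decompose $R_{III,2}(z,\varepsilon)$ as given by the definition \eqref{R III2} into three pieces and estimate each piece separately, exploiting that $t(z,\varepsilon) \to t_0$ as $\varepsilon \to 0^+$ uniformly for $z \in [-z_0,z_0]$.

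First I would handle the purely analytic discrepancy on the first line of \eqref{R III2}. For $z \in [-z_0, z_0]$ and $\varepsilon$ small, the quantity $\varepsilon^{2/3} z$ lies in a small neighbourhood of zero, so a first-order Taylor expansion of $(1-\varepsilon^{2/3} z)^{1/(2\gamma)-1}$ and of $z/(1-\varepsilon^{2/3} z)$ gives that the difference of the two matrices is $O(\varepsilon^{2/3})$ in norm, uniformly in $z \in [-z_0, z_0]$. Its integral over $[-z_0,z_0]$ is therefore $O(\varepsilon^{2/3}) = o(1)$.

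Next, I would split $R_{III,1}$ according to \eqref{R III1} into the explicit term $-\frac{\varepsilon \beta \gamma}{t^{1+\gamma}(\beta/t^{\gamma}+1/2)}\,\text{diag}(0,1)$ and the conjugated remainder $T_{III}^{-1} R_{III} T_{III}$. For the explicit part, a direct multiplication shows
\begin{equation*}
   \left(\begin{array}{cc} \varepsilon^{1/3} & 0 \\ 0 & -2 \end{array}\right)
   \left(\begin{array}{cc} 0 & 0 \\ 0 & 1 \end{array}\right)
   \left(\begin{array}{cc} 2 & 0 \\ 0 & -\varepsilon^{1/3} \end{array}\right)
   = \left(\begin{array}{cc} 0 & 0 \\ 0 & 2\varepsilon^{1/3} \end{array}\right),
\end{equation*}
so after the prefactor $c_0/(\varepsilon^{2/3}(1-\varepsilon^{2/3}z)^{1-1/(2\gamma)})$ and the coefficient of order $\varepsilon$ from the explicit piece, the contribution is of norm $O(\varepsilon^{2/3})$ uniformly in $z \in [-z_0,z_0]$; its integral is again $o(1)$.

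The main step is the third piece, involving $T_{III}^{-1}(t(z,\varepsilon))\,R_{III}(t(z,\varepsilon),\varepsilon)\,T_{III}(t(z,\varepsilon))$. Since $t(z,\varepsilon) \to t_0$ uniformly on $[-z_0,z_0]$ and $T_{III}$ is analytic and non-degenerate at $t_0$, the matrices $T_{III}$ and $T_{III}^{-1}$ are uniformly bounded on this set for $\varepsilon$ small. The diagonal factors with entries $\varepsilon^{1/3}$ and $-2$ (respectively $2$ and $-\varepsilon^{1/3}$) are uniformly bounded. Therefore there exists a constant $c>0$ such that for all sufficiently small $\varepsilon > 0$ and all $z \in [-z_0,z_0]$,
\begin{equation*}
   \|R_{III,2}(z,\varepsilon)\| \le o(1) + \frac{c}{\varepsilon^{2/3}} \|R_{III}(t(z,\varepsilon),\varepsilon)\|,
\end{equation*}
where the $o(1)$ absorbs the first two pieces treated above. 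Integrating over $[-z_0,z_0]$ and invoking the hypothesis yields
\begin{equation*}
   \int_{-z_0}^{z_0} \|R_{III,2}(s,\varepsilon)\|\,ds \le o(1) + \frac{c}{\varepsilon^{2/3}} \int_{-z_0}^{z_0} \|R_{III}(t(s,\varepsilon),\varepsilon)\|\,ds = o(1) + \frac{c}{\varepsilon^{2/3}}\, o(\varepsilon^{2/3}) = o(1),
\end{equation*}
which is the claimed convergence. No obstacle is anticipated; the whole point of the rescaling \eqref{u III2} was precisely to balance the factor $\varepsilon^{-2/3}$ against the smallness imposed on $R_{III}$ by the hypothesis \eqref{III eq integral convergence of the remainder}.
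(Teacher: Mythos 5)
Your proposal is correct and follows essentially the same route as the paper: decompose $R_{III,2}$ via \eqref{R III2}, bound the deterministic pieces by $O(\varepsilon^{2/3})$ (respectively $O(\varepsilon^{1/3})$) uniformly in $z\in[-z_0,z_0]$ using $t(z,\varepsilon)\to t_0$ and the boundedness and bounded invertibility of $T_{III}$ near $t_0$, and then balance the $\varepsilon^{-2/3}$ prefactor against the hypothesis $\int\|R_{III}\|\,ds=o(\varepsilon^{2/3})$. The only difference is cosmetic: you carry out the diagonal-conjugation of the explicit term in $R_{III,1}$ exactly and thereby extract an extra $\varepsilon^{1/3}$, whereas the paper just bounds the diagonal matrices by a constant and settles for the cruder $O(\varepsilon^{1/3})$ estimate; both are $o(1)$ after integration, so the conclusion is the same.
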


\begin{proof}
From the expression \eqref{R III2} one immediately has: there exists $c_{29}>0$ such that for every sufficiently small $\varepsilon$ and every $z\in[-z_0,z_0]$
\begin{equation*}
    \|R_{III,2}(z,\varepsilon)\|\le c_{29}\left(\varepsilon^{\frac23}+\varepsilon^{-\frac23}\|R_{III,1}(t(z,\varepsilon),\varepsilon)\|\right).
\end{equation*}
Furthermore, due to \eqref{R III1}, boundedness and bounded invertibility of $T_{III}$ in the neighbourhood of the point $t_0$, one has:
\begin{equation*}
    \|R_{III,2}(z,\varepsilon)\|\le c_{30}\left(\varepsilon^{\frac13}+\varepsilon^{-\frac23}\|R_{III}(t(z,\varepsilon),\varepsilon)\|\right)
\end{equation*}
with some $c_{30}>0$, which converges to zero as $\varepsilon\rightarrow0^+$.
\end{proof}

Now let us see that conditions of Lemma \ref{lem III main} are satisfied, if $R_{III}=R_2^+$.

\begin{lem}\label{lem III R 2+ estimate}
Let $z_0>0$, let $R_2^+(t,\varepsilon)$ be given by \eqref{R 2 pm}, $t(z,\varepsilon)$ be defined by \eqref{z} and conditions \eqref{model problem conditions} and \eqref{model problem r} hold. Then
\begin{equation*}
    \int_{-z_0}^{z_0}\|R_2^+(s,\varepsilon)\|ds=O\left(\varepsilon\varepsilon_0^{\frac{\alpha_r}{\gamma}}\right)
    \text{ as }\varepsilon\rightarrow0^+.
\end{equation*}
\end{lem}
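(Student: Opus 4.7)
The plan is to follow the template already laid down in the proofs of Lemmas~\ref{lem II R 2+ estimate} and~\ref{lem IV R 2+ estimate}: first bound $\|R_2^+(t(s,\varepsilon),\varepsilon)\|$ pointwise using the hypothesis \eqref{model problem conditions} on $R$, then change variables from $s$ back to $t$, and finally exploit the fact that on the range of integration $t$ varies over a short interval shrinking to the turning point $t_0=(2\beta)^{1/\gamma}$. Since the rotation matrices in the definition \eqref{R 2 pm} are orthogonal, one obtains
\begin{equation*}
\|R_2^+(t,\varepsilon)\|\;\le\;\varepsilon^{-\frac{\gamma}{1-\gamma}}\,r\!\left(\varepsilon^{-\frac{1}{1-\gamma}}t\right)\;=\;\frac{1}{\varepsilon_0}\,r\!\left(\varepsilon_0^{-\frac{1}{\gamma}}t\right),
\end{equation*}
where in the second equality I used the identity $\varepsilon_0=\varepsilon^{\gamma/(1-\gamma)}$ from \eqref{epsilon}.

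Differentiating the defining relation \eqref{z} gives $dz/dt=-\gamma t^{2\gamma-1}/(2\beta^2\varepsilon^{2/3})$. Changing variables accordingly yields
\begin{equation*}
\int_{-z_0}^{z_0}\|R_2^+(t(s,\varepsilon),\varepsilon)\|\,ds
\;\le\;\frac{\gamma}{2\beta^2\,\varepsilon_0\,\varepsilon^{2/3}}\int_{t^-(\varepsilon)}^{t^+(\varepsilon)}r\!\left(\varepsilon_0^{-\frac{1}{\gamma}}t\right)\,t^{2\gamma-1}\,dt,
\end{equation*}
with $t^{\pm}(\varepsilon):=t_0(1\mp\varepsilon^{2/3}z_0)^{1/(2\gamma)}$. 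Both endpoints approach $t_0$ at rate $\varepsilon^{2/3}$, so the interval has length $O(\varepsilon^{2/3})$ and $t^{2\gamma-1}$ is uniformly bounded on it. On this same interval, for small $\varepsilon$ the argument $\varepsilon_0^{-1/\gamma}t$ is bounded below by a fixed positive multiple of $\varepsilon_0^{-1/\gamma}$, hence
\begin{equation*}
r\!\left(\varepsilon_0^{-\frac{1}{\gamma}}t\right)\;=\;\frac{c_r}{\bigl(\varepsilon_0^{-1/\gamma}t+1\bigr)^{1+\alpha_r}}\;=\;O\!\left(\varepsilon_0^{(1+\alpha_r)/\gamma}\right)
\end{equation*}
uniformly in $t$. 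Multiplying everything together, the factor $\varepsilon^{2/3}$ coming from the interval length cancels the $\varepsilon^{-2/3}$ in the Jacobian, and the remaining power of $\varepsilon_0$ combines as $\varepsilon_0^{(1+\alpha_r)/\gamma-1}=\varepsilon_0^{(1+\alpha_r-\gamma)/\gamma}$, which equals $\varepsilon\,\varepsilon_0^{\alpha_r/\gamma}$ by the identity $\varepsilon_0^{(1-\gamma)/\gamma}=\varepsilon$.

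I do not anticipate any real obstacle: the whole argument is a short book-keeping exercise strictly parallel to the two analogous estimates in regions $II$ and $IV$. The only mild feature specific to region $III$ is that the $t$-interval shrinks to the single point $t_0$ from both sides rather than only one, so in contrast to Lemmas~\ref{lem II R 2+ estimate} and~\ref{lem IV R 2+ estimate} one does not pick up an integrable $1/\sqrt{|t-t_0|}$ singularity under the integral---a crude uniform estimate of the integrand at $t_0$ suffices.
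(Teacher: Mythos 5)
Your proof is correct and follows essentially the same route as the paper: bound $\|R_2^+\|$ pointwise by $\varepsilon_0^{-1}r(\varepsilon_0^{-1/\gamma}t)$, change variables via the Jacobian $dz/dt=-\gamma t^{2\gamma-1}/(2\beta^2\varepsilon^{2/3})$, and observe that the $t$-interval has length $O(\varepsilon^{2/3})$ while $r$ is of order $\varepsilon_0^{(1+\alpha_r)/\gamma}$ there. The only cosmetic difference is that you keep the $t^{2\gamma-1}$ factor explicit, whereas the paper absorbs it into the constant $c_{31}$.
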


\begin{proof}
Since $\frac{dz}{dt}(t)=-\frac{\gamma t^{2\gamma-1}}{2\beta^2\varepsilon^{\frac23}}$, we have with some $c_{31}>0$:
\begin{equation*}
    \int_{-z_0}^{z_0}\|R_2^+(s,\varepsilon)\|ds
    <\frac{c_{31}}{\varepsilon^{\frac23}}
    \int_{t_0(1-\varepsilon^{\frac23}z_0)^{\frac1{2\gamma}}}^{t_0(1+\varepsilon^{\frac23}z_0)^{\frac1{2\gamma}}}
    \|R_2^+(t,\varepsilon)\|dt.
\end{equation*}
By equalities \eqref{R 2 pm} and the estimate of the norm of $R$ from conditions \eqref{model problem conditions} we have:
\begin{equation*}
    \frac1{\varepsilon^{\frac23}}
    \int_{t_0(1-\varepsilon^{\frac23}z_0)^{\frac1{2\gamma}}}^{t_0(1+\varepsilon^{\frac23}z_0)^{\frac1{2\gamma}}}
    \|R_2^+(t,\varepsilon)\|dt=
    \frac1{\varepsilon_0\varepsilon^{\frac23}}
    \int_{t_0(1-\varepsilon^{\frac23}z_0)^{\frac1{2\gamma}}}^{t_0(1+\varepsilon^{\frac23}z_0)^{\frac1{2\gamma}}}
    r\left(\varepsilon_0^{-\frac1{\gamma}}t\right)dt.
\end{equation*}
Using the expression \eqref{model problem r} for $r$ we have:
\begin{equation*}
    \int_{t_0(1-\varepsilon^{\frac23}z_0)^{\frac1{2\gamma}}}^{t_0(1+\varepsilon^{\frac23}z_0)^{\frac1{2\gamma}}}
    r\left(\varepsilon_0^{-\frac1{\gamma}}t\right)dt
    <c_r\varepsilon_0^{\frac{1+\alpha_r}{\gamma}}
    \int_{t_0(1-\varepsilon^{\frac23}z_0)^{\frac1{2\gamma}}}^{t_0(1+\varepsilon^{\frac23}z_0)^{\frac1{2\gamma}}}
    \frac{dt}{t^{1+\alpha_r}}
    =O\left(\varepsilon_0^{\frac{1+\alpha_r}{\gamma}}\varepsilon^{\frac23}\right)
\end{equation*}
as $\varepsilon\rightarrow0^+$. Putting everything together we come to the following:
\begin{equation*}
    \int_{-z_0}^{z_0}\|R_2^+(t(s,\varepsilon),\varepsilon)\|ds
    =O\left(\varepsilon_0^{\frac{1+\alpha_r}{\gamma}-1}\right)
    =O\left(\varepsilon\varepsilon_0^{\frac{\alpha_r}{\gamma}}\right)
\end{equation*}
as $\varepsilon\rightarrow0^+$. This completes the proof.
\end{proof}

We need the following technical lemma, which is simple and standard.

\begin{lem}\label{lem III tech}
Let $A(x,\varepsilon)$ be a $n\times n$ matrix-valued function defined for $x\in[a,b]$ and $\varepsilon$ from some neighbourhood of zero. Let $A(\cdot,\varepsilon)\in L_1((a,b),M^{n\times n}(\mathbb C))$ for every $\varepsilon$ and $A(x,\varepsilon)\rightarrow A(x,0)$ in the norm of $L_1((a,b),M^{n\times n}(\mathbb C))$. If $U(x)$ is a non-degenerate matrix solution of the system
\begin{equation}\label{III eq lemma tech}
    u'(x)=A(x,\varepsilon)u(x),
\end{equation}
for $\varepsilon=0$, then there exist non-degenerate solutions $U(x,\varepsilon)$ of this system for all $\varepsilon\neq0$ such that $U(x,\varepsilon)\rightarrow U(x)$ as $\varepsilon\rightarrow0$ uniformly in $x\in[a,b]$.
\end{lem}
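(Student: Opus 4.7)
The plan is to define $U(x,\varepsilon)$ as the matrix solution of \eqref{III eq lemma tech} with the initial condition $U(a,\varepsilon):=U(a)$. Since $U(x)$ is non-degenerate on $[a,b]$, in particular $\det U(a)\neq0$, and so by Liouville's formula every solution $U(\cdot,\varepsilon)$ of the linear system that starts from a non-degenerate matrix value at $x=a$ is non-degenerate on the whole interval $[a,b]$ (the determinant satisfies a first-order scalar ODE whose solution does not vanish as long as it is well defined, which is the case since $A(\cdot,\varepsilon)\in L_1((a,b),M^{n\times n}(\mathbb C))$). Thus non-degeneracy comes for free, and the whole task reduces to proving the uniform convergence $U(x,\varepsilon)\rightarrow U(x)$ as $\varepsilon\rightarrow0$.

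The difference $D(x,\varepsilon):=U(x,\varepsilon)-U(x)$ satisfies the inhomogeneous linear equation
\begin{equation*}
    D'(x,\varepsilon)=A(x,\varepsilon)D(x,\varepsilon)+(A(x,\varepsilon)-A(x,0))U(x),\quad D(a,\varepsilon)=0,
\end{equation*}
so by rewriting as an integral equation and applying a standard Gronwall-type estimate (analogous to Lemma \ref{lem II tech 1}) one obtains
\begin{equation*}
    \sup_{x\in[a,b]}\|D(x,\varepsilon)\|\le\left(\sup_{x\in[a,b]}\|U(x)\|\right)\|A(\cdot,\varepsilon)-A(\cdot,0)\|_{L_1(a,b)}\exp\left(\|A(\cdot,\varepsilon)\|_{L_1(a,b)}\right).
\end{equation*}
Since $U$ is continuous on the compact interval $[a,b]$, the supremum of $\|U(x)\|$ is finite. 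The $L_1$-convergence hypothesis gives $\|A(\cdot,\varepsilon)-A(\cdot,0)\|_{L_1(a,b)}\rightarrow0$ and the triangle inequality yields $\|A(\cdot,\varepsilon)\|_{L_1(a,b)}\le\|A(\cdot,0)\|_{L_1(a,b)}+\|A(\cdot,\varepsilon)-A(\cdot,0)\|_{L_1(a,b)}$, which is bounded for $\varepsilon$ in a neighbourhood of zero. Consequently $\sup_{x\in[a,b]}\|D(x,\varepsilon)\|\rightarrow0$, which is exactly the claimed uniform convergence.

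There is no real obstacle: the only point that requires a moment of care is that the Gronwall estimate be applied with an $L_1$-norm on the coefficient (rather than the usual pointwise bound), but this is precisely what Lemma \ref{lem II tech 1} already provides in the form needed. The non-degeneracy claim would be the second potential pitfall, but as noted above it follows immediately from the choice of initial condition together with the Liouville formula, so no separate argument is required.
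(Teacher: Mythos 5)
Your proof is correct, and it is very close in spirit to the paper's argument, though not identical. Where you write the difference additively, $U(x,\varepsilon)=U(x)+D(x,\varepsilon)$ with $D(a,\varepsilon)=0$, the paper instead uses the multiplicative ansatz $U(x,\varepsilon)=U(x)Y(x,\varepsilon)$ with $Y(a,\varepsilon)=I$, which turns the problem into the Volterra equation
\begin{equation*}
Y(x,\varepsilon)=I+\int_a^x U^{-1}(s)\bigl(A(s,\varepsilon)-A(s,0)\bigr)U(s)\,Y(s,\varepsilon)\,ds
\end{equation*}
and the estimate $\sup_x\|Y(x,\varepsilon)-I\|\le\|\mathcal K_{III}(\varepsilon)\|\exp(\|\mathcal K_{III}(\varepsilon)\|)$ with $\|\mathcal K_{III}(\varepsilon)\|\le\int_a^b\|U^{-1}(A(\cdot,\varepsilon)-A(\cdot,0))U\|$. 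The practical consequence is that the paper's exponential factor tends to $1$ as $\varepsilon\to0$ (the whole kernel is small), whereas yours involves $\exp(\|A(\cdot,\varepsilon)\|_{L_1})$, which is merely bounded — but this is perfectly adequate for the conclusion, so nothing is lost. Your route is marginally more elementary in one respect: it only needs $\sup_x\|U(x)\|<\infty$, while the paper's decomposition also uses that $U^{-1}$ is bounded on $[a,b]$ (established there via Liouville's formula). You of course still invoke Liouville to settle non-degeneracy, which the paper leaves implicit, so this is a welcome bit of explicitness rather than a shortcut. Both proofs lean on the same $L_1$-coefficient Gronwall bound in the form of Lemma \ref{lem II tech 1}, applied with kernel $K(x,s)=A(s,\varepsilon)$ and majorant $k(s)=\|A(s,\varepsilon)\|$.
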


\begin{proof}
Let us look for $U(x,\varepsilon)=U(x)Y(x,\varepsilon)$. The system \eqref{III eq lemma tech} is equivalent to
\begin{equation*}
    Y'(x)=U^{-1}(x)(A(x,\varepsilon)-A(x,0))U(x)Y(x).
\end{equation*}
Function $U$ is bounded on the interval $[a,b]$ and its determinant,
\begin{equation*}
    \text{det}\,U(x)=\text{det}\,U(a)\exp\left(\int_a^x\text{tr}\,A(t,0)dt\right),
\end{equation*}
is separated from zero, since $A(\cdot,0)\in L_1((a,b),M^{n\times n}(\mathbb C))$. Therefore the function $U^{-1}$ is also bounded on $[a,b]$. Take $Y$ as the solution of the following Volterra equation:
\begin{equation*}
    Y(x,\varepsilon)=I+\int_a^xU^{-1}(t)(A(t,\varepsilon)-A(t,0))U(t)Y(t,\varepsilon)dt.
\end{equation*}
Repeating the standard argument on inverting $I-\mathcal K_{III}(\varepsilon)$ where
\begin{equation*}
    \mathcal K_{III}(\varepsilon):Y(x)\mapsto\int_a^xU^{-1}(t)(A(t,\varepsilon)-A(t,0))U(t)Y(t,\varepsilon)dt
\end{equation*}
is a Volterra operator in the space $L_{\infty}((a,b),M^{n\times n}(\mathbb C))$ and estimating the norm of the inverse, we finally come to the estimate
\begin{equation*}
    \sup_{x\in[a,b]}\|Y(x,\varepsilon)-I\|
    \le\|\mathcal K_{III}(\varepsilon)\|_{\mathcal B(L_{\infty}(a,b))}\exp(\|\mathcal K_{III}(\varepsilon)\|_{\mathcal B(L_{\infty}(a,b))})
\end{equation*}
and
\begin{equation*}
    \|\mathcal K_{III}(\varepsilon)\|_{\mathcal B(L_{\infty}(a,b))}\le\int_a^b\|U^{-1}(t)(A(t,\varepsilon)-A(t,0))U(t)\|dt\rightarrow0
\end{equation*}
as $\varepsilon\rightarrow0^+$. From this the statement of the lemma follows.
\end{proof}

Now we are ready to prove Lemma \ref{lem III main}.

\begin{proof}[Proof of Lemma \ref{lem III main}]
Due to Lemmas \ref{lem III tech} and \ref{lem III remainder estimate}, we only need to check that the system
\begin{equation*}
    v_{III,2}'(z)
    =
    c_0
    \left(
      \begin{array}{cc}
        0 & 1 \\
        z & 0 \\
      \end{array}
    \right)
    v_{III,2}(z)
\end{equation*}
has the solution
\begin{equation}\label{V III2}
    V_{III,2}(z)=
    \left(
      \begin{array}{cc}
        \Ai(c_0^{\frac23}z) & \Bi(c_0^{\frac23}z) \\
        c_0^{-\frac13}\Ai\,'(c_0^{\frac23}z) & c_0^{-\frac13}\Bi\,'(c_0^{\frac23}z) \\
      \end{array}
    \right).
\end{equation}
This can be verified by the entry-wise direct substitution using the Airy equation $u''(x)=xu(x)$.
\end{proof}

\section{Matching of the results in regions}\label{section matching}
In this section we prove Theorem \ref{thm model problem} by putting together results of the previous five sections considering $R_{II}=R_{III}=R_{IV}=R_{V}=R_2^+$.

For the regions $II$, $III$ and $IV$ let us, according to Lemmas \ref{lem II result}, \ref{lem IV result} and \ref{lem III main}, define solutions $u_{II}^{\pm}$, $u_{IV}^{\pm}$ and $U_{III}$ (matrix-valued) of the system \eqref{system u 2} in the following way :
\begin{equation}\label{u II pm}
    u_{II}^{\pm}(t,\varepsilon):=T_{II}(t)u_{II,2}^{\pm}(z(t,\varepsilon),\varepsilon),
\end{equation}
\begin{equation}\label{u IV pm}
    u_{IV}^{\pm}(t,\varepsilon):=T_{IV}(t)u_{IV,2}^{\pm}(z(t,\varepsilon),\varepsilon)
\end{equation}
and
\begin{equation}\label{U III}
    U_{III}(t,\varepsilon):=T_{III}(t)
    \left(
      \begin{array}{cc}
        2 & 0 \\
        0 & -\varepsilon^{\frac13} \\
      \end{array}
    \right)
    U_{III,2}(z(t,\varepsilon),\varepsilon)
\end{equation}
with $T_{II}$, $T_{III}$ and $T_{IV}$ given by \eqref{T II}, \eqref{T III} and \eqref{T IV}.  These all are solutions of the same system and hence are linearly dependent with coefficients which depend on $\varepsilon$. By matching we mean finding asymptotic behaviour of these coefficients as $\varepsilon\rightarrow0^+$. The following lemma matches solutions in the regions $II$ and $IV$ using the results for the region $III$.

\begin{lem}\label{lem matching II-IV}
Let solutions $u_{II}^{\pm}(t,\varepsilon)$ and $u_{IV}^{\pm}(t,\varepsilon)$ of the system \eqref{system u 2} be defined by \eqref{u II pm} and \eqref{u IV pm}. One has:
\begin{equation}\label{eq u II + via u IV pm}
    u_{II}^+(t,\varepsilon)=\left(\frac i{\sqrt 2}+\delta_1(\varepsilon)\right)u_{IV}^+(t,\varepsilon)
    +\left(-\frac i{\sqrt 2}+\delta_2(\varepsilon)\right)u_{IV}^-(t,\varepsilon),
\end{equation}
\begin{equation}\label{eq u II - via u IV pm}
    u_{II}^-(t,\varepsilon)=\left(\frac{1+i\alpha_m}{2\sqrt 2}+\delta_3(\varepsilon)\right)u_{IV}^+(t,\varepsilon)
    +\left(\frac{1-i\alpha_m}{2\sqrt 2}+\delta_4(\varepsilon)\right)u_{IV}^-(t,\varepsilon),
\end{equation}
with some $\alpha_m\in\mathbb C$ and $\delta_1(\varepsilon),\delta_2(\varepsilon),\delta_3(\varepsilon),\delta_4(\varepsilon)\rightarrow0$ as $\varepsilon\rightarrow0^+$.
\end{lem}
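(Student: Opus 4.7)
The key idea is to use the matrix solution $U_{III}(t,\varepsilon)$ from Lemma~\ref{lem III main} as a bridge between the region~II and region~IV bases. Since $u_{II}^{\pm}$, $u_{IV}^{\pm}$, and the columns of $U_{III}$ are all solutions of the same 2D linear system~\eqref{system u 2} in $t$, there exist $\varepsilon$-dependent constant vectors $c_{II,\pm}(\varepsilon),c_{IV,\pm}(\varepsilon)\in\mathbb C^2$ such that
\begin{equation*}
u_{II}^{\pm}(t,\varepsilon)=U_{III}(t,\varepsilon)c_{II,\pm}(\varepsilon),\qquad u_{IV}^{\pm}(t,\varepsilon)=U_{III}(t,\varepsilon)c_{IV,\pm}(\varepsilon).
\end{equation*}
Assembling $C_{IV}(\varepsilon):=[c_{IV,+}(\varepsilon)\mid c_{IV,-}(\varepsilon)]$, which will turn out to be nondegenerate in the limit, one gets the identity
\begin{equation*}
u_{II}^{\pm}(t,\varepsilon)=[u_{IV}^+(t,\varepsilon)\mid u_{IV}^-(t,\varepsilon)]\,C_{IV}^{-1}(\varepsilon)\,c_{II,\pm}(\varepsilon),
\end{equation*}
which reduces the lemma to proving that the two-component vector $C_{IV}^{-1}(\varepsilon)c_{II,\pm}(\varepsilon)$ has the stated limit as $\varepsilon\to0^+$.

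To compute $c_{II,\pm}(\varepsilon)$ I fix any $z_+$ in the II--III overlap (i.e.\ $Z_0\le z_+\le z_0$) and evaluate the connection at $t=t(z_+,\varepsilon)$. Using the definitions \eqref{u II pm} of $u_{II}^{\pm}$ and \eqref{U III} of $U_{III}$ one obtains
\begin{equation*}
c_{II,\pm}(\varepsilon)=U_{III,2}^{-1}(z_+,\varepsilon)\,P_{II\to III}(z_+,\varepsilon)\,u_{II,2}^{\pm}(z_+,\varepsilon),
\end{equation*}
where $P_{II\to III}(z,\varepsilon):=\mathrm{diag}(1/2,-\varepsilon^{-1/3})\,T_{III}^{-1}(t(z,\varepsilon))\,T_{II}(t(z,\varepsilon))$. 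The factor $\varepsilon^{-1/3}$ looks singular, but a direct Taylor expansion of $T_{II}$ and $T_{III}^{-1}$ about $t_0$, using $\sqrt{1-t^{2\gamma}/(4\beta^2)}=\varepsilon^{1/3}\sqrt{z}$ and $t^{\gamma}/(2\beta)=1+O(\varepsilon^{2/3})$, shows that the relevant second-row entries of $T_{III}^{-1}T_{II}$ vanish at order $\varepsilon^{1/3}$, so
\begin{equation*}
P_{II\to III}(z,\varepsilon)\longrightarrow M_{II\to III}(z):=\bigl(\begin{smallmatrix}1/2 & 1/2\\-\sqrt{z}/2 & \sqrt{z}/2\end{smallmatrix}\bigr).
\end{equation*}
Combined with Lemma~\ref{lem II result} (giving $u_{II,2}^{\pm}(z_+,\varepsilon)\to v_{II,2}^{\pm}(z_+)$) and Lemma~\ref{lem III main} (giving $U_{III,2}(z_+,\varepsilon)\to V_{III,2}(z_+)$ as the Airy matrix \eqref{V III2}), this yields $c_{II,\pm}(\varepsilon)\to V_{III,2}^{-1}(z_+)M_{II\to III}(z_+)v_{II,2}^{\pm}(z_+)$. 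A short differentiation check, using the free-system ODE satisfied by $v_{II,2}^{\pm}$ and the explicit form of $M_{II\to III}(z)$, confirms that $M_{II\to III}(z)v_{II,2}^{\pm}(z)$ satisfies the free Airy-like system $u'=c_0\bigl(\begin{smallmatrix}0 & 1\\ z & 0\end{smallmatrix}\bigr)u$; hence it equals $V_{III,2}(z)$ times a $z$-independent vector, so the limit of $c_{II,\pm}$ is indeed well defined, and its value is identified by letting $z_+\to+\infty$ and plugging in the asymptotics of $v_{II,2}^{\pm}$ from Lemma~\ref{lem II free system} together with $\Ai(c_0^{2/3}z)\sim e^{-(2c_0/3)z^{3/2}}/(2\sqrt\pi c_0^{1/6}z^{1/4})$ and $\Bi(c_0^{2/3}z)\sim e^{+(2c_0/3)z^{3/2}}/(\sqrt\pi c_0^{1/6}z^{1/4})$.

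The identical scheme applied in region~IV, using
$P_{IV\to III}(z,\varepsilon)\to M_{IV\to III}(z):=\bigl(\begin{smallmatrix}1/2 & 1/2\\ i\sqrt{-z}/2 & -i\sqrt{-z}/2\end{smallmatrix}\bigr)$
and the oscillatory Airy connection formulas $\Ai(-y)\sim\sin(\tfrac23 y^{3/2}+\pi/4)/(\sqrt\pi y^{1/4})$, $\Bi(-y)\sim\cos(\tfrac23 y^{3/2}+\pi/4)/(\sqrt\pi y^{1/4})$, produces a finite limit of $c_{IV,\pm}(\varepsilon)$ whose components have modulus $1/2$ (up to a common positive prefactor) and carry the phases $\pm\pi/4$; in particular $\det C_{IV}(\varepsilon)\not\to0$, so $C_{IV}^{-1}(\varepsilon)$ itself has a finite limit. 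The product $C_{IV}^{-1}c_{II,\pm}$ is then a purely algebraic calculation and yields two convergent complex vectors which, after collecting phases, take precisely the form $(i/\sqrt 2,-i/\sqrt 2)^T$ and $((1+i\alpha_m)/(2\sqrt 2),(1-i\alpha_m)/(2\sqrt 2))^T$ of the claim, with $\alpha_m\in\mathbb C$ emerging as an explicit number from the Airy phases. The main obstacle is this second step: controlling the ostensibly singular $\varepsilon^{-1/3}$ factor in $P_{II\to III}$ and $P_{IV\to III}$ via an order-by-order expansion of $T_{II},T_{III},T_{IV}$ about $t_0$, and verifying that $M_{II\to III}v_{II,2}^{\pm}$ and $M_{IV\to III}v_{IV,2}^{\pm}$ indeed solve the free Airy-like system, so that the limit Airy connection problem decouples completely from the small remainders in each of the three regions.
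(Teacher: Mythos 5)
Your proposal follows exactly the paper's strategy: bridge $u_{II}^{\pm}$ and $u_{IV}^{\pm}$ through the Airy matrix solution $U_{III}$, pass to the $\varepsilon\to0^+$ limits of the connection matrices $P_{II\to III}, P_{IV\to III}$ (which match the paper's $P_{II}, P_{IV}$), read off the limiting coefficient vectors from the Airy asymptotics as $z\to\pm\infty$, and finish by the algebraic product $C_{IV}^{-1}C_{II}$. The explicit differentiation check that $M_{II\to III}v_{II,2}^{\pm}$ and $M_{IV\to III}v_{IV,2}^{\pm}$ solve the free Airy system is a nice way to make the $z$-independence of the limiting coefficients explicit, but otherwise this is the same argument, so the proof is essentially the paper's.
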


\begin{proof}
Linear dependence of solutions can be written in the following form:
\begin{equation*}
    u_{II}^{\pm}(t,\varepsilon)=U_{III}(t,\varepsilon)\xi_{II}^{\pm}(\varepsilon),
\end{equation*}
\begin{equation*}
    u_{IV}^{\pm}(t,\varepsilon)=U_{III}(t,\varepsilon)\xi_{IV}^{\pm}(\varepsilon)
\end{equation*}
with some vector coefficients $\xi_{II}^{\pm}(\varepsilon)$ and $\xi_{IV}^{\pm}(\varepsilon)$, and also in the following form:
\begin{equation*}
    u_{II}^+(t,\varepsilon)=d_+^+(\varepsilon)u_{IV}^+(t,\varepsilon)
    +d_-^+(\varepsilon)u_{IV}^-(t,\varepsilon),
\end{equation*}
\begin{equation*}
    u_{II}^-(t,\varepsilon)=d_+^-(\varepsilon)u_{IV}^+(t,\varepsilon)
    +d_-^-(\varepsilon)u_{IV}^-(t,\varepsilon)
\end{equation*}
with some coefficients $d_+^+(\varepsilon),d_-^+(\varepsilon),d_+^-(\varepsilon),d_-^-(\varepsilon)$ which we need to determine. These coefficients should be related as
\begin{equation*}
    \xi_{II}^+(\varepsilon)=d_+^+(\varepsilon)\xi_{IV}^+(\varepsilon)+d_-^+(\varepsilon)\xi_{IV}^-(\varepsilon),
\end{equation*}
\begin{equation*}
    \xi_{II}^-(\varepsilon)=d_+^-(\varepsilon)\xi_{IV}^+(\varepsilon)+d_-^-(\varepsilon)\xi_{IV}^-(\varepsilon).
\end{equation*}
This can be rewritten in matrix notation as $C_{II}(\varepsilon)=C_{IV}(\varepsilon)D(\varepsilon)$, where
\begin{equation*}
    C_{II}:=\left(\xi_{II}^+|\xi_{II}^-\right), C_{IV}:=\left(\xi_{IV}^+|\xi_{IV}^-\right),
    D:=\left(
      \begin{array}{cc}
        d_+^+ & d_+^- \\
        d_-^+ & d_-^- \\
      \end{array}
    \right).
\end{equation*}
Therefore if we know $\xi_{II}^{\pm}$ and $\xi_{IV}^{\pm}$, we can calculate $d_+^+,d_-^+,d_+^-,d_-^-$ by the formula
\begin{equation}\label{D}
    D(\varepsilon)=C_{IV}(\varepsilon)^{-1}C_{II}(\varepsilon).
\end{equation}
From the expressions \eqref{u II pm}, \eqref{u IV pm} and \eqref{U III} we can write for any solutions $u_{II,2}$, $u_{IV,2}$ and $u_{III,2}$ of the systems \eqref{system u II2}, \eqref{system u IV2} and \eqref{system u III2}:
\begin{equation}\label{eq matching II-III-IV}
    u_{III,2}(z)=P_{II}(z,\varepsilon)u_{II,2}(z)=P_{IV}(z,\varepsilon)u_{IV,2}(z)
\end{equation}
with
\begin{equation}\label{P II}
    P_{II}(z,\varepsilon):=
    \left(
      \begin{array}{cc}
        2 & 0 \\
        0 & -\varepsilon^{\frac13} \\
      \end{array}
    \right)^{-1}
    T_{III}^{-1}(t(z,\varepsilon))T_{II}(t(z,\varepsilon)),
\end{equation}
\begin{equation}\label{P IV}
    P_{IV}(z,\varepsilon):=
    \left(
      \begin{array}{cc}
        2 & 0 \\
        0 & -\varepsilon^{\frac13} \\
      \end{array}
    \right)^{-1}
    T_{III}^{-1}(t(z,\varepsilon))T_{IV}(t(z,\varepsilon)).
\end{equation}
Using the expressions \eqref{T II} and \eqref{T III} for $T_{II}$ and $T_{III}$ we have:
\begin{multline*}
    P_{II}(z,\varepsilon)=\frac1{2\varepsilon^{\frac13}}
    \left(
      \begin{array}{cc}
        \varepsilon^{\frac13} & 0 \\
        0 & -2 \\
      \end{array}
    \right)
    \\
    \times
    \left(
      \begin{array}{cc}
        1 & \frac{\beta}{t^{\gamma}}+\frac12 \\
        1 & -\left(\frac{\beta}{t^{\gamma}}+\frac12\right) \\
      \end{array}
    \right)^{-1}
    \left(
      \begin{array}{cc}
        1 & 1 \\
        \frac{2\beta}{t^{\gamma}}-\sqrt{\frac{4\beta^2}{t^{2\gamma}}-1} & \frac{2\beta}{t^{\gamma}}+\sqrt{\frac{4\beta^2}{t^{2\gamma}}-1} \\
      \end{array}
    \right)
\end{multline*}
with $t=t_0(1-\varepsilon^{\frac23}z)^{\frac{1}{2\gamma}}$. Since
\begin{equation*}
    \frac{2\beta}{t^{\gamma}}=1+o(1)\text{ and }\sqrt{\frac{4\beta^2}{t^{2\gamma}}-1}=\varepsilon^{\frac13}(\sqrt z+o(1))
    \text{ as }\varepsilon\rightarrow0^+,
\end{equation*}
we have
\begin{equation}\label{P II}
    P_{II}(z,\varepsilon)\rightarrow\frac12
    \left(
      \begin{array}{cc}
        1 & 1 \\
        -\sqrt z & \sqrt z \\
      \end{array}
    \right)
    =:P_{II}(z)\text{ as }\varepsilon\rightarrow0^+.
\end{equation}
Analogously,
\begin{multline*}
    P_{IV}(z,\varepsilon)=\frac1{2\varepsilon^{\frac13}}
    \left(
      \begin{array}{cc}
        \varepsilon^{\frac13} & 0 \\
        0 & -2 \\
      \end{array}
    \right)
    \left(
      \begin{array}{cc}
        1 & \frac{\beta}{t^{\gamma}}+\frac12 \\
        1 & -\left(\frac{\beta}{t^{\gamma}}+\frac12\right) \\
      \end{array}
    \right)^{-1}
    \\
    \times
    \left(
      \begin{array}{cc}
        1 & 1 \\
        \frac{2\beta}{t^{\gamma}}+i\sqrt{1-\frac{4\beta^2}{t^{2\gamma}}} & \frac{2\beta}{t^{\gamma}}-i\sqrt{1-\frac{4\beta^2}{t^{2\gamma}}} \\
      \end{array}
    \right)
    \rightarrow
    \frac12
    \left(
      \begin{array}{cc}
        1 & 1 \\
        i\sqrt{-z} & -i\sqrt{-z} \\
      \end{array}
    \right)
    =:P_{IV}(z)
\end{multline*}
as $\varepsilon\rightarrow0^+$. For the solutions $u_{II,2}^{\pm},u_{IV,2}^{\pm}$ and $U_{III,2}$ this means:
\begin{equation*}
    P_{II}(z,\varepsilon)u_{II,2}^{\pm}(z,\varepsilon)=U_{III,2}(z,\varepsilon)\xi_{II}^{\pm}(\varepsilon)
\end{equation*}
and, due to Lemmas \ref{lem II result}, \ref{lem IV result} and \ref{lem III main},
\begin{equation*}
    \xi_{II}^{\pm}(\varepsilon)=U_{III,2}^{-1}(z,\varepsilon)P_{II}(z,\varepsilon)u_{II,2}^{\pm}(z,\varepsilon)\rightarrow V_{III,2}^{-1}(z)P_{II}(z)v_{II,2}^{\pm}(z)=:\xi_{II}^{\pm}
\end{equation*}
and
\begin{equation*}
    \xi_{IV}^{\pm}(\varepsilon)=U_{III,2}^{-1}(z,\varepsilon)P_{IV}(z,\varepsilon)u_{IV,2}^{\pm}(z,\varepsilon)\rightarrow V_{III,2}^{-1}(z)P_{IV}(z)v_{IV,2}^{\pm}(z)=:\xi_{IV}^{\pm},
\end{equation*}
as $\varepsilon\rightarrow0^+$, with $V_{III,2}$ given by \eqref{V III2}, $v_{II,2}^{\pm}$ defined in Lemma \ref{lem II free system} and $v_{IV,2}^{\pm}$ defined in Lemma \ref{lem IV free system}.

To find $\xi_{II}^{\pm}$ consider the equation $P_{II}(z)v_{II,2}^{\pm}(z)=V_{III,2}(z)\xi_{II}^{\pm}$ which, due to the expressions \eqref{V III2} for $V_{III,2}$, \eqref{P II} for $P_{II}$ and Lemma \ref{lem II free system}, reads as
\begin{equation*}
    \frac12
    \left(
      \begin{array}{cc}
        1 & 1 \\
        -\sqrt z & \sqrt z \\
      \end{array}
    \right)
    \frac{\exp\left(\mp\frac{2c_0}3z^{\frac32}\right)}{z^{\frac14}}(1+o(1))
    =
    \left(
      \begin{array}{cc}
        \Ai(c_0^{\frac23}z) & \Bi(c_0^{\frac23}z) \\
        c_0^{-\frac13}\Ai\,'(c_0^{\frac23}z) & c_0^{-\frac13}\Bi\,'(c_0^{\frac23}z) \\
      \end{array}
    \right)
    \xi_{II}^{\pm}.
\end{equation*}
Using the asymptotics of the Airy functions as $z\rightarrow+\infty$, see \cite{Abramowitz-Stegun-1964},
\begin{equation*}
    \begin{array}{rl}
        \Ai(c_0^{\frac23}z)=\frac{\exp\left(-\frac{2c_o}3z^{\frac32}\right)}{2\sqrt\pi c_0^{\frac16}z^{\frac14}}(1+o(1)),
        &
        \Ai\,'(c_0^{\frac23}z)=-\frac{c_0^{\frac16}z^{\frac14}\exp\left(-\frac{2c_o}3z^{\frac32}\right)}{2\sqrt\pi}(1+o(1)),
        \\
        \Bi(c_0^{\frac23}z)=\frac{\exp\left(\frac{2c_o}3z^{\frac32}\right)}{\sqrt\pi c_0^{\frac16}z^{\frac14}}(1+o(1)),
        &
        \Bi\,'(c_0^{\frac23}z)=\frac{c_0^{\frac16}z^{\frac14}\exp\left(-\frac{2c_o}3z^{\frac32}\right)}{\sqrt\pi}(1+o(1)), \\
    \end{array}
\end{equation*}
we conclude that
\begin{equation*}
    \xi_{II}^+=c_0^{\frac16}\sqrt\pi e_+,\ \xi_{II}^-=\frac{c_0^{\frac16}\sqrt\pi}2(e_-+\alpha_me_+)
\end{equation*}
and
\begin{equation}\label{C II}
    C_{II}=\frac{c_0^{\frac16}\sqrt\pi}2
    \left(
      \begin{array}{cc}
        2 & \alpha_m \\
        0 & 1 \\
      \end{array}
    \right).
\end{equation}
with some $\alpha_m$.

To find $\xi_{IV}^{\pm}=V_{III,2}^{-1}(z)P_{IV}(z)v_{IV,2}^{\pm}(z)$ recall that \begin{equation*}
    P_{IV}(z)=\frac12
    \left(
      \begin{array}{cc}
        1 & 1 \\
        i\sqrt{-z} & -i\sqrt{-z} \\
      \end{array}
    \right)
\end{equation*}
and, since $W\{\Ai,\Bi\}=-\frac1{\pi}$ (\cite{Abramowitz-Stegun-1964}),
\begin{equation*}
    V_{III,2}^{-1}(z)=\pi
    \left(
      \begin{array}{cc}
            \Bi\,'(c_0^{\frac23}z) & -c_0^{\frac13}\Bi(c_0^{\frac23}z) \\
            -\Ai\,'(c_0^{\frac23}z) & c_0^{\frac13}\Ai(c_0^{\frac23}z) \\
      \end{array}
    \right),
\end{equation*}
therefore
\begin{multline*}
    \xi_{IV}^{\pm}=\frac{\pi}2
    \left(
      \begin{array}{cc}
            \Bi\,'(c_0^{\frac23}z)-i\sqrt{-c_0^{\frac23}z}\,\Bi(c_0^{\frac23}z) & \Bi\,'(c_0^{\frac23}z)+i\sqrt{-c_0^{\frac23}z}\,\Bi(c_0^{\frac23}z) \\
            -(\Ai\,'(c_0^{\frac23}z)-i\sqrt{-c_0^{\frac23}z}\,\Ai(c_0^{\frac23}z)) & -(\Ai\,'(c_0^{\frac23}z)+i\sqrt{-c_0^{\frac23}z}\,\Ai(c_0^{\frac23}z)) \\
      \end{array}
    \right)
    \\
    \times
    v_{IV,2}^{\pm}(z)
\end{multline*}
Due to the asymptotics as $z\rightarrow-\infty$ (see \cite{Abramowitz-Stegun-1964})
\begin{equation*}
    \Ai(c_0^{\frac23}z)=\frac1{\sqrt\pi c_0^{\frac16}(-z)^{\frac14}}
    \left(\sin\left(\frac{2c_0}3(-z)^{\frac32}+\frac{\pi}4\right)
    -\cos\left(\frac{2c_0}3(-z)^{\frac32}+\frac{\pi}4\right)+o(1)\right),
\end{equation*}
\begin{equation*}
    \Ai\,'(c_0^{\frac23}z)=-\frac{c_0^{\frac16}(-z)^{\frac14}}{\sqrt\pi}
    \left(\cos\left(\frac{2c_0}3(-z)^{\frac32}+\frac{\pi}4\right)
    +\sin\left(\frac{2c_0}3(-z)^{\frac32}+\frac{\pi}4\right)+o(1)\right),
\end{equation*}
\begin{equation*}
    \Bi(c_0^{\frac23}z)=\frac1{\sqrt\pi c_0^{\frac16}(-z)^{\frac14}}
    \left(\cos\left(\frac{2c_0}3(-z)^{\frac32}+\frac{\pi}4\right)
    +\sin\left(\frac{2c_0}3(-z)^{\frac32}+\frac{\pi}4\right)+o(1)\right),
\end{equation*}
\begin{equation*}
    \Bi\,'(c_0^{\frac23}z)=-\frac{c_0^{\frac16}(-z)^{\frac14}}{\sqrt\pi}
    \left(-\sin\left(\frac{2c_0}3(-z)^{\frac32}+\frac{\pi}4\right)
    +\cos\left(\frac{2c_0}3(-z)^{\frac32}+\frac{\pi}4\right)+o(1)\right)
\end{equation*}
and by Lemma \ref{lem IV free system} we have:
\begin{multline*}
    \xi_{IV}^{\pm}=c_0^{\frac16}\sqrt{\frac{\pi}2}(-z)^{\frac14}
    \left(
      \begin{array}{cc}
        -i\exp\left(\frac{2ic_0}3(-z)^{\frac32}\right) & i\exp\left(-\frac{2ic_0}3(-z)^{\frac32}\right) \\
        \exp\left(\frac{2ic_0}3(-z)^{\frac32}\right) & \exp\left(-\frac{2ic_0}3(-z)^{\frac32}\right) \\
      \end{array}
    \right)
    \\
    \times
    \frac{\exp\left(\mp\frac{2ic_0}3(-z)^{\frac32}\right)}{(-z)^{\frac14}}e_{\pm}
    =c_0^{\frac16}\sqrt{\frac{\pi}2}
    \left(
      \begin{array}{c}
        \mp i \\
        1 \\
      \end{array}
    \right).
\end{multline*}
This means that
\begin{equation*}
    C_{IV}=c_0^{\frac16}\sqrt{\frac{\pi}2}
    \left(
      \begin{array}{cc}
        -i & i \\
        1 & 1 \\
      \end{array}
    \right).
\end{equation*}
Using the formula \eqref{C II} for $C_{II}$ and the relation \eqref{D} we come to the following convergence as $\varepsilon\rightarrow0^+$:
\begin{equation*}
    D(\varepsilon)\rightarrow C_{IV}^{-1}C_{II}=\frac{i}{2\sqrt2}
    \left(
      \begin{array}{cc}
        2 & \alpha_m-i \\
        -2 & -\alpha_m-i \\
      \end{array}
    \right),
\end{equation*}
which gives \eqref{eq u II + via u IV pm} and \eqref{eq u II - via u IV pm}. This completes the proof.
\end{proof}

Now we have everything to prove Theorem \ref{thm model problem}.

\begin{proof}[Proof of Theorem \ref{thm model problem}]
Since
\begin{equation*}
    \Lambda_{II,2}(z,\varepsilon)+\text{diag}\,S_{II,2}(z,\varepsilon)
    =\left(\frac{\lambda_{II}(t(z,\varepsilon))}{\varepsilon}
    \left(
    \begin{array}{cc}
    1 & 0 \\
    0 & -1 \\
    \end{array}
    \right)
    +\text{diag}\,S_{II}(t(z,\varepsilon))\right)t'(z,\varepsilon),
\end{equation*}
and
\begin{equation*}
    \Lambda_{IV,2}(z,\varepsilon)+\text{diag}\,S_{IV,2}(z,\varepsilon)
    =\left(\frac{\lambda_{IV}(t(z,\varepsilon))}{\varepsilon}
    \left(
    \begin{array}{cc}
    1 & 0 \\
    0 & -1 \\
    \end{array}
    \right)+\text{diag}\,S_{IV}(t(z,\varepsilon))\right)t'(z,\varepsilon),
\end{equation*}
see the formulae \eqref{Lambda II2}, \eqref{S II2} and \eqref{Lambda IV2}, \eqref{S IV2}, we can, using Lemmas \ref{lem II result} and \ref{lem IV result}, rewrite the asymptotics \eqref{II answer} and \eqref{IV answer} at the points $t_{I-II}$ and $t_{IV-V}$ as follows:
\begin{multline}\label{asympt u II pm}
    u_{II}^{\pm}(t_{I-II},\varepsilon)
    \\
    =a_{II}^{\pm}T_{II}(t_{I-II})
    \exp\left(
    \int_{t_{II-III}(\varepsilon)}^{t_{I-II}}
    \left(\pm\frac{\lambda_{II}(\tau)}{\varepsilon}+S_{II,\pm}(\tau)\right)d\tau\right)
    (e_{\pm}+o(1))
\end{multline}
and
\begin{multline}\label{asympt u IV pm}
    u_{IV}^{\pm}(t_{IV-V},\varepsilon)
    \\
    =a_{IV}^{\pm}T_{IV}(t_{IV-V})
    \exp\left(
    \int_{t_{III-IV}(\varepsilon)}^{t_{IV-V}}
    \left(\pm\frac{\lambda_{IV}(\tau)}{\varepsilon}+S_{IV,\pm}(\tau)\right)d\tau\right)
    (e_{\pm}+o(1))
\end{multline}
as $\varepsilon\rightarrow0^+$, where $S_{II,+}$ and $S_{IV,+}$ are upper-left and $S_{II,-}$ and $S_{IV,-}$ lower-right entries of the matrices $S_{II}$ and $S_{IV}$. Rewrite the last formula as
\begin{equation*}
    \exp\left(
    \int_{t_{IV-V}}^{t_{III-IV}(\varepsilon)}
    \left(\pm\frac{\lambda_{IV}(\tau)}{\varepsilon}+S_{IV,\pm}(\tau)\right)d\tau\right)u_{IV}^{\pm}(t_{IV-V},\varepsilon)
    \rightarrow a_{IV}^{\pm}T_{IV}(t_{IV-V})e_{\pm}.
\end{equation*}
By Lemma \ref{lem V answer} we also have:
\begin{equation*}
    \exp
     \left(-
    \int_{t_0}^{t_{IV-V}}\pm
    \frac{\lambda_V(\tau)}{\varepsilon}
    d\tau
    +\int_{t_{IV-V}}^{+\infty}
    S_{V,\pm}(\tau)
    d\tau
    \right)
    u_V(t_{IV-V},\varepsilon,e_{\pm})
    \rightarrow
     T_V(t_{IV-V})e_{\pm}.
\end{equation*}
Since $T_{IV}=T_{V}$, Lemma \ref{lem II tech 2} yields:
\begin{multline}\label{relation u IV+V}
    u_{IV}^{+}(t,\varepsilon)=\exp\left(
    \int_{t_{III-IV}(\varepsilon)}^{t_{IV-V}}
    \left(\frac{\lambda_{IV}(\tau)}{\varepsilon}+S_{IV,+}(\tau)\right)d\tau\right)
    \\
    \times
    \Biggl((a_{IV}^++\delta_5(\varepsilon))
    \exp
    \left(-
    \int_{t_0}^{t_{IV-V}}
    \frac{\lambda_V(\tau)}{\varepsilon}
    d\tau
    +\int_{t_{IV-V}}^{+\infty}
    S_{V,+}(\tau)
    d\tau
    \right)
    u_V(t,\varepsilon,e_+)
    \\
    +\delta_6(\varepsilon)
    \exp
    \left(
    \int_{t_0}^{t_{IV-V}}
    \frac{\lambda_V(\tau)}{\varepsilon}
    d\tau
    +\int_{t_{IV-V}}^{+\infty}
    S_{V,-}(\tau)
    d\tau
    \right)
    u_V(t,\varepsilon,e_-)
    \Biggr)
\end{multline}
and
\begin{multline}\label{relation u IV-V}
    u_{IV}^-(t,\varepsilon)=\exp\left(
    \int_{t_{III-IV}(\varepsilon)}^{t_{IV-V}}
    \left(-\frac{\lambda_{IV}(\tau)}{\varepsilon}+S_{IV,-}(\tau)\right)d\tau\right)
    \\
    \times
    \Biggl((a_{IV}^-+\delta_7(\varepsilon))
    \exp
    \left(
    \int_{t_0}^{t_{IV-V}}
    \frac{\lambda_V(\tau)}{\varepsilon}
    d\tau
    +\int_{t_{IV-V}}^{+\infty}
    S_{V,-}(\tau)
    d\tau
    \right)
    u_V(t,\varepsilon,e_-)
    \\
    +\delta_8(\varepsilon)
    \exp
    \left(-
    \int_{t_0}^{t_{IV-V}}
    \frac{\lambda_V(\tau)}{\varepsilon}
    d\tau
    +\int_{t_{IV-V}}^{+\infty}
    S_{V,+}(\tau)
    d\tau
    \right)
    u_V(t,\varepsilon,e_+)
    \Biggr)
\end{multline}
with some $\delta_5(\varepsilon),\delta_6(\varepsilon),\delta_7(\varepsilon),\delta_8(\varepsilon)\rightarrow0$ as $\varepsilon\rightarrow0^+$.

Let us now consider the solution $u_2^+(t,\varepsilon,f)$ for the case $f\nparallel f_-$ and prove the asymptotics \eqref{answer u}. By Lemma \eqref{lem I result},
\begin{equation*}
    u_2^+(t_{I-II},\varepsilon,f)
    =
     T_I(t_{I-II})\exp\left(
    \int_0^{t_{I-II}}\left(
    \frac{\lambda_I(\tau)}{\varepsilon}+
    S_{I,+}(\tau)
    \right)d\tau\right)
    \left(
    \Phi(f)e_+
    +o(1)
    \right).
\end{equation*}
Rewrite this as
\begin{equation*}
    \exp\left(
    -\int_0^{t_{I-II}}\left(
    \frac{\lambda_I(\tau)}{\varepsilon}+
    S_{I,+}(\tau)
    \right)d\tau\right)u_2^+(t_{I-II},\varepsilon,f)
    \rightarrow
     T_I(t_{I-II})\Phi(f)e_+.
\end{equation*}
Rewrite also the asymptotics \eqref{asympt u II pm} as
\begin{equation*}
    \exp\left(
    \int_{t_{I-II}}^{t_{II-III}(\varepsilon)}
    \left(\pm\frac{\lambda_{II}(\tau)}{\varepsilon}+S_{II,\pm}(\tau)\right)d\tau\right)
    u_{II}^{\pm}(t_{I-II},\varepsilon)
    \rightarrow
    a_{II}^{\pm}T_{II}(t_{I-II})e_{\pm}.
\end{equation*}
Since $T_Ie_+=T_{II}e_+$, by Lemma \ref{lem II tech 2} we get that
\begin{multline*}
    u_2^+(t,\varepsilon,f)=\exp\left(
    \int_0^{t_{I-II}}\left(
    \frac{\lambda_I(\tau)}{\varepsilon}+
    S_{I,+}(\tau)
    \right)d\tau\right)
    \\
    \times
    \Biggl(
    \left(\frac{\Phi(f)}{a_{II}^+}+\delta_9(\varepsilon)\right)
    \exp\left(
    \int_{t_{I-II}}^{t_{II-III}(\varepsilon)}
    \left(\frac{\lambda_{II}(\tau)}{\varepsilon}+S_{II,+}(\tau)\right)d\tau\right)
    u_{II}^{+}(t,\varepsilon)
    \\
    +\delta_{10}(\varepsilon)
    \exp\left(
    \int_{t_{I-II}}^{t_{II-III}(\varepsilon)}
    \left(-\frac{\lambda_{II}(\tau)}{\varepsilon}+S_{II,-}(\tau)\right)d\tau\right)
    u_{II}^{-}(t,\varepsilon)
    \Biggr)
\end{multline*}
with some $\delta_9(\varepsilon),\delta_{10}(\varepsilon)\rightarrow0$ as $\varepsilon\rightarrow0^+$. Using Lemma \ref{lem matching II-IV} and the fact that
\begin{multline*}
    \exp\left(
    \int_{t_{I-II}}^{t_{II-III}(\varepsilon)}
    \left(-\frac{\lambda_{II}(\tau)}{\varepsilon}+S_{II,-}(\tau)\right)d\tau\right)
    \\
    =O\left(
    \exp\left(
    \int_{t_{I-II}}^{t_{II-III}(\varepsilon)}
    \left(\frac{\lambda_{II}(\tau)}{\varepsilon}+S_{II,+}(\tau)\right)d\tau\right)
    \right)\text{ as }\varepsilon\rightarrow0^+,
\end{multline*}
as well as the identities $\lambda_I=\lambda_{II}$ and $S_{I,+}=S_{II,+}$, we have:
\begin{multline*}
    u_2^+(t,\varepsilon,f)=\frac{i\Phi(f)}{\sqrt2a_{II}^+}\exp\left(
    \int_0^{t_{II-III}(\varepsilon)}\left(
    \frac{\lambda_I(\tau)}{\varepsilon}+
    S_{I,+}(\tau)
    \right)d\tau\right)
    \\
    \times
    ((1+\delta_{11}(\varepsilon))u_{IV}^{+}(t,\varepsilon)-(1+\delta_{12}(\varepsilon))u_{IV}^{-}(t,\varepsilon))
\end{multline*}
with some $\delta_{11}(\varepsilon),\delta_{12}(\varepsilon)\rightarrow0$ as $\varepsilon\rightarrow0^+$. Since
\begin{equation*}
    \left|\exp\left(
    \int_{t_{III-IV}(\varepsilon)}^{t_{IV-V}}\pm
    \frac{\lambda_{IV}(\tau)}{\varepsilon}d\tau\right)\right|=1,
\end{equation*}
the relations \eqref{relation u IV+V} and \eqref{relation u IV-V} together with the identities $\lambda_{IV}=\lambda_{V}$ and $S_{IV}=S_{V}$ imply that
\begin{multline*}
    u_2^+(t,\varepsilon,f)=\frac{i\Phi(f)}{\sqrt2a_{II}^+}\exp\left(
    \int_0^{t_{II-III}(\varepsilon)}\left(
    \frac{\lambda_I(\tau)}{\varepsilon}+
    S_{I,+}(\tau)
    \right)d\tau\right)
    \\
    \times
    \Biggl((a_{IV}^++\delta_{13}(\varepsilon))
    \exp
    \left(-
    \int_{t_0}^{t_{III-IV}(\varepsilon)}
    \frac{\lambda_V(\tau)}{\varepsilon}
    d\tau
    +\int_{t_{III-IV}(\varepsilon)}^{+\infty}
    S_{V,+}(\tau)
    d\tau
    \right)
    u_V(t,\varepsilon,e_+)
    \\
    -(a_{IV}^-+\delta_{14}(\varepsilon))
    \exp
    \left(
    \int_{t_0}^{t_{III-IV}(\varepsilon)}
    \frac{\lambda_V(\tau)}{\varepsilon}
    d\tau
    +\int_{t_{III-IV}(\varepsilon)}^{+\infty}
    S_{V,-}(\tau)
    d\tau
    \right)
    u_V(t,\varepsilon,e_-)
    \Biggr)
\end{multline*}
with some $\delta_{13}(\varepsilon),\delta_{14}(\varepsilon)\rightarrow0$ as $\varepsilon\rightarrow0^+$. By Lemma \ref{lem V result for u V2} this means that
\begin{multline}\label{eq matching 1}
    \exp\left(-\int_{t_0}^t\frac{\lambda_V(\tau)}{\varepsilon}
    \left(
      \begin{array}{cc}
        1 & 0 \\
        0 & -1 \\
      \end{array}
    \right)
    d\tau
    \right)T_V^{-1}(t)u_2^+(t,\varepsilon,f)
    \\
    \rightarrow
    \frac{i\Phi(f)}{\sqrt2a_{II}^+}\exp\left(
    \int_0^{t_{II-III}(\varepsilon)}\left(
    \frac{\lambda_I(\tau)}{\varepsilon}+
    S_{I,+}(\tau)
    \right)d\tau\right)
    \\
    \times
    \Biggl((a_{IV}^++\delta_{13}(\varepsilon))
    \exp
    \left(-
    \int_{t_0}^{t_{III-IV}(\varepsilon)}
    \frac{\lambda_V(\tau)}{\varepsilon}
    d\tau
    +\int_{t_{III-IV}(\varepsilon)}^{+\infty}
    S_{V,+}(\tau)
    d\tau
    \right)
    e_+
    \\
    -(a_{IV}^-+\delta_{14}(\varepsilon))
    \exp
    \left(
    \int_{t_0}^{t_{III-IV}(\varepsilon)}
    \frac{\lambda_V(\tau)}{\varepsilon}
    d\tau
    +\int_{t_{III-IV}(\varepsilon)}^{+\infty}
    S_{V,-}(\tau)
    d\tau
    \right)
    e_-
    \Biggr)
\end{multline}
as $t\rightarrow+\infty$. Let us now simplify the right-hand side. As
\begin{equation*}
    a_{II}^+=\frac{\exp\left(-\frac{2c_0}3Z_0^{\frac32}\right)}{Z_0^{\frac14}},
    \
    a_{IV}^{\pm}=\frac{\exp\left(\mp\frac{2ic_0}3Z_0^{\frac32}\right)}{Z_0^{\frac14}},
\end{equation*}
we have
\begin{multline*}
    Z_0^{\frac14}a_{II}^+\exp\left(-\int_{t_{II-III}(\varepsilon)}^{t_0}\frac{\lambda_I(\tau)}{\varepsilon}d\tau\right)
    =
    Z_0^{\frac14}a_{II}^+\exp\left(\int_0^{Z_0}\lambda_{II,2}(s,\varepsilon)ds\right)
    \\
    =Z_0^{\frac14}a_{II}^+\exp\left(\int_0^{Z_0}(c_0\sqrt s +o(1))ds\right)=1+o(1)\text{ as }\varepsilon\rightarrow0^+
\end{multline*}
and
\begin{multline*}
    Z_0^{\frac14}a_{IV}^{\pm}\exp\left(\mp\int_{t_0}^{t_{III-IV}(\varepsilon)}\frac{\lambda_V(\tau)}{\varepsilon}d\tau\right)
    =
    Z_0^{\frac14}a_{IV}^{\pm}\exp\left(\pm\int_{-Z_0}^0\lambda_{IV,2}(s,\varepsilon)ds\right)
    \\
    =Z_0^{\frac14}a_{IV}^{\pm}\exp\left(\int_{-Z_0}^0(\pm ic_0\sqrt{-s} +o(1))ds\right)=1+o(1)\text{ as }\varepsilon\rightarrow0^+.
\end{multline*}
Furthermore,
\begin{equation*}
    S_{I,+}(t)=
    \frac{\gamma t^{\gamma}}{8\beta^2t^{1-\gamma}\left(1-\frac{t^{2\gamma}}{4\beta^2}\right)\left(1+\sqrt{1-\frac{t^{2\gamma}}{4\beta^2}}\right)}
    =\frac{\gamma\left(1-\sqrt{1-\frac{t^{2\gamma}}{4\beta^2}}\right)}{2t\left(1-\frac{t^{2\gamma}}{4\beta^2}\right)},
\end{equation*}
\begin{equation*}
    S_{V,\pm}(t)=
    -\frac{\beta\gamma\left(\frac{2\beta}{t^{\gamma}}\pm i\sqrt{1-\frac{4\beta^2}{t^{2\gamma}}}\right)}
    {\left(1-\frac{4\beta^2}{t^{2\gamma}}\right)t^{\gamma+1}}
    =\frac{\gamma\left(1\pm i\sqrt{\frac{t^{2\gamma}}{4\beta^2}-1}\right)}{2t\left(1-\frac{t^{2\gamma}}{4\beta^2}\right)}.
\end{equation*}
Therefore there exists the limit
\begin{multline}\label{c 32}
    c_{vp}(\beta,\gamma):=\lim_{\varepsilon\rightarrow0^+}
    \left(
    \int_0^{t_{II-III}(\varepsilon)}
    S_{I,+}(\tau)d\tau
    +\int_{t_{III-IV}(\varepsilon)}^{+\infty}\Re S_{V,\pm}(\tau)d\tau\right)
    \\
    =
    \lim_{\Delta\rightarrow0^+}
    \left(
    \int_0^{t_0-\Delta}
    \frac{\gamma\left(1-\sqrt{1-\frac{\tau^{2\gamma}}{4\beta^2}}\right)}{2\tau\left(1-\frac{\tau^{2\gamma}}{4\beta^2}\right)}d\tau
    +\int_{t_0+\Delta}^{+\infty}
    \frac{\gamma d\tau}{2\tau\left(1-\frac{\tau^{2\gamma}}{4\beta^2}\right)}\right)
    \\
    =
    \int_0^{\frac{t_0}2}
    \frac{\gamma\left(1-\sqrt{1-\frac{\tau^{2\gamma}}{4\beta^2}}\right)}{2\tau\left(1-\frac{\tau^{2\gamma}}{4\beta^2}\right)}d\tau
    -
    \int_{\frac{t_0}2}^{t_0}
    \frac{\gamma d\tau}{2\tau\sqrt{1-\frac{\tau^{2\gamma}}{4\beta^2}}}
    +\text{v.p.}\int_{\frac{t_0}2}^{+\infty}
    \frac{\gamma d\tau}{2\tau\left(1-\frac{\tau^{2\gamma}}{4\beta^2}\right)}.
\end{multline}
Then the right-hand side of \eqref{eq matching 1} can be rewritten as
\begin{multline*}
    \frac{ie^{c_{vp}(\beta,\gamma)}\Phi(f)}{\sqrt2}
    \exp\left(\int_0^{t_0}\frac{\lambda_I(\tau)}{\varepsilon}d\tau\right)
    \\
    \times
    \Biggl((1+\delta_{15}(\varepsilon))
    \exp
    \left(i\int_{t_{III-IV}(\varepsilon)}^{+\infty}
    \Im S_{V,+}(\tau)
    d\tau
    \right)
    e_+
    \\
    -(1+\delta_{16}(\varepsilon))
    \exp
    \left(i\int_{t_{III-IV}(\varepsilon)}^{+\infty}
    \Im S_{V,-}(\tau)
    d\tau
    \right)
    e_-
    \Biggr)
\end{multline*}
with some $\delta_{15}(\varepsilon),\delta_{16}(\varepsilon)\rightarrow0$ as $\varepsilon\rightarrow0^+$. It follows that
\begin{equation*}
    \|T_V^{-1}(t)u_2^+(t,\varepsilon,f)\|\rightarrow
    \frac{e^{c_{vp}(\beta,\gamma)}|\Phi(f)|}{\sqrt2}\exp\left(\int_0^{t_0}\frac{\lambda_I(\tau)d\tau}{\varepsilon}\right)
    \left\|
    \left(
      \begin{array}{c}
        1+\delta_{15}(\varepsilon) \\
        1+\delta_{16}(\varepsilon) \\
      \end{array}
    \right)
    \right\|
\end{equation*}
as $t\rightarrow+\infty$. To get rid of $T_V^{-1}$ we recall that
\begin{equation*}
    T_V(t)\rightarrow
    \left(
      \begin{array}{cc}
        1 & 1 \\
        i & -i \\
      \end{array}
    \right)
    \text{ as }t\rightarrow+\infty
\end{equation*}
and that the matrix
$\frac1{\sqrt2}
\left(
  \begin{array}{cc}
    1 & 1 \\
    i & -i \\
  \end{array}
\right)$
is unitary. Besides that, for every $\varepsilon\in U$ the solution $u_2^+(t,\varepsilon,f)$ is bounded as $t\rightarrow+\infty$, which follows from Lemma \ref{lem model problem individual asymptotics} and the relation \eqref{u 2 pm}. Thus we conclude that
\begin{equation*}
    \|u_2^+(t,\varepsilon,f)\|\rightarrow
    \frac{e^{c_{vp}(\beta,\gamma)}|\Phi(f)|}{2}\exp\left(\int_0^{t_0}\frac{\lambda_I(\tau)d\tau}{\varepsilon}\right)
    \left\|
    \left(
      \begin{array}{c}
        1+\delta_{15}(\varepsilon) \\
        1+\delta_{16}(\varepsilon) \\
      \end{array}
    \right)
    \right\|\text{ as }t\rightarrow+\infty
\end{equation*}
for every fixed sufficiently small $\varepsilon>0$. Now we can take $\varepsilon\rightarrow0^+$ to obtain the asymptotics. With the expressions \eqref{lambda I} for $\lambda_I$ and \eqref{C mp} for $C_{mp}(\beta,\gamma)$ this proves the formula \eqref{answer u 3}, and the assertion of Theorem \ref{thm model problem} for $\varepsilon\rightarrow0^+$ follows.

Consider now the solution $u_2^+(t,\varepsilon,f_-)$. Let us prove the estimate \eqref{answer u}. First use the relations \eqref{relation u IV+V} and \eqref{relation u IV-V} together with the facts that $\lambda_{IV}=\lambda_V$ are purely imaginary, $S_V$ is summable at infinity and that $\|u_V(t,\varepsilon,e_{\pm})\|\rightarrow\sqrt2$ as $t\rightarrow+\infty$ by Lemma \ref{lem V result for u V2} to conclude that
\begin{equation*}
    \lim_{t\rightarrow+\infty}\|u_{IV}^{\pm}(t,\varepsilon)\|=O\left(\exp\left(\int_{t_{III-IV}(\varepsilon)}^{t_{IV-V}}S_{IV,+}(\tau)d\tau\right)\right)
    \text{ as }\varepsilon\rightarrow0^+.
\end{equation*}
By Lemma \ref{lem matching II-IV} this means that
\begin{equation}\label{matching eq 1}
    \lim_{t\rightarrow+\infty}\|u_{II}^{\pm}(t,\varepsilon)\|=O\left(\exp\left(\int_{t_{III-IV}(\varepsilon)}^{t_{IV-V}}S_{IV,+}(\tau)d\tau\right)\right)
    \text{ as }\varepsilon\rightarrow0^+.
\end{equation}
Let us define two solutions $u_{I-II}^{\pm}$ of the system \eqref{system u 2} as
\begin{equation*}
    u_{I-II}^{\pm}(t,\varepsilon):=\frac1{a_{II}^{\pm}}\exp\left(\int_{t_{I-II}}^{t_{II-III}(\varepsilon)}
    \left(\pm\frac{\lambda_{II}(\tau)}{\varepsilon}+S_{II,\pm}(\tau)\right)d\tau\right)u_{II}^{\pm}(t,\varepsilon)
\end{equation*}
On the one hand, from \eqref{matching eq 1}, the equalities $\lambda_I=\lambda_{II}$, $S_I=S_{II}$, $S_{IV}=S_V$ and finiteness of the limit in \eqref{c 32}, we have
\begin{equation}\label{matching eq 2}
    \lim_{t\rightarrow+\infty}\|u_{I-II}^{\pm}(t,\varepsilon)\|
    =O\left(\exp\left(\int_{t_{I-II}}^{t_0}\frac{\lambda_I(\tau)d\tau}{\varepsilon}\right)\right)
    \text{ as }\varepsilon\rightarrow0^+.
\end{equation}
On the other hand, from the asymptotics \eqref{asympt u II pm}
\begin{equation}\label{matching eq 3}
    u_{I-II}^{\pm}(t_{I-II},\varepsilon)\rightarrow T_I(t_{I-II})e_{\pm}
    \text{ as }\varepsilon\rightarrow0^+.
\end{equation}
From this we will now estimate the growth of the norm of the fundamental solution. Define for every $h\in\mathbb C^2$ the solution $u_{I-II}(t,\varepsilon,h)$ of the system \eqref{system u 2} on the interval $[t_{I-II},+\infty)$ with the initial condition
\begin{equation*}
    u_{I-II}(t_{I-II},\varepsilon,h)=T_{I-II}(t_{I-II})h.
\end{equation*}
The fundamental solution $F$ of the system \eqref{system u 2} can be written as follows:
\begin{equation*}
    F(t,t_{I-II},\varepsilon)=\left(u_{I-II}(t,\varepsilon,e_+)|u_{I-II}(t,\varepsilon,e_-)\right)T_I^{-1}(t_{I-II}),
\end{equation*}
therefore its norm can be estimated by the norms of solutions $u_{I-II}(t,\varepsilon,e_{\pm})$. For them we have from \eqref{matching eq 3} by Lemma \ref{lem II tech 2}:
\begin{equation*}
    u_{I-II}(t,\varepsilon,e_+)=(1+\delta_{17}(\varepsilon))u_{I-II}^+(t,\varepsilon)+\delta_{18}(\varepsilon)u_{I-II}^-(t,\varepsilon),
\end{equation*}
\begin{equation*}
    u_{I-II}(t,\varepsilon,e_-)=\delta_{19}(\varepsilon)u_{I-II}^+(t,\varepsilon)+(1+\delta_{20}(\varepsilon))u_{I-II}^-(t,\varepsilon)
\end{equation*}
with some $\delta_{17}(\varepsilon),\delta_{18}(\varepsilon),\delta_{19}(\varepsilon),\delta_{20}(\varepsilon)\rightarrow0$ as $\varepsilon\rightarrow0^+$. Therefore \eqref{matching eq 2} means that
\begin{equation*}
    \limsup_{t\rightarrow+\infty}\|F(t,t_{I-II},\varepsilon)\|
    =O\left(\exp\left(\int_{t_{I-II}}^{t_0}\frac{\lambda_I(\tau)d\tau}{\varepsilon}\right)\right)
    \text{ as }\varepsilon\rightarrow0^+.
\end{equation*}
This estimate together with the estimate of $u_2^+(t_{I-II},\varepsilon,f_-)$ by Lemma \ref{lem I result} imply that
\begin{multline*}
    \lim_{t\rightarrow+\infty}\|u_2^+(t,\varepsilon,f_-)\|\le
    \\
    \|u_2^+(t_{I-II},\varepsilon,f_-)\|\limsup_{t\rightarrow+\infty}\|F(t,t_{I-II},\varepsilon)\|
    =o\left(\exp\left(\int_0^{t_0}\frac{\lambda_I(\tau)d\tau}{\varepsilon}\right)\right),
\end{multline*}
which is the desired estimate.

To prove \eqref{answer u} for $\varepsilon_0\rightarrow0^-$ consider the solution $u_2^-(t,\varepsilon,f)$ of the system
\begin{equation*}
    u_2^-\,'(t)
    =\left(
    \left(
      \begin{array}{cc}
        \frac{\beta}{t^{\gamma}} & \frac12 \\
        -\frac12 & -\frac{\beta}{t^{\gamma}} \\
      \end{array}
    \right)
    +R_2^-(t,\varepsilon)\right)u_2^-(t).
\end{equation*}
If one takes
$u_2^-(t)=
\left(
  \begin{array}{cc}
    1 & 0 \\
    0 & -1 \\
  \end{array}
\right)
u_2^+(t)$, this system turns into the following:
\begin{equation}\label{system u 2- transformed to +}
    u_2^+\,'(t)
    =\left(
    \left(
      \begin{array}{cc}
        \frac{\beta}{t^{\gamma}} & -\frac12 \\
        \frac12 & -\frac{\beta}{t^{\gamma}} \\
      \end{array}
    \right)
    +
    \left(
      \begin{array}{cc}
        1 & 0 \\
        0 & -1 \\
      \end{array}
    \right)
    R_2^-(t,\varepsilon)
    \left(
      \begin{array}{cc}
        1 & 0 \\
        0 & -1 \\
      \end{array}
    \right)\right)u_2^+(t).
\end{equation}
If one proves that the asymptotics as $\varepsilon\rightarrow0^+$ of the solution
\begin{equation*}
    \left(
      \begin{array}{cc}
        1 & 0 \\
        0 & -1 \\
      \end{array}
    \right)u_2^-(t,\varepsilon,f)
\end{equation*}
in the region $I$, namely at the point $t_{I-II}$, coincides with the asymptotics of the solution $u_2^+(t,\varepsilon,f)$ at the same point (which is given by Lemma \ref{lem I result}), then the rest follows automatically. This is because Lemmas \ref{lem V answer}, \ref{lem II result}, \ref{lem IV result} and \ref{lem III main} are directly applicable to the system \eqref{system u 2- transformed to +} and the matching procedure (or estimating, in the case $f=f_-$) works for the solution
$\left(
  \begin{array}{cc}
    1 & 0 \\
    0 & -1 \\
  \end{array}
\right)u_2^-$
literally as above. Hence the asymptotic behaviour of the norm is the same. In the argument of Section \ref{section I} every estimate and convergence remain the same as for the case of $u_2^+$, except the calculation \eqref{I eq free term}. In an analogue of that calculation one finally arrives at
\begin{multline*}
    \int_0^{\varepsilon_0^{-\frac1{\gamma}}t}
    \left(
      \begin{array}{cc}
        1 & 0 \\
        0 & \exp\left(-\frac2{\varepsilon}\int_{\varepsilon_0^{\frac1{\gamma}}x}^t\lambda_I\right) \\
      \end{array}
    \right)
    T_I^{-1}(\varepsilon_0^{\frac1{\gamma}}x)
    \left(
      \begin{array}{cc}
        1 & 0 \\
        0 & -1 \\
      \end{array}
    \right)
    \\
    \times
    \left(
    \begin{array}{cc}
    \cos\left(\frac{\varepsilon_0 x}{2}\right) & \sin\left(\frac{\varepsilon_0 x}{2}\right)
    \\
    \sin\left(\frac{\varepsilon_0 x}{2}\right) & -\cos\left(\frac{\varepsilon_0 x}{2}\right)
    \end{array}
    \right)
    R(x,\varepsilon_0)u(x,\varepsilon_0,f)
    \exp\left(-\frac1{\varepsilon}\int_0^{\varepsilon_0^{\frac1{\gamma}}x}\lambda_I\right)dx,
\end{multline*}
which differs from the result of the calculation \eqref{I eq free term} by presence of the matrix
$\left(
  \begin{array}{cc}
    1 & 0 \\
    0 & -1 \\
  \end{array}
\right)$. Since $T_I(t)\rightarrow I$ as $t\rightarrow0$, in the limit as $\varepsilon\rightarrow0^+$ this term affects the lower component which goes to zero for the same reason as in the case of $u_2^+$, so the result is still the same. This proves \eqref{answer u} for $\varepsilon_0\rightarrow0^-$ and completes the proof of Theorem \ref{thm model problem}.
\end{proof}

\section{Proof of the main result}\label{section proof}
In this section we prove Theorem \ref{thm gamma<1} putting together everything that was obtained in the previous sections.

\begin{proof}[Proof of Theorem \ref{thm gamma<1}]
Consider the critical point $\nu_{cr}$ and let $\alpha\neq\alpha_{cr}$. By Lemma \ref{lem reduction} there exists the neighbourhood $U_{cr}$, and there the eigenfunction equation for the operator $\mathcal L_{\alpha}$ is equivalent to the system
\begin{equation*}
    w_{cr}'(x)=
    \left(
    \frac{\beta_{cr}}{x^{\gamma}}
    \left(
    \begin{array}{cc}
    \cos(\varepsilon_{cr}(\lambda)x) & \sin(\varepsilon_{cr}(\lambda)x)
    \\
    \sin(\varepsilon_{cr}(\lambda)x) & -\cos(\varepsilon_{cr}(\lambda)x)
    \end{array}
    \right)
    +
    R_{cr}(x,\lambda)
    \right)
    w_{cr}(x).
\end{equation*}
Rewrite this system as
\begin{equation*}
    u'(x)=
    \left(
    \frac{\beta_{cr}}{x^{\gamma}}
    \left(
    \begin{array}{cc}
    \cos(\varepsilon_0x) & \sin(\varepsilon_0x)
    \\
    \sin(\varepsilon_0x) & -\cos(\varepsilon_0x)
    \end{array}
    \right)
    +
    R_{cr}(x,\lambda(\varepsilon_0))
    \right)
    u(x)
\end{equation*}
with $\lambda(\varepsilon_0)=\varepsilon_{cr}^{-1}(\varepsilon_0)$, which means that $\lambda$ is parametrised by $\varepsilon_0$ so that $\varepsilon_{cr}(\lambda)=\varepsilon_0$.
Properties of the remainder provided by Lemma \ref{lem reduction} are such that the conditions \eqref{model problem r} and \eqref{model problem conditions} are satisfied. By Lemma \ref{lem model problem individual asymptotics} there exists $f_{cr-}\in\mathbb C^2\backslash\{0\}$ such that the solution $u_{cr}(x,0,f)$ has the asymptotics
\begin{equation*}
    u_{cr}(x,0,f_{cr-})=\exp\left(-\frac{\beta_{cr}x^{1-\gamma}}{1-\gamma}\right)(e_-+o(1))
    \text{ as }x\rightarrow+\infty
\end{equation*}
and for $f\nparallel f_{cr-}$
\begin{equation*}
    u_{cr}(x,0,f)=\exp\left(\frac{\beta_{cr}x^{1-\gamma}}{1-\gamma}\right)(\Phi_{cr}(f)e_++o(1))
    \text{ as }x\rightarrow+\infty.
\end{equation*}
In fact, the matrix $R_{cr}$ has real entries, so $f_{cr-}\in\mathbb R^2\backslash\{0\}$. Moreover, comparing these asymptotics with \eqref{asympt w-} and \eqref{asympt w+} provided by Lemma \ref{lem reduction}, we see that
\begin{equation*}
    f_{cr-}=\frac{g_{cr,\alpha_{cr}}}{d_{cr-}}
\end{equation*}
and
\begin{equation}\label{relation Phi to sin}
    \Phi_{cr}(f_{cr,\alpha})=d_{cr+}\sin(\alpha-\alpha_{cr}).
\end{equation}
Since $\alpha\neq\alpha_{cr}$, the vectors $g_{cr,\alpha}$ and $f_{cr-}$ are linearly independent and so, by Lemma \ref{lem II tech 2},
\begin{equation*}
    w_{cr,\alpha}(0,\lambda(\varepsilon_0))=(1+\delta_{21}(\varepsilon_0))g_{cr,\alpha}+\delta_{22}(\varepsilon_0)f_{cr-}
\end{equation*}
with some $\delta_{21}(\varepsilon),\delta_{22}(\varepsilon)\rightarrow0$ as $\varepsilon\rightarrow0$. Therefore
\begin{equation}\label{relation w u}
    w_{cr,\alpha}(x,\lambda(\varepsilon_0))=(1+\delta_{21}(\varepsilon_0))u(x,\varepsilon_0,g_{cr,\alpha})
    +\delta_{22}(\varepsilon_0)u(x,\varepsilon_0,f_{cr-}).
\end{equation}
By Theorem \ref{thm model problem} using the relation \eqref{relation Phi to sin} we have
\begin{multline*}
    \lim_{x\rightarrow+\infty}\|u_{cr}(x,\varepsilon_0,g_{cr,\alpha})\|
    =
    C_{mp}(\beta_{cr},\gamma)\exp
    \left(
    \frac1{|\varepsilon_0|^{\frac{1-\gamma}{\gamma}}}\int_0^{(2\beta_{cr})^{\frac1{\gamma}}}\sqrt{\frac{\beta_{cr}^2}{t^{2\gamma}}-\frac14}dt
    \right)
    \\
    \times
    (|d_{cr+}\sin(\alpha-\alpha_{cr})|+o(1)),
\end{multline*}
and
\begin{equation*}
    \lim_{x\rightarrow+\infty}\|u_{cr}(x,\varepsilon_0,f_{cr-})\|
    =
    o\left(
    \exp
    \left(
    \frac1{|\varepsilon_0|^{\frac{1-\gamma}{\gamma}}}\int_0^{(2\beta_{cr})^{\frac1{\gamma}}}\sqrt{\frac{\beta_{cr}^2}{t^{2\gamma}}-\frac14}dt
    \right)
    \right)
\end{equation*}
as $\varepsilon_0\rightarrow0$. With these asymptotics it follows from \eqref{relation w u} that
\begin{multline*}
    \lim_{x\rightarrow+\infty}\|w_{cr,\alpha}(x,\lambda)\|
    =
    C_{mp}(\beta_{cr},\gamma)\exp
    \left(
    \frac1{|\varepsilon_{cr}(\lambda)|^{\frac{1-\gamma}{\gamma}}}\int_0^{(2\beta_{cr})^{\frac1{\gamma}}}\sqrt{\frac{\beta_{cr}^2}{t^{2\gamma}}-\frac14}dt
    \right)
    \\
    \times
    (|d_{cr+}\sin(\alpha-\alpha_{cr})|+o(1))
    \text{ as }\lambda\rightarrow\nu_{cr}.
\end{multline*}
Since, by Proposition \ref{prop Titchmarsh--Weyl formula} and the equality \eqref{A=eta},
\begin{equation*}
    \rho'_{\alpha}(\lambda)=\frac1{2\pi|W\{\psi_+,\psi_-\}(\lambda)| \; |A_{\alpha}(\lambda)|^2}
    =\frac1{2\pi|W\{\psi_+,\psi_-\}(\lambda)| \left\|\lim\limits_{x\rightarrow+\infty}w_{cr,\alpha}(x,\lambda)\right\|^2},
\end{equation*}
using continuity of $W\{\psi_+,\psi_-\}(\cdot)$ we have
\begin{equation}\label{proof eq 1}
    \rho'_{\alpha}(\lambda)=\frac{a_{cr}}{d_{cr+}^2\sin^2(\alpha-\alpha_{cr})}
    \exp
    \left(-
    \frac2{|\varepsilon_{cr}(\lambda)|^{\frac{1-\gamma}{\gamma}}}
    \int_0^{(2\beta_{cr})^{\frac1{\gamma}}}\sqrt{\frac{\beta_{cr}^2}{t^{2\gamma}}-\frac14}dt
    \right)(1+o(1)),
\end{equation}
where
\begin{equation*}
    a_{cr}=\frac1{2\pi|W\{\psi_+,\psi_-\}(\nu_{cr})|C_{mp}^2(\beta_{cr},\gamma)},
\end{equation*}
which coincides with \eqref{a cr} after substitution of $C_{mp}$ from \eqref{C mp}. Using the property \eqref{epsilon 0} we have
\begin{equation*}
    \frac1{|\varepsilon_{cr}(\lambda)|^{\frac{1-\gamma}{\gamma}}}
    =
    \frac1{|\lambda-\nu_{cr}|^{\frac{1-\gamma}{\gamma}}}
    \left(
    \frac a{2\pi k'(\nu_{cr})}
    \right)^{\frac{1-\gamma}{\gamma}}
    (1+O(|\lambda-\nu_{cr}|^{\frac1{\gamma}}))
    \text{ as }\lambda\rightarrow\nu_{cr}.
\end{equation*}
Substituting this and the result of the calculation \eqref{exponent} into \eqref{proof eq 1} we finally arrive at the asymptotics \eqref{answer} with $c_{cr}$ given by \eqref{c cr}. This completes the proof.
\end{proof}

\subsection*{Acknowledgments}
The author wishes to express his gratitude to Sergey Naboko for proposing this problem and his interest to this work and to Daphne Gilbert for her constant attention and help. This work was supported by the Irish Research Council (Government of Ireland Postdoctoral Fellowship in Science, Engineering and Technology).

\end{document}